\newtheorem{Th}{Theorem}[section]
\newtheorem{Def}[Th]{Definition}
\newtheorem{Rq}[Th]{Remark}
\newtheorem{Pro}[Th]{Proposition}
\newtheorem{Cor}[Th]{Corollary}
\newtheorem{Lem}[Th]{Lemma}
\newcommand{\R}{\mathbb{R}}
\newcommand{\E}{\mathbb{E}}
\newcommand{\Po}{\mathbb{P}}
\newcommand{\Or}{\mathbb{O}}
\newcommand{\Sp}{\mathbb{S}}
\newcommand{\Vv}{\mathbb{V}}
\newcommand{\V}{\mathbf{k}_0}
\newcommand{\K}{\widehat{\mathbb{P}}_0}
\newenvironment{proof}{\noindent\textit{Proof.~}}{\hfill$\blacksquare$\bigbreak} 
\newlength{\plarg}
\title{Sharp asymptotics for the solutions of the three-dimensional massless Vlasov-Maxwell system with small data}
\author{L\'eo Bigorgne\footnote{Laboratoire de Math\'ematiques, Univ. Paris-Sud, CNRS, Universit\'e Paris-Saclay, 91405 Orsay. E-mail adress : leo.bigorgne@u-psud.fr.}}
\begin{document}

\maketitle
    
\begin{abstract}

This paper is concerned with the asymptotic properties of the small data solutions to the massless Vlasov-Maxwell system in $3d$. We use vector field methods to derive almost optimal decay estimates in null directions for the electromagnetic field, the particle density and their derivatives. No compact support assumption in $x$ or $v$ is required on the initial data and the decay in $v$ is in particular initially optimal. Consistently with Proposition $8.1$ of \cite{dim4}, the Vlasov field is supposed to vanish initially for small velocities. In order to deal with the slow decay rate of the solutions near the light cone and to prove that the velocity support of the particle density remains bounded away from $0$, we make crucial use of the null properties of the system.
\end{abstract}

    \tableofcontents
\section{Introduction}

This article is part of a series of works concerning the asymptotic behavior of small data solutions to the Vlasov-Maxwell equations. The system is a classical model for collisionless plasma and is given, for $K$ species of particles, by\footnote{We will, throughout this article, use the Einstein summation convention so that $v^{i} \partial_{i} f = \sum_{i=1}^3 v^i \partial_{i} f$. A sum on latin letters starts from $1$ whereas a sum on greek letters starts from $0$.}
\begin{eqnarray}
\nonumber \sqrt{m_k^2+|v|^2}\partial_t f_k+v^i \partial_i f_k +e_k\left( \sqrt{m_k^2+|v|^2} {F_{0}}^j+ v^{q} {F_{q}}^j \right) \partial_{v^j} f_k & = & 0, \\ \nonumber
\nabla^{\mu} F_{\mu \nu} & = & \sum_{k=1}^K e_k \int_{v \in \R^3} \frac{v_{\nu}}{\sqrt{m_k^2+|v|^2}} f_k dv , \\ \nonumber
\nabla^{\mu} {}^* \!  F_{\mu \nu} & = & 0,
\end{eqnarray}
where
\begin{itemize}
\item $m_k \geq 0$ is the mass of the particles of the species $k$ and $e_k \neq 0$ is their charge.
\item The function $f_k(t,x,v)$ is the particle density of the species $k$, where $(t,x,v) \in \R_+ \times \R^3 \times \left( \R^3 \setminus \{0 \} \right)$ if $m_k=0$ and $(t,x,v) \in \R_+ \times \R^3 \times  \R^3 $ otherwise.
\item The $2$-form $F(t,x)$, with $(t,x) \in \R_+ \times \R^3$, is the electromagnetic field and ${}^* \!  F(t,x)$ is its Hodge dual.
\end{itemize}
In \cite{dim4}, we studied the massless Vlasov-Maxwell system in high dimensions ($n \geq 4$) and we proved that if the particle densities initially vanish for small velocities and if certain weighted $L^1$ and $L^2$ norms of the initial data are small enough, then the unique classical solution to the system exists globally in time. Moreover, as the smallness assumption only concerns $L^1$ and $L^2$ norms, no compact support assumption in $x$ or $v$ was required. We also obtained optimal pointwise decay estimates on the velocity averages of $f_k$ and their derivatives as well as improved decay estimates on the null components of the electromagnetic field and its derivatives. In the same article, we also proved that there exists smooth initial data such that the particle densities do not vanish for small velocities and for which \eqref{VM1}-\eqref{VM3} does not admit a local classical solution\footnote{Note that this result holds for dimensions $n \geq 2$.}.

Similar results for the massive Vlasov-Maxwell system in high dimensions are also obtained in \cite{dim4}. A main difference however is that $f_k$ does not have to be supported away from $v=0$. The $3d$ massive case requires a refinement of our method and will be treated in \cite{dim3}. We will also study the solutions of \eqref{VM1}-\eqref{VM3} in the exterior of a light cone. The strong decay satisfied by $f_k$ in such a region will allow us to lower the initial decay hypothesis on the electromagnetic field and to obtain asymptotics on the solutions in a simpler way than for the whole spacetime. This will be done in \cite{ext}.

In this paper, we study the asymptotic properties of the small data solutions to the three-dimensional massless Vlasov-Maxwell, so that $m_k=0$. We start with optimal decay in $v$ on the particle densities in the sense that we merely suppose $f_k(0,x,.)$ to be integrable in $v$, which is a necessary condition for the source term of the Maxwell equations to be well defined. In massive Vlasov systems, powers of $|v|$ are often lost in order to gain time decay or to exploit null properties\footnote{See for instance \cite{dim4} for the Vlasov-Maxwell system, where we used the inequality $1 \leq 4v^0 v^{\underline{L}}$ in order to take advantage of decay in $t-r$.}. Our assumptions will force us to better understand the null structure of the equations. In fact, one of the goals of this article is to describe in full details the null structure of the system, which appears to be fundamental for proving integrability and controling the velocity support of the particle density.

In view of their physical meaning, the functions $f_k$ are usually supposed to be non negative. However, as their signs play no role in this paper and since we will consider neutral plasmas, we suppose for simplicity that $K=1$ and we do not restrict the values of $f_1$ to $\R_+$. We also normalize the charge $e_1$ to $1$ and we denote $f_1$ by $f$. The system can then be rewritten as
\begin{eqnarray}\label{VM1}
|v|\partial_t f+v^i \partial_i f +\left(|v|{F_{0}}^j+ v^{q} {F_{q}}^j \right) \partial_{v^j} f & = & 0, \\ \label{VM2}
\nabla^{\mu} F_{\mu \nu} & = & J(f)_{\nu} \hspace{2mm} := \hspace{2mm} \int_{v \in \R^3} \frac{v_{\nu}}{|v|} f dv , \\ \label{VM3}
\nabla^{\mu} {}^* \!  F_{\mu \nu} & = & 0.
\end{eqnarray}
Note that we can recover the more common form of the Vlasov-Maxwell system using the relations
$$E^i=F_{0i} \hspace{8mm} \text{and} \hspace{8mm} B^i=-{}^* \!  F_{0i},$$
so that the equations \eqref{VM1}-\eqref{VM3} can be rewritten as 
\begin{flalign*}
 & \hspace{3cm} |v| \partial_t f+v^i \partial_i f + (|v|E+v \times B) \cdot \nabla_v f = 0, & \\
& \hspace{3cm} \nabla \cdot E =  \int_{v \in \R^3}fdv, \hspace{1.9cm} \partial_t E^j = (\nabla \times B)^j -\int_{v \in \R^3} \frac{v^j}{|v|}fdv, & \\
& \hspace{3cm} \nabla \cdot B = 0, \hspace{3,2cm} \partial_t B = - \nabla \times E. &
\end{flalign*} 
We choose to work with a neutral plasma to simplify the proof but the case of a non zero total charge will be covered in \cite{dim3} and \cite{ext}.

\subsection{Previous results on small data solutions for the massive Vlasov-Maxwell system}

Global existence for small data in dimension $3$ was first established by Glassey-Strauss in \cite{GSt} under a compact support assumption (in space and in velocity). In \cite{GSc}, a similar result is obtained for the nearly neutral case. The compact support assumption in $v$ is removed in \cite{Sc} but the data still have to be compactly supported in space. Note that none of these results contain estimates on $\partial_{\mu_1}...\partial_{\mu_k} \int_v f dv$ and the optimal decay rate on $\int_v f dv$ is not obtained by the method of \cite{Sc}. They all proved decay estimates on the electromagnetic field up to first order derivatives. 

In \cite{dim4}, we used vector field methods, developed in \cite{CK} for the electromagnetic field and \cite{FJS} for the Vlasov field, in order to remove all compact support assumptions for the dimensions $n \geq 4$. We then derived (almost) optimal decay on the solutions of the system and their derivatives and we described precisely the behavior of the null components of $F$.

Recently, Wang proved in \cite{Wang} a similar result for the $3d$ case. Using both vector field method and Fourier analysis, he replaced the compact support assumption by strong polynomial decay hypotheses in $(x,v)$ on $f$ and obtained optimal pointwise decay estimates on $\int_v f dv$ and its derivatives. 

\subsection{Previous works on Vlasov systems using vector field methods}

Properties of small data solutions of other Vlasov systems were obtained recently using vector field methods. First on the Vlasov-Nordstr\"om system, in \cite{FJS} and \cite{FJS3}, and the Vlasov-Poisson system (see \cite{Poisson}). Vector field methods led to a proof of the stability of the Minkowski spacetime for the Einstein-Vlasov system, obtained independently by \cite{FJS2} and \cite{Lindblad}.

Note that vector field methods can also be used to derive integrated decay for solutions to the the massless Vlasov equation on curved background such as slowly rotating Kerr spacetime (see \cite{ABJ}).
\subsection{Statement of the main result}
The following theorem is the main result of this paper. For the notations not yet defined, see Section \ref{sec2}.
\begin{Th}\label{theorem}
Let $N \geq 10$, $\epsilon >0$ and $(f^0,F^0)$ an initial data set for the Vlasov-Maxwell equations \eqref{VM1}-\eqref{VM3} satisfying the smallness assumption\footnote{We could avoid any hypotheses on the derivatives of order $N+1$ and $N+2$ of $F^0$ (see Remark \ref{rqFhighderiv} for more details).}
$$  \sum_{ |\beta|+|\kappa| \leq N+3} \int_{x \in \R^3} \int_{v \in \R^3} (1+|x|)^{|\beta|+2}(1+|v|)^{|\kappa|} \left| \partial_x^{\beta} \partial_v^{\kappa} f^0 \right| dv dx +\sum_{  |\gamma| \leq N+2} \int_{x \in \R^3} (1+|x|)^{2|\gamma|+2} \left| \nabla_{\partial^{\gamma}_x} F^0 \right|^2 dx \leq \epsilon,$$
the neutral hypothesis
\begin{equation}\label{eq:neutral}
\int_{x \in \R^3} \int_{v \in \R^3} f^0 dv dx =0
\end{equation}
and the support assumption
$$\forall \hspace{1mm} 0 < |v| \leq 3, \hspace{12mm} f^0(.,v)=0.$$
There exists $C>0$ and $\epsilon_0>0$ such that if $0 \leq \epsilon \leq \epsilon_0$, then the unique classical solution $(f,F)$ of the system which satisfies $f(t=0)=f^0$ and $F(t=0)=F^0$ is a global solution and verifies the following estimates.
\begin{itemize}
\item Energy bound for the electromagnetic field $F$: $\forall$ $t \in \R_+$,
$$\sum_{\begin{subarray}{}   Z^{\gamma} \in \mathbb{K}^{|\gamma|} \\ \hspace{1mm} |\gamma| \leq N \end{subarray}}  \int_{ \R^3}  \tau_+^2 \left(  \left| \alpha \left( \mathcal{L}_{ Z^{\gamma}}(F) \right) \right|^2  +  \left| \rho \left( \mathcal{L}_{ Z^{\gamma}}(F) \right) \right|^2  +  \left|\sigma \left( \mathcal{L}_{ Z^{\gamma}}(F) \right) \right|^2 \right)+\tau_-^2 \left|\underline{\alpha} \left( \mathcal{L}_{ Z^{\gamma}}(F) \right) \right|^2  dx \leq C\epsilon \log^4(3+t).$$
\item Sharp pointwise decay estimates for the null components of $\mathcal{L}_{Z^{\gamma}}(F)$: $\forall$ $|\gamma| \leq N-2$, $(t,x) \in \R_+ \times \R^3$,
\begin{eqnarray}
\nonumber \left| \rho \left( \mathcal{L}_{Z^{\gamma}}(F) \right) \right|(t,x) & \lesssim & \sqrt{\epsilon} \frac{\log^2 (3+t)}{\tau_+^{2} \tau_-^{\frac{1}{2}}}, \hspace{20mm} \left| \alpha \left( \mathcal{L}_{Z^{\gamma}}(F) \right) \right|(t,x) \hspace{2mm} \lesssim \hspace{2mm} \sqrt{\epsilon} \frac{\log^2 (3+t)}{\tau_+^{\frac{5}{2}} }, \\ \nonumber
\left| \sigma \left( \mathcal{L}_{Z^{\gamma}}(F) \right) \right|(t,x) & \lesssim & \sqrt{\epsilon} \frac{\log^2 (3+t)}{\tau_+^{2} \tau_-^{\frac{1}{2}}} , \hspace{20mm}  \left| \underline{\alpha} \left( \mathcal{L}_{Z^{\gamma}}(F) \right) \right|(t,x) \hspace{2mm} \lesssim \hspace{2mm} \sqrt{\epsilon} \frac{\log^{\frac{5}{2}} (1+\tau_-)}{\tau_+ \tau_-^{\frac{3}{2}}} . 
\end{eqnarray}
\item Energy bound for the particle density: $\forall$ $t \in \R_+$,
$$ \sum_{\begin{subarray}{}  \hspace{0.5mm} \widehat{Z}^{\beta} \in \K^{|\beta|} \\ \hspace{1mm} |\beta| \leq N \end{subarray}} \sum_{z \in \V} \int_{x \in \R^3} \int_{v \in \R^3} \left|z \widehat{Z}^{\beta} f \right|(t,x,v) dv dx \leq C\epsilon \log (3+t).$$
\item Vanishing property for small velocities:
$$ \hspace{-5mm} \forall \hspace{0.5mm} (t,x,v) \in \R_+ \times \R^3 \times \left( \R^3 \setminus \{ 0 \} \right), \hspace{1.5cm} |v| \leq 1 \hspace{1mm} \Rightarrow \hspace{1mm} f(t,x,v) =0.$$
\item Sharp pointwise decay estimates for the velocity averages of $\widehat{Z}^{\beta} f$: $\forall$ $|\beta| \leq N-5$, $z \in \V$,
$$\forall \hspace{0.5mm} (t,x) \in \R_+ \times \R^3, \hspace{1.5cm} \int_{ v \in \R^3} \left| z^2 \widehat{Z}^{\beta} f \right| dv \hspace{2mm} \lesssim \hspace{2mm} \frac{\epsilon}{\tau_+^2 \tau_-} .$$
\end{itemize}
\end{Th}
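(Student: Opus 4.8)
The plan is to run a continuity/bootstrap argument on the norms appearing in the theorem, exploiting the null structure of the system emphasized in the introduction. I would first set up a bootstrap assumption: on a maximal interval $[0,T^*)$, assume the energy bounds for $F$ and for the particle density hold with $C\epsilon\log^4$ and $C\epsilon\log$ replaced by, say, $2C\epsilon\log^4$ and $2C\epsilon\log$, assume the velocity support of $f$ stays in $\{|v|\ge 1\}$, and assume the weaker pointwise decay that follows from these energies by Klainerman--Sobolev type inequalities for velocity averages (as in \cite{FJS,dim4}). Local well-posedness and the smallness of the data give a nonempty interval; the goal is to improve every constant strictly, whence $T^*=+\infty$.

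The core of the argument is then organized into three intertwined pieces. First, the \emph{vanishing property for small velocities}: along the characteristics of \eqref{VM1} one controls $\frac{d}{ds}|v|$ by the Lorentz force, and the decay of the null components of $F$ (here the null structure is essential — the ``bad'' component $\underline\alpha$ multiplies a good weight, cf.\ the footnote on $1\le 4v^0v^{\underline L}$, whose massless analogue must be replaced by a genuinely null computation) shows that the total variation of $|v|$ along any characteristic is $O(\sqrt\epsilon)$, hence bounded; since $f^0$ vanishes for $0<|v|\le 3$, $f$ vanishes for $|v|\le 1$ for all time, and in particular all the $|v|^{-1}$ weights in the system are harmless. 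Second, the \emph{energy estimate for $F$}: commute the Maxwell equations with the Killing and conformal Killing fields $Z\in\mathbb K$, use the Morawetz/conformal multiplier to get the weighted energy with $\tau_+^2,\tau_-^2$ weights, and bound the source term $\mathcal L_{Z^\gamma}J(f)$ by the Vlasov energy via Cauchy--Schwarz in $v$ (this is where the $(1+|v|)^{|\kappa|}$ weights on the data and the $z$-weights $z\in\mathbf k_0$ enter); the $\log^4$ loss comes from integrating the borderline $\tau_-^{-1}$ terms. Third, the \emph{energy estimate for the Vlasov field}: commute \eqref{VM1} with the modified lifts $\widehat Z\in\widehat{\mathbb P}_0$, bound the inhomogeneous terms $[\widehat Z,\mathbf T_F]\widehat Z^{\beta'}f$ schematically by $(\text{null component of }\mathcal L F)\cdot(\text{weight})\cdot(z\widehat Z^{\beta'}f)$, and close using the pointwise decay of $F$ obtained in the previous step; the null structure is again what makes these products integrable in $t$, yielding only the $\log(3+t)$ growth. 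Finally, feed the improved energies back through Klainerman--Sobolev to get the sharp pointwise estimates on $\rho,\alpha,\sigma,\underline\alpha$ and on $\int_v|z^2\widehat Z^\beta f|\,dv$, at the cost of $N-2$, resp.\ $N-5$, derivatives.

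The main obstacle is the slow $\tau_-$ decay near the light cone combined with the weak (merely $L^1$ in $v$, no compact support) assumptions on $f$: a naive estimate of the Lorentz-force term $(|v|E+v\times B)\cdot\nabla_v f$ and of the Maxwell source $J(f)$ loses too much and produces a growing-in-time hierarchy that does not close. The resolution, and the technical heart of the paper, is a careful decomposition of the operators $v^i\partial_i$ and of the Lorentz force in the null frame $(L,\underline L,e_1,e_2)$: each occurrence of a slowly-decaying field component ($\underline\alpha$, or $\rho,\sigma$ away from the cone) must be paired with a compensating null weight coming either from the good derivatives $\slashed\nabla$, from the vector fields themselves, or from the weights $z\in\mathbf k_0$, so that every nonlinear term is genuinely integrable. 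I would therefore expect to spend most of the effort establishing the precise null-structure lemmas for $[\widehat Z,\mathbf T_F]$ and for $J(\widehat Z^\beta f)$, after which the bootstrap closes by the standard scheme above.
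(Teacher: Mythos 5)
Your bootstrap skeleton --- commute with $\mathbb{K}$ and $\widehat{\mathbb{P}}_0$, run a conformal energy for $F$ and an $L^1_{x,v}$ energy for $f$, recover pointwise decay via Klainerman--Sobolev, and feed it back --- is the right frame, and your emphasis on pairing slowly-decaying field components with null weights matches the role of Lemma \ref{calculF}. But the sketch omits the step on which the Maxwell estimate actually hinges. The bulk term $\int_0^t\int_{\Sigma_s}|G_{\mu\nu}J^\mu\overline{K}_0^\nu|$ in Proposition \ref{energyMax1} must be handled by Cauchy--Schwarz in $x$, hence requires $L^2_x$ control of $\int_v|z\widehat{Z}^\beta f|\,dv$ for all $|\beta|\le N$. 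The $L^2$--$L^1$ Klainerman--Sobolev inequality (Proposition \ref{KS2}) costs two $\widehat{Z}$ derivatives, so it only reaches $|\beta|\le N-2$; for the top two orders the paper needs the separate $R=H+G$ matrix--Duhamel decomposition of Section \ref{sec8} (following \cite{FJS}), together with the extra null-structure bookkeeping of Lemma \ref{bilaninho} forced by the fact that one cannot commute the top-order transport equation any further. Without some version of this device, the energy hierarchy simply does not close, and your ``Cauchy--Schwarz in $v$'' does not substitute for it.

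The other gap is in the vanishing property. The heuristic ``the total variation of $|v|$ along a characteristic is $O(\sqrt\epsilon)$'' does not follow from $|F|\lesssim\sqrt\epsilon\,\tau_+^{-1}\tau_-^{-1}$: a characteristic hugging the light cone sees $|F|=O(t^{-1})$, whose time integral diverges. Appendix \ref{appendixA} instead splits into regions: away from the cone the full $\tau_+\tau_-$ decay is summable, while near the cone the Lorentz force is decomposed into a radial part paired with the stronger $|\rho(F)|\lesssim\sqrt\epsilon\,\tau_+^{-3/2}$ decay, and a transverse part controlled by the geometric observation (Lemma \ref{point2}) that once $V/|V|$ deviates from $X/s$ by an angle $\theta$, the quantity $t-|X(t)|$ grows at rate $\gtrsim\theta^2$, so the characteristic leaves the near-cone region before the transverse integral can accumulate. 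Your appeal to the $v^{\underline{L}}\underline{\alpha}$ cancellation is the right structure for the Vlasov \emph{energy} estimate, but it is not what drives this ODE argument. As a smaller matter, taking $\operatorname{supp} f\subset\{|v|\ge1\}$ as a bootstrap hypothesis for the unregularized equation \eqref{VM1} presupposes a local existence that the $|v|^{-1}$ weights make non-obvious; the paper avoids this by running the whole scheme on the cut-off operator $T_F^\chi$ of Subsection \ref{subsecsmallvelo} and only identifying $T_F^\chi(f)=T_F(f)$ \emph{a posteriori}.
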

\begin{Rq}
One can prove a similar result if $f^0$ vanishes for the velocties $v$ such that $|v| \leq R$, with $R >0$ ($\epsilon_0$ would then also depend on $R$).
\end{Rq}
\begin{Rq}
We say that $(f^0,F^0)$ is an initial data set for the Vlasov-Maxwell system if the function $f^0 : \R^3_x \times \left( \R^3_v \setminus \{ 0 \} \right) \rightarrow \R$ and $F^0$ are both sufficiently regular and satisfy the constraint equations
$$\nabla^i \left( F^0 \right)_{i0} =-  \int_{v \in \R^3} f^0 dv \hspace{10mm} \text{and} \hspace{10mm} \nabla^i \left( {}^* \! F^0 \right)_{i0} =0.$$ 
\end{Rq}
\begin{Rq}
The neutral hypothesis \eqref{eq:neutral} is a necessary condition for $\int_{\R^3} (1+r)^2|F|^2 dx$ to be finite. This means that, for a sufficiently regular solution to the Vlasov-Maxwell system $(f,F)$, the total electromagnetic charge 
$$ Q(t) := \lim_{r \rightarrow + \infty} \int_{\mathbb{S}_{t,r}} \frac{x^i}{r} F_{0i} d \mathbb{S}_{t,r} =  \int_{x \in \R^3} \int_{v \in \R^3} f dv dx ,$$
which is a conserved quantity in $t$, vanishes. More precisely, if $Q(0) \neq 0$, then
$$\int_{\R^3} r \left| \rho \left( F^0 \right) \right|^2 dx = +\infty, \hspace{8mm} \text{where} \hspace{4mm} \rho \left( F^0 \right) := \frac{x^i}{r} \left( F^0 \right)_{i0}.$$
We prove in Appendix \ref{AppendixC} that the derivatives of $F$ are automatically chargeless, whether or not $Q$ vanishes.
\end{Rq}
\subsection{Strategy of the proof and main difficulties}

The proof of Theorem \ref{theorem} is based on energy and vector field methods and essentially relies on bounding sufficiently well the spacetime integrals of the commuted equations. The solutions of the massless Vlasov equation enjoy improved decay estimates in the null directions. More precisely, one can already see that with the following estimate (see Lemma \ref{weights1} and Proposition \ref{KS1}), for $g$ a solution to the free transport equation $|v| \partial_t g+v^i \partial_{i} g=0 $,
\begin{equation}\label{extradecayintro}
\hspace{-1mm} \forall \hspace{0.5mm} (t,x) \in \R_+ \times \R^3, \hspace{9mm} \int_{v \in \R^3} \left| \frac{v^L}{|v|} \right|^p \left| \frac{v^A}{|v|} \right|^k \left| \frac{v^{\underline{L}}}{|v|} \right|^q |g|(t,x,v) dv \lesssim \sum_{|\beta| \leq 3} \frac{\| (1+r)^{|\beta|+p+k+q} \partial^{\beta}_x g \|_{L^1_{x,v}}(t=0)}{(1+t+r)^{2+k+q}(1+|t-r|)^{1+p}}.
\end{equation}
This strong decay is a key element of our proof. Without it, we would have to consider modifications of the commutation vector fields of the free transport operator as in \cite{FJS3}, \cite{Poisson}, \cite{FJS2} and \cite{Lindblad} for, respectively, the Vlasov-Nordstr\"om, the Vlasov-Poisson and the Einstein-Vlasov systems. As the particles are massless, the characteristics of the transport equation and those of the Maxwell equations have the same velocity\footnote{Note that this is not the case for particles of mass $m >0$ since the free transport operator is then $\sqrt{m^2+|v|^2} \partial_t +v^i \partial_i$.}. The consequence is that, in a product such as $\mathcal{L}_{Z^{\gamma}}(F).\widehat{Z}^{\beta}f$, we cannot transform a $|t-r|$ decay in a $t+r$ one as it is done, in view of support consideration, for the massive case with compactly supported initial data. We are then led to carefully study the null structure of the equations, and in particular of the non linearities such as
\begin{equation}\label{intrononlin}
 v^{\mu} {\mathcal{L}_{Z^{\gamma}}(F)_{\mu}}^{ i} \partial_{v^i} \widehat{Z}^{\beta} f, 
\end{equation}
with $Z$ a Killing vector field and $\widehat{Z}$ its complete lift\footnote{The expression of the complete lift of a vector field of the Minkowski space is presented in Definition \ref{Deflift}.}. The problem is that, for $g$ a solution to $|v|\partial_t+v^{i} \partial_{i} g=0$, $\partial_v g$ essentially behaves as $(1+t+r) \partial_{t,x} g$ and the electromagnetic field, as a solution of a wave equation, only decays with a rate of $(1+t+r)^{-1}$ in the $t+r$ direction. However, from \cite{CK}, we know that certain null components of the Maxwell field are expected to behave better than others. The same is true for the null components of the velocity vector $v$ as it is suggested by \eqref{extradecayintro}. Moreover, we also know from \cite{dim4} that $v^{\underline{L}}$ allows us to take advantage of the $t-r$ decay as it permits to estimate spacetime integrals by using a null foliation. Finally, the radial component of $(0,\partial_{v^1} \widehat{Z}^{\beta} f,\partial_{v^2} \widehat{Z}^{\beta} f,\partial_{v^3} \widehat{Z}^{\beta} f)$ costs a power of $t-r$ instead of $t+r$. The null structure of \eqref{intrononlin} is then studied in Lemma \ref{calculF} and we can observe that each term contains at least one good component. 

Another problem, specific to massless particles, arises from small velocities. We already observed in Section $8$ of \cite{dim4} that the velocity part $V$ of the characteristics of
\begin{equation}\label{characintro}
 \partial_t +\frac{v^i}{|v|} \partial_i f+\left( F_{0i} +\frac{v^j}{|v|} F_{ji} \right) \partial_{v^i} f=0
 \end{equation}
can reach $0$ in finite time. The consequence is that if $f$ does not initially vanish for small velocities, the Vlasov-Maxwell system could not admit a local classical solution. This issue is reflected in the energy estimates through, schematically,
$$ \left\| \widehat{Z} f \right\|_{L^1_{x,v}} \hspace{-0.5mm} (t) \hspace{2mm} \leq \hspace{2mm}  2\left\| \widehat{Z} f \right\|_{L^1_{x,v}} \hspace{-0.5mm} (0)+\int_{0}^t \int_{x \in \R^3} \int_{v \in \R^3} |\psi(t,x,v)| \frac{|\widehat{Z} f|}{|v|} dv dx ds, $$
where $\psi$ is a homogeneous function of degree $0$ in $v$. One cannot hope to close such an estimate using say the Gr\"onwall inequality due to the factor of $\frac{1}{|v|}$ appearing in the error term on the right hand side. In \cite{dim4}, we take advantage of the strong decay rate of the electromagnetic field, given by the high dimensions, to prove that the velocity support of $f$ remains bounded away from $0$ if initially true. The slow decay of $F$ in dimension $3$ forces us to exploit the null structure of the equations satisfied by the characteristics of \eqref{characintro} in order to recover this result. The strong decay rate satisfied by the radial component of the electric field $\rho (F)$ plays a fundamental role here. We point out that this difficulty is not present in the Einstein-Vlasov system as the Vlasov equation can be written, for a metric $g$ and defined in terms of the cotangent variables, as
$$v_{\mu} g^{\mu \nu} \partial_{x^{\nu}} f-\frac{1}{2} v_{\mu} v_{\nu} \partial_i g^{\mu \nu} \partial_{v_i} f=0.$$
One can observe that the homogeneity in $v$ of the non linearity of the Vlasov equation is the same than the one of $|v| \partial_t+v^i \partial_i$, so that the velocity part of the characteristics cannot reach $0$ in finite time time. Local existence, for the massless Einstein-Vlasov system and for sufficiently regular initial data, has been proven by Svedberg in \cite{Svedberg}.

\subsection{Structure of the paper}

Section \ref{sec2} presents the notations used in this article, basic results on the electromagnetic field and its null decomposition. The commutation vector fields are introduced in Subsection \ref{subsecvector} and the source terms of the commuted equations are descibed in Subsection \ref{subseccomu}. Subsection \ref{sectionweights} contains fundamental properties on the null components of the velocity vector. In Section \ref{sec4}, we introduce the norms used to study the Vlasov-Maxwell system and we present energy estimates in order to control them. We then exploit these energy norms to obtain pointwise decay estimates on both fields through Klainerman-Sobolev type inequalities. Lemma \ref{calculF}, proved in Section \ref{sec5}, is of fundamental importance in this work since it depicts the null structure of the non linearities of the transport equations. In section \ref{sec6}, we set up the bootstrap assumptions, discuss their immediate consequences and describe the main steps of the proof of Theorem \ref{theorem}. Sections \ref{sec7} to \ref{sec9} concern respectively the improvement of the bounds on the distribution function, the proof of $L^2$ estimates for the velocity averages of its higher order derivatives and the improvement of the estimates on the electromagnetic field energies. In Appendix \ref{appendixA}, we prove that the Vlasov field vanishes for small velocities. In Appendix \ref{AppendixB} we expose how to bound the energy norms of $f$ and $F$ in terms of weighted $L^1$ and $L^2$ norms of the initial data. We prove in Appendix \ref{AppendixC} that the derivatives of $F$, for $(f,F)$ a sufficiently regular solution to the Vlasov-Maxwell system, are automatically chargeless. Finally, Appendix \ref{appendixD} contains the proof of certain results concerning the null decomposition of the electromagnetic field.

\subsection{Acknowledgements}

This article forms part of my Ph.D. thesis and I am grateful to my advisor Jacques Smulevici for his support and fruitful discussions. Part of this work was funded by the European Research Council under the European Union's Horizon 2020 research and innovation program (project GEOWAKI, grant agreement 714408).

\section{Notations and preliminaries}\label{sec2}

\subsection{Basic notations}

In this paper we work on the $3+1$ dimensional Minkowski spacetime $(\R^{3+1},\eta)$. We will use two sets of coordinates, the Cartesian $(t,x^1,x^2,x^3)$, in which $\eta=diag(-1,1,1,1)$, and null coordinates $(\underline{u},u,\omega_1,\omega_2)$, where
$$\underline{u}=t+r, \hspace{10mm} u=t-r$$
and $(\omega_1,\omega_2)$ are spherical variables, which are spherical coordinates on the spheres $(t,r)=constant$. These coordinates are defined globally on $\R^{3+1}$ apart from the usual degeneration of spherical coordinates and at $r=0$. We will also use the following classical weights,
$$\tau_+:= \sqrt{1+\underline{u}^2} \hspace{8mm} \text{and} \hspace{8mm} \tau_-:= \sqrt{1+u^2}.$$
We denote by $(e_1,e_2)$ an orthonormal basis on the spheres and by $\slashed{\nabla}$ (respectively $\slashed{div}$) the intrinsic covariant differentiation (respectively divergence operator) on the spheres $(t,r)=constant$. Capital Latin indices (such as $A$ or $B$) will always correspond to spherical variables. The null derivatives are defined by
$$L=\partial_t+\partial_r \hspace{5mm} \text{and} \hspace{5mm} \underline{L}=\partial_t-\partial_r, \hspace{8mm} \text{so that} \hspace{4mm} L(\underline{u})=2, \hspace{3mm} L(u)=0, \hspace{3mm} \underline{L}( \underline{u})=0 \hspace{3mm} \text{and} \hspace{3mm} \underline{L}(u)=2.$$
The velocity vector $(v^{\mu})_{0 \leq \mu \leq 3}$ is parametrized by $(v^i)_{1 \leq i \leq 3}$ and $v^0=|v|$ since we study massless particles. We introduce $T$, the operator defined, for all sufficiently regular function $f : [0,T[ \times \R^3_x \times \left( \R^3_v \setminus \{ 0 \} \right)$, by
$$T : f \mapsto v^{\mu} \partial_{\mu} f.$$
We will use the notation $\nabla_v g := (0,\partial_{v^1}g, \partial_{v^2}g,\partial_{v^3}g)$ so that \eqref{VM1} can be rewritten $$T_F(f) := v^{\mu} \partial_{\mu} f +F \left( v, \nabla_v f \right) =0.$$

\begin{Rq}
As we study massless particles, the functions considered in this paper will not be defined for $v=0$. However, for simplicity and since $\{v=0\}$ has Lebesgue measure $0$, we will consider integrals over $\R^3_v$. Moreover, the distribution function $f$ will be supported away from $v=0$ during the proof of Theorem \ref{theorem}.
\end{Rq}
We will use the notation $D_1 \lesssim D_2$ for an inequality such as $ D_1 \leq C D_2$, where $C>0$ is a positive constant independent of the solutions but which could depend on $N \in \mathbb{N}$, the maximal order of commutation. Finally we will raise and lower indices using the Minkowski metric $\eta$. For instance, $v_{\mu} = v^{\nu} \eta_{\nu \mu}$ so that $v_0=-v^0$ and $v_i=v^i$ for all $1 \leq i \leq 3$.

\subsection{The problem of the small velocities}\label{subsecsmallvelo}

For technical reasons, we will use all along this paper a fixed cutoff function $\chi$ such that $ \chi =1$ on $[1,+\infty[$ and $\chi =0$ on $]-\infty, \frac{1}{2}]$. We introduce the operator
\begin{equation}\label{TFtheta}
T_F^{\chi} : g \mapsto  v^{\mu} \partial_{\mu} g + \chi(|v|) F \left( v, \nabla g \right) .
\end{equation}
As mentioned earlier, we proved in Section $8$ of \cite{dim4} that because of the small velocities, there exist initial data sets for which the Vlasov-Maxwell system does not admit a local classical solution. The main idea of the proof consists in studying characteristics such that their velocity part reaches $0$ in finite time. This is why we suppose in Theorem \ref{theorem} that the Vlasov field vanishes initially for small velocities and one step of the proof will be to verify that this property remains true for all $t \in \R_+$. To circumvent difficulties related to characteristics reaching $v=0$, we will rather first define $(f,F)$ as the solution to $\eqref{VM2}-\eqref{VM3}$ and $T_F^{\chi}(f)=0$. Notice that none of the characteristics of the operator $T^{\chi}_F$ reaches $v=0$. Indeed, if $(X,V)$ is one of them, we have
$$\frac{d V^j}{dt}(s)= \chi \left( |V|(s) \right) \frac{V^{\mu}(s)}{|V|(s)} {F_{\mu}}^j(s,X(s)) .$$
Consequently, if $|V(s)| < \frac{1}{2}$, then $V(t)=V(s)$ for all $t \geq s$. The goal will then to prove that if $f(0,.,.)$ vanishes for all $|v| \leq 3$, so does $f(t,.,.)$ for all $|v| \leq 1$, implying that $T_F(f)=0$ and that $(f,F)$ is a solution to the Vlasov-Maxwell system \eqref{VM1}-\eqref{VM3}. 
\subsection{Basic tools for the study of the electromagnetic field}\label{basicelec}

As we describe the electromagnetic field in geometric form, it will be represented throughout this article by a $2$-form. Let $F$ be a $2$-form defined on $[0,T[ \times \R^3_x$. Its null decomposition $(\alpha(F), \underline{\alpha}(F), \rho(F), \sigma (F))$, introduced by \cite{CK}, is defined by
$$\alpha_A(F) = F_{AL}, \hspace{8mm} \underline{\alpha}_A(F)= F_{A \underline{L}}, \hspace{8mm} \rho(F)= \frac{1}{2} F_{L \underline{L} } \hspace{8mm} \text{and} \hspace{8mm} \sigma(F) =F_{12}.$$
The Hodge dual ${}^* \! F$ of $F$ is the $2$-form given by
$${}^* \! F_{\mu \nu} = \frac{1}{2} F^{\lambda \sigma} \varepsilon_{ \lambda \sigma \mu \nu},$$
where $\varepsilon_{ \lambda \sigma \mu \nu}$ are the components of the Levi-Civita symbol, and its energy-momentum tensor is
$$T[F]_{\mu \nu} :=   F_{\mu \beta} {F_{\nu}}^{\beta}- \frac{1}{4}\eta_{\mu \nu} F_{\rho \sigma} F^{\rho \sigma}.$$
Note that $T[F]_{\mu \nu}$ is symmetric and traceless, i.e. $T[F]_{\mu \nu}=T[F]_{\nu \mu}$ and ${T[F]_{\mu}}^{\mu}=0$. This last point is specific to the dimension $3$ and engenders additional difficulties in the analysis of the Maxwell equations in high dimensions (see Section $3.3.2$ of \cite{dim4} for more details). We have an alternative form of the Maxwell equations.

\begin{Lem}\label{maxwellbis}
Let $G$ be a $2$-form and $J$ be a $1$-form both sufficiently regular. Then,
\begin{align*}
	\left\{	
	\begin{aligned}	
		\nabla^{\mu} G_{\mu \nu} & =  J_{\nu} \\	
		\nabla^{\mu} {}^* \! G_{\mu \nu} & = 0	
	\end{aligned}	
	\right.	
	& \hspace{2cm} \Leftrightarrow & 	
	\left\{
	\begin{aligned}	
		\nabla_{[ \lambda} G_{\mu \nu ]}&=0 \\	
		\nabla_{[ \lambda} {}^* \! G_{\mu \nu ]} &= \varepsilon_{ \lambda \mu \nu \kappa} J^{\kappa},
	\end{aligned}
	\right.
\end{align*}
where $\nabla_{[ \lambda} H_{\mu \nu ]}:= \nabla_{ \lambda} H_{\mu \nu }+\nabla_{ \mu} H_{ \nu \lambda }+\nabla_{ \nu} H_{ \lambda \mu  }$.

\end{Lem}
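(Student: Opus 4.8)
\emph{Proof idea.} The plan is to reduce both equivalences to a single algebraic identity relating the totally antisymmetrised derivative of a $2$-form to the divergence of its Hodge dual, and then apply it twice. Since we work on flat Minkowski space, $\nabla$ is simply the coordinate derivative in the Cartesian frame and the components of $\varepsilon$ are constant, so $\nabla_{\lambda} \varepsilon_{\mu \nu \rho \sigma}=0$ and one may move $\varepsilon$ freely past $\nabla$. The identity I would establish first is: for any sufficiently regular $2$-form $H$,
$$ \varepsilon^{\lambda \mu \nu \kappa} \, \nabla_{[\lambda} H_{\mu \nu]} \; = \; 6 \, \nabla^{\mu} \, {}^* \! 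H_{\mu}{}^{\kappa} . $$
This follows by expanding $\nabla_{[\lambda} H_{\mu \nu]} = \nabla_{\lambda} H_{\mu \nu} + \nabla_{\mu} H_{\nu \lambda} + \nabla_{\nu} H_{\lambda \mu}$, contracting with $\varepsilon^{\lambda \mu \nu \kappa}$ and noting that the three terms coincide after contraction (a cyclic permutation of $(\lambda,\mu,\nu)$ being even), which produces the factor $3$, while recognising $\tfrac12 \varepsilon^{\lambda \mu \nu \kappa} \nabla_{\lambda} H_{\mu \nu} = \nabla^{\mu} \, {}^* \! H_{\mu}{}^{\kappa}$ directly from ${}^* \! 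H_{\mu \nu} = \tfrac12 H^{\rho \sigma} \varepsilon_{\rho \sigma \mu \nu}$ yields the remaining factor $2$. One also checks, along the way, that $\nabla_{[\lambda} H_{\mu \nu]}$ is totally antisymmetric (antisymmetry under one transposition plus the antisymmetry of $H$).

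Next I would record the standard contraction identities for the Levi-Civita symbol in $3+1$ Lorentzian signature: $\varepsilon_{\lambda \mu \nu \kappa} \varepsilon^{\rho \sigma \tau \kappa} = - \delta^{\rho \sigma \tau}_{\lambda \mu \nu}$ (the $3 \times 3$ determinant of Kronecker deltas, the overall sign coming from $\det \eta = -1$), and its consequence ${}^{**} \! H = - H$. From the first one gets an inversion formula: a totally antisymmetric $3$-tensor $T_{\lambda \mu \nu}$ is recovered from its $\varepsilon$-contraction $S^{\kappa} := \varepsilon^{\rho \sigma \tau \kappa} T_{\rho \sigma \tau}$ by $T_{\lambda \mu \nu} = - \tfrac16 \varepsilon_{\lambda \mu \nu \kappa} S^{\kappa}$. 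In particular $\nabla_{[\lambda} H_{\mu \nu]}$ vanishes if and only if its $\varepsilon$-contraction vanishes; combined with the identity above, this gives $\nabla_{[\lambda} H_{\mu \nu]} = 0 \Leftrightarrow \nabla^{\mu} \, {}^* \! H_{\mu \nu} = 0$ for every $2$-form $H$.

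Applying this last equivalence with $H = G$ yields at once the top lines on the two sides of the claimed equivalence. For the bottom lines I would use the key identity with $H = {}^* \! G$: since ${}^{**} \! G = - G$, one gets $\varepsilon^{\lambda \mu \nu \kappa} \nabla_{[\lambda} {}^* \! G_{\mu \nu]} = 6 \, \nabla^{\mu} \, {}^{**} \! G_{\mu}{}^{\kappa} = - 6 \, \nabla^{\mu} G_{\mu}{}^{\kappa}$, so $\nabla^{\mu} G_{\mu \nu} = J_{\nu}$ is equivalent to $\varepsilon^{\lambda \mu \nu \kappa} \nabla_{[\lambda} {}^* \! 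G_{\mu \nu]} = - 6 \, J^{\kappa}$, which by the inversion formula is equivalent to $\nabla_{[\lambda} {}^* \! G_{\mu \nu]} = \varepsilon_{\lambda \mu \nu \kappa} J^{\kappa}$. Combining the two equivalences proves the lemma. The only genuinely delicate point is the bookkeeping of the three sign conventions at play — the Lorentzian sign in $\varepsilon \cdot \varepsilon$, the sign in ${}^{**} = - \mathrm{id}$, and the orientation fixing $\varepsilon_{\lambda \mu \nu \kappa}$ — which have to be kept consistent so that the numerical factors cancel and the source appears exactly as $\varepsilon_{\lambda \mu \nu \kappa} J^{\kappa}$ with no spurious constant; the rest is a routine index computation.
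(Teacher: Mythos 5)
Your proof is correct, and it takes a genuinely different route from the paper's. The paper argues by component-wise verification: it writes out a representative pair of components (e.g.\ $\nabla^{\mu}G_{\mu 1}=J_1$ and $\nabla^i\,{}^*G_{i0}=0$), translates each using the explicit Hodge-duality relations such as $G^{01}={}^*G_{23}$, ${}^*G_{10}=-G_{23}$, etc., and then states that the other components ``can be obtained by similar computations.'' Your proof instead isolates the invariant identity $\varepsilon^{\lambda\mu\nu\kappa}\nabla_{[\lambda}H_{\mu\nu]}=6\,\nabla^{\mu}\,{}^*\!H_{\mu}{}^{\kappa}$, valid for any $2$-form $H$, together with the inversion formula coming from $\varepsilon_{\lambda\mu\nu\kappa}\varepsilon^{\rho\sigma\tau\kappa}=-\delta^{\rho\sigma\tau}_{\lambda\mu\nu}$ and ${}^{**}=-\mathrm{id}$, and applies this single identity twice (to $H=G$ and to $H={}^*G$). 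I have checked the numerology: in the paper's convention ${}^*F_{\mu\nu}=\tfrac12 F^{\lambda\sigma}\varepsilon_{\lambda\sigma\mu\nu}$ with Lorentzian signature, the factor $6$, the sign $-$ in $\varepsilon\cdot\varepsilon$, and ${}^{**}G=-G$ all combine to give exactly $\nabla_{[\lambda}{}^*\!G_{\mu\nu]}=\varepsilon_{\lambda\mu\nu\kappa}J^{\kappa}$ with no spurious constant. Your approach is actually \emph{complete} (the paper's is only a sample of cases), it avoids any choice of distinguished indices, and it makes the structural reason behind the equivalence transparent: Hodge duality interchanges divergence and antisymmetrized gradient of a $2$-form. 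The paper's component check is shorter to write for someone who only wants to confirm a couple of cases, but yours is the more robust and illuminating argument.
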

\begin{proof}
Consider for instance $\nabla^{\mu} G_{\mu 1} =J_1$ and $\nabla^i {}^* G_{i0} =0$. As 
\begin{eqnarray}
\nonumber G^{01} & = & {}^* \! G_{23} \varepsilon_{ 01 23} \hspace{2mm} = \hspace{2mm} {}^* \! G_{23}, \hspace{9.5mm} G^{21} \hspace{2mm} = \hspace{2mm} {}^* \! G_{03} \varepsilon_{ 21 03} \hspace{2mm} = \hspace{2mm} {}^* \! G_{30}, \hspace{17mm} G^{31} \hspace{2mm} = \hspace{2mm} {}^* \! G_{02} \varepsilon_{ 31 02} \hspace{2mm} = \hspace{2mm} G_{02}, \\ 
\nonumber  {}^* \! G_{10} & =  & G_{23} \varepsilon_{ 2 3 1 0} \hspace{2mm} = \hspace{2mm} -G_{23}, \hspace{8mm} {}^* \! G_{20} \hspace{2mm} = \hspace{2mm} G_{31} \varepsilon_{  3 1 2 0} \hspace{2mm} = \hspace{2mm} -G_{31}  \hspace{5.5mm} \text{and} \hspace{5.5mm} {}^* \! G_{30} \hspace{2mm} = \hspace{2mm} G_{12} \varepsilon_{ 1 2 3 0} \hspace{2mm} = -\hspace{2mm} G_{12},
\end{eqnarray}
we have
$$ \nabla^{\mu} G_{\mu 1} =J_1 \Leftrightarrow \nabla_0 {}^* \! G_{23}+\nabla_2 {}^* \! G_{30}+\nabla_3 {}^* \! G_{02}=J_1  \hspace{8mm} \text{and} \hspace{8mm} \nabla^i {}^* G_{i0} =0 \Leftrightarrow \nabla_1 G_{23}+\nabla_2 G_{31}+\nabla_3 G_{12} =0.$$
The equivalence of the two systems can be obtained by similar computations. 
\end{proof}
We can then compute the divergence of the energy momentum tensor of an electromagnetic field.

\begin{Cor}\label{tensordiv}
Let $G$ and $J$ be as in the previous lemma. Then, $\nabla^{\mu} T[G]_{\mu \nu}=G_{\nu \lambda} J^{\lambda}$.
\end{Cor}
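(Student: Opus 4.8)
The plan is to compute $\nabla^{\mu}T[G]_{\mu\nu}$ directly from the definition $T[G]_{\mu\nu}=G_{\mu\beta}{G_{\nu}}^{\beta}-\frac{1}{4}\eta_{\mu\nu}G_{\rho\sigma}G^{\rho\sigma}$ and then to exploit the two equivalent forms of the Maxwell equations provided by Lemma \ref{maxwellbis}. Applying the Leibniz rule produces three terms,
$$\nabla^{\mu}T[G]_{\mu\nu}=\left(\nabla^{\mu}G_{\mu\beta}\right){G_{\nu}}^{\beta}+G_{\mu\beta}\nabla^{\mu}{G_{\nu}}^{\beta}-\frac{1}{2}G^{\rho\sigma}\nabla_{\nu}G_{\rho\sigma},$$
where in the last one I use that the metric is covariantly constant, so $\nabla^{\mu}(\eta_{\mu\nu}\Phi)=\nabla_{\nu}\Phi$, together with $\nabla_{\nu}(G_{\rho\sigma}G^{\rho\sigma})=2G^{\rho\sigma}\nabla_{\nu}G_{\rho\sigma}$. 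The first term is already the sought right-hand side: the first Maxwell equation $\nabla^{\mu}G_{\mu\beta}=J_{\beta}$ gives $\left(\nabla^{\mu}G_{\mu\beta}\right){G_{\nu}}^{\beta}=J_{\beta}{G_{\nu}}^{\beta}=G_{\nu\lambda}J^{\lambda}$. The whole proof then reduces to showing that the remaining two terms cancel.

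To this end I would first rewrite $G_{\mu\beta}\nabla^{\mu}{G_{\nu}}^{\beta}$ by lowering and raising indices with $\eta$, which turns it into $G^{\alpha\gamma}\nabla_{\alpha}G_{\nu\gamma}$. Using the antisymmetry of $G$ (relabelling the two contracted indices shows $G^{\alpha\gamma}\nabla_{\gamma}G_{\nu\alpha}=-G^{\alpha\gamma}\nabla_{\alpha}G_{\nu\gamma}$) one obtains $G^{\alpha\gamma}\nabla_{\alpha}G_{\nu\gamma}=\frac{1}{2}G^{\alpha\gamma}\left(\nabla_{\alpha}G_{\nu\gamma}-\nabla_{\gamma}G_{\nu\alpha}\right)$. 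Then I invoke the closedness relation $\nabla_{[\lambda}G_{\mu\nu]}=0$ of Lemma \ref{maxwellbis}: written as $\nabla_{\alpha}G_{\nu\gamma}+\nabla_{\nu}G_{\gamma\alpha}+\nabla_{\gamma}G_{\alpha\nu}=0$ and using once more the antisymmetry $\nabla_{\gamma}G_{\alpha\nu}=-\nabla_{\gamma}G_{\nu\alpha}$, it yields $\nabla_{\alpha}G_{\nu\gamma}-\nabla_{\gamma}G_{\nu\alpha}=-\nabla_{\nu}G_{\gamma\alpha}=\nabla_{\nu}G_{\alpha\gamma}$. Substituting this identity gives $G_{\mu\beta}\nabla^{\mu}{G_{\nu}}^{\beta}=G^{\alpha\gamma}\nabla_{\alpha}G_{\nu\gamma}=\frac{1}{2}G^{\alpha\gamma}\nabla_{\nu}G_{\alpha\gamma}=\frac{1}{2}G^{\rho\sigma}\nabla_{\nu}G_{\rho\sigma}$, which is exactly the opposite of the third term. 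Hence the last two contributions cancel and $\nabla^{\mu}T[G]_{\mu\nu}=G_{\nu\lambda}J^{\lambda}$, as claimed.

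I do not expect any genuine obstacle here: the argument is purely algebraic and uses the Maxwell equations only through what Lemma \ref{maxwellbis} already packages, the second equation $\nabla^{\mu}{}^{*}\!G_{\mu\nu}=0$ entering merely via the Bianchi-type identity $\nabla_{[\lambda}G_{\mu\nu]}=0$. The only points demanding a little care are the index bookkeeping — making sure the second term really is $G^{\alpha\gamma}\nabla_{\alpha}G_{\nu\gamma}$ with the correct sign after raising and lowering — and the repeated use of the antisymmetry of $G$ when passing from $\nabla_{[\lambda}G_{\mu\nu]}=0$ to $\nabla_{\alpha}G_{\nu\gamma}-\nabla_{\gamma}G_{\nu\alpha}=\nabla_{\nu}G_{\alpha\gamma}$. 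Should one prefer to avoid the abstract index manipulation, one can instead verify the identity componentwise in the Cartesian frame using the explicit relations between $G$ and ${}^{*}\!G$ displayed in the proof of Lemma \ref{maxwellbis}, but the coordinate-free computation above is shorter.
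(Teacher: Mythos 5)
Your proof is correct and follows essentially the same route as the paper's: expand $\nabla^{\mu}T[G]_{\mu\nu}$ by Leibniz, feed the first Maxwell equation into the term $\left(\nabla^{\mu}G_{\mu\beta}\right){G_{\nu}}^{\beta}$, and use the antisymmetry of $G$ together with the Bianchi-type identity $\nabla_{[\lambda}G_{\mu\nu]}=0$ from Lemma \ref{maxwellbis} to show $G^{\mu\rho}\nabla_{\mu}G_{\nu\rho}=\tfrac{1}{4}\nabla_{\nu}\left(G^{\mu\rho}G_{\mu\rho}\right)$, cancelling the trace term. The paper's version is just more terse.
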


\begin{proof}
Using the previous lemma, we have
$$G_{\mu \rho} \nabla^{\mu} {G_{\nu}}^{\rho} \hspace{1mm} = \hspace{1mm}  G^{\mu \rho} \nabla_{\mu} G_{\nu \rho}  \hspace{1mm} = \hspace{1mm} \frac{1}{2} G^{\mu \rho} (\nabla_{\mu} G_{\nu \rho}-\nabla_{\rho} G_{\nu \mu}) \hspace{1mm} = \hspace{1mm} \frac{1}{2} G^{\mu \rho} \nabla_{\nu} G_{\mu \rho} \hspace{1mm} = \hspace{1mm} \frac{1}{4} \nabla_{\nu} (G^{\mu \rho} G_{\mu \rho}).$$
Hence,
$$\nabla^{\mu} T[G]_{\mu \nu} \hspace{1mm} = \hspace{1mm} \nabla^{\mu} (G_{\mu \rho}){G_{\nu}}^{\rho}+\frac{1}{4} \nabla_{\nu} (G^{\mu \rho} G_{\mu \rho})-\frac{1}{4}\eta_{\mu \nu} \nabla^{\mu} (G^{\sigma \rho} G_{\sigma \rho})\hspace{1mm} = \hspace{1mm} G_{\nu \rho} J^{\rho}.$$
\end{proof}

Finally, the null components of the energy-momentum tensor of a $2$-form $G$ are given by
\begin{equation}\label{tensorcompo}
T[G]_{L L}=|\alpha(G)|^2, \hspace{10mm} T[G]_{\underline{L} \hspace{0.5mm} \underline{L}}=|\underline{\alpha}(G)|^2 \hspace{10mm} \text{and} \hspace{10mm} T[G]_{L \underline{L}}=|\rho(G)|^2+|\sigma(G)|^2.
\end{equation}

\subsection{The vector fields of the Poincar\'e group and their complete lifts}\label{subsecvector}

We present in this section the commutation vector fields for the Maxwell equations and those for the relativistic transport operator. Let $\Po$ be the generators of the Poincar\'e algebra, i.e. the set containing
\begin{flalign*}
& \hspace{1cm} \text{the translations\footnotemark} \hspace{18mm} \partial_{\mu}, \hspace{2mm} 0 \leq \mu \leq 3, & \\
& \hspace{1cm} \text{the rotations} \hspace{24mm} \Omega_{ij}=x^i\partial_{j}-x^j \partial_i, \hspace{2mm} 1 \leq i < j \leq 3, & \\
& \hspace{1cm} \text{the hyperbolic rotations} \hspace{7mm} \Omega_{0k}=t\partial_{k}+x^k \partial_t, \hspace{2mm} 1 \leq k \leq 3. 
\end{flalign*}
\footnotetext{In this article, we will denote $\partial_{x^i}$, for $1 \leq i \leq 3$, by $\partial_{i}$ and sometimes $\partial_t$ by $\partial_0$.}
We also consider $\mathbb{K}:= \Po \cup \{ S \}$, where $S=x^{\mu} \partial_{\mu}$ is the scaling vector field and $\Or := \{ \Omega_{12}, \hspace{1mm} \Omega_{13}, \hspace{1mm} \Omega_{23} \}$, the set of the rotational vector fields. The vector fields of $\mathbb{K}$ are well known for commuting with the wave and the Maxwell equations (see Proposition \ref{comumax1} below). However, to commute the operator $T=v^{\mu} \partial_{\mu}$, one should consider, as in \cite{FJS}, the complete lifts of the vector fields of $\Po$.

\begin{Def}\label{Deflift}
Let $V$ be a vector field of the form $V^{\beta} \partial_{\beta}$. Then, the complete lift $\widehat{V}$ of $V$ is defined by
$$\widehat{V}=V^{\beta} \partial_{\beta}+v^{\gamma} \frac{\partial V^i}{\partial x^{\gamma}} \partial_{v^i}.$$
We then have $\widehat{\partial}_{\mu}=\partial_{\mu}$ for all $0 \leq \mu \leq 3$, $$\widehat{\Omega}_{ij}=x^i \partial_j-x^j \partial_i+v^i \partial_{v^j}-v^j \partial_{v^i}, \hspace{2mm} \text{for} \hspace{2mm} 1 \leq i < j \leq 3, \hspace{8mm} \text{and} \hspace{8mm} \widehat{\Omega}_{0k} = t\partial_k+x^k \partial_t+v^0 \partial_{v^k}, \hspace{2mm} \text{for} \hspace{2mm} 1 \leq k \leq 3.$$
\end{Def}

One can check that $[T,\widehat{Z}]=0$ for all $Z \in \Po$. As we also have $[T,S]=T$, we consider
$$\K := \{ \widehat{Z} \hspace{1mm} / \hspace{1mm} Z \in \Po \} \cup \{ S \}$$
and we will, for simplicity, denote by $\widehat{Z}$ an arbitrary vector field of $\K$, even if $S$ is not a complete lift. These vector fields and the averaging in $v$ almost commute in the following sense.

\begin{Lem}\label{lift}
Let $f : [0,T[ \times \mathbb{R}^3_x \times \left( \R^3_v \setminus \{ 0 \} \right) \rightarrow \mathbb{R} $ be a sufficiently regular function. We have, almost everywhere,
$$\forall \hspace{0.5mm} Z \in \mathbb{K}, \hspace{6mm} \left|Z\left( \int_{v \in \R^3 } |f| dv \right) \right| \leq \int_{v \in \R^3} |f| dv+ \sum_{  \widehat{Z} \in \K } \int_{v \in \R^3 } | \widehat{Z} f | dv .$$
\end{Lem}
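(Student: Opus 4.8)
The plan is to handle the two types of vector fields in $\mathbb{K}$ separately: the complete lifts $\widehat{Z}$ of Poincaré vector fields $Z \in \Po$, and the scaling vector field $S$. In both cases the strategy is to commute $Z$ past the integral sign $\int_{v} \cdot \, dv$, picking up an error term from the $v$-dependent part of the vector field (for the lifts) or from the fact that $S$ does not differentiate in $v$ at all, and then to integrate by parts in $v$ to remove the $\partial_{v^i}$ derivatives, absorbing the resulting terms into the two sums on the right-hand side.

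First I would treat a lift $\widehat{Z}$ with $Z = Z^\beta \partial_\beta \in \Po$. The key observation is that $Z$ acting on the function $x \mapsto \int_v |f|\, dv$ only sees the $(t,x)$-derivatives, so $Z\left(\int_v |f|\, dv\right) = \int_v Z^\beta \partial_\beta |f|\, dv$ (differentiation under the integral sign, justified by the regularity and decay assumed on $f$). Now write $\widehat{Z} = Z + v^\gamma \frac{\partial Z^i}{\partial x^\gamma} \partial_{v^i}$, so that $Z^\beta \partial_\beta |f| = \widehat{Z}|f| - v^\gamma \frac{\partial Z^i}{\partial x^\gamma} \partial_{v^i} |f|$. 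For the rotations $\Omega_{ij}$ and hyperbolic rotations $\Omega_{0k}$, the coefficients $\frac{\partial Z^i}{\partial x^\gamma}$ are constants (equal to $0$ or $\pm 1$), so $v^\gamma \frac{\partial Z^i}{\partial x^\gamma}$ is a linear function of $v$ and, crucially, $\partial_{v^i}\left(v^\gamma \frac{\partial Z^i}{\partial x^\gamma}\right)$ is a constant — in fact for $\Omega_{ij}$ and $\Omega_{0k}$ one checks this divergence in $v$ vanishes (the corresponding vector field on $v$-space is divergence-free). Hence, integrating by parts in $v$,
$$\int_v v^\gamma \frac{\partial Z^i}{\partial x^\gamma} \partial_{v^i}|f|\, dv = -\int_v \partial_{v^i}\!\left(v^\gamma \frac{\partial Z^i}{\partial x^\gamma}\right) |f|\, dv = 0,$$
with no boundary contribution since $|f|$ decays at infinity in $v$ and is supported away from $v=0$. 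Actually one does not even need the divergence to vanish exactly: $\left|\partial_{v^i}(v^\gamma \partial_\gamma Z^i)\right| \lesssim 1$ always, which already yields a term bounded by $\int_v |f|\, dv$. Therefore $\left|Z\left(\int_v |f|\, dv\right)\right| \leq \int_v |\widehat{Z}f|\, dv + \int_v |f|\, dv$, which is stronger than the claimed bound for this case.

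For $S = x^\mu \partial_\mu$, there is no $v$-part: $S\left(\int_v |f|\, dv\right) = \int_v x^\mu \partial_\mu |f|\, dv = \int_v S|f|\, dv$, and since $S \in \mathbb{K}$ is its own "lifted" counterpart in $\K$ (by the convention stated just before the lemma), $\int_v |S f|\, dv$ is one of the terms in the sum $\sum_{\widehat{Z} \in \K}$, so this case is immediate. Combining the two cases and summing, every $Z \in \mathbb{K}$ satisfies the asserted inequality; keeping only the weaker form $\int_v |f| + \sum_{\widehat{Z}} \int_v |\widehat{Z}f|$ gives exactly the statement. The one point requiring a little care — the main (mild) obstacle — is justifying the differentiation under the integral sign and the vanishing of the $v$-boundary term in the integration by parts; both follow from the standing assumption that $f$ is sufficiently regular, integrable in $v$ together with enough weights, and (during the proof of Theorem \ref{theorem}) supported away from $v = 0$, so that the boundary sphere $|v| = 0$ contributes nothing and the integrand decays as $|v| \to \infty$.
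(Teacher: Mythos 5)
Your approach matches the paper's proof: pass $Z$ under the $v$-integral, write $Z|f| = \widehat{Z}|f| - (v\text{-part of }\widehat{Z})|f|$, integrate by parts in $v$, and use the chain rule $\widehat{Z}|f| = \frac{f}{|f|}\widehat{Z}f$ almost everywhere. The paper only spells out the case $Z = \Omega_{12}$ and leaves the rest implicit, so your treatment is in fact more complete.

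One small factual slip worth flagging: for a Lorentz boost $\Omega_{0k}$ the $v$-part of the lift is $v^0\partial_{v^k}$ with $v^0 = |v|$, whose divergence is $\partial_{v^k}(v^0) = v^k/v^0$. This is bounded by $1$ but does \emph{not} vanish, so your parenthetical claim that the boost vector field on $v$-space is divergence-free is incorrect. Your immediate fallback remark — that the divergence is $O(1)$ and hence contributes at most $\int_v |f|\,dv$ — is the correct observation, and it is precisely this contribution from the boosts that explains why the lemma's right-hand side carries the extra term $\int_v |f|\,dv$ at all (for rotations, translations, and $S$ this term is not needed).
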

\begin{proof}
Let us consider, for instance, the case where $Z=\Omega_{12}=x^1 \partial_2-x^2 \partial_1$. Then, integrating by parts in $v$, we have almost everywhere
\begin{eqnarray}
\nonumber \left| \Omega_{12}\left( \int_{v \in \R^3 } |f| dv \right) \right| & = & \left| \int_{v \in \R^3 } \widehat{\Omega}_{12} \left( |f|  \right)  dv -\int_{v \in \R^3 } \Big( v^1 \partial_{v^2} \left( |f|  \right)-v^2 \partial_{v^1} \left( |f|  \right)   \Big) dv \right| \\ \nonumber
& = & \left| \int_{v \in \R^3 } \frac{ f}{| f|} \widehat{\Omega}_{12} \left( f \right)  dv +0 \right| \hspace{2mm} \leq \hspace{2mm} \int_{v \in \R^3 } \left| \widehat{\Omega}_{12} \left(  f \right) \right| dv.
\end{eqnarray}
\end{proof}
The vector space generated by each of the sets defined in this section is an algebra.

\begin{Lem}
Let $\mathbb{L}$ be either $\K$, $\mathbb{K}$, $\Po$ or $\Or$. Then for all $(Z_1,Z_2) \in \mathbb{L}^2$, $[Z_1,Z_2]$ is a linear combination of vector fields of $\mathbb{L}$.

\end{Lem}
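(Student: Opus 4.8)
The statement to prove is that each of the sets $\K$, $\mathbb{K}$, $\Po$, $\Or$ generates a Lie algebra: the commutator of any two generators is a linear combination of generators from the same family. The plan is to verify this family by family, exploiting the fact that it suffices to check the claim on the listed generators (since the commutator is bilinear and antisymmetric, and $[\,\cdot\,,\cdot\,]$ distributes over linear combinations with the obvious Leibniz correction terms — but here the coefficients are constants, so plain bilinearity is all we need).

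First I would treat $\Or = \{\Omega_{12},\Omega_{13},\Omega_{23}\}$, which is the smallest case and essentially the Lie algebra $\mathfrak{so}(3)$: a direct computation gives $[\Omega_{12},\Omega_{13}] = -\Omega_{23}$ and cyclic permutations, so the claim is immediate. Next, $\Po$: here one checks the brackets among $\partial_\mu$, $\Omega_{ij}$, $\Omega_{0k}$. The relevant identities are standard — $[\partial_\mu,\partial_\nu]=0$; $[\Omega_{\mu\nu},\partial_\sigma]$ is $\pm\partial_{\mu}$ or $\pm\partial_\nu$ (or zero) depending on whether $\sigma$ matches an index; and $[\Omega_{\mu\nu},\Omega_{\sigma\rho}]$ is, up to sign, one of the $\Omega$'s when the index sets share exactly one element and zero when they are disjoint or equal. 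I would record these, noting that the hyperbolic rotations $\Omega_{0k}$ are handled by the same formula as $\Omega_{ij}$ once one raises/lowers the $0$ index with $\eta$. Then $\mathbb{K}=\Po\cup\{S\}$ follows by adding the brackets with $S=x^\mu\partial_\mu$: one computes $[S,\partial_\mu]=-\partial_\mu$ and $[S,\Omega_{\mu\nu}]=0$, so no new vector fields are produced.

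Finally, for $\K = \{\widehat{Z} : Z\in\Po\}\cup\{S\}$ I would use the key structural fact that the complete lift is a Lie-algebra homomorphism: for vector fields $\Gamma_1,\Gamma_2$ of the form $\Gamma^\beta\partial_\beta$ one has $\widehat{[\Gamma_1,\Gamma_2]} = [\widehat{\Gamma_1},\widehat{\Gamma_2}]$. Granting this (it is a short check from Definition \ref{Deflift}, since the $v$-components of $\widehat\Gamma$ are linear in $v$ with coefficients $\partial\Gamma^i/\partial x^\gamma$ that are constants for $\Gamma\in\Po$, so the bracket of the lifts reproduces exactly the lift of the bracket), the bracket relations for $\{\widehat Z : Z\in\Po\}$ are obtained by applying $\widehat{(\cdot)}$ to the already-established relations for $\Po$. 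The remaining brackets $[S,\widehat Z]$ are computed directly: since $S$ has no $\partial_v$ part and $[S,Z]$ is a multiple of a vector field in $\Po$ whose lift is the corresponding multiple of $\widehat{[S,Z]}$ after reconciling the scaling action on the $v$-coefficients, one finds $[S,\widehat\partial_\mu]=-\widehat\partial_\mu$ and $[S,\widehat\Omega_{\mu\nu}]=0$, again producing nothing outside $\K$.

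The only mildly delicate point — the ``main obstacle,'' though it is not a serious one — is the bookkeeping for $\K$: one must be careful that the commutator $[\widehat{\Gamma}_1,\widehat{\Gamma}_2]$ computed as a vector field on the $(t,x,v)$ space really equals the lift of $[\Gamma_1,\Gamma_2]$, i.e. that the extra $\partial_v$ terms combine correctly and do not generate a spurious $v\,\partial_v$ contribution. This is where the homomorphism property of the complete lift does all the work, and I would either cite the standard reference for it or include the one-line verification using the explicit formula in Definition \ref{Deflift}. Everything else is a finite, routine table of constant-coefficient brackets.
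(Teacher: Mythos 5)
The paper does not actually prove this lemma; it is stated as a standard fact with no proof attached, so there is nothing to compare against. Your plan is structurally sound: verify $\Or$ directly (it is $\mathfrak{so}(3)$), then $\Po$ via the table of Poincar\'e brackets, then $\mathbb{K}$ by adding $[S,\partial_\mu]=-\partial_\mu$ and $[S,\Omega_{\mu\nu}]=0$, and finally $\K$ via the complete-lift homomorphism plus the brackets $[S,\widehat{Z}]$.

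The one place your reasoning is slightly off is the parenthetical justification of the homomorphism $\widehat{[\Gamma_1,\Gamma_2]}=[\widehat{\Gamma}_1,\widehat{\Gamma}_2]$ for $\Gamma\in\Po$. You argue that the $v$-coefficients of $\widehat{\Gamma}$ are linear in $v$ with constant coefficients $\partial\Gamma^i/\partial x^\gamma$, so the bracket closes automatically. But for a boost, $\widehat{\Omega}_{0k}=t\partial_k+x^k\partial_t+v^0\partial_{v^k}$ with $v^0=|v|$, and $|v|$ is not a linear function of the independent variables $(v^1,v^2,v^3)$: the phase space here is the mass shell, not the full tangent bundle. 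The homomorphism does hold, but for a different reason: either one observes that the unconstrained complete lift on $T\mathbb{R}^{1,3}$ (with $v^0$ a free coordinate) is a Lie-algebra homomorphism by the general theory of prolongations, and that the lifts of Killing vector fields are tangent to the mass shell $\{v^0=|v|\}$ so restriction preserves brackets; or one verifies the finite table directly, where the closure hinges on identities such as $(v^i\partial_{v^j}-v^j\partial_{v^i})(|v|)=0$ for rotations and $v^0\partial_{v^k}(v^0)=v^k$ for boosts, which are what make the ``extra $\partial_v$ terms'' you worry about recombine into the right lift. You do correctly flag this as the delicate point of the argument, so this is a matter of sharpening the explanation rather than a wrong step.
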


We consider an ordering on each of the sets $\mathbb{O}$, $\mathbb{P}$, $\mathbb{K}$ and $\widehat{\mathbb{P}}_0$. We take orderings such that, if $\mathbb{P}= \{ Z^i / \hspace{2mm} 1 \leq i \leq |\mathbb{P}| \}$, then $\mathbb{K}= \{ Z^i / \hspace{2mm} 1 \leq i \leq |\mathbb{K}| \}$, with $Z^{|\mathbb{K}|}=S$, and
$$ \K= \left\{ \widehat{Z}^i / \hspace{2mm} 1 \leq i \leq |\K| \right\}, \hspace{2mm} \text{with} \hspace{2mm} \left( \widehat{Z}^i \right)_{ 1 \leq i \leq |\Po|}=\left( \widehat{Z^i} \right)_{ 1 \leq i \leq |\Po|} \hspace{2mm} \text{and} \hspace{2mm} \widehat{Z}^{|\K|}=S  .$$
If $\mathbb{L}$ denotes $\mathbb{O}$, $\mathbb{P}$, $\mathbb{K}$ or $\widehat{\mathbb{P}}_0$, and  $\beta \in \{1, ..., |\mathbb{L}| \}^q$, with $q \in \mathbb{N}^*$, we will denote the differential operator $\Gamma^{\beta_1}...\Gamma^{\beta_r} \in \mathbb{L}^{|\beta|}$ by $\Gamma^{\beta}$. For a vector field $Y$, we will denote by $\mathcal{L}_Y$ the Lie derivative with respect to $Y$ and if $Z^{\gamma} \in \mathbb{K}^{q}$, we will write $\mathcal{L}_{Z^{\gamma}}$ for $\mathcal{L}_{Z^{\gamma_1}}...\mathcal{L}_{Z^{\gamma_q}}$. 

Let us recall, by the following classical result, that the derivatives tangential to the cone behave better than others.

\begin{Lem}\label{goodderiv}
The following relations hold,
$$(t-r)\underline{L}=S-\frac{x^i}{r}\Omega_{0i}, \hspace{6mm} (t+r)L=S+\frac{x^i}{r}\Omega_{0i} \hspace{6mm} \text{and} \hspace{6mm} re_A=\sum_{1 \leq i < j \leq 3} C^{i,j}_A \Omega_{ij},$$
where the $C^{i,j}_A$ are uniformly bounded and depend only on spherical variables. We also have
$$(t-r)\partial_t =\frac{t}{t+r}S-\frac{x^i}{t+r}\Omega_{0i} \hspace{8mm} \text{and} \hspace{8mm} (t-r) \partial_i = \frac{t}{t+r} \Omega_{0i}- \frac{x^i}{t+r}S- \frac{x^j}{t+r} \Omega_{ij}.$$
\end{Lem}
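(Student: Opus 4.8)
The statement to prove is Lemma~\ref{goodderiv}, which collects several standard identities expressing ``good'' derivatives (those tangential to the light cone, or the null derivative $L$, or the radial derivative $\underline{L}$) in terms of the Killing and scaling vector fields of the Poincar\'e group. Let me sketch a proof plan.

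\medskip

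The plan is to verify each identity by a direct coordinate computation, starting from the explicit expressions $S = t\partial_t + x^i\partial_i$, $\Omega_{0i} = t\partial_i + x^i\partial_t$, $\Omega_{ij} = x^i\partial_j - x^j\partial_i$, together with $\partial_r = \frac{x^i}{r}\partial_i$ and hence $L = \partial_t + \frac{x^i}{r}\partial_i$, $\underline{L} = \partial_t - \frac{x^i}{r}\partial_i$. First I would compute $\frac{x^i}{r}\Omega_{0i} = \frac{x^i}{r}(t\partial_i + x^i\partial_t) = \frac{t x^i}{r}\partial_i + r\partial_t$ (using $x^ix^i = r^2$), so that $S - \frac{x^i}{r}\Omega_{0i} = (t-r)\partial_t + (x^i - \frac{t x^i}{r})\partial_i = (t-r)\partial_t - \frac{t-r}{r}x^i\partial_i = (t-r)(\partial_t - \partial_r) = (t-r)\underline{L}$; and similarly $S + \frac{x^i}{r}\Omega_{0i} = (t+r)\partial_t + \frac{t+r}{r}x^i\partial_i = (t+r)L$. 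For the angular identity $r e_A = \sum_{i<j} C^{i,j}_A \Omega_{ij}$, I would use that the rotation vector fields $\Omega_{ij}$ span the tangent space of each sphere $\{t,r = \text{const}\}$: concretely, the vectors $r\, e_A$ are $\mathbb{R}$-linear combinations (with coefficients that are smooth functions of the angular variables only, since $\Omega_{ij}$ has no radial or time component and $\Omega_{ij}(r) = \Omega_{ij}(t) = 0$) of the three $\Omega_{ij}$; the coefficients $C^{i,j}_A$ are bounded because $e_A$ is unit length and $|\Omega_{ij}| \gtrsim r$ away from the axes, with the standard removable-singularity argument handling the coordinate degeneracies.

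\medskip

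For the last two identities I would invert the relations between $(\partial_t,\partial_i)$ and $(S,\Omega_{0i},\Omega_{ij})$. From $S = t\partial_t + x^i\partial_i$ and $x^k\Omega_{0k}$ summed, one gets $x^k\Omega_{0k} = r^2\partial_t + t\, x^k\partial_k$. Combining: $t S - x^k\Omega_{0k} = (t^2 - r^2)\partial_t$, so $(t-r)\partial_t = \frac{t}{t+r}S - \frac{x^k}{t+r}\Omega_{0k}$ after dividing by $t+r$. Similarly, to isolate $\partial_i$ I would use $\Omega_{0i} = t\partial_i + x^i\partial_t$, substitute the expression just found for $\partial_t$, and use $x^j\Omega_{ij} = x^j(x^i\partial_j - x^j\partial_i) = x^i x^j\partial_j - r^2\partial_i$ to clean up: one finds $t\,\Omega_{0i} - x^i S + x^j\Omega_{ij}$ (summed over $j$) collapses to $(t^2 - r^2)\partial_i$, giving $(t-r)\partial_i = \frac{t}{t+r}\Omega_{0i} - \frac{x^i}{t+r}S - \frac{x^j}{t+r}\Omega_{ij}$.

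\medskip

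I expect none of these steps to present a genuine obstacle: this is a bookkeeping lemma and every identity reduces to elementary algebra with the Cartesian expressions of the vector fields and the relation $x^ix^i = r^2$. The only point requiring a word of care is the angular identity $r e_A = \sum_{i<j} C^{i,j}_A\Omega_{ij}$ and the claim that the coefficients are uniformly bounded and depend only on the spherical variables — here one should note that $\{\Omega_{ij}\}$ restricted to a sphere spans a $2$-dimensional space (they satisfy one linear relation, $x^1\Omega_{23} - x^2\Omega_{13} + x^3\Omega_{12}$ annihilates functions of $(t,r)$ only in the appropriate sense), so the representation is not unique but can always be chosen with bounded angular coefficients. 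Since these identities are entirely classical, in the write-up I would verify one or two representative cases in detail and leave the rest to the reader as analogous computations.
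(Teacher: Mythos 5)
The paper states Lemma~\ref{goodderiv} without proof (it is introduced as a ``classical result''), so your job here is to supply the omitted verification, and your direct-coordinate approach is the natural one. The computations for $(t-r)\underline{L}$, $(t+r)L$, the sphere identity $re_A = C^{i,j}_A \Omega_{ij}$, and $(t-r)\partial_t$ are all correct.

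However, your treatment of the final identity is internally inconsistent, and the inconsistency is worth flagging rather than glossing over. You correctly compute that, with the paper's convention $\Omega_{ij}=x^i\partial_j-x^j\partial_i$ extended antisymmetrically and with Einstein summation over $j$,
$$t\,\Omega_{0i}-x^iS+x^j\Omega_{ij}=t^2\partial_i+tx^i\partial_t-tx^i\partial_t-x^ix^k\partial_k+x^ix^j\partial_j-r^2\partial_i=(t^2-r^2)\partial_i,$$
i.e.\ the cross terms cancel only with the \emph{plus} sign on $x^j\Omega_{ij}$. Dividing by $t+r$ therefore yields
$$(t-r)\partial_i=\frac{t}{t+r}\Omega_{0i}-\frac{x^i}{t+r}S+\frac{x^j}{t+r}\Omega_{ij},$$
whereas the lemma as printed (and the conclusion line of your write-up) has $-\frac{x^j}{t+r}\Omega_{ij}$. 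A spot check at $t=0$, $x=(1,1,0)$, $i=1$ confirms that the minus sign gives $-\sqrt{2}\,\partial_2$ rather than the required $-\sqrt{2}\,\partial_1$, while the plus sign gives the correct answer. So either the lemma has a sign typo (most likely; the inequality $\tau_-|\partial f|\lesssim\sum_Z|Zf|$ used downstream is insensitive to this sign) or the paper is silently using the opposite convention $\Omega_{ji}$ on that term. Either way, you should not state a conclusion that contradicts the derivation two lines above it — note the discrepancy explicitly and record which sign your computation actually produces.

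A small secondary remark on the sphere identity: the relation among the rotations is the pointwise identity $x^1\Omega_{23}-x^2\Omega_{13}+x^3\Omega_{12}=0$ (it vanishes identically, not merely ``on functions of $(t,r)$ in an appropriate sense''), and the existence of bounded angular coefficients $C^{i,j}_A$ is most cleanly seen by writing $\Omega_{ij}$ in spherical coordinates, where $\Omega_{ij}=r\bigl(a^A_{ij}(\omega)\,e_A\bigr)$ with smooth bounded $a^A_{ij}$, and inverting on each of a finite cover of charts. The argument you gesture at is fine in spirit but would benefit from being made that explicit.
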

Finally, we introduce the vector field
$$\overline{K}_0:= \frac{1}{2}\tau_+^2 L+ \frac{1}{2}\tau_-^2 \underline{L},$$
which will be used as a multiplier.

\subsection{Commutation of the Vlasov-Maxwell system}\label{subseccomu}

Let us start by proving the following result. For convenience, we extend the Kronecker symbol to vector fields, i.e. $\delta_{X, Y}=1$ if $X=Y$ and $\delta_{X, Y}=0$ otherwise.
\begin{Lem}\label{comumax1}
Let $G$ be a $2$-form and $g$ a function, both sufficiently regular. For all $\widehat{Z} \in \K$,
$$ \widehat{Z} \left( G \left( v, \nabla_v g \right) \right) = \mathcal{L}_Z(G) \left( v, \nabla_v g \right)+G \left( v, \nabla_v \widehat{Z} g \right)-2 \delta_{\widehat{Z},S} G \left( v, \nabla_v g \right).$$
If $\nabla^{\mu} G_{\mu \nu} = J(g)_{\nu}$ and $\nabla^{\mu} {}^* \! G_{\mu \nu} = 0$, then
$$\forall \hspace{0.5mm} Z \in \mathbb{K}, \hspace{6mm} \nabla^{\mu} \mathcal{L}_Z(G)_{\mu \nu} = J(\widehat{Z}g)_{\nu}+3 \delta_{Z,S} J(g)_{\nu} \hspace{15mm} \text{and} \hspace{15mm} \nabla^{\mu} {}^* \! \mathcal{L}_Z(G)_{\mu \nu} = 0.$$
\end{Lem}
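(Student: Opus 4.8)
The plan is to prove the three identities separately, with the first one being a direct computation using the explicit form of the complete lifts from Definition~\ref{Deflift}, and the Maxwell identities following from Lemma~\ref{maxwellbis} together with the standard fact that the Killing (and conformal Killing) fields of $\Po \cup \{S\}$ commute with $\nabla^{\mu}\nabla_{[\lambda}$ up to controlled lower-order terms.

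First I would establish the formula for $\widehat{Z}\left( G(v,\nabla_v g)\right)$. Writing $G(v,\nabla_v g) = v^{\mu} {G_{\mu}}^{i} \partial_{v^i} g$, I apply $\widehat{Z}$ and use the Leibniz rule. There are three contributions: the action of $\widehat{Z}$ on the components ${G_{\mu}}^{i}$ (which only involves the spacetime part $Z = Z^{\beta}\partial_{\beta}$ of $\widehat{Z}$), the action on $v^{\mu}$ (which involves the $v$-part of $\widehat{Z}$, i.e.\ the terms $v^{\gamma}\partial_{\gamma}Z^i \,\partial_{v^i}$), and the action on $\partial_{v^i} g$. The first piece combines with a correction term coming from the commutator of $Z$ with the contraction $v^\mu{}\cdot{}^i$ to produce the Lie derivative $\mathcal{L}_Z(G)(v,\nabla_v g)$; this is where one must be careful, because $\mathcal{L}_Z(G)_{\mu\nu} = Z(G_{\mu\nu}) + \partial_\mu Z^\lambda G_{\lambda\nu} + \partial_\nu Z^\lambda G_{\mu\lambda}$, so the derivatives of the coefficients $Z^\lambda$ that appear here have to match up with those produced by $\widehat{Z}$ acting on $v^\mu$ and by the commutator $[\,\widehat Z, \partial_{v^i}]$. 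The $\delta_{\widehat Z, S}$ term arises because $S = x^\mu\partial_\mu$ is not a complete lift and has no $\partial_v$ part, while $\mathcal{L}_S$ contributes an extra trace $\partial_\mu S^\lambda = \delta_\mu^\lambda$ in each of the two indices of $G$; matching homogeneities in $v$ (recall $G(v,\nabla_v g)$ is order $1$ in $v$ and $\nabla_v g$ lowers homogeneity) produces exactly the factor $-2$. I would do the case $\widehat Z = \widehat\Omega_{0k}$ explicitly to see the mechanism, then note $\widehat\Omega_{ij}$ and $\partial_\mu$ are analogous and $S$ is the special case.

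For the Maxwell part, I would use Lemma~\ref{maxwellbis} to replace the system $\nabla^\mu G_{\mu\nu} = J(g)_\nu$, $\nabla^\mu {}^*\!G_{\mu\nu}=0$ by the pair $\nabla_{[\lambda}G_{\mu\nu]} = 0$, $\nabla_{[\lambda}{}^*\!G_{\mu\nu]} = \varepsilon_{\lambda\mu\nu\kappa}J(g)^\kappa$. For $Z \in \Po$ a Killing field, $\mathcal{L}_Z$ commutes with $\nabla$ and with the Levi-Civita symbol (since $Z$ is Killing), so $\nabla_{[\lambda}\mathcal{L}_Z(G)_{\mu\nu]} = \mathcal{L}_Z\left(\nabla_{[\lambda}G_{\mu\nu]}\right) = 0$ and similarly $\nabla_{[\lambda}{}^*\!\mathcal{L}_Z(G)_{\mu\nu]} = \mathcal{L}_Z\left(\varepsilon_{\lambda\mu\nu\kappa}J(g)^\kappa\right) = \varepsilon_{\lambda\mu\nu\kappa}\mathcal{L}_Z(J(g))^\kappa$; one then checks ${}^*(\mathcal{L}_Z G) = \mathcal{L}_Z({}^*G)$ and that $\mathcal{L}_Z(J(g)) = J(\widehat Z g)$, the latter by an integration by parts in $v$ exactly as in the proof of Lemma~\ref{lift} (the $\partial_v$ terms of $\widehat Z$ integrate away against $v_\nu/|v|$, up to the homogeneity-$0$ weight which is annihilated). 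For $Z = S$, $S$ is conformal Killing with $\mathcal{L}_S \eta = 2\eta$, so it no longer commutes with $\nabla$ on a $2$-form; instead one gets $\nabla_{[\lambda}\mathcal{L}_S(G)_{\mu\nu]} = 0$ still (the conformal weight works out for the closed form) while $\nabla_{[\lambda}{}^*\!\mathcal{L}_S(G)_{\mu\nu]}$ picks up $\mathcal{L}_S$ acting on $\varepsilon$, which scales, producing the stated shift by $3\delta_{Z,S}J(g)_\nu$ after translating back via Lemma~\ref{maxwellbis}; the coefficient $3$ is the conformal weight of the source term in $3+1$ dimensions and I would pin it down by a direct count, e.g.\ using $[S,\partial_\mu] = -\partial_\mu$ and the known fact that $\Box$ commutes with $S$ up to $2\Box$.

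The main obstacle I expect is bookkeeping the constants: getting the $-2\delta_{\widehat Z,S}$ in the first identity and the $+3\delta_{Z,S}$ in the second with the correct signs and magnitudes, since these hinge on precisely how the scaling vector field interacts with the homogeneity-in-$v$ of $\nabla_v$ and with the conformal weight of $J$. Everything else is either a routine Leibniz-rule expansion or a direct appeal to the already-proven Lemmas~\ref{maxwellbis} and~\ref{lift}.
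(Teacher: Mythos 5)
Your proposal is correct and follows essentially the same route as the paper: the first identity by a Leibniz expansion of $v^{\mu}G_{\mu i}\partial_{v^i}g$ (the paper organizes this via the split $Z_v := \widehat{Z}-Z$, but the bookkeeping is the same), and the Maxwell identity by commuting $\mathcal{L}_Z$ with the Maxwell operator and then computing $\mathcal{L}_Z(J(g))$ by integration by parts in $v$. One point worth making explicit, which your sketch glides over: for $Z=\Omega_{0i}$ the commutator $[\widehat{\Omega}_{0i},\partial_{v^j}]=-\tfrac{v^j}{v^0}\partial_{v^i}$ produces the term $-\tfrac{1}{v^0}G_{\mu j}v^{\mu}v^{j}\partial_{v^i}g$, and this is matched against the $v^{\mu}G_{\mu 0}\partial_{v^i}g$ coming from $\partial_j\Omega_{0i}^{0}=\delta_j^{i}$ in $\mathcal{L}_{\Omega_{0i}}(G)$ only because $G(v,v)=0$ — a genuine cancellation, not just bookkeeping, and the paper records it as such. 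Also, the coefficient $3\delta_{Z,S}$ is best seen as $2+1$: the $2$ from commuting $\nabla^{\mu}$ with $\mathcal{L}_S$ (i.e.\ $\nabla^{\mu}\mathcal{L}_S(G)_{\mu\nu}=\mathcal{L}_S(J(g))_{\nu}+2J(g)_{\nu}$, which your dual-system argument via Lemma~\ref{maxwellbis} gives) and the $1$ from $\mathcal{L}_S(J(g))=J(Sg)+J(g)$; lumping both into a single "conformal weight" is fine morally but the direct count you propose would land on this split.
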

\begin{proof}
Let $\widehat{Z} \in \K$ and define $Z_v:=\widehat{Z}-Z$. Then,
$$ \widehat{Z} \left( G \left( v , \nabla_v g \right) \right) \hspace{2mm} = \hspace{2mm} \mathcal{L}_Z(G) \left( v, \nabla_v g \right)+G \left( [Z,v], \nabla_v g \right)+G \left( v,[Z,\nabla_v g] \right)+G \left( Z_v(v), \nabla_v g\right)+G \left( v, Z_v \left(\nabla_v g \right) \right) .$$
Note now that
\begin{itemize}
\item $S_v=0$ and $[S,v]=-v$,
\item $[Z,v]=-Z_v(v)$ if $Z \in \mathbb{P}$.
\end{itemize}
The first identity is then implied by
\begin{itemize}
\item $[\partial, \nabla_v g]=\nabla_v \partial(g)$ and $[S, \nabla_v g ]= \nabla_v S(g)-\nabla_v g$.
\item $[Z, \nabla_v g]+Z_v \left( \nabla_v g \right)= \nabla_v \widehat{Z}(g)$, if $Z \in \mathbb{O}$.
\item $[Z, \nabla_v g]+Z_v \left( \nabla_v g \right)= \nabla_v \widehat{Z}(g)-\frac{v}{v^0}\partial_{v^i} g $ and $G(v,v)=0$, if $Z = \Omega_{0i}$.
\end{itemize}
Recall now that if\footnote{This can be obtained by straightforward computations in cartesian coordinates. We also refer to Proposition $3.3$ of \cite{CK}.} $Z \in \mathbb{K}$,
$$ \nabla^{\mu} \mathcal{L}_{Z}(G)_{\mu \nu} = \mathcal{L}_{Z} (J(g))_{\nu} +2\delta_{Z,S} J(g)_{\nu}  \hspace{10mm} \text{and} \hspace{10mm} \nabla^{\mu} {}^* \! \mathcal{L}_{Z}(G)_{\mu \nu} = 0.$$
One then only have to notice that 
$$ \mathcal{L}_{S} (J(g))=J(S g)+J(g) \hspace{8mm} \text{and} \hspace{8mm} \forall \hspace{0.5mm} Z \in \mathbb{P}, \hspace{5mm} \mathcal{L}_{Z} (J(g)) = J( \widehat{Z} g).$$
This follows from $ \mathcal{L}_{Z} (J(g))_{\nu} = Z (J(g)_{\nu})+ \partial_{\nu} \left( Z^{\lambda} \right) J(g)_{\lambda}$ and integration by parts in $v$. For instance,
\begin{eqnarray}
\nonumber  \mathcal{L}_{\Omega_{12}} (J(g))_{\nu} & = & \int_{v \in \R^3} \frac{v_{\nu}}{v^0}  \left( \widehat{\Omega}_{12}-v^1 \partial_{v^2}+v^2 \partial_{v^1} \right) g dv +\delta_{1,\nu} \int_{v \in \R^3} \frac{v_2}{v^0} g dv -\delta_{2, \nu}  \int_{v \in \R^3} \frac{v_1}{v^0} g dv \\ \nonumber
& = & J \left( \widehat{\Omega}_{12} g \right)+\delta_{2,\nu} \int_{v \in \R^3} \frac{v^1}{v^0} g dv -\delta_{1, \nu}  \int_{v \in \R^3} \frac{v^1}{v^0} g dv+\delta_{1,\nu} \int_{v \in \R^3} \frac{v_2}{v^0} g dv -\delta_{2, \nu}  \int_{v \in \R^3} \frac{v_1}{v^0} g dv.
\end{eqnarray}
\end{proof}
Iterating Lemma \ref{comumax1}, we can describe the form of the source terms of the commuted Vlasov-Maxwell equations.
\begin{Pro}\label{Maxcom}
Let $(f,F)$ be a sufficiently regular solution to the Vlasov-Maxwell system \eqref{VM1}-\eqref{VM3} and $Z^{\beta} \in \mathbb{K}^{|\beta|}$. There exist integers $n^{\beta}_{\gamma, \kappa}$ and $m^{\beta}_{\xi}$ such that
\begin{eqnarray}
\nonumber T_F \left( \widehat{Z}^{\beta} f \right) & = & \sum_{\begin{subarray}{} |\gamma|+|\kappa| \leq |\beta| \\ \hspace{1mm} |\kappa| \leq |\beta|-1 \end{subarray}} n^{\beta}_{\gamma , \kappa} \mathcal{L}_{Z^{\gamma}}(F) \left( v , \nabla_v \widehat{Z}^{\kappa} (f) \right) , \\ \nonumber
\nabla^{\mu} \mathcal{L}_{Z^{\beta}}(F)_{\mu \nu} & = & \sum_{|\xi| \leq |\beta|} m^{\beta}_{\xi} J \left( \widehat{Z}^{\xi} f \right)_{\nu}, \\ \nonumber
\nabla^{\mu} {}^* \! \mathcal{L}_{Z^{\beta}}(F)_{\mu \nu} & = & 0 .
\end{eqnarray}
\end{Pro}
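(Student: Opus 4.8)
The plan is to prove Proposition \ref{Maxcom} by induction on $|\beta|$, using Lemma \ref{comumax1} as the single-commutator step and exploiting the algebra property of $\K$ and $\mathbb{K}$ established in the preceding lemmas. The base case $|\beta|=0$ is immediate: it is just the original system \eqref{VM1}--\eqref{VM3} written as $T_F(f)=0$ (so the sum over $\gamma,\kappa$ is empty, or one keeps only $\gamma=\kappa=0$ with coefficient $0$) together with $\nabla^{\mu}F_{\mu\nu}=J(f)_{\nu}$ and $\nabla^{\mu}{}^*\!F_{\mu\nu}=0$, i.e. $m^{\emptyset}_{\emptyset}=1$.

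For the inductive step, I would write $\widehat{Z}^{\beta}=\widehat{Z}\,\widehat{Z}^{\beta'}$ with $|\beta'|=|\beta|-1$ and apply $\widehat{Z}$ to the formula for $T_F(\widehat{Z}^{\beta'}f)$. The key identity is $\widehat{Z}(T_F(g))=T_F(\widehat{Z}g)+[\widehat{Z},T](g)+(\widehat{Z}-Z)(F(v,\nabla_v g))+\ldots$; more precisely one combines $[T,\widehat{Z}]=0$ for $Z\in\Po$ and $[T,S]=T$ with the first identity of Lemma \ref{comumax1}, $\widehat{Z}(F(v,\nabla_v g))=\mathcal L_Z(F)(v,\nabla_v g)+F(v,\nabla_v\widehat{Z}g)-2\delta_{\widehat{Z},S}F(v,\nabla_v g)$. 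Applying this to each summand $\mathcal L_{Z^{\gamma}}(F)(v,\nabla_v\widehat{Z}^{\kappa}f)$ in the formula for $T_F(\widehat{Z}^{\beta'}f)$ produces three kinds of terms: $\mathcal L_Z\mathcal L_{Z^{\gamma}}(F)(v,\nabla_v\widehat{Z}^{\kappa}f)$, $\mathcal L_{Z^{\gamma}}(F)(v,\nabla_v\widehat{Z}\widehat{Z}^{\kappa}f)$, and multiples of $\mathcal L_{Z^{\gamma}}(F)(v,\nabla_v\widehat{Z}^{\kappa}f)$ coming from the $S$-correction and from the $T_F$ versus $T$ discrepancy. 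Since $Z^{\gamma}\in\mathbb{K}^{|\gamma|}$ and $Z\in\mathbb{K}$, the composition $ZZ^{\gamma}\in\mathbb{K}^{|\gamma|+1}$ is already of the required form; similarly $\widehat{Z}\widehat{Z}^{\kappa}\in\K^{|\kappa|+1}$. One checks the index constraints are preserved: $|\gamma|+|\kappa|$ increases by at most $1$ to at most $|\beta|$, and $|\kappa|$ stays $\leq|\beta'|-1\leq|\beta|-1$ in the untouched-$\kappa$ terms while in the $\widehat{Z}\widehat{Z}^{\kappa}$ terms $|\kappa|+1\leq|\beta'|=|\beta|-1$, so the bound $|\kappa|\leq|\beta|-1$ holds throughout. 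Collecting terms and relabelling multi-indices gives the claimed integer coefficients $n^{\beta}_{\gamma,\kappa}$.

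For the Maxwell part, I would iterate the second assertion of Lemma \ref{comumax1}. Writing $Z^{\beta}=ZZ^{\beta'}$, the inductive hypothesis gives $\nabla^{\mu}\mathcal L_{Z^{\beta'}}(F)_{\mu\nu}=\sum_{|\xi|\leq|\beta'|}m^{\beta'}_{\xi}J(\widehat{Z}^{\xi}f)_{\nu}$, and applying Lemma \ref{comumax1} to the $2$-form $\mathcal L_{Z^{\beta'}}(F)$ (whose source is this linear combination of $J(\widehat{Z}^{\xi}f)$'s, each of which is itself of the form $J(g)$) yields $\nabla^{\mu}\mathcal L_Z\mathcal L_{Z^{\beta'}}(F)_{\mu\nu}=\sum_{|\xi|\leq|\beta'|}m^{\beta'}_{\xi}\bigl(J(\widehat{Z}\widehat{Z}^{\xi}f)_{\nu}+3\delta_{Z,S}J(\widehat{Z}^{\xi}f)_{\nu}\bigr)$, which is of the asserted form with $|\xi'|\leq|\beta'|+1=|\beta|$. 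The dual equation $\nabla^{\mu}{}^*\!\mathcal L_{Z^{\beta}}(F)_{\mu\nu}=0$ propagates trivially since Lemma \ref{comumax1} shows it is preserved by each single Lie derivative. The one subtlety requiring care is that applying Lemma \ref{comumax1} to $\mathcal L_{Z^{\beta'}}(F)$ requires its source $1$-form to be exactly of the form $J(g)$ for some scalar $g$; this is fine because the map $g\mapsto J(g)$ is linear and $\mathcal L_Z$ acts linearly, so one applies the lemma to each term $J(\widehat{Z}^{\xi}f)$ separately and sums.

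I expect the main (and only genuine) obstacle to be bookkeeping: verifying that at every stage the index ranges $|\gamma|+|\kappa|\leq|\beta|$ and $|\kappa|\leq|\beta|-1$ are exactly maintained when one of the three term-types is generated, and that the algebra property (every commutator $[Z_1,Z_2]$ of vector fields in $\mathbb{K}$, resp. $\K$, is a linear combination of vector fields in the same set) is invoked correctly so that, e.g., reordering $ZZ^{\gamma}$ or $\widehat{Z}\widehat{Z}^{\kappa}$ according to the fixed ordering only introduces lower-order terms already accounted for. There is no analytic difficulty; everything is algebraic and follows by a careful induction. I would present the argument compactly, spelling out the single-commutator computation from Lemma \ref{comumax1} once and then asserting that iteration gives the stated form, with the index constraints checked explicitly for each of the three generated term-types.
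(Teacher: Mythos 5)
Your proposal is correct and takes exactly the approach the paper intends: the paper omits the proof, writing only "Iterating Lemma \ref{comumax1}", and your induction on $|\beta|$ using the two identities of Lemma \ref{comumax1} as the single-step engine, combined with $[T,\widehat{Z}]=0$ for $Z\in\mathbb{P}$ and $[T,S]=T$, is precisely that iteration spelled out. The index bookkeeping you sketch checks out (each of the generated term-types — the $\mathcal{L}_Z\mathcal{L}_{Z^{\gamma}}(F)(v,\nabla_v\widehat{Z}^{\kappa}f)$, the $\mathcal{L}_{Z^{\gamma}}(F)(v,\nabla_v\widehat{Z}\widehat{Z}^{\kappa}f)$, the $S$-corrections, and the commutator contributions $\mathcal{L}_Z(F)(v,\nabla_v\widehat{Z}^{\beta'}f)$ and $F(v,\nabla_v\widehat{Z}^{\beta'}f)$ — lands inside the constraint region $|\gamma|+|\kappa|\leq|\beta|$, $|\kappa|\leq|\beta|-1$), and the Maxwell part iterates cleanly by linearity of $g\mapsto J(g)$ and of $\mathcal{L}_Z$; note also that no reordering of the multi-indices is actually required since the proposition sums over all admissible $(\gamma,\kappa)$, so your remark about the algebra property is a safety net rather than a needed step.
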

The main observation is that the structure of the non-linearity $F(v,\nabla_v f)$ is conserved after commutation, which is important since if the source terms of the Vlasov equation behaved as $v^0 |F| |\partial_v f|$, we would not be able to close the energy estimates for the Vlasov field. The other conserved structure is $J(f)$, which is also crucial since a source term behaving as $\int_v |f|dv$ would prevent us to close the energy estimates for the electromagnetic field.

\subsection{Weights preserved by the flow and null components of the velocity vector}\label{sectionweights}

We designate the null components of the velocity vector by $(v^L,v^{\underline{L}},v^{e_1},v^{e_2})$, so that
$$v=v^L L+ v^{\underline{L}} \underline{L}+v^{e_A}e_A, \hspace{6mm} v^L=\frac{v^0+v^r}{2} \hspace{6mm} \text{and} \hspace{6mm} v^{\underline{L}}=\frac{v^0-v^r}{2}.$$
For simplicity we will write $v^A$ instead of $v^{e_A}$. We introduce, as in \cite{FJS}, the following set of weights
$$ \mathbf{k}_0 := \left\{\frac{v^{\mu}}{v^0} \hspace{1mm} \Big/ \hspace{1mm} 0 \leq \mu \leq 3 \right\} \cup \left\{ x^{\mu}\frac{v^{\nu}}{v^0}-x^{\nu}\frac{v^{\mu}}{v^0} \hspace{1mm} \Big/ \hspace{1mm} \mu \neq \nu \right\} \cup \left\{ x^{\mu} \frac{v_{\mu}}{v^0} \right\}$$
and we will denote $x^{\mu}\frac{v^{\nu}}{v^0}-x^{\nu}\frac{v^{\mu}}{v^0}$ by $z_{\mu \nu}$ and $x^{\mu} \frac{v_{\mu}}{v^0}$ by $s_0$. They are preserved by the flow of $T$ and by the action of $\K$. More precisely, we have the following result.
\begin{Lem}\label{weights}
Let $z \in \mathbf{k}_0$ and $\widehat{Z} \in \K$. Then,
$$ T(z)=0, \hspace{10mm} \widehat{Z}(v^0z) \in v^0 \mathbf{k}_0 \cup \{ 0 \} \hspace{10mm} \text{and} \hspace{10mm} \left| \widehat{Z} (z) \right| \leq \sum_{w \in \V} |w|.$$
\end{Lem}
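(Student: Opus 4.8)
The plan is to treat the three assertions separately and to verify each of them by direct computation on the explicit generators listed in the definition of $\mathbf{k}_0$, namely the weights $v^\mu/v^0$ (with $0\le\mu\le 3$), the weights $z_{\mu\nu}=x^\mu v^\nu/v^0-x^\nu v^\mu/v^0$ (with $\mu\ne\nu$), and $s_0=x^\mu v_\mu/v^0$. Note first that it suffices to argue with $v^0 z$ rather than $z$, since $v^0\frac{v^\mu}{v^0}=v^\mu$, $v^0 z_{\mu\nu}=x^\mu v^\nu-x^\nu v^\mu$, and $v^0 s_0 = x^\mu v_\mu$ are polynomial in $(x,v)$ (recall $v^0=|v|$ is a fixed function of $v$), which makes the computations with the differential operators $\widehat{Z}$ and $T$ painless. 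The third inequality $|\widehat{Z}(z)|\le\sum_{w\in\mathbf k_0}|w|$ will then follow from the second together with the elementary bound $|v^\mu/v^0|\le 1$ and the Leibniz rule $\widehat Z(z)=v^0\cdot\widehat Z(z)/v^0$; more precisely one writes $\widehat Z(z) = \tfrac{1}{v^0}\widehat Z(v^0 z) - z\tfrac{\widehat Z(v^0)}{v^0}$ and uses that $\widehat Z(v^0)$ is a linear combination of the $v^\mu$, hence $\widehat Z(v^0)/v^0$ is a linear combination of weights in $\mathbf k_0$.

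For $T(z)=0$: since $T=v^\mu\partial_\mu$ acts only in $(t,x)$, one has $T(v^\nu/v^0)=0$ trivially; for $z_{\mu\nu}$ one computes $T(x^\mu v^\nu/v^0-x^\nu v^\mu/v^0) = v^\mu\cdot v^\nu/v^0 - v^\nu\cdot v^\mu/v^0 = 0$; and for $s_0=x^\mu v_\mu/v^0$ one gets $T(s_0)=v^\mu v_\mu/v^0$, which vanishes because $v^\mu v_\mu = -(v^0)^2+|v|^2=0$ for massless particles (this is precisely the place where masslessness enters). This is the quickest of the three parts.

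For the action of $\widehat Z$: I will go case by case over $\widehat Z\in\{\partial_\mu,\widehat\Omega_{ij},\widehat\Omega_{0k},S\}$ using the explicit expressions from Definition \ref{Deflift}. For translations $\partial_\mu$ the computation is immediate: $\partial_\mu(v^\nu)=0$ and $\partial_\mu(x^\alpha v^\beta-x^\beta v^\alpha)=\delta_\mu^\alpha v^\beta-\delta_\mu^\beta v^\alpha\in\{0\}\cup\{\pm v^\beta\}\subset v^0\mathbf k_0\cup\{0\}$, and similarly $\partial_\mu(x^\alpha v_\alpha)=v_\mu=\pm v^\mu$. For the lifted rotations $\widehat\Omega_{ij}$ and boosts $\widehat\Omega_{0k}$ one checks that $\widehat\Omega(v^\nu)$ is again a $\pm$ of another $v^\alpha$ (e.g. $\widehat\Omega_{0k}(v^j)=\delta_k^j v^0$, $\widehat\Omega_{0k}(v^0)=v^k$, using $v^0\partial_{v^k}(|v|)=v^0\cdot v^k/|v|=v^k$), and that $\widehat\Omega(x^\alpha v^\beta-x^\beta v^\alpha)$ reorganizes — again by the Leibniz rule — into a single weight $\pm(x^{\alpha'}v^{\beta'}-x^{\beta'}v^{\alpha'})$ times $v^0$, or into $\pm x^\mu v_\mu$; the boost case needs the identity $v^\mu v_\mu=0$ to cancel the diagonal term, exactly as in the $T(s_0)$ computation. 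For $S=x^\mu\partial_\mu$ (which is \emph{not} a complete lift and does not differentiate in $v$), one has $S(v^\nu)=0$, so $S(v^\nu/v^0)=0$; $S(x^\alpha v^\beta-x^\beta v^\alpha)=x^\alpha v^\beta-x^\beta v^\alpha$ (degree one homogeneity in $x$), i.e. $v^0 z_{\alpha\beta}$ again; and $S(x^\mu v_\mu)=x^\mu v_\mu=v^0 s_0$. In every case $\widehat Z(v^0 z)$ lands in $v^0\mathbf k_0\cup\{0\}$, proving the second assertion.

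\textbf{Main obstacle.} There is no serious conceptual difficulty; the work is purely bookkeeping. The one point that requires care — and is the only place where the massless structure is genuinely used — is the boost/scaling computation on $s_0$ and on those $z_{\mu\nu}$ with a time index, where the relation $v^\mu v_\mu=(v^0)^2-|v|^2=0$ is needed to turn an a priori "off-list" diagonal term into zero or into another listed weight; I would state this identity once at the start and flag each use. A secondary minor nuisance is keeping the index combinatorics straight for $\widehat\Omega_{ij}$ and $\widehat\Omega_{0k}$ acting on the general $z_{\mu\nu}$, but organising the argument by which index of $z_{\mu\nu}$ is hit makes this routine. The deduction of the third inequality from the second, via $\widehat Z(z)=\tfrac1{v^0}\widehat Z(v^0 z)-z\,\tfrac{\widehat Z(v^0)}{v^0}$ and $|v^\mu/v^0|\le 1$, is then immediate. (A small caveat: the statement writes $z\in\mathbf k_1$, which appears to be a typo for $\mathbf k_0=\mathbf{k}_0$; I would read it as $z\in\mathbf k_0$ throughout.)
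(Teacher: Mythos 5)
Your case-by-case verification is exactly the paper's own approach (the paper simply displays a few representative computations with $\widehat{\Omega}_{12}$ and $\widehat{\Omega}_{02}$ acting on $tv^1-x^1v^0$ and $x^1v^2-x^2v^1$ and declares the remaining cases similar), and your reading of $\mathbf{k}_1$ as a typo for $\mathbf{k}_0$ is correct, as is your Leibniz-rule deduction of the third bound from the second. One small correction to your commentary: the identity $v^\mu v_\mu=0$ is genuinely needed only for $T(s_0)=0$; the boost and scaling computations of $\widehat{Z}(v^0 z)$ close without it, the apparent diagonal terms cancelling purely algebraically (e.g.\ $\widehat{\Omega}_{0k}(v^0 s_0)=tv^k-x^kv^0-tv^k+x^kv^0=0$).
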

\begin{proof}
The first property ensues from straightforward computations. For the second one, let us consider for instance $tv^1-x^1v^0$, $x^1v^2-x^2v^1$, $\widehat{\Omega}_{12}$ and $\widehat{\Omega}_{02}$. We have
\begin{eqnarray}
\nonumber \widehat{\Omega}_{12}(tv^1-x^1v^0) & = & -tv^2-x^2v^0, \hspace{24mm} \widehat{\Omega}_{12}(x^1v^2-x^2v^1 ) \hspace{2mm} = \hspace{2mm} 0,\\ \nonumber
\widehat{\Omega}_{02}(tv^1-x^1v^0) & = & x^2v^1-x^1v^2 \hspace{10mm} \text{and} \hspace{10mm} \widehat{\Omega}_{02}(x^1v^2-x^2v^1 )  \hspace{2mm} = \hspace{2mm} x^1v^0-tv^1.
\end{eqnarray}
The other cases are similar and the third property follows directly from the second one.
\end{proof}
The following inequalities, which should be compared to those of Lemma \ref{goodderiv}, suggest how we will use these weights.
\begin{Lem}\label{weights1}
We have,
$$ \frac{v^L}{v^0} \lesssim \frac{1}{\tau_-} \sum_{z \in \V} |z|, \hspace{8mm}  \frac{v^{\underline{L}}}{v^0}+\frac{|v^A|}{v^0} \lesssim \frac{1}{\tau_+} \sum_{z \in \V} |z| \hspace{8mm} \text{and} \hspace{8mm} |v^A| \lesssim \sqrt{v^0v^{\underline{L}}}.$$
\end{Lem}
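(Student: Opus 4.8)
\textbf{Proof plan for Lemma \ref{weights1}.}

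The plan is to express each null component of $v$ in terms of the weights in $\mathbf{k}_0$ (i.e.\ the $z_{\mu\nu}$ and $s_0$) and then estimate the resulting coefficients using the size of $\tau_-$ and $\tau_+$. First I would recall the definitions: $v^L = \frac{v^0+v^r}{2}$, $v^{\underline L} = \frac{v^0 - v^r}{2}$ and $v^r = \frac{x^i}{r}v^i$, while $s_0 = x^\mu \frac{v_\mu}{v^0} = \frac{-tv^0 + x^i v^i}{v^0} = r\frac{v^r}{v^0} - t$. For the first bound, the key identity is $\frac{v^L}{v^0} = \frac{v^0 + v^r}{2v^0}$, which I want to rewrite using the relation $(v^0)^2 = |v|^2 = (v^r)^2 + |v^{e_1}e_1 + v^{e_2}e_2|^2$ valid since $v$ is null and $(e_1,e_2,\partial_r)$ is orthonormal; hence $\frac{v^L v^{\underline L}}{(v^0)^2} = \frac{(v^0)^2 - (v^r)^2}{4(v^0)^2} = \frac{|v^A|^2}{4(v^0)^2}$, which already gives the third inequality $|v^A| \lesssim \sqrt{v^0 v^{\underline L}}$ once one checks $v^L \lesssim v^0$ (immediate, since $|v^r| \leq v^0$).

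For the $\frac{v^{\underline L}}{v^0}$ and $\frac{|v^A|}{v^0}$ bounds (decay like $\tau_+^{-1}$): on the region $r \leq \frac{1}{2}(1+t)$ one has $\tau_+ \simeq \tau_- \simeq 1+t$ and the bound follows trivially from $\frac{v^{\underline L}}{v^0}, \frac{|v^A|}{v^0} \leq 1 \leq \sum_{z \in \V}|z|$ since $\frac{v^0}{v^0} = 1 \in \mathbf{k}_0$ (indeed $\mathbf{k}_0$ contains $\frac{v^\mu}{v^0}$ for $\mu = 0$). On the region $r \geq \frac{1}{2}(1+t)$, so that $r \simeq \tau_+$, I would use the weights $z_{0i} = t\frac{v^i}{v^0} - x^i$ and $z_{ij} = x^i \frac{v^j}{v^0} - x^j \frac{v^i}{v^0}$ together with $s_0$. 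Concretely, $r\frac{v^{\underline L}}{v^0} = r\frac{v^0 - v^r}{2v^0} = \frac{1}{2}\big(r - \frac{x^i v^i}{v^0}\big) = \frac{1}{2}(r - t - s_0) + \frac{1}{2}(t - \frac{x^iv^i}{v^0} + s_0)$; but a cleaner route is $r - r\frac{v^r}{v^0} = r - (s_0 + t) $ combined with the identity from Lemma \ref{goodderiv}-type decompositions; alternatively, contract $z_{0i}$ with $\frac{x^i}{r}$ to get $\frac{x^i}{r}z_{0i} = \frac{t v^r}{v^0} - r$, hence $r\big(1 - \frac{v^r}{v^0}\big) = r - \frac{x^i}{r}z_{0i}\cdot\frac{r}{t}\cdot\frac{t}{r}$... — the point is that $r\frac{v^{\underline L}}{v^0}$ is a bounded linear combination (with coefficients $\frac{x^i}{r}$, $1$, $\frac{t}{r} \lesssim 1$) of $\{s_0, z_{0i}, 1\}$, giving $r\frac{v^{\underline L}}{v^0} \lesssim \sum_{z\in\V}|z|$, hence the claim. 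For $|v^A|$ I would use the rotation weights $z_{ij}$, whose combinations recover $r\frac{v^A}{v^0}$ up to bounded coefficients depending only on spherical variables, exactly as $re_A$ is a combination of the $\Omega_{ij}$ in Lemma \ref{goodderiv}.

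For the first bound $\frac{v^L}{v^0} \lesssim \frac{1}{\tau_-}\sum_{z\in\V}|z|$: again split into $r \leq \frac{1}{2}(1+t)$, where $\tau_- \simeq 1+t \simeq \tau_+$ and one reduces as above, and $r \geq \frac{1}{2}(1+t)$. In the latter region I would write $r\frac{v^L}{v^0} = \frac{1}{2}(r + \frac{x^iv^i}{v^0}) = \frac{1}{2}(r + t + s_0)$, so $r\frac{v^L}{v^0} - r = \frac{1}{2}(t - r + s_0) = \frac{1}{2}(u + s_0)$; thus $\frac{v^L}{v^0} = 1 + \frac{u + s_0}{2r}$. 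Since $|u| \leq \tau_-$ and $|u|\leq \tau_+ \simeq r$, and the other null components satisfy $(v^0)^2 = (v^L \cdot v^{\underline L}\text{-relations})$... the cleanest finish uses the null relation $\frac{v^L}{v^0}\cdot\frac{v^{\underline L}}{v^0} = \frac{|v^A|^2}{4(v^0)^2}$ from above together with $v^L \leq v^0$: multiply the already-established bound $\frac{v^{\underline L}}{v^0} \lesssim \tau_+^{-1}\sum|z|$ — this does not directly give it. \textbf{The main obstacle} is precisely isolating $\frac{v^L}{v^0}$, whose natural bound involves $\tau_-^{-1}$ rather than $\tau_+^{-1}$: the honest argument is to note $\frac{v^L}{v^0} = \frac{v^0 + v^r}{2v^0}$ and that on $r \geq \frac12(1+t)$ one has $\frac{v^0+v^r}{2v^0} \leq \frac{|s_0 - u|}{2r} + \text{(controlled terms)}$; one expresses $\tau_- \frac{v^L}{v^0}$ as a combination of $s_0$, the $z_{\mu\nu}$, and $\frac{v^\mu}{v^0}$ with coefficients that are $O(1)$ on this region (using $\tau_- \leq \tau_+ \simeq r$), closing the estimate. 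I expect this bookkeeping of which weight contributes the $\tau_-$ versus $\tau_+$ gain to be the only delicate point; everything else is the elementary algebra of null frames together with $v$ being null.
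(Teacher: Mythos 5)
Your handling of the third inequality is correct and actually \emph{cleaner} than the paper's: using $v^0=|v|$ one has $|v^A|^2=(v^0)^2-(v^r)^2=(v^0+v^r)(v^0-v^r)=4\,v^L v^{\underline L}$, so $|v^A|=2\sqrt{v^Lv^{\underline L}}\le 2\sqrt{v^0v^{\underline L}}$. The paper instead derives $4r^2v^Lv^{\underline L}=\sum_{k<l}|v^0z_{kl}|^2$ and combines it with $rv_A=v^0C_A^{i,j}z_{ij}$, which is more indirect. Your sketch of the $\frac{v^{\underline L}}{v^0}$ and $\frac{|v^A|}{v^0}$ bounds is also in the right spirit: the paper's route is the single identity $2(t+r)\frac{v^{\underline L}}{v^0}=-s_0-\frac{x^i}{r}z_{0i}$ (no case split needed, since $\tau_+\gtrsim 1+t+r$ handles the small--$(t+r)$ region automatically), but your regional argument lands in the same place.

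There is, however, a genuine gap in your treatment of the first inequality. You arrive at $r\frac{v^L}{v^0}=\tfrac12(r+t+s_0)$ and thence $\frac{v^L}{v^0}=1+\frac{u+s_0}{2r}$, and then explicitly say you are stuck. This intermediate form does not close: the constant ``$1$'' on the right is of order $1$, not of order $\tau_-^{-1}$, and multiplying through by $\tau_-\sim|u|$ produces a term of size $|u|$ that is not itself a weight in $\mathbf{k}_0$. The step you are missing is to multiply by $(t-r)$ rather than by $r$ before simplifying. Starting from $2\frac{v^L}{v^0}=1+\frac{v^r}{v^0}$ and using both $s_0=r\frac{v^r}{v^0}-t$ and $\frac{x^i}{r}z_{0i}=t\frac{v^r}{v^0}-r$, one obtains the exact identity
$$2(t-r)\frac{v^L}{v^0}=(t-r)+(t-r)\frac{v^r}{v^0}=\Big(t-r\frac{v^r}{v^0}\Big)+\Big(t\frac{v^r}{v^0}-r\Big)=-s_0+\frac{x^i}{r}z_{0i},$$
and then $\tau_-\frac{v^L}{v^0}\lesssim(1+|u|)\frac{v^L}{v^0}\le\frac{v^0}{v^0}+\tfrac12|s_0|+\tfrac12\sum_i|z_{0i}|\lesssim\sum_{z\in\V}|z|$, with the $|u|\le 1$ case absorbed by the weight $\frac{v^0}{v^0}=1\in\mathbf{k}_0$ (the trick you yourself used for $\frac{v^{\underline L}}{v^0}$). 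Without this cancellation, the first estimate does not follow from the manipulations you wrote, so the proposal as stated is incomplete at precisely the point you flagged as ``the main obstacle.'' (Incidentally, the paper's proof prints the two identities with $\pm\frac{x^i}{r}z_{0i}$ interchanged; this is a harmless sign typo once you take absolute values.)
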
 
\begin{proof}
Note first that
$$2(t-r)\frac{v^L}{v^0} = -s_0-\frac{x^i}{r}z_{0i}, \hspace{10mm} 2(t+r)\frac{v^{\underline{L}}}{v^0} = -s_0+\frac{x^i}{r}z_{0i} \hspace{10mm} \text{and} \hspace{10mm} rv_A = v^0 C_A^{i,j} z_{ij},$$
where $C^{i,j}_A$ are bounded functions depending only on the spherical variables such as $re_A= C^{i,j}_A \Omega_{i,j}$. This gives the first two estimates. For the last one, use also that $4r^2v^Lv^{\underline{L}} =\sum_{k < l} |v^0z_{kl}|^2 $, which comes from
\begin{eqnarray}
\nonumber 4r^2v^Lv^{\underline{L}} & = & (rv^0)^2-\left(x^i v_i \right)^2 \hspace{2mm} = \hspace{2mm}  \sum_{i=1}^3(r^2 -|x^i|^2)|v_i|^2-2\sum_{1 \leq k < l \leq 3} x^kx^lv_kv_l, \\ \nonumber 
 \sum_{1 \leq k < l \leq 3} |v^0 z_{kl}|^2 & = & \sum_{1 \leq k < l \leq 3} |x^k|^2 |v_l|^2+|x^l|^2 |v_k|^2-2x^kx^lv_kv_l \hspace{2mm} = \hspace{2mm} \sum_{i=1}^3 \sum_{j \neq i} |x^j|^2 |v_i|^2 -2\sum_{1 \leq k < l \leq n} x^kx^lv_kv_l.
\end{eqnarray}
\end{proof}
\begin{Rq}
There are certain differences to the massive case, where $v^0 = \sqrt{m^2+|v|^2}$ and $m >0$.
\begin{itemize}
\item The inequality $1 \lesssim v^0 v^{\underline{L}}$ does not hold.
\item As $x^{i} v_{i}-tv^0$ does not commute with the massive relativistic transport operator, we rather consider the set of weights $\mathbf{k}_1 := \V \setminus \{s_0 \}$ in this context. Then, the estimate $\tau_-v^L+\tau_+ v^{\underline{L}} \lesssim v^0 \sum_{z \in \mathbf{k}_1} |z|$ is merely satisfied in the exterior of the light cone. 
\end{itemize}
\end{Rq}

\subsection{Various subsets of Minkowski spacetime}\label{subsets}

We introduce here several subsets of Minkowski space depending on $t \in \mathbb{R}_+$, $r \in \R_+$, $u \in \mathbb{R}$. Let $\Sp_{t,r}$, $\Sigma_t$, $C_u(t)$ and $V_u(t)$, be the sets defined as
\begin{eqnarray}
\nonumber \Sp_{t,r} & := & \{ (s,y) \in \R_+ \times \R^3 \hspace{1mm} / \hspace{1mm} (s,|y|)=(t,r) \}, \hspace{1.5cm} C_u(t) \hspace{2mm} := \hspace{2mm} \{(s,y)  \in \mathbb{R}_+ \times \mathbb{R}^3 / \hspace{1mm} \hspace{1mm} s \leq t, \hspace{1mm} s-|y|=u \},  \\ \nonumber
 \Sigma_t & := & \{ (s,y) \in \R_+ \times \R^3 \hspace{1mm} / \hspace{1mm} s=t \}, \hspace{29.5mm} V_u(t) \hspace{2mm} := \hspace{2mm} \{ (s,y) \in \mathbb{R}_+ \times \mathbb{R}^3 / \hspace{1mm} s \leq t, \hspace{1mm} s-|y| \leq u \}. 
\end{eqnarray}
The volume form on $C_u(t)$ is given by $dC_u(t)=\sqrt{2}^{-1}r^{2}d\underline{u}d \mathbb{S}^{2}$, where $ d \mathbb{S}^{2}$ is the standard metric on the $2$ dimensional unit sphere. In view of applying the divergence theorem, we also introduce
$$\Sigma_t^u \hspace{2mm} := \hspace{2mm} \{ (s,y) \in \R_+ \times \R^3 \hspace{1mm} / \hspace{1mm} s=t, \hspace{1mm} |y| \geq s-u \}.$$

\begin{tikzpicture}
\draw [-{Straight Barb[angle'=60,scale=3.5]}] (0,-0.3)--(0,5);
\fill[color=gray!35] (2,0)--(5,3)--(9.8,3)--(9.8,0)--(1,0);
\node[align=center,font=\bfseries, yshift=-2em] (title) 
    at (current bounding box.south)
    {The sets $\Sigma_t$, $C_u(t)$ and $V_u(t)$};
\draw (0,3)--(9.8,3) node[scale=1.5,right]{$\Sigma_t$};
\draw (2,0.2)--(2,-0.2);
\draw [-{Straight Barb[angle'=60,scale=3.5]}] (0,0)--(9.8,0) node[scale=1.5,right]{$\Sigma_0$};
\draw[densely dashed] (2,0)--(5,3) node[scale=1.5,left, midway] {$C_u(t)$};
\draw (6,1.5) node[ color=black!100, scale=1.5] {$V_u(t)$}; 
\draw (0,-0.5) node[scale=1.5]{$r=0$};
\draw (2,-0.5) node[scale=1.5]{$-u$};
\draw (-0.5,4.7) node[scale=1.5]{$t$};
\draw (9.5,-0.5) node[scale=1.5]{$r$};   
\end{tikzpicture}

We also introduce a dyadic partition of $\R_+$ by considering the sequence $(t_i)_{i \in \mathbb{N}}$ and the functions $(T_i(t))_{i \in \mathbb{N}}$ defined by
$$t_0=0, \hspace{5mm} t_i = 2^i \hspace{5mm} \text{if} \hspace{5mm} i \geq 1, \hspace{5mm} \text{and} \hspace{5mm} T_{i}(t)= t \mathds{1}_{t \leq t_i}(t)+t_i \mathds{1}_{t > t_i}(t).$$
We then define the truncated cones $C^i_u(t)$ adapted to this partition by
$$C_u^i(t) := \left\{ (s,y) \in \R_+ \times \R^3 \hspace{2mm} / \hspace{2mm} t_i \leq s \leq T_{i+1}(t), \hspace{2mm} s-|y| =u \right\}= \left\{ (s,y) \in C_u(t) \hspace{2mm} / \hspace{2mm} t_i \leq s \leq T_{i+1}(t) \right\}.$$
The following lemma will be used several times during this paper. It depicts that we can foliate $[0,t] \times \R^3$ by $(\Sigma_s)_{0 \leq s \leq t}$, $(C_u(T))_{u \leq t}$ or $(C^i_u(T))_{u \leq t, i \in \mathbb{N}}$.
\begin{Lem}\label{foliationexpli}
Let $t>0$ and $g \in L^1([0,t] \times \R^3)$. Then
$$\int_0^t \int_{\Sigma_s} g dx ds \hspace{2mm} = \hspace{2mm} \int_{u=-\infty}^t \int_{C_u(t)} g dC_u(t) \frac{du}{\sqrt{2}} \hspace{2mm} = \hspace{2mm} \sum_{i=0}^{+ \infty} \int_{u=-\infty}^t \int_{C^i_u(t)} g dC^i_u(t) \frac{du}{\sqrt{2}}.$$
\end{Lem}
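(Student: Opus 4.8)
\textbf{Proof proposal for Lemma \ref{foliationexpli}.}

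The plan is to reduce everything to Fubini's theorem together with an explicit change of variables, and then observe that the dyadic decomposition is just a disjoint partition of the cone $C_u(t)$. First I would treat the middle equality. The region $[0,t] \times \R^3$ is naturally foliated by the slices $\Sigma_s$; I want to refoliate it instead by the truncated cones $C_u(t)$, indexed by $u \in \mathopen]-\infty, t]$. Concretely, on $\{(s,y) : 0 \le s \le t\}$ I introduce the coordinates $(u, s, \omega) = (s-|y|, s, \omega)$ where $\omega \in \mathbb{S}^2$ records the angular part of $y$; equivalently $|y| = s - u$, which is nonnegative exactly when $u \le s$. For fixed $u \le t$, the set $C_u(t) = \{(s,y) : u \le s \le t,\ |y| = s-u\}$ is swept out as $s$ ranges over $[\max(0,u), t]$ — note when $u \ge 0$ the cone starts at $s=u$ (where $r=0$) and when $u<0$ it starts at $s=0$. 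The Cartesian volume element $dx\,ds = r^2\,dr\,d\mathbb{S}^2\,ds$ becomes, after the substitution $r = s-u$ at fixed $s$ (so $dr = -du$, or rather $|\partial(r)/\partial(u)| = 1$), $(s-u)^2\,du\,d\mathbb{S}^2\,ds$. Comparing with the stated volume form $dC_u(t) = \sqrt{2}^{-1} r^2\, d\underline{u}\, d\mathbb{S}^2$ and the relation $\underline{u} = t+r = 2s - u$ along $C_u(t)$ (so that at fixed $u$, $d\underline{u} = 2\,ds$), one gets $dC_u(t)\frac{du}{\sqrt 2} = \frac{1}{\sqrt 2} r^2 \cdot 2\,ds \cdot d\mathbb{S}^2 \cdot \frac{du}{\sqrt 2} = r^2\, ds\, d\mathbb{S}^2\, du = dx\,ds$, which is exactly the identity needed once one invokes Fubini to exchange the order of integration in $(u,s,\omega)$. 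Since $g \in L^1$, Fubini applies and the middle equality follows.

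For the third equality, I would note that the functions $T_i$ and the sequence $(t_i)$ are designed precisely so that the intervals $[t_i, T_{i+1}(t)]$, for $i \in \mathbb{N}$, cover $[0,t]$ with overlaps only at the endpoints (a set of measure zero in $s$): indeed for $s \le t$ one has $t_i \le s \le T_{i+1}(t) = \min(t, t_{i+1})$ iff $s \in [t_i, t_{i+1}]$, and $\bigcup_{i\ge 0}[t_i,t_{i+1}] = [0,\infty) \supseteq [0,t]$. Consequently, for each fixed $u \le t$, the truncated cones $C^i_u(t) = \{(s,y) \in C_u(t) : t_i \le s \le T_{i+1}(t)\}$ partition $C_u(t)$ up to a measure-zero set, and since $dC^i_u(t)$ is just the restriction of $dC_u(t)$, we have $\int_{C_u(t)} g\, dC_u(t) = \sum_{i=0}^{+\infty} \int_{C^i_u(t)} g\, dC^i_u(t)$ by countable additivity of the integral (the interchange of $\sum_i$ with $\int_{u}$ being justified by Tonelli applied to $|g|$, using $g \in L^1$). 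Substituting this into the middle expression gives the right-hand side.

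I do not expect a genuine obstacle here; the only point requiring a little care is bookkeeping the normalization constants $\sqrt 2$ and the factor relating $d\underline{u}$ to $ds$ along a fixed cone, together with correctly handling the lower limit of the $s$-integration (which is $0$ for $u<0$ and $u$ for $0 \le u \le t$, but in both cases is automatically encoded by the condition $r = s-u \ge 0$ defining $C_u(t)$). One should also remark, for completeness, that the map $(s,y) \mapsto (s, u, \omega)$ is a smooth diffeomorphism away from the measure-zero set $\{y = 0\}$ and the usual coordinate singularities of $\mathbb{S}^2$, so these exceptional sets do not affect the integrals.
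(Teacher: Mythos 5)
Your proof is correct. The paper states Lemma \ref{foliationexpli} without proof, treating both identities as elementary, so there is no authorial argument to compare against; your argument supplies the natural one. The key computation is exactly right: on $C_u(t)$ one has $r = s-u$ and $\underline{u} = s + r = 2s - u$, hence at fixed $u$ both $d\underline{u} = 2\,ds$ and $dr = ds$, so that $dC_u(t)\,\frac{du}{\sqrt{2}} = \frac{1}{\sqrt{2}} r^2 (2\,ds)\,d\mathbb{S}^2\,\frac{du}{\sqrt{2}} = r^2\,dr\,d\mathbb{S}^2\,ds = dx\,ds$, the change of variables $(s,r)\mapsto(s,u)$ having Jacobian $1$. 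Your handling of the domain ($s\in[\max(0,u),t]$, encoded automatically by $r\geq 0$), the appeal to Fubini--Tonelli under the $L^1$ hypothesis, and the observation that the intervals $[t_i, T_{i+1}(t)]$ partition $[0,t]$ up to the measure-zero set $\{t_i\}_{i\in\mathbb{N}}$ are all sound; the third equality is indeed just countable additivity applied cone by cone.
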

Note that the sum over $i$ is in fact finite. The second foliation is useful to take advantage of decay in the $t-r$ direction since $\| \tau_-^{-1} \|_{L^{\infty}(C_u(t))} = \tau_-^{-1}$, whereas $\| \tau_-^{-1} \|_{L^{\infty}(\Sigma_s)} = 1$. The last foliation will be used to take advantage of time decay on $C_u(t)$ as we merely have $\|\tau_+^{-1}\|_{L^{\infty}(C_u(t))} = \tau_-^{-1}$, whereas $\|\tau_+^{-1}\|_{L^{\infty}(C^i_u(t))} \leq (1+t_i)^{-1} \leq 3(1+t_{i+1})^{-1}$.

\section{Energy and pointwise decay estimates}\label{sec4}

In this section, we recall classical energy estimates for both the electromagnetic field and the Vlasov field and how to obtain pointwise decay estimates from them.

\subsection{Energy estimates}\label{energy}

For the Vlasov field, we will use the following energy estimate.

\begin{Pro}\label{energyf}
Let $H : [0,T[ \times \R^3_x \times \left( \R^3_v \setminus \{ 0 \} \right) \rightarrow \R $ and $g_0 : \R^3_x \times \left( \R^3_v \setminus \{ 0 \} \right) \rightarrow \R$ be two sufficiently regular functions and $F$ a sufficiently regular $2$-form. Then, $g$, the unique classical solution of 
\begin{eqnarray}
\nonumber T_F(g)&=&H \\ \nonumber
g(0,.,.)&=&g_0,
\end{eqnarray}
satisfies, for all $t \in [0,T[$, the following estimates,
$$ \left\| \int_{v \in \R^3} |g|dv \right\|_{L^1(\Sigma_t)}+ \sup_{u \leq t} \left\| \int_{v \in \R^3} \frac{v^{\underline{L}}}{v^0} |g| dv \right\|_{L^1(C_u(t))} \hspace{2mm} \leq \hspace{2mm}   2 \left\| \int_{v \in \R^3} |g_0| dv \right\|_{L^1(\Sigma_0)} + 2\int_0^t  \int_{\Sigma_s} \int_{v \in \R^3}|H| \frac{dv}{v^0} dxds.$$
\end{Pro}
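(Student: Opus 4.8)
The natural approach is an energy estimate via the divergence theorem applied to an appropriate vector field built from $|g|$. First I would reduce to the case where $g$ is smooth and does not vanish, so that $|g|$ is differentiable wherever $g\neq 0$ and one can write $T_F(|g|)=\frac{g}{|g|}T_F(g)=\frac{g}{|g|}H$ almost everywhere; the general case follows by approximation (or by noting the identities hold a.e.). The key computation is that for the transport part, $v^\mu\partial_\mu|g| = \partial_\mu(v^\mu|g|)$ since $v^\mu$ does not depend on $(t,x)$, so setting the $1$-form (or vector field) with components $v^\mu|g|/v^0$ — i.e.\ dividing by $v^0$ so the $t$-component is exactly $|g|$ — we get
\[
\partial_\mu\!\left(\frac{v^\mu}{v^0}|g|\right) = \frac{1}{v^0}v^\mu\partial_\mu|g| = \frac{1}{v^0}\Big(T_F(|g|) - F(v,\nabla_v|g|)\Big) = \frac{1}{v^0}\frac{g}{|g|}H - \frac{1}{v^0}F(v,\nabla_v|g|).
\]
The last term is handled by integration by parts in $v$: since $F_{\mu}{}^{i}$ is independent of $v^i$ and $v^\mu F_{\mu}{}^i$ is divergence-free in $v$ (because $\partial_{v^i}(v^\mu F_\mu{}^i)=\partial_{v^i}(v^0)F_{0}{}^i = \frac{v^i}{v^0}F_{0}{}^i$, and after also accounting for the $v^q F_q{}^i$ term one finds the full $v$-divergence of $v^\mu F_\mu{}^i/v^0$ vanishes — this uses the antisymmetry of $F$, exactly as in the well-known fact that the Vlasov flow is measure-preserving), the integral over $\R^3_v$ of $\frac{1}{v^0}F(v,\nabla_v|g|)$ vanishes. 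So after integrating in $v$, the only genuine source is $\int_v \frac{|H|}{v^0}dv$, up to sign.

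\textbf{Carrying it out.} Integrate the identity above over $v\in\R^3$ and then apply the divergence theorem on the region $V_u(t)$ (bounded by $\Sigma_0$, $\Sigma_t^u$, and the truncated cone $C_u(t)$) to the vector field $X^\mu := \int_v \frac{v^\mu}{v^0}|g|\,dv$. The boundary terms are: on $\Sigma_t^u$ the flux is $\int_v|g|\,dv$ integrated over $\Sigma_t^u$ (outward normal $\partial_t$), on $\Sigma_0$ it is $-\|g_0\|$, and on $C_u(t)$ the flux with respect to the null normal $\underline{L}$ picks out $\int_v \frac{v^{\underline L}}{v^0}|g|\,dv$ with a favorable (nonnegative) sign — this is precisely why $v^{\underline L}/v^0\geq 0$, which holds since $v^{\underline L}=\tfrac12(v^0-v^r)\geq 0$, makes the cone flux controllable rather than needing to be discarded. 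Rearranging,
\[
\Big\|{\textstyle\int_v |g|dv}\Big\|_{L^1(\Sigma_t^u)} + \frac{1}{\sqrt 2}\Big\|{\textstyle\int_v \tfrac{v^{\underline L}}{v^0}|g|dv}\Big\|_{L^1(C_u(t))} \leq \|g_0\|_{L^1_{x,v}} + \int_0^t\!\!\int_{\Sigma_s}\!\!\int_v \frac{|H|}{v^0}\,dv\,dx\,ds.
\]
Taking $u\to+\infty$ (so $\Sigma_t^u\to\Sigma_t$ and the cone term drops) gives the bound for $\|\int_v|g|dv\|_{L^1(\Sigma_t)}$; taking the supremum over $u\leq t$ of the two displayed terms and adding gives the stated inequality, with the factor $2$ absorbing the constants from the two separate applications (one for each term, or simply bounding $\|\cdot\|_{L^1(\Sigma_t^u)}\leq\|\cdot\|_{L^1(\Sigma_t)}$ in the first term and using the second application for the cone term).

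\textbf{Main obstacle.} The only delicate point is the rigorous justification of the $v$-integration by parts killing the field term: one must check that $\nabla_v$ applied to the $v$-dependent coefficient $v^\mu F_\mu{}^i/v^0$ has vanishing divergence (this is the statement that the characteristic flow of $T^\chi_F$, or $T_F$, is volume-preserving in $(x,v)$, which rests on $F$ being a $2$-form, i.e.\ antisymmetric, and on the massless dispersion relation $v^0=|v|$), and that there are no boundary contributions at $v=0$ or $|v|=\infty$ — here one uses the decay/regularity hypotheses on $g$ and the fact that throughout the proof of Theorem \ref{theorem} the relevant densities are supported in $|v|\geq 1$, so $v^0$ is bounded below and $1/v^0$ causes no trouble. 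The rest is a routine application of the divergence theorem, and the sign conventions on the null flux through $C_u(t)$ should be tracked carefully using $dC_u(t)=\sqrt 2^{-1}r^2\,d\underline u\,d\mathbb{S}^2$ and $\underline L(u)=2$.
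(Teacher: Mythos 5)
Your proof is correct and follows essentially the same route as the paper's: rewrite $T_F$ acting on $|g|$, form the vector field $X^\mu = \int_v \frac{v^\mu}{v^0}|g|\,dv$, observe that the field term vanishes after integration by parts in $v$ (by antisymmetry of $F$), and apply the divergence theorem on $V_u(t)$ to read off the $\Sigma_t$ flux and the nonnegative cone flux $\int_v \frac{v^{\underline L}}{v^0}|g|\,dv$. Two cosmetic points only: the paper applies the divergence theorem separately on $[0,t]\times\R^3$ and on $V_u(t)$ rather than recovering the slab bound as the limiting case $u=t$ (your "$u\to+\infty$" should read $u=t$, since $u\leq t$ throughout), and the parenthetical claim that $v^\mu F_\mu{}^i$ itself is $v$-divergence-free is not quite right — it is $v^\mu F_\mu{}^i/v^0$ that is, as you state correctly in the very next clause.
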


\begin{proof}
Note first that as $T(|g|)=\frac{g}{|g|}H-\frac{g}{|g|} F ( v, \nabla_v g)$ and since $F$ is a $2$-form, integration by parts in $v$ gives us
$$ \partial_{\mu} \int_v |g| \frac{v^{\mu}}{v^0}dv = \int_v \left( \frac{g}{|g|}\frac{H}{v^0}-\frac{g}{|g|} F\left( \frac{v}{v^0}, \nabla_v g \right) \right) dv = \int_v \left( \frac{g}{|g|}\frac{H}{v^0}- \frac{v^{j}v^i}{(v^0)^3}F_{j i}|g| \right) dv= \int_v  \frac{g}{|g|}H\frac{dv}{v^0} .$$
Hence, the divergence theorem applied to $\int_v |g| \frac{v^{\mu}}{v^0}dv$ in the regions $[0,t] \times \R^3_x$ and $V_u(t)$, for all $u \leq t$, gives
\begin{eqnarray}
\nonumber \int_{\Sigma_t} \int_v |g| dv dx  & \leq &  \int_{\Sigma_0} \int_v |g| dv dx+\int_0^t \int_{\Sigma_s} \left| \int_v  \frac{g}{|g|}H \frac{dv}{v^0} \right| dx ds, \\ \nonumber
\int_{\Sigma^u_t} \int_v  |g| dvdx+\sqrt{2} \int_{C_u(t)} \int_v \frac{v^{\underline{L}}}{v^0} |g| dv dC_u(t)  & \leq &  \int_{\Sigma^u_0} \int_v  |g| dvdx+\int_0^t \int_{\Sigma^u_s} \left| \int_v  \frac{g}{|g|}H \frac{dv}{v^0} \right| dx ds ,
\end{eqnarray}
which implies the result.
\end{proof}
We then define, for $(Q,q) \in \mathbb{N}^2$,
\begin{eqnarray}\label{defE1}
 \E[g](t) & := & \left\| \int_{v \in \R^3} |g|dv \right\|_{L^1(\Sigma_t)}+ \sup_{u \leq t} \left\| \int_{v \in \R^3} \frac{v^{\underline{L}}}{v^0} |g| dv \right\|_{L^1(C_u(t))}, \\ 
  \E^{q}_{Q}[g](t) & := & \sum_{ \begin{subarray}{} \widehat{Z}^{\beta} \in \K^{|\beta|} \\ \hspace{0.5mm} |\beta| \leq Q  \end{subarray}} \sum_{z \in \V}  \E \left[  z^q \widehat{Z}^{\beta} g \right](t). \label{defE2}
\end{eqnarray}
We now introduce the energy norms, related to the electromagnetic field, used in this paper. We consider, for the remaining of this section, $G$ a sufficiently regular $2$-form defined on $[0,T[ \times \R^3$ and we denote by $(\alpha,\underline{\alpha},\rho,\sigma)$ its null decomposition. We moreover suppose that $G$ satisfies
\begin{eqnarray}
\nonumber \nabla^{\mu} G_{\mu \nu} & = & J_{\nu} \\ \nonumber
\nabla^{\mu} {}^* \! G_{\mu \nu} & = & 0,
\end{eqnarray}
with $J$ be a sufficiently regular $1$-form defined on $[0,T[ \times \R^3$.
\begin{Def}\label{defMax1}
Let $k \in \mathbb{N}$. We define, for $t \in [0,T[$,
\begin{eqnarray}
\nonumber \mathcal{E}^{\overline{K}_0}[G](t) & := & 4\int_{\Sigma_t} T[G]_{0 \nu} \overline{K}_0^{\nu} dx+ 2 \sup_{u \leq t } \int_{C_u(t)} T[G]_{L \nu}\overline{K}^{\nu}_0dC_u(t) \\ \nonumber 
 & = & \int_{\Sigma_t} \tau_+^2|\alpha|^2+\tau_-^2|\underline{\alpha}|^2+(\tau_+^2+\tau_-^2)(|\rho|^2+|\sigma|^2)dx+\sup_{u \leq t}\int_{C_u(t)} \tau_+^2|\alpha|^2+\tau_-^2(|\rho|^2+|\sigma|^2)d C_u(t), \\
 \nonumber \mathcal{E}^{\partial_t,k}[G](t) & = & \int_{\Sigma_t} \tau_-^{2} \log^{-k}(1+\tau_-) \left( |\alpha|^2+|\underline{\alpha}|^2+2|\rho|^2+2|\sigma|^2 \right)dx .
\end{eqnarray}
For $N \in \mathbb{N}^*$, we also introduce
$$  \mathcal{E}_N[G] :=  \sum_{\begin{subarray}{l}  Z^{\gamma} \in \mathbb{K}^{|\gamma|} \\ \hspace{0.5mm} |\gamma| \leq N \end{subarray}}  \mathcal{E}^{\overline{K}_0}[\mathcal{L}_{Z^{\gamma}}(G)] \hspace{12mm} \text{and} \hspace{12mm} \mathcal{E}^{k}_N[G] :=  \sum_{\begin{subarray}{l}  Z^{\gamma} \in \mathbb{K}^{|\gamma|} \\ \hspace{0.5mm} |\gamma| \leq N \end{subarray}}  \mathcal{E}^{\partial_t,k}[\mathcal{L}_{Z^{\gamma}}(G)].$$
\end{Def}
\begin{Rq}
During the proof of Theorem \ref{theorem}, we will have a small growth on $\mathcal{E}_N[G]$ and not on $\mathcal{E}^{k}_N[G]$. The second energy norm will then permit us to obtain the optimal decay rate in the $t+r$ direction on $\underline{\alpha}$, which will be crucial for closing the energy estimates for the Vlasov field.
\end{Rq}
The following energy estimates hold.
\begin{Pro}\label{energyMax1}
We have, with $\overline{C} >0$ a constant depending on $k$, for all $ t \in [0,T[$,
\begin{eqnarray}
 \nonumber \mathcal{E}^{\overline{K}_0}[G](t) & \leq & 2 \mathcal{E}^{\overline{K}_0}[G](0)+8 \int_0^t \int_{\Sigma_s} |G_{\mu \nu} J^{\mu} \overline{K}_0^{\nu}| dx ds, \\
 \nonumber \mathcal{E}^{\partial_t, k}[G](t) & \leq & \overline{C} \mathcal{E}^{\overline{K}_0}[G](0)+\overline{C} \int_0^t \int_{\Sigma_s} \frac{\tau_-^2}{\log^k(1+\tau_-)}|G_{\mu 0} J^{\mu}| dx ds+\overline{C}\sup_{0 \leq s \leq t} \left( \log^{1-k}(2+s) \mathcal{E}^{\overline{K}_0}[G](s) \right). 
\end{eqnarray}
\end{Pro}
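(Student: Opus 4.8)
The plan is to prove the two energy inequalities of Proposition~\ref{energyMax1} by applying the divergence theorem to the vector field $P^\nu := T[G]_{\mu\nu} X^\mu$ for a well-chosen multiplier $X$, and using Corollary~\ref{tensordiv} to control the spacetime error terms. For the first estimate I take $X = \overline{K}_0$; for the second I take $X = \partial_t$ after inserting the weight $\tau_-^2\log^{-k}(1+\tau_-)$.

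\textbf{First inequality (multiplier $\overline{K}_0$).} I would integrate $\nabla^\nu\big(T[G]_{\mu\nu}\overline{K}_0^\mu\big)$ over the region $[0,t]\times\R^3$, and also over $V_u(t)$ for each $u\le t$, exactly as in the proof of Proposition~\ref{energyf}. The boundary terms are, on $\Sigma_t$ and $\Sigma_0$, the flux $\int_{\Sigma_s} T[G]_{0\nu}\overline{K}_0^\nu dx$, and on the null cone $C_u(t)$ the flux $\int_{C_u(t)} T[G]_{L\nu}\overline{K}_0^\nu dC_u(t)$; by \eqref{tensorcompo} and the definition $\overline{K}_0 = \tfrac12\tau_+^2 L + \tfrac12\tau_-^2\underline L$ these reproduce the positive quadratic expressions in the null components displayed in Definition~\ref{defMax1} (here one uses $T[G]_{LL}=|\alpha|^2$, $T[G]_{\underline L\,\underline L}=|\underline\alpha|^2$, $T[G]_{L\underline L}=|\rho|^2+|\sigma|^2$, and that $T[G]$ is traceless and symmetric, together with the standard positivity of the $\overline{K}_0$-flux through the spheres). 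The bulk term is $\int_0^t\int_{\Sigma_s}\nabla^\nu\big(T[G]_{\mu\nu}\overline{K}_0^\mu\big)dxds$, which splits as $\int T[G]_{\mu\nu}\nabla^\nu\overline{K}_0^\mu + \int (\nabla^\nu T[G]_{\mu\nu})\overline{K}_0^\mu$. The deformation tensor piece ${}^{(\overline{K}_0)}\pi = \mathcal L_{\overline{K}_0}\eta$ contracted against the traceless tensor $T[G]$ vanishes (the conformal Killing property of $\overline{K}_0$, whose only nonzero deformation is a multiple of $\eta$, is exactly why $T[G]$ tracelessness is invoked), leaving only $\int_0^t\int_{\Sigma_s}(\nabla^\nu T[G]_{\mu\nu})\overline{K}_0^\mu dxds = \int_0^t\int_{\Sigma_s} G_{\mu\lambda}J^\lambda \overline{K}_0^\mu dxds$ by Corollary~\ref{tensordiv}. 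Taking absolute values and combining the $\Sigma_t$ identity with the supremum over $u$ of the $V_u(t)$ identity yields the stated bound $\mathcal E^{\overline{K}_0}[G](t) \le 2\mathcal E^{\overline{K}_0}[G](0) + 8\int_0^t\int_{\Sigma_s}|G_{\mu\nu}J^\mu\overline{K}_0^\nu|dxds$.

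\textbf{Second inequality (weighted $\partial_t$ multiplier).} Here I would work with the current $P^\nu = w(\tau_-)\,T[G]_{0\nu}$ where $w(\tau_-) = \tau_-^2\log^{-k}(1+\tau_-)$, integrate $\nabla_\nu P^\nu$ over $[0,t]\times\R^3$, and bound the boundary term on $\Sigma_0$ by $\mathcal E^{\overline{K}_0}[G](0)$ up to a constant (since $w \lesssim \tau_+^2$ on $\Sigma_0$ where $\tau_-\simeq\tau_+$). The bulk term again splits: the $(\nabla^\nu T[G]_{0\nu})$ contribution gives $\int_0^t\int_{\Sigma_s} w\,G_{\mu 0}J^\mu dxds$, which is the second term on the right; the remaining contribution comes from $\partial_t$ not being Killing once multiplied by $w$, i.e. from $T[G]^{\mu\nu}\partial_\mu(w\,\delta_\nu^0)$-type terms, producing an expression controlled by $\partial_\mu w \cdot |T[G]|$. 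Since $\partial_t w - \partial_r w$ and $\partial_t w + \partial_r w$ involve $w'(\tau_-)$, which behaves like $\tau_- \log^{-k}(1+\tau_-)$, one bounds these error terms pointwise by $\tau_-\log^{-k}(1+\tau_-)$ times the null components of $T[G]$; estimating by $\tau_+^{-1}$ times the $\overline{K}_0$-flux density and integrating in time against $\int_0^t \tau_+^{-1}ds \lesssim \log(2+t)$, one arrives at a term $\lesssim \sup_{0\le s\le t}\big(\log^{1-k}(2+s)\,\mathcal E^{\overline{K}_0}[G](s)\big)$. Collecting the three contributions gives the second inequality.

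\textbf{Main obstacle.} The routine part is the divergence-theorem bookkeeping and the algebra expressing fluxes in terms of $(\alpha,\underline\alpha,\rho,\sigma)$; the delicate point is the error term in the second estimate. One must track carefully that the weight derivative $w'(\tau_-)$ is paired only with the \emph{good} combinations of null components (so that the $\tau_+^{-1}$ gain is legitimate and no uncontrolled $\underline\alpha$ term with the wrong weight survives), and that the resulting time integral genuinely produces the claimed logarithmic loss rather than a power loss — this is exactly the mechanism by which $\mathcal E^{\partial_t,k}_N$ inherits a \emph{bounded} (rather than growing) estimate from a slowly growing $\mathcal E_N$, as highlighted in the remark following Definition~\ref{defMax1}. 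I would also need to check the sign/size of the constant $\overline C$ and that the bulk error can be absorbed or is handled purely by the supremum term, with no Gr\"onwall argument required at this stage.
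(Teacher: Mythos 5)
Your overall strategy is the same as the paper's: apply the divergence theorem to $T[G]_{\mu\nu}X^\mu$ with multiplier $\overline{K}_0$ (respectively, the weight $\tau_-^2\log^{-k}(1+\tau_-)$ times $\partial_t$), use the traceless/conformal-Killing cancellation and Corollary~\ref{tensordiv} to reduce the bulk to the source term, and foliate by $\Sigma_s$ and $V_u(t)$. Your treatment of the first inequality is complete and matches the paper. For the second, you correctly identify that since $w=\tau_-^2\log^{-k}(1+\tau_-)$ depends only on $u$, the error term $\partial^\mu w\,T[G]_{\mu 0}$ reduces to $-\tfrac12\underline{L}(w)\,T[G]_{L0}$ and therefore involves only the good components $\alpha,\rho,\sigma$ (no $\underline\alpha$ at the wrong weight), which is the essential structural point.

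However, your quantitative closing of the error term has a gap. You bound the error pointwise by $\tau_-\log^{-k}(1+\tau_-)$ times good null components, then ``estimate by $\tau_+^{-1}$ times the $\overline{K}_0$-flux density and integrate $\int_0^t\tau_+^{-1}ds\lesssim\log(2+t)$.'' Carried out literally, this yields $\log(2+t)\sup_s\mathcal{E}^{\overline{K}_0}[G](s)$, which is \emph{not} the stated $\sup_s\log^{1-k}(2+s)\mathcal{E}^{\overline{K}_0}[G](s)$ (for $k\geq 1$ your bound grows, the stated one does not). The leap ``one arrives at a term $\lesssim\sup_s(\log^{1-k}(2+s)\mathcal{E}^{\overline{K}_0}[G](s))$'' is not justified by what precedes it. The paper's argument retains the logarithmic factor from the weight derivative all the way through: it bounds the error density by $\frac{\tau_+^2}{\tau_+\log^{k+1}(1+\tau_+)}(|\alpha|^2+|\rho|^2+|\sigma|^2)$, so that after dividing out the $\tau_+^2$ flux density and taking the sup on $\Sigma_s$ one is left with the time integral $\int_0^t\frac{\mathcal{E}^{\overline{K}_0}[G](s)}{(1+s)\log^{k+1}(2+s)}\,ds$, and then writes
\[
\frac{\mathcal{E}^{\overline{K}_0}[G](s)}{(1+s)\log^{k+1}(2+s)}=\frac{1}{(1+s)\log^{2}(2+s)}\cdot\log^{1-k}(2+s)\,\mathcal{E}^{\overline{K}_0}[G](s),
\]
so that the prefactor is integrable on $[0,\infty)$ and the remaining factor is pulled out as the supremum. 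This is the step you must supply: you cannot discard the $\log^{-k}$ in the weight derivative before integrating in time, and you need the convergent integral $\int_0^\infty\frac{ds}{(1+s)\log^2(2+s)}<\infty$ rather than the divergent $\int\frac{ds}{1+s}$ to obtain a $t$-independent constant $\overline C$.
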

\begin{proof}
Denoting $ T[G]$ by $T$ and using Corollary \ref{tensordiv}, we have, as $\nabla^{\mu} \overline{K}_0^{\nu}+\nabla^{\nu} \overline{K}_0^{\mu}=4t\eta^{\mu \nu}$ and ${T_{\mu}}^{\mu}=0$,
$$  \nabla^{\mu} \left( T_{\mu \nu} \overline{K}_0^{\nu} \right)  \hspace{1mm} = \hspace{1mm} \nabla^{\mu} T_{\mu \nu}\overline{K}_0^{\nu}+T_{\mu \nu} \nabla^{\mu} \overline{K}_0^{\nu}  \hspace{1mm} = \hspace{1mm} G_{\nu \lambda} J^{\lambda} \overline{K}_0^{\nu} +\frac{1}{2} T_{\mu \nu} \left( \nabla^{\mu} \overline{K}_0^{\nu}+\nabla^{\nu} \overline{K}_0^{\mu} \right) \hspace{1mm} = \hspace{1mm} G_{\nu \lambda} J^{\lambda} \overline{K}_0^{\nu}. $$
Applying the divergence theorem in $[0,t] \times \R^3$ and in $V_{u}(t)$, for all $u \leq t$, yields
\begin{eqnarray}\label{eq:2}
 \int_{\Sigma_t}  T_{0 \nu} \overline{K}_0^{\nu} dx & = & \int_{\Sigma_0}  T_{0 \nu} \overline{K}_0^{\nu}  dx-\int_0^t \int_{\Sigma_s }   G_{\mu \nu} J^{\mu} \overline{K}_0^{\nu}  dx ds, \\ 
 \int_{\Sigma_t^u}  T_{0 \nu} \overline{K}_0^{\nu} dx + \frac{1}{\sqrt{2}} \int_{C_{u}(t)} T_{L \nu} \overline{K}_0^{\nu}dC_{u}(t) & = & \int_{\Sigma^u_0}  T_{0 \nu} \overline{K}_0^{\nu}  dx-\int_0^t \int_{\Sigma^u_s }   G_{\mu \nu} J^{\mu} \overline{K}_0^{\nu}  dx ds. \label{eq:22}
 \end{eqnarray}
Notice, using \eqref{tensorcompo} and $2 \overline{K}_0=\tau_+^2L+\tau_-^2 \underline{L}$, that
$$ 4T_{0 \nu} \overline{K}_0^{\nu} = \tau_+^2 |\alpha|+\tau_-^2|\underline{\alpha}|+(\tau_+^2+\tau_-^2)(|\rho|+|\sigma|) \hspace{10mm} \text{and} \hspace{10mm} 2 T_{L \nu} \overline{K}_0^{\nu} = \tau_+^2 |\alpha|^2+\tau_-^2|\rho|^2+\tau_-^2|\sigma|^2.$$
It then only remains, to obtain the first estimate, to take the supremum over all $u \leq t$ in \eqref{eq:22} and to combine it with \eqref{eq:2}. For the other one, note first using Corollary \ref{tensordiv} and \eqref{tensorcompo} that
\begin{eqnarray}
\nonumber \left| \nabla^{\mu}  \left( \tau_-^2 \log^{-k}(1 \hspace{-0.3mm} + \hspace{-0.3mm} \tau_-) T_{\mu 0} \right) \hspace{-0.2mm} \right| \hspace{-0.5mm}  & = & \hspace{-0.5mm} \left| \tau_-^2 \log^{-k}(1 \hspace{-0.3mm} + \hspace{-0.3mm} \tau_-) \nabla^{\mu} T_{\mu 0}-\frac{1}{2}\underline{L} \left( \tau_-^2 \log^{-k}(1 \hspace{-0.3mm} + \hspace{-0.3mm} \tau_-) \right) T_{L 0} \right|  \\ \nonumber
& = & \hspace{-0.5mm} \left| \tau_-^2 \log^{-k}(1 \hspace{-0.3mm} + \hspace{-0.3mm} \tau_-)  \nabla^{\mu} T_{\mu 0} -u \log^{-k}(1 \hspace{-0.3mm} + \hspace{-0.3mm} \tau_-) \hspace{-0.2mm} \left(2-\frac{k \tau_-}{1 \hspace{-0.3mm} + \hspace{-0.3mm} \tau_-} \log^{-1}(1 \hspace{-0.3mm} + \hspace{-0.3mm} \tau_-) \right) \hspace{-0.2mm} T_{L 0} \right| \\ \nonumber
& \lesssim & \hspace{-0.5mm} \tau_-^2 \log^{-k}(1+\tau_-) \left| G_{0 \lambda} J^{\lambda} \right|+\frac{\tau_+^2}{\tau_+ \log^{k+1}(1 \hspace{-0.3mm} + \hspace{-0.3mm} \tau_+)} \left( \left| \alpha  \right|^2+\left| \rho \right|^2+\left| \sigma  \right|^2 \right).
\end{eqnarray}
Consequently, applying the divergence theorem in $[0,t] \times \R^3$, we obtain
$$ \int_{\Sigma_t} \frac{\tau_-^2}{ \log^{k}(1+\tau_-)} T_{00}dx  \lesssim \int_{\Sigma_0} (1+r)^2 T_{00}dx+\int_0^t \int_{\Sigma_s } \frac{\tau_-^2}{\log^{k}(1+\tau_-)}  \left|   G_{0 \nu} J^{\nu} \right| dx ds+\int_0^t \frac{\mathcal{E}^{\overline{K}_0}[G](s)}{(1+s) \log^{k+1}(2+s)} ds.$$
The result then follows from $4T_{00} =  |\alpha|^2+|\underline{\alpha}|^2+2|\rho|^2+2|\sigma|^2$ and $\int_0^{+\infty} (1+s)^{-1} \log^{-2}(2+s) ds < + \infty$.
\end{proof}
\subsection{Decay estimates}
\subsubsection{Decay estimates for velocity averages}
We prove in this subsection an $L^{\infty}-L^1$ and an $L^2-L^1$ Klainerman-Sobolev inequality for velocity averages. The $L^{\infty}-L^1$ one was originally proved in \cite{FJS} (see Theorem $6$) and we propose here a shorter proof. Let us start with the following lemma.
\begin{Lem}\label{Sobsphere}
Let $g : \mathbb{S}^2 \times \left( \R^3_v \setminus \{ 0 \} \right) \rightarrow \R$ a sufficiently regular function. Then, with $\Omega^{\beta} \in \Or^{|\beta|}$,
$$\left\| \int_{v \in \R^3}  |g| dv \right\|_{L^{\infty}(\mathbb{S}^2)} \lesssim   \sum_{ |\beta| \leq 2 } \left\| \int_{v \in \R^3}   \left| \widehat{\Omega}^{\beta} g \right| dv \right\|_{L^1(\mathbb{S}^2)} \hspace{-1mm}, \hspace{1cm}  \left\| \int_{v \in \R^3}  |g| dv \right\|_{L^{2}(\mathbb{S}^2)} \lesssim   \sum_{ |\beta| \leq 1  } \left\| \int_{v \in \R^3}   \left| \widehat{\Omega}^{\beta} g \right| dv \right\|_{L^1(\mathbb{S}^2)} \hspace{-1mm} .$$
\end{Lem}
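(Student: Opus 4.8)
The plan is to reduce the two inequalities on $\mathbb{S}^2$ to the classical scalar Sobolev embeddings $H^2(\mathbb{S}^2)\hookrightarrow L^\infty(\mathbb{S}^2)$ and $H^1(\mathbb{S}^2)\hookrightarrow L^4(\mathbb{S}^2)\hookrightarrow L^2(\mathbb{S}^2)$, applied to the scalar function $\omega\mapsto\int_{v}|g|(\omega,v)\,dv$. The only thing to check is that differentiating under the integral sign and passing the absolute value around is compatible with the rotation vector fields, and that this costs only the \emph{lifted} rotations $\widehat{\Omega}^\beta$ rather than extra powers of $r$ or weights in $v$.

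First I would recall that on the sphere the spherical derivatives $e_A$ are, up to bounded coefficients, linear combinations of the rotations $\Omega_{ij}$ (Lemma \ref{goodderiv}, on a fixed sphere where $r$ is just a constant that can be absorbed), so it suffices to prove the estimates with $\slashed\nabla^{\beta}$-type norms replaced by sums over $\Omega^{\beta}\in\Or^{|\beta|}$ acting on $\int_v|g|\,dv$. Then, setting $h(\omega):=\int_{v\in\R^3}|g|(\omega,v)\,dv$, the scalar Sobolev inequalities give
$$\|h\|_{L^\infty(\mathbb{S}^2)}\lesssim\sum_{|\beta|\le 2}\|\Omega^{\beta}h\|_{L^1(\mathbb{S}^2)},\qquad \|h\|_{L^2(\mathbb{S}^2)}\lesssim\sum_{|\beta|\le 1}\|\Omega^{\beta}h\|_{L^1(\mathbb{S}^2)}.$$
The second step is to commute each $\Omega=\Omega_{ij}$ through the $v$-integral: since $\Omega_{ij}$ has no $v$-component while $\widehat{\Omega}_{ij}=\Omega_{ij}+v^i\partial_{v^j}-v^j\partial_{v^i}$ differs from it by a $v$-divergence-free vector field, integration by parts in $v$ (exactly as in the proof of Lemma \ref{lift}) yields, almost everywhere, $\bigl|\Omega_{ij}\!\int_v|g|\,dv\bigr|=\bigl|\int_v \tfrac{g}{|g|}\widehat{\Omega}_{ij}g\,dv\bigr|\le\int_v|\widehat{\Omega}_{ij}g|\,dv$. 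Iterating this $|\beta|$ times (using that $\Or$ and $\K$ are closed under brackets, so higher commutators of the $\widehat\Omega$'s stay among the $\widehat{\Omega}^{\beta}$ with $|\beta|$ unchanged or smaller) gives $|\Omega^{\beta}h|\lesssim\sum_{|\beta'|\le|\beta|}\int_v|\widehat{\Omega}^{\beta'}g|\,dv$ pointwise on $\mathbb{S}^2$. Plugging this into the scalar inequalities above and using that the $L^1(\mathbb{S}^2)$-norm of $\int_v|\widehat{\Omega}^{\beta'}g|\,dv$ is bounded by $\sum_{|\beta|\le 2}$ (resp. $\sum_{|\beta|\le 1}$) of the same quantities finishes the proof.

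The only genuinely delicate point is the almost-everywhere differentiation of $|g|$ and the handling of the sign factor $g/|g|$ where $g$ vanishes; this is exactly the technical issue already dispatched in Lemma \ref{lift}, so I would simply invoke that argument (approximate $|\cdot|$ by a smooth convex function, or work on $\{g\neq 0\}$ and note the complement contributes nothing), rather than redo it. Everything else — the scalar Sobolev embedding on the compact manifold $\mathbb{S}^2$ and the boundedness of the coefficients relating $\slashed\nabla$ to the $\Omega_{ij}$ — is classical and contributes only the implicit constant hidden in $\lesssim$.
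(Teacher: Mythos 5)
Your proposal is correct and rests on the same two ingredients as the paper's proof: iterated one–dimensional Sobolev inequalities on $\mathbb{S}^2$ (equivalently, $W^{2,1}(\mathbb{S}^2)\hookrightarrow L^\infty$ and $W^{1,1}(\mathbb{S}^2)\hookrightarrow L^2$ — note these are $W^{k,1}$ embeddings, not $H^k$ as you label them) together with the integration-by-parts of Lemma \ref{lift}, which replaces $\Omega$ acting on $\int_v|\cdot|\,dv$ by a velocity average of $\widehat\Omega$ acting on $|\cdot|$.

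The one point worth flagging is the order in which you combine these. You apply the full scalar Sobolev inequality first, producing terms $\Omega^\beta h$ with $|\beta|=2$, and only then commute rotations through the $v$-integral, claiming the pointwise iterate $|\Omega^\beta h|\lesssim\sum_{|\beta'|\le|\beta|}\int_v|\widehat\Omega^{\beta'}g|\,dv$ a.e.\ by "iterating Lemma \ref{lift}." That iteration is not literally Lemma \ref{lift}: the lemma controls a single rotational derivative of $\int_v|g|\,dv$; to justify the second-order version you must make a.e.\ sense of $\widehat\Omega_1\widehat\Omega_2|g|$, i.e.\ second weak derivatives of $|g|$, which is a different (if still manageable, for $g$ of the stated regularity) technical issue. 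The paper avoids this entirely by interleaving: after one 1D Sobolev step, Lemma \ref{lift} is applied immediately, and the next 1D Sobolev step then acts on $\int_v|\widehat\Omega^\kappa g|\,dv$, which is again a $v$-average of an absolute value and therefore only needs to be differentiated once before Lemma \ref{lift} is invoked again. Either route reaches the conclusion, but the paper's interleaving never requires more than one weak derivative of an absolute value and is the cleaner argument; if you keep your ordering you should justify the second-order step rather than attribute it to Lemma \ref{lift} as stated.
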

\begin{proof}
Let $\omega \in \mathbb{S}^2$ and $(\theta, \varphi ) \in ]0, \pi [ \times ]0, 2 \pi [$ be a system of spherical coordinates such that $\omega_1:=\theta(\omega)=\frac{\pi}{2}$ and $\omega_2 := \varphi ( \omega )= \pi$. Using a one dimensional Sobolev inequality, that $|\partial_{\theta} u| \lesssim \sum_{\Omega \in \Or} |\Omega u |$ and Lemma \ref{lift}, we have,
\begin{align*}
 \int_v  |g|(\omega_1,\omega_2,v) dv \hspace{1mm} & \lesssim \hspace{1mm} \int_{|\theta - \frac{\pi}{2}|\leq \frac{\pi}{4}} \left( \left| \int_v |g|(\theta,\omega_2,v) dv \right|+\left| \partial_{\theta} \int_v  |g|(\theta,\omega_2,v) dv \right| \right) d \theta \\ & \lesssim \hspace{1mm} \sum_{ \begin{subarray}{} \Omega^{\kappa} \in \Or^{|\kappa|} \\ \hspace{1mm} |\kappa| \leq 1 \end{subarray} } \int_{|\theta - \frac{\pi}{2}|\leq \frac{\pi}{4}} \int_v  | \widehat{\Omega}^{\kappa} g|(\theta,\omega_2,v) dv \sin (\theta ) d \theta.
 \end{align*}
We obtain similarly that 
\begin{align*}
\int_{|\theta - \frac{\pi}{2}|\leq \frac{\pi}{4}}  \int_v  | \widehat{\Omega}^{\kappa} g|(\theta,\omega_2,v) dv \sin ( \theta ) d \theta \hspace{1mm} & \lesssim \hspace{1mm}  \sum_{ \begin{subarray}{} \Omega^{\gamma} \in \Or^{|\gamma|} \\ \hspace{1mm} |\gamma| \leq 1 \end{subarray}} \int_{\varphi =0}^{2 \pi} \int_{|\theta - \frac{\pi}{2}|\leq \frac{\pi}{4}} \int_v  | \widehat{\Omega}^{\gamma} \widehat{\Omega}^{\kappa} g|(\theta,\varphi,v) dv  \sin ( \theta )  d \theta d \varphi \\ & \lesssim \hspace{1mm} \sum_{ \begin{subarray}{} \Omega^{\beta} \in \Or^{|\beta|} \\ \hspace{1mm} |\beta| \leq 2 \end{subarray} } \left\| \int_{v}   \left| \widehat{\Omega}^{\beta} g \right| dv \right\|_{L^1(\mathbb{S}^2)},
\end{align*}
which implies the first inequality. For the other one, by a standard $L^2-L^1$ Sobolev inequality, one has
$$ \left\| \int_{v \in \R^3}  |g| dv \right\|_{L^{2}(\mathbb{S}^2)} \lesssim \left\| \int_{v \in \R^3}  |g| dv \right\|_{L^{1}(\mathbb{S}^2)}+\left\| \partial_{\theta} \int_{v \in \R^3}  |g| dv \right\|_{L^1(\mathbb{S}^2)}+\left\| \partial_{\varphi} \int_{v \in \R^3}  |g| dv \right\|_{L^1(\mathbb{S}^2)} .$$
It then remains to apply Lemma \ref{lift} again.
\end{proof}
\begin{Pro}\label{KS1}
Let $f:[0,T[ \times \R^3 \times \left( \R^3_v \setminus \{ 0 \} \right)$ be a sufficiently regular function, $z \in \V$ and $j \in \mathbb{N}$. Then, 
$$\forall \hspace{0.5mm} (t,x) \in [0,T[ \times \R^3, \hspace{1cm} \int_{v \in \R^3}  |z^jf|(t,x,v) dv \hspace{2mm} \lesssim \hspace{2mm} \frac{(j+1)^3}{\tau_+^2 \tau_-} \sum_{\begin{subarray}{l} \widehat{Z}^{\beta} \in \K^{|\beta|} \\ \hspace{1mm} |\beta| \leq 3 \end{subarray}} \sum_{w \in \V} \left\| \int_{v \in \R^3} \left| w^j \widehat{Z}^{\beta} f \right| dv \right\|_{ L^1(\Sigma_t)}.$$
\end{Pro}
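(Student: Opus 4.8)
The plan is to reduce the statement to a purely spatial $L^1$--$L^\infty$ Klainerman--Sobolev inequality on $\Sigma_t$, and to prove the latter (in the streamlined way announced before the statement) by combining the spherical Sobolev inequality of Lemma~\ref{Sobsphere} with one-dimensional Sobolev inequalities in the radial variable, using Lemma~\ref{goodderiv} to trade spatial derivatives for vector fields of $\mathbb{K}$ and Lemma~\ref{lift} to let them act on velocity averages.

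\textbf{Step 1: reduction and the factor $(j+1)^3$.} Write $\phi := \int_v |z^j f|\, dv$. By Lemma~\ref{lift}, for every $Z \in \mathbb{K}$ one has $|Z\phi| \le \phi + \sum_{\widehat Z \in \K} \int_v |\widehat Z(z^j f)|\, dv$ pointwise, and $\widehat{Z}(z^j f) = z^j \widehat Z f + j\,z^{j-1}(\widehat Z z) f$. Lemma~\ref{weights} gives $|\widehat Z z| \le \sum_{w \in \V}|w|$, and the elementary bound $\max(a,b)^j \le a^j + b^j$ yields $|z^{j-1} w| \le |z|^j + |w|^j$; hence $\int_v |\widehat Z(z^j f)|\, dv \lesssim (j+1)\sum_{w \in \V}\big( \int_v |w^j \widehat Z f|\, dv + \int_v|w^j f|\,dv \big)$. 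Iterating this up to three times shows that it suffices to establish, for $\psi$ a velocity average (with its ``derivatives'' $Z^\gamma\psi$ interpreted through Lemma~\ref{lift}, i.e. replaced by $\sum_{\widehat Z^\gamma \in \K^{|\gamma|}}\int_v|\widehat Z^\gamma h|\,dv$ for the relevant weighted density $h$), the bound
$$\psi(x) \;\lesssim\; \frac{1}{\tau_+^2\,\tau_-} \sum_{\substack{Z^\gamma \in \mathbb K^{|\gamma|} \\ |\gamma| \le 3}} \big\| Z^\gamma \psi \big\|_{L^1(\Sigma_t)} ;$$
the factor $(j+1)^3$ is then exactly what Step~1 produces.

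\textbf{Step 2: the spatial estimate.} Fix $(t,x)$, set $r = |x|$, and split into two regions. If $r \le \tfrac{1+t}{2}$, then $\tau_+ \sim \tau_- \sim 1+t$, and one uses the standard Sobolev embedding $W^{3,1}(B) \hookrightarrow L^\infty(B)$ on the ball $B = B\big(x, \tfrac{1+t}{4}\big)$ rescaled to unit size: the rescaling produces $(1+t)^{-3} \sim \tau_+^{-2}\tau_-^{-1}$, and by Lemma~\ref{goodderiv} each $(1+t)\partial_i$ is a combination, with coefficients bounded on $B$, of $\Omega_{0i}$, $S$ and $\Omega_{jk}$, all admissible on velocity averages by Lemma~\ref{lift}. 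If $r \ge \tfrac{1+t}{2}$, then $\tau_+ \sim 1+r$; apply first Lemma~\ref{Sobsphere} on $\mathbb{S}^2$ together with $\int_{\mathbb{S}^2}(\cdot) = r^{-2}\int_{\Sp_{t,r}}(\cdot)$ to gain the factor $r^{-2}\sim\tau_+^{-2}$ at the cost of two lifted rotations, and then, for fixed angle, a one-dimensional weighted Sobolev inequality in $\rho = |x|$ to recover the remaining $\tau_-^{-1}$ with one further derivative: on $\Sigma_t$ one has $\partial_\rho = \tfrac12 L - \tfrac12 \underline L$ with $(t+r)L = S + \tfrac{x^i}{r}\Omega_{0i}$ and $(t-r)\underline L = S - \tfrac{x^i}{r}\Omega_{0i}$ (Lemma~\ref{goodderiv}), so $\tau_-\partial_\rho$ is, on the part $|t-r|\ge 1$, a combination with bounded coefficients of $S$ and the $\Omega_{0i}$, and is harmless on $|t-r|\le 1$ (there $\tau_- \sim 1$ and $\underline L$ is an ordinary derivative). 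Summing $2+1 = 3$ derivatives and combining the two regions yields the spatial estimate, and with Step~1 the Proposition.

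\textbf{Main obstacle.} The delicate point is the radial step in the wave zone $r \gtrsim t$: extracting \emph{exactly} the weight $\tau_-^{-1}$, with no logarithmic loss, requires choosing the one-dimensional cutoff and $\tau_-$-weight carefully and using the precise form of $\partial_\rho|_{\Sigma_t}$ from Lemma~\ref{goodderiv}, and one must check that after substituting these vector fields of $\mathbb{K}$ for $\partial_\rho$ and invoking Lemma~\ref{lift} no growth in $\tau_+$ is reintroduced. Everything else --- the angular Sobolev estimate, the interior region, and the $j$-dependence through the weight algebra of Lemma~\ref{weights} --- is routine, once the bound $\max(a,b)^j \le a^j + b^j$ is used to keep all weights at the common power $j$.
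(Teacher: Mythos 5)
Your argument is correct and is essentially the paper's own: the $(j+1)^3$ reduction via Lemma~\ref{weights}, a rescaled Sobolev embedding on a ball of radius $\sim(1+t)$ centred at $x$ in the interior region $r\leq(1+t)/2$, and the spherical Sobolev inequality of Lemma~\ref{Sobsphere} combined with a $\tau_-$-weighted radial integration in the wave zone, with Lemmas~\ref{goodderiv} and~\ref{lift} trading the weighted derivatives for vector fields of $\mathbb{K}$ acting on velocity averages. The only (cosmetic) difference is that in the exterior region you perform the angular Sobolev estimate before the radial step, whereas the paper first integrates $\partial_r(\tau_-\int_v|f|\,dv)$ from $r=|x|$ to infinity and only then applies Lemma~\ref{Sobsphere}; both orderings yield the same bound with the same count of three derivatives.
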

\begin{proof}
Note first that if $j \geq 1$, the inequality follows from the case $j=0$ as, using Lemma \ref{weights},
$$ \left| \widehat{Z}^{\beta} \left( z^j f \right) \right| \lesssim j^{|\beta|} \sum_{w \in \V} \sum_{|\kappa| \leq |\beta|} \left| w^j \widehat{Z}^{\kappa} f \right|.$$
Suppose now that $j=0$ and consider $(t,x)=(t,|x| \omega) \in [0,T[ \times \R^3$.
\begin{itemize}
\item If $1+t \leq 2|x|$, one has, using Lemmas \ref{goodderiv} and \ref{lift},
\begin{eqnarray}
\nonumber |x|^2 \tau_- \int_v |f|(t,|x|\omega,v) dv & = & -|x|^2 \int_{r=|x|}^{+ \infty} \partial_r \left( \tau_- \int_v |f|(t,r\omega,v) dv \right) dr\\ \nonumber
& \lesssim & |x|^2 \sum_{Z \in \mathbb{K}} \int_{r=|x|}^{+ \infty} \left( \int_v  |f|(t,r\omega,v) dv + \left| Z \left( \int_v  |f|(t,r\omega,v) dv \right) \right| \right) dr \\ \nonumber
& \lesssim & \sum_{ \widehat{Z} \in \K} \int_{r=|x|}^{+ \infty} \left( \int_v  |f|(t,r\omega,v) dv+ \int_v  |\widehat{Z} f|(t,r\omega,v) dv \right) r^2 dr.
\end{eqnarray}
It then remains to apply Lemma \ref{Sobsphere} and to remark that $\tau_+ \lesssim r$ in the region considered.
\item Otherwise $1+t \geq 2|x|$, so that, with $\tau :=1+t$,
$$\forall \hspace{0.5mm} |y| \leq \frac{1}{4}, \hspace{8mm} \tau \leq 10(1+|t-|x+\tau y||).$$
Thus, for all sufficiently regular functions $h$, $1 \leq i \leq 3$ and almost all $|y| \leq \frac{1}{4}$, we have, using Lemmas \ref{goodderiv} and then \ref{lift},
\begin{eqnarray}
\nonumber
\left| \partial_{y^i} \left( \int_v |h|(t,x+ \tau y,v) dv \right) \right|  & = &  \left| \tau \int_v  \Big(  \partial_i |h| \Big) (t,x+ \tau y,v) dv \right| \\ \nonumber
& \lesssim &   \left| (1+|t-|x+ \tau y|| ) \int_v  \Big(  \partial_i |h| \Big) (t,x+\tau y,v) dv \right| \\ \nonumber & \lesssim &  \sum_{Z \in \mathbb{K}} \left|  \int_v   \Big(  Z |h| \Big)(t,x+ \tau y,v) dv \right| \\  & \lesssim &  \sum_{\begin{subarray}{} \widehat{Z}^{\kappa} \in \K^{|\kappa|} \\ \hspace{1mm} |\kappa| \leq 1 \end{subarray}} \int_v  \left| \widehat{Z}^{\kappa} h \right| (t,x+\tau y,v) dv . \label{eq:KSproof}
\end{eqnarray}
Hence, using alternatively three times a one dimensional Sobolev inequality and then \eqref{eq:KSproof}, we get,
\begin{eqnarray}
\nonumber \int_v |f|(t,x,v) dv   & \lesssim &  \sum_{n=0}^1  \int_{|y^1| \leq \frac{1}{4 \sqrt{3}}} \left| \left(\partial_{y^1} \right)^n \left( \int_v |f|(t,x^1+\tau y^1,x^2,x^3,v) dv \right)  \right| dy^1 \\ \nonumber
& \lesssim &  \sum_{|\kappa| \leq 1} \int_{|y^1| \leq \frac{1}{4 \sqrt{3}}} \int_v |\widehat{Z}^{\kappa} f|(t,x^1+\tau y^1,x^2,x^3,v) dv dy^1 \\ \nonumber
& \lesssim & \sum_{n=0}^1  \sum_{ |\kappa| \leq 1 }  \int_{|y^1| \leq \frac{1}{4 \sqrt{3}}} \int_{|y^2| \leq \frac{1}{4 \sqrt{3}}}  \left| \left(\partial_{y^2} \right)^n \left( \int_v |\widehat{Z}^{\kappa} f|(t,x+\tau (y^1,y^2,0),v) dv \right)  \right| dy^2dy^1 \\ \nonumber
& \lesssim &  \sum_{|\kappa| \leq 2} \int_{|y^1| \leq \frac{1}{4 \sqrt{3}}} \int_{|y^2| \leq \frac{1}{4 \sqrt{3}}} \int_v |\widehat{Z}^{\kappa} f|(t,x+ \tau (y^1,y^2,0),v) dv dy^2 dy^1 \\ \nonumber
& \lesssim & \sum_{n=0}^1  \sum_{ |\kappa| \leq 2 }  \int_{|y| \leq \frac{1}{4}}   \left| \left(\partial_{y^3} \right)^n  \left( \int_v |\widehat{Z}^{\kappa} f|(t,x+\tau y,v) dv \right)  \right| dy \\ \nonumber
& \lesssim &  \sum_{|\kappa| \leq 3} \int_{|y| \leq \frac{1}{4 }} \int_v |\widehat{Z}^{\kappa} f|(t,x+\tau y,v) dv dy.
\end{eqnarray}
The result then follows from the change of variables $z=\tau y$ and that $\tau_- \leq \tau_+ \lesssim \tau$ in the region studied.
\end{itemize}
\end{proof}
We now turn on the $L^2-L^1$ Klainerman-Sobolev inequality.
\begin{Pro}\label{KS2}
Let $f:[0,T[ \times \R^3 \times \left( \R^3_v \setminus \{ 0 \} \right)$ be a sufficiently regular function, $z \in \V$ and $j \in \mathbb{N}$. Then, 
$$\forall \hspace{0.5mm} t \in [0,T[, \hspace{1cm} \left\| \tau_+ \tau_-^{\frac{1}{2}} \int_{v \in \R^3}  |z^jf| dv \right\|_{L^2(\Sigma_t)} \lesssim (j+1)^2 \sum_{\begin{subarray}{l} \widehat{Z}^{\beta} \in \K^{|\beta|} \\ \hspace{1mm} |\beta| \leq 2 \end{subarray}} \sum_{w \in \V} \left\| \int_{v \in \R^3} \left| w^j \widehat{Z}^{\beta} f \right| dv \right\|_{ L^1(\Sigma_t)}.$$
\end{Pro}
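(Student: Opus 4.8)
The plan is to mimic the structure of the proof of Proposition \ref{KS1}, but to keep track of an $L^2$ norm on the relevant sphere rather than taking an $L^\infty$ bound there, and to exploit the extra factor $\tau_+\tau_-^{1/2}$ by a scaling/Sobolev argument that trades derivatives in the $\Or$ and $\K$ directions for the necessary weights. As in Proposition \ref{KS1}, the case $j\geq 1$ reduces immediately to the case $j=0$: by Lemma \ref{weights} one has $\bigl|\widehat{Z}^\beta(z^jf)\bigr|\lesssim j^{|\beta|}\sum_{w\in\V}\sum_{|\kappa|\leq|\beta|}\bigl|w^j\widehat{Z}^\kappa f\bigr|$, which produces the factor $(j+1)^2$ (since $|\beta|\leq 2$) and lets us assume $j=0$ from now on.

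With $j=0$, fix $t\in[0,T[$ and split $\R^3$ into the interior region $\{\,1+t\geq 2|x|\,\}$ and the exterior region $\{\,1+t\leq 2|x|\,\}$. In the interior region $\tau_+\lesssim\tau_-$, so $\tau_+\tau_-^{1/2}\lesssim\tau_+^{3/2}\lesssim\tau^{3/2}$ with $\tau:=1+t$ constant on $\Sigma_t$; here I would run the same three-step one-dimensional Sobolev argument as in \eqref{eq:KSproof}, but stopping after two iterations and keeping the third slab's $L^2$ norm instead of passing to $L^\infty$. Concretely, writing $x=(x^1,x^2,x^3)$, a one-dimensional $L^\infty_{y^1}L^2_{(y^2,y^3)}$ Sobolev estimate followed by \eqref{eq:KSproof} and then another such estimate in $y^2$ gives, after the change of variables $z=\tau y$, a bound of the velocity average by $\tau^{-3/2}\sum_{|\kappa|\leq 2}\|\int_v|\widehat{Z}^\kappa f|\,dv\|_{L^2(\text{a }\tau\text{-ball})}$; since $\tau_+\tau_-^{1/2}\lesssim\tau^{3/2}$ there, this is exactly the claimed $L^2(\Sigma_t)$ bound. (One can alternatively carry the full three-variable argument in the form $L^\infty_{y^1}L^\infty_{y^2}L^2_{y^3}$ and then integrate; both give the same result.)

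In the exterior region, where $\tau_+\lesssim r$ and $\tau_-$ need not be comparable to $\tau_+$, I would work radially as in the first bullet of the proof of Proposition \ref{KS1}. Using Lemma \ref{goodderiv} to write $\tau_-^{1/2}\partial_r$ (or rather $r\,\tau_-^{1/2}$ times a derivative) in terms of the fields of $\mathbb{K}$, and the fundamental theorem of calculus in $r$ from $|x|$ to $+\infty$, one obtains a pointwise bound of $r^2\tau_-\int_v|f|\,dv$ at the point $(t,|x|\omega)$ by an $r$-integral over the ray of $\sum_{|\kappa|\leq 1}r^2\int_v|\widehat{Z}^\kappa f|\,dv$; squaring, using Cauchy–Schwarz in $r$ together with $\int_{|x|}^\infty \tau_-^{-1}(t,r\omega)\,dr\lesssim 1$ (or a comparable integrable weight), and then applying the $L^2(\mathbb{S}^2)$ part of Lemma \ref{Sobsphere} to absorb the angular dependence at the cost of one more $\widehat\Omega$, I would arrive at $\|\tau_+\tau_-^{1/2}\int_v|f|dv\|_{L^2(\Sigma_t\cap\{1+t\leq 2|x|\})}^2\lesssim\sum_{|\beta|\leq 2}\|\int_v|\widehat{Z}^\beta f|dv\|_{L^1(\Sigma_t)}^2$, which is what is needed. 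Combining the two regions yields the proposition.

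The main obstacle I anticipate is bookkeeping the weights correctly in the exterior region: one must make sure that the combination $r^2\tau_-^{1/2}$ (inside an $L^2$) is produced from exactly $r^2$ (an $L^1$ density on $\Sigma_t$) using only an $r$-integral with an integrable-in-$r$ factor $\tau_-^{-1}$, so that Cauchy–Schwarz closes without losing powers of $\tau_+$ — this is precisely why the exponent on $\tau_-$ is $\tfrac12$ and not $1$. The interior region is essentially routine once one decides to stop the Sobolev cascade one step early; the only care needed there is that the change of variables $z=\tau y$ correctly converts the $\tau$-ball $L^2$ norm into the $\Sigma_t$ $L^2$ norm with the right power of $\tau$, matching $\tau_+\tau_-^{1/2}$.
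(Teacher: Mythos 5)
Your proposal uses a different region decomposition from the paper's proof (two regions versus three: the paper separates $\{|x|\leq t/2\}$, $\{|x|\geq 3t/2\}$, and the wave zone $\{t/2\leq|x|\leq 3t/2\}$), and both halves of your argument have gaps that the paper's three-region, dyadic strategy is specifically designed to avoid.

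\textbf{Interior region.} The argument as described does not close. If one ``stops the cascade after two iterations'' one ends up bounding $\|g\|_{L^\infty_{y^1}L^\infty_{y^2}L^2_{y^3}}$ by a mixed norm with $L^1$ in $(y^1,y^2)$ but $L^2$ in $y^3$ on the right; on a bounded domain $L^1_{y^1,y^2}L^2_{y^3}$ is \emph{not} dominated by $L^1_{y^1,y^2,y^3}$, so one does not obtain the $L^2\lesssim L^1$ conclusion that Proposition~\ref{KS2} asserts. If instead one carries the cascade all the way to an $L^\infty$ pointwise bound and then uses finiteness of the volume, the derivative count is wrong (three derivatives, as in Proposition~\ref{KS1}, not two). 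The paper's device is different: a single \emph{two-dimensional} $L^2$--$L^1$ Sobolev inequality in $(x_1,x_2)$ (which costs only one derivative because $W^{1,1}(\R^2)\hookrightarrow L^2(\R^2)$), applied pointwise in $x_3$, followed by a one-dimensional $L^2$--$L^1$ Sobolev inequality in $x_3$. That is how the total cost stays at two derivatives while still converting $L^2_{x}$ into $L^1_{x}$.

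\textbf{Exterior region.} The claim $\int_{|x|}^\infty\tau_-^{-1}(t,r\omega)\,dr\lesssim 1$ is simply false: $\int^\infty\frac{dr}{1+|t-r|}$ diverges logarithmically, so the Cauchy--Schwarz step you describe fails, and there is no ``comparable integrable weight'' that rescues it without destroying the $\tau_-^{1/2}$ gain. More structurally, your two-region split lumps together the wave zone $t/2\leq|x|\leq 3t/2$ (where $\tau_-$ ranges from $1$ to $\sim t$ while $\tau_+\sim t$) with the far exterior $|x|\geq 3t/2$ (where $\tau_-\sim\tau_+\sim r$), and these need genuinely different treatments. The paper gets the $\tau_-^{1/2}$ weight in the wave zone by decomposing into dyadic annuli $V_i=\{2^{-i-1}t<|t-|y||\leq 2^{-i}t\}$ and applying a radial one-dimensional $L^2$--$L^1$ Sobolev inequality rescaled by $2^{-i}t$ on each annulus; the Jacobian from that rescaling is exactly what produces $\tau_-$ inside the $L^2$ norm, and the sum over $i$ converges because the annuli overlap only finitely often. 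The far exterior is handled by a separate dyadic decomposition in $r$ (the sets $X_i,Y_i$) with the 2d+1d Sobolev argument rescaled by $2^i t$. There is no version of your single fundamental-theorem-of-calculus argument that reproduces this, precisely because the needed radial weight is not integrable on rays.
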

\begin{proof} As previously, we can restrict the proof to the case $j=0$. We introduce $\delta = \frac{1}{4}$ for convenience and we suppose first that $t \geq 1$. The idea is classical and consists in splitting $\Sigma_t$ into the three domains, $|x| \leq \frac{t}{2}$, $|x| \geq \frac{3}{2}t$ and $\frac{1}{2}t \leq |x| \leq \frac{3}{2}t$.

$\bullet$ \textit{ Step $1$, the interior region.} Applying a local two-dimensional $L^2-L^1$ Sobolev inequality to the function $x \mapsto \int_v |f|(t,tx,v) dv$, we get
$$ \int_{|x| \leq \frac{1}{2}} \left| \int_v |f|(t,tx,v)dv \right|^2 dx_1 dx_2 dx_3  \lesssim  \sum_{q=0}^1 \int_{|x_3| \leq \frac{1}{2}} \left| \int_{x_1^2+x_2^2 \leq \frac{1}{4}-x_3^2+\delta^2} \int_v \left((t \partial_{x_1,x_2})^q |f| \right)(t,tx,v)dv  dx_1 d x_2 \right|^2 d x_3.$$
As $t-|tx| \geq \frac{1}{4}t$ on the domain of integration since $|x| \leq \frac{1}{2}+\delta \leq \frac{3}{4}$, Lemmas \ref{goodderiv} and \ref{lift} gives us
$$ \int_{|x| \leq \frac{1}{2}} \left| \int_v |f|(t,tx,v)dv \right|^2 dx_1 dx_2 dx_3  \lesssim  \sum_{|\beta| \leq 1} \int_{|x_3| \leq \frac{1}{2}} \left| \int_{x_1^2+x_2^2 \leq \frac{1}{4}-x_3^2+\delta^2} \int_v \left|\widehat{Z}^{\beta} f \right|(t,tx,v)dv  dx_1 d x_2 \right|^2 d x_3.$$
Now, one can obtain similarly, using a one-dimensional $L^2-L^1$ Sobolev inequality in the variable $x_3$, that
$$ \int_{|x| \leq \frac{1}{2}} \left| \int_v |f|(t,tx,v)dv \right|^2 dx_1 dx_2 dx_3  \lesssim  \sum_{|\beta| \leq 2} \left| \int_{|x_3| \leq \frac{1}{2}+\delta}  \int_{x_1^2+x_2^2 \leq \frac{1}{4}-x_3^2+\delta^2} \int_v \left|\widehat{Z}^{\beta} f \right|(t,tx,v)dv  dx_1 d x_2 d x_3 \right|^2.$$
Since $\tau_+^2 \tau_- \lesssim t^3$ if $|x| \leq \frac{1}{2}t$, we finally obtain, by the change of variables $y=tx$,
\begin{equation}\label{larget}
\left\| \tau_+ \tau_-^{\frac{1}{2}} \int_{v \in \R^3}  |f|(t,y,v) dv \right\|_{L^2(|y| \leq \frac{1}{2}t)} \lesssim t^3 \left\|  \int_{v \in \R^3}  |f|(t,tx,v) dv \right\|_{L^2(|x| \leq \frac{1}{2})} \lesssim \sum_{|\beta| \leq 2} \left\| \int_v \left|\widehat{Z}^{\beta} f \right| dv \right\|_{L^1(\Sigma_t)}.
\end{equation}
$\bullet$ \textit{Step $2$, the exterior region.} Let us introduce, for $i \in \mathbb{N}$, the following sets\footnote{The constant hidden in $\lesssim$ in the upcoming computations will not depend on $i$.}
$$ X_i := \left\{ y \times \Sigma_t \hspace{1mm} / \hspace{1mm} 3 t \times 2^{i-1} \leq |y| < 3t \times 2^i \right\}, \hspace{0.9cm} \text{and} \hspace{0.9cm} Y_i := \left\{ y \times \Sigma_t \hspace{1mm} / \hspace{1mm} 5 t \times 2^{i} \leq 4 |y| < 13t \times 2^{i} \right\}.$$
In the domain considered here, where $|x| \geq \frac{3}{2}t$, we have $\tau_+ \lesssim |x|$ but we cannot follow exactly what we have done for the interior region as we cannot view $|x|$ as a parameter. However, as for $i \in \mathbb{N}$, $2^i t \sim \tau_+$ on $X_i$ and
$$\forall \hspace{0.5mm} \frac{3}{2}-\delta \leq |x| \leq 3+\delta, \hspace{1cm} |2^i t x|-t \geq \left( 2^i\frac{5}{4}-1 \right)t \geq \frac{1}{4} \times 2^i t,$$ we can apply similar operations to $x \mapsto \int_v |f|(t,2^itx,v) dv$ as to $x \mapsto \int_v |f|(t,tx,v) dv$ previously and obtain
\begin{eqnarray}
\nonumber \int_{\frac{3}{2} \leq |x| \leq 3} \left| \int_v |f|(t,2^i tx,v)dv \right|^2 \hspace{-0.5mm} dx \hspace{-1mm} & \lesssim & \hspace{-1mm} \sum_{|\beta| \leq 2} \left| \int_{ |x_3| \leq 3+\delta}  \int_{\frac{9}{4}-x_3^2-\delta^2 \leq x_1^2+x_2^2 \leq 9-x_3^2+\delta^2} \int_v \left|\widehat{Z}^{\beta} g \right|(t,2^itx,v)dv  dx \right|^2 \\ \nonumber
& \lesssim & \hspace{-1mm} \sum_{|\beta| \leq 2} \left| \int_{\frac{5}{4} \leq |x| \leq \frac{13}{4}} \int_v \left|\widehat{Z}^{\beta} g \right|(t,2^itx,v)dv  dx \right|^2.
\end{eqnarray}
As $\tau_+^2 \tau_- \lesssim 2^{3i} t^3$ on $X_i$, we finally obtain by the change of variables $y=2^i t x$, 
$$\left\| \tau_+ \tau_-^{\frac{1}{2}} \int_{v \in \R^3}  |f| dv \right\|_{L^2(X_i)} \lesssim 2^{3i} t^3 \left\|  \int_{v \in \R^3}  |f|(t,2^itx,v) dv \right\|_{L^2( \frac{3}{2} \leq |x| \leq 3)} \lesssim \sum_{|\beta| \leq 2} \left\| \int_v \left|\widehat{Z}^{\beta} f \right| dv \right\|_{L^1(Y_i)}.$$
As $Y_i \cap Y_j = \varnothing$ if $|i-j| \geq 2$, $X_i \cap X_j = \varnothing$ if $i \neq j$ and since $\{ y \in \Sigma_t \hspace{1mm} / \hspace{1mm} |y| \geq \frac{3}{2}t \}=\cup_{i=0}^{+\infty} X_i$, we get
\begin{equation}\label{largex}
\left\| \tau_+ \tau_-^{\frac{1}{2}} \int_{v \in \R^3}  |f|(t,y,v) dv \right\|_{L^2(|y| \geq \frac{3}{2}t)} \hspace{2mm} \lesssim \hspace{2mm}  \sum_{|\beta| \leq 2} \left\| \int_v \left|\widehat{Z}^{\beta} f \right| dv \right\|_{L^1(\Sigma_t)}.
\end{equation}

$\bullet$ \textit{Step $3$, the remaining domain.} We now focus on the region $\frac{1}{2}t \leq |x| \leq \frac{3}{2}t$. We will obtain the $\tau_+$ integrated decay with the rotational vector fields through Sobolev inequalities on the spheres. To obtain the $\sqrt{\tau_-}$ decay, note first that $|u| \leq \frac{1}{2}t$ in this region (recall that $u=t-|x|$). The idea to capture the decay in $u$ will then be to divide the domain in the disjoint union of the sets
$$ V_i := \{ y \in \Sigma_t \hspace{1mm} / \hspace{1mm} 2^{-i-1}t < |t-|y|| \leq 2^{-i} t \}, \hspace{5mm} i \in \mathbb{N}^*.$$
Let $\omega \in \mathbb{S}^2$. Applying a $L^2-L^1$ Sobolev inequality to $g: s \mapsto  \int_v|f|(t,t(1-2^{-i}s)\omega,v)dv$, we obtain
$$ \int_{ \frac{1}{2} \leq |s| \leq 1} \left| \int_v |f|(t,t(1-2^{-i}s)\omega,v) dv \right|^2 ds \hspace{2mm} \lesssim \hspace{2mm} \sum_{j=0}^1 \left| \int_{ \frac{1}{4} \leq |s| \leq \frac{5}{4} } \left| \int_v \left( t2^{-i} \partial_r|f| \right)^j(t,t(1-2^{-i}s)\omega,v) dv \right| ds \right|^2 . $$
Since $\frac{1}{4}2^{-i}t \leq |t-|t-2^{-i}ts||$ for all $i \in \mathbb{N}^*$ and $\frac{1}{4} \leq |s| \leq \frac{5}{4}$, we get, using Lemmas \ref{goodderiv} and \ref{lift} that
$$ \int_{ \frac{1}{2} \leq |s| \leq 1} \left| \int_v |f|(t,t(1-2^{-i}s)\omega,v) dv \right|^2 ds \hspace{2mm} \lesssim \hspace{2mm} \sum_{|\kappa| \leq 1} \left| \int_{ \frac{1}{4} \leq |s| \leq \frac{5}{4} } \left| \int_v | \widehat{Z}^{\kappa} f |(t,t(1-2^{-i}s)\omega,v) dv \right| ds \right|^2 . $$
The change of variables $r=t(1-2^{-i}s)$ gives
$$ t2^{-i} \int_{ 2^{-i-1}t \leq |t-r| \leq 2^{-i}t} \left| \int_v |f|(t,r\omega,v) dv \right|^2 dr \hspace{2mm} \lesssim \hspace{2mm} \sum_{|\kappa| \leq 1} \left| \int_{ 2^{-i-2}t \leq |t-r| \leq 5 \times 2^{-i-2}t } \left| \int_v | \widehat{Z}^{\kappa} f |(t,r\omega,v) dv \right| dr \right|^2. $$
As previously with the domains $X_i$ and $Y_i$, we take the sum over $i \in \mathbb{N}^*$ and we get
$$  \int_{ \frac{1}{2}t \leq r \leq \frac{3}{2}t} |t-r| \left|  \int_v |f|(t,r\omega,v) dv \right|^2 dr \hspace{2mm} \lesssim \hspace{2mm} \sum_{|\kappa| \leq 1} \left| \int_{ \frac{3}{8}t \leq r \leq \frac{13}{8}t } \left| \int_v | \widehat{Z}^{\kappa} f|(t,r\omega,v) dv \right| dr \right|^2. $$
By simpler operations, one can also obtain that
$$  \int_{ t-\frac{1}{2} \leq r \leq t+\frac{1}{2}}  \left|  \int_v |f|(t,r\omega,v) dv \right|^2 dr \hspace{2mm} \lesssim \hspace{2mm} \sum_{|\kappa| \leq 1} \left| \int_{ t-\frac{5}{8} \leq r \leq t+\frac{5}{8} } \left| \int_v | \widehat{Z}^{\kappa} f|(t,r\omega,v) dv \right| dr \right|^2. $$
Integrating each side of these inequalities over $\mathbb{S}^2$ and applying Proposition \ref{Sobsphere} to the right hand sides, we get
$$  \int_{ \frac{1}{2}t \leq r \leq \frac{3}{2}t} \int_{\omega \in \mathbb{S}^2} \tau_- \left|\int_v |f|(t,r\omega,v dv \right|^2 d \mathbb{S}^2 dr \hspace{2mm} \lesssim \hspace{2mm} \sum_{|\kappa| \leq 2} \left| \int_{ \frac{3}{8}t \leq r \leq \frac{13}{8}t } \int_{\omega \in \mathbb{S}^2} \left| \int_v | \widehat{Z}^{\kappa} f|(t,r\omega,v) dv \right| d \mathbb{S}^2 dr \right|^2. $$
Finally, multiply both side of the inequality by $t^2$ and use $\tau_+ \lesssim t \leq 2r $ on the domain of integration in order to obtain
\begin{eqnarray}\label{reqt}
\int_{ \frac{1}{2}t \leq r \leq \frac{3}{2}t} \int_{\omega \in \mathbb{S}^2} \hspace{-0.3mm} \tau_+^2 \tau_- \left| \int_v |f|(t,r\omega,v dv \right|^2 \hspace{-0.3mm} d \mathbb{S}^2 r^2 dr \lesssim \sum_{|\kappa| \leq 2} \left| \int_{ \frac{3}{8}t \leq r \leq \frac{13}{8}t } \int_{\omega \in \mathbb{S}^2} \left| \int_v | \widehat{Z}^{\kappa} f|(t,r\omega,v) dv \right| d \mathbb{S}^2 r^2 dr \right|^2 \hspace{-0.3mm}.
\end{eqnarray}
The result then follows from \eqref{larget}, \eqref{largex} and \eqref{reqt}. The case $t \leq 1$ can be treated similarly, repeating the arguments of Steps $1$ and $2$ since in that case $\tau_+^2 \tau_- \lesssim (1+r)^3$ and
$$ \Sigma_t = \{ y \in \Sigma_t \hspace{1mm} / \hspace{1mm}   |y| \leq 2^{-1} \} \cup \left( \cup_{i=0}^{+ \infty} \{ y \in \Sigma_t \hspace{1mm} / \hspace{1mm}   2^{i-1} \leq |y| < 2^i \} \right).$$
\end{proof}
\subsubsection{Pointwise Decay estimates for the electromagnetic field}

In this section, we follow mostly \cite{CK}. We first present certain identities and inequalities between quantities linked to the null decomposition of a $2$-form (see Section \ref{basicelec} for its definition), then we recall Sobolev inequalities and, finally, we prove the desired pointwise decay estimates for the electromagnetic field.

 For the remaining part of this section, we consider $G$ a $2$-form and $J$ a $1$-form, both sufficiently regular and defined on $[0,T[ \times \R^3$, such that
\begin{eqnarray}
\nonumber \nabla^{\mu} G_{\mu \nu} & =& J_{\nu}, \\ \nonumber
\nabla^{\mu} {}^* \! G_{ \mu \nu } & = & 0.
\end{eqnarray}
Aside from Lemma \ref{alphaem} and the estimate on $\alpha(G)$ in Proposition \ref{decayMaxell}, all the results of this subsection apply to a general $2$-form.

$\bullet$ \textbf{Preparatory results.} \newline
To lighten the presentation, we prove the three upcoming lemmas in Appendix \ref{appendixD}. 
 
\begin{Lem}\label{randrotcom}
Let $\Omega \in \Or$. Then, the operators $\mathcal{L}_{\Omega}$ and $\nabla_{\partial_r}$ commute with the null decomposition of $G$ as well as with each other, i.e., denoting by $\zeta$ any of the null component $\alpha$, $\underline{\alpha}$, $\rho$ or $\sigma$,
$$ [\mathcal{L}_{\Omega}, \nabla_{\partial_r}] G=0, \hspace{1.2cm} \mathcal{L}_{\Omega}(\zeta(G))= \zeta ( \mathcal{L}_{\Omega}(G) ) \hspace{1.2cm} \text{and} \hspace{1.2cm} \nabla_{\partial_r}(\zeta(G))= \zeta ( \nabla_{\partial_r}(G) ) .$$
Similar results hold for $\mathcal{L}_{\Omega}$ and $\nabla_{\partial_t}$, $\nabla_L$ or $\nabla_{\underline{L}}$. For instance, $\nabla_{L}(\zeta(G))= \zeta ( \nabla_{L}(G) )$.
\end{Lem}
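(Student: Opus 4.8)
# Proof Plan for Lemma \ref{randrotcom}

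The plan is to work in the null frame $(L,\underline L,e_1,e_2)$ and exploit two structural facts: first, that the rotational vector fields $\Omega\in\Or$ are tangent to the spheres $\Sp_{t,r}$ and commute with the radial and time translations in the sense that $[\Omega,\partial_r]$ and $[\Omega,\partial_t]$ are again tangent to the spheres (indeed, a direct computation in Cartesian coordinates shows $[\Omega_{ij},\partial_r]=0$ and $[\Omega_{ij},\partial_t]=0$, since $\Omega_{ij}$ has no $\partial_t$ or $\partial_r$ component and coefficients homogeneous of degree $0$ in the angular variables); second, that $L$, $\underline L$, and $\partial_r$ are all \emph{parallel-transported} along $\partial_r$ and along $\Omega$ for the relevant connection, up to terms that stay in the frame. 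The key identity to record at the start is $\nabla_{\partial_r} L = \nabla_{\partial_r}\underline L = 0$ and $\nabla_{\partial_r} e_A = r^{-1} e_A$ (or the analogous ODE), together with $\mathcal L_\Omega(L)=\mathcal L_\Omega(\underline L)=0$ and $\mathcal L_\Omega(e_A)$ a combination of the $e_B$'s.

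First I would prove $[\mathcal L_\Omega,\nabla_{\partial_r}]G=0$. Since $\mathcal L$ and $\nabla$ differ by terms involving Christoffel symbols of $\eta$, and $\eta$ is flat, one has $\mathcal L_X = \nabla_X - (\nabla X)$ acting on tensors, so $[\mathcal L_\Omega,\nabla_{\partial_r}]G$ reduces to an expression built from $\nabla_{[\Omega,\partial_r]}$ and from $\nabla(\nabla\Omega)$ contracted with $G$; because $\Omega$ is a Killing field of $\eta$, $\nabla\nabla\Omega$ is controlled (in fact, for the rotations one checks $[\Omega,\partial_r]=0$ as a vector field on $\R^{3+1}\setminus\{r=0\}$), and the commutator vanishes identically. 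Equivalently, and perhaps more cleanly for the write-up, I would note $[\Omega,\partial_r]=0$ directly, hence $[\mathcal L_\Omega,\mathcal L_{\partial_r}]=\mathcal L_{[\Omega,\partial_r]}=0$ on forms, and then convert between $\mathcal L_{\partial_r}$ and $\nabla_{\partial_r}$ acting on a $2$-form, observing that the conversion terms involve only $\partial_r$ of the frame, which $\Omega$ annihilates.

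Next, for the compatibility with the null decomposition: by definition $\alpha_A(G)=G_{AL}=G(e_A,L)$, etc. To compute $\mathcal L_\Omega(\zeta(G))$ I would use that $\mathcal L_\Omega$ acting on a scalar is just $\Omega$ applied to it, and $\Omega(G(e_A,L)) = (\mathcal L_\Omega G)(e_A,L) + G(\mathcal L_\Omega e_A, L) + G(e_A, \mathcal L_\Omega L)$. The last term vanishes since $\mathcal L_\Omega L=0$, and the middle term contributes $G(\mathcal L_\Omega e_A,L)$ which, since $\mathcal L_\Omega e_A$ is a combination of the $e_B$, is exactly the term accounting for the fact that $\alpha$ is a section of a bundle over the spheres on which $\Omega$ acts — i.e. it is precisely $\alpha(\mathcal L_\Omega G)$ that one wants, once one adopts the convention that $\mathcal L_\Omega$ on the angular component $\alpha$ includes this rotation of the frame. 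The same computation works for $\underline\alpha$, $\rho=\tfrac12 G_{L\underline L}$ (where both frame corrections vanish), and $\sigma=G_{12}$. For $\nabla_{\partial_r}(\zeta(G))=\zeta(\nabla_{\partial_r}G)$ I would run the identical argument with $\nabla_{\partial_r}$ in place of $\Omega$, using $\nabla_{\partial_r}L=\nabla_{\partial_r}\underline L=0$ and $\nabla_{\partial_r}e_A \parallel e_A$ so that the frame-correction terms either vanish or are diagonal and cancel against the definition; the statements for $\nabla_{\partial_t}$, $\nabla_L$, $\nabla_{\underline L}$ follow the same way since $\partial_t$, $L$, $\underline L$ also parallel-transport the null frame along themselves in Minkowski space.

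The main obstacle is purely bookkeeping: getting the conventions right for what "$\mathcal L_\Omega$ applied to a null component'' means, since $\alpha$ and $\underline\alpha$ are not scalars but sections of a rank-$2$ bundle over $\Sp_{t,r}$, and the identity $\mathcal L_\Omega(\zeta(G))=\zeta(\mathcal L_\Omega G)$ is only true once $\mathcal L_\Omega$ on the left is interpreted as the induced action on that bundle (equivalently, $\Omega$ applied componentwise plus the rotation of $(e_1,e_2)$). I expect the cleanest route is to fix this convention explicitly at the start of the Appendix \ref{appendixD} proof, reduce everything to the scalar identities $\Omega(G_{L\underline L})=(\mathcal L_\Omega G)_{L\underline L}$ and $\Omega(G_{12})=(\mathcal L_\Omega G)_{12}+\text{(frame terms)}$ by Leibniz, and then invoke $\mathcal L_\Omega L=\mathcal L_\Omega\underline L=0$ and $[\Omega,\partial_r]=0$ to kill the unwanted pieces. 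No hard analysis is involved; it is an exercise in the algebra of the null frame.
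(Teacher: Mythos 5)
Your route is essentially the paper's: reduce to the frame relations $[\Omega,L]=[\Omega,\underline L]=[\Omega,\partial_r]=0$, $\nabla_{\partial_r}L=\nabla_{\partial_r}\underline L=\nabla_{\partial_r}e_A=0$, apply the Leibniz rule in the null frame, and use the explicit Cartesian formulas $\mathcal L_X(G)_{\mu\nu}=X(G_{\mu\nu})+\partial_\mu(X^\lambda)G_{\lambda\nu}+\partial_\nu(X^\lambda)G_{\mu\lambda}$ and $\nabla_X(G)_{\mu\nu}=X(G_{\mu\nu})$ for the commutator $[\mathcal L_\Omega,\nabla_{\partial_r}]G=0$. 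Two points need to be tightened.

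First, the parenthetical $\nabla_{\partial_r}e_A=r^{-1}e_A$ is wrong, and your fallback "diagonal and cancel against the definition'' does not repair it. The Cartesian components of $e_A$ depend only on the angular variables, so $\nabla_{\partial_r}e_A=0$; indeed the identity $\nabla_{\partial_r}(\zeta(G))=\zeta(\nabla_{\partial_r}G)$ requires this, since if $\nabla_{\partial_r}e_A=c\,e_A$ with $c\neq0$ then the Leibniz expansion of $\nabla_{\partial_r}(G(e_A,L))$ would produce an uncancelled extra term $c\,\alpha_A(G)$, and the identity would fail. Second, you gloss over the scalar case $\zeta=\sigma$: there the Leibniz expansion yields the extra terms $G([\Omega,e_1],e_2)+G(e_1,[\Omega,e_2])=(C_\Omega^{1,1}+C_\Omega^{2,2})\,\sigma(G)$ with $[\Omega,e_A]=C_\Omega^{A,B}e_B$, and one must still argue that this trace vanishes (it does, since $\Omega$ is Killing on the spheres so $C_\Omega^{A,B}$ is skew-symmetric, but this is an extra fact). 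The paper sidesteps the whole discussion by the Hodge-duality identities $\rho({}^*H)=-\sigma(H)$ and ${}^*\mathcal L_\Omega(H)=\mathcal L_\Omega({}^*H)$, which reduce the $\sigma$ identity to the already-established scalar identity for $\rho$; from the Leibniz expansion alone it only records the estimate $|\Omega\,\sigma(G)|\lesssim|\sigma(\mathcal L_\Omega G)|+|\sigma(G)|$. Your direct route is viable, but you should either prove the trace vanishes or adopt the duality argument, which also removes any ambiguity about the convention for $\mathcal L_\Omega$ on the sphere-bundle-valued components $\alpha,\underline\alpha$.
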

We now give a more precise version of Lemma $3.3$ of \cite{CK}.
\begin{Lem}\label{null}
Denoting by $\zeta$ any of the null component $\alpha$, $\underline{\alpha}$, $\rho$ or $\sigma$, we have
$$\tau_- \left| \nabla_{\underline{L}} \zeta (G)\right|+\tau_+ \left| \nabla_{L} \zeta (G) \right| \lesssim \sum_{|\gamma| \leq 1} \left|  \zeta \left( \mathcal{L}_{Z^{\gamma}}(G) \right) \right| \hspace{5mm} \text{and} \hspace{5mm} (1+r)\left| \slashed{\nabla} \zeta (G) \right| \lesssim |\zeta(G)|+\sum_{ \Omega \in \Or} \left|  \zeta \left( \mathcal{L}_{\Omega}(G) \right) \right| .$$
\end{Lem}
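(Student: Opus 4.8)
The plan is to prove Lemma \ref{null} by reducing every null component estimate to the analogous statements for the full $2$-form $G$, which are the well-known identities $(t-r)\underline{L} = S - \frac{x^i}{r}\Omega_{0i}$, $(t+r)L = S + \frac{x^i}{r}\Omega_{0i}$ and $re_A = \sum_{i<j} C_A^{i,j}\Omega_{ij}$ of Lemma \ref{goodderiv}, combined with the commutation property of Lemma \ref{randrotcom}. First I would treat the angular estimate. Since $re_A = \sum_{i<j} C_A^{i,j}\Omega_{ij}$ with $C_A^{i,j}$ bounded, for any null component $\zeta$ one has $r\slashed{\nabla}_A \zeta(G) = C_A^{i,j}\nabla_{\Omega_{ij}}\zeta(G)$ up to lower order terms coming from the fact that $e_A$ is not parallel-transported; more precisely $\nabla_{\Omega_{ij}}\zeta(G) = \mathcal{L}_{\Omega_{ij}}(\zeta(G)) + (\text{Christoffel-type terms})$, and by Lemma \ref{randrotcom} $\mathcal{L}_{\Omega}(\zeta(G)) = \zeta(\mathcal{L}_{\Omega}(G))$. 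The remaining terms are controlled by $|\zeta(G)|$ itself because the relevant connection coefficients on the spheres $(t,r)=\text{const}$ are bounded by $r^{-1}$, yielding $(1+r)|\slashed{\nabla}\zeta(G)| \lesssim |\zeta(G)| + \sum_{\Omega\in\Or}|\zeta(\mathcal{L}_{\Omega}(G))|$.

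Next I would handle the $\nabla_L$ and $\nabla_{\underline{L}}$ estimates in parallel. Using $(t+r)L = S + \frac{x^i}{r}\Omega_{0i}$ we write $(t+r)\nabla_L \zeta(G) = \nabla_S \zeta(G) + \frac{x^i}{r}\nabla_{\Omega_{0i}}\zeta(G)$. Each of the two terms on the right is a covariant derivative of $\zeta(G)$ along a Killing (or conformally Killing) field $Z \in \mathbb{K}$. One then commutes: $\nabla_Z \zeta(G)$ differs from $\zeta(\mathcal{L}_Z(G))$ only by terms of the schematic form $\zeta(G)$ times first derivatives of the coefficients of $Z$ (and, for $S$, a purely numerical multiple of $\zeta(G)$ from the weight-scaling of the null frame). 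Since $\tau_+ \lesssim 1+t+r$ and these extra terms contribute at the level of $\sum_{|\gamma|\le 1}|\zeta(\mathcal{L}_{Z^\gamma}(G))|$, one obtains $\tau_+|\nabla_L \zeta(G)| \lesssim \sum_{|\gamma|\le 1}|\zeta(\mathcal{L}_{Z^\gamma}(G))|$. The $\nabla_{\underline{L}}$ bound is identical using $(t-r)\underline{L} = S - \frac{x^i}{r}\Omega_{0i}$ and $\tau_- \lesssim 1+|t-r|$; here one must be slightly careful near $r=0$ and on the light cone $t=r$, but in the region $r\le 1$ one has $\tau_- \sim \tau_+$ and all frame vectors are comparable to the $\partial_\mu$, so the estimate is trivial there, while away from $r=0$ the decomposition is valid.

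The main obstacle I expect is bookkeeping the commutator terms between $\nabla_Z$ and the operation $\zeta(\cdot)$ of extracting a null component — that is, verifying that $\nabla_Z(\zeta(G)) - \zeta(\mathcal{L}_Z(G))$ is genuinely of order zero in $G$ (no derivatives of $G$) and bounded pointwise by $|\zeta(G)| + \sum_{\Omega}|\zeta(\mathcal{L}_\Omega(G))|$. This rests on the fact that the null frame $(L,\underline{L},e_1,e_2)$ is transported in a controlled way by the Killing fields: $[Z, L]$, $[Z,\underline{L}]$ and $[Z, e_A]$ all decompose back into the null frame with coefficients that are either bounded constants or bounded spherical functions times $r^{-1}$ or $\tau_\pm^{-1}$. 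Since this is precisely the content that Lemma \ref{randrotcom} packages for the rotations and the radial/time derivatives, the remaining cases (the boosts $\Omega_{0i}$ and the scaling $S$) follow by the same elementary frame computations; I would carry these out in Appendix \ref{appendixD} as announced, the only subtlety being to track the precise weight $\tau_-$ versus $\tau_+$ attached to each term so that nothing is lost.
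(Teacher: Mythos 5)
Your overall strategy matches the paper's: use the identities of Lemma \ref{goodderiv} to write $\tau_{\mp}\nabla_{\underline{L}},\nabla_L$ in terms of $S$ and $\frac{x^i}{r}\Omega_{0i}$, push the covariant derivative past the null decomposition via Lemma \ref{randrotcom}, and then replace covariant derivatives by Lie derivatives up to algebraic error terms. You also correctly identify the crux: one must check that the error term $\zeta(\mathcal{L}_Z G) - \zeta(\nabla_Z G)$ closes on the \emph{same} null component $\zeta$ and not on a mixture of components (this is precisely the sharpening of Lemma $3.3$ of \cite{CK} that the statement claims).

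However, your justification for why this closure holds is not adequate, and this is where your proposal leaves a real gap. You argue that it ``rests on the fact that $[Z,L]$, $[Z,\underline{L}]$ and $[Z,e_A]$ all decompose back into the null frame with coefficients that are bounded constants or bounded spherical functions times $r^{-1}$ or $\tau_\pm^{-1}$.'' Such a frame-bracket decomposition, by itself, does \emph{not} yield same-component closure: for a single boost $\Omega_{0i}$, the brackets $[\Omega_{0i},e_A]$ contain nontrivial $\Omega_{0l}$ (hence $L$- and $\underline{L}$-) directions, so naively a $\rho$ or a different $\alpha/\underline{\alpha}$ contribution could appear. The paper handles this by introducing the explicit tensor $H^i = \mathcal{L}_{\Omega_{0i}}(G) - \nabla_{\Omega_{0i}}(G)$, whose components are given by \eqref{defHi}, and then verifying by direct computation the cancellations $\frac{x_i}{r}\rho(H^i)=0$, $x_i H^i_{AB}=0$, and $x_i H^i(\frac{\Omega_{kl}}{r},\partial_t) = rG(\frac{\Omega_{kl}}{r},\partial_r)$, $x_i H^i(\frac{\Omega_{kl}}{r},\partial_r) = rG(\frac{\Omega_{kl}}{r},\partial_t)$. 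Only once these identities are in hand does one see that the contracted error $\frac{x_i}{r}\zeta(H^i)$ reproduces exactly $\pm\zeta(G)$ (for $\alpha,\underline\alpha$) or vanishes (for $\rho,\sigma$), which is the content you needed. Also note that your closing remark — that ``the only subtlety [is] to track the precise weight $\tau_-$ versus $\tau_+$'' — misidentifies the difficulty: the weights come for free from Lemma \ref{goodderiv}; the nontrivial part is precisely the same-component cancellation in the $H^i$ terms, which you would still need to carry out explicitly.
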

The following equation will be useful in order to obtain a strong decay estimate on $\alpha(G)$.
\begin{Lem}\label{alphaem}
Denoting by $(\alpha, \underline{\alpha}, \rho, \sigma)$ the null decomposition of $G$, we have
\begin{equation}\label{eq:alphaem}
\nabla_{\underline{L}} \alpha_A-\frac{\alpha_A}{r}+\slashed{\nabla}_{e_A} \rho+\varepsilon_{BA} \slashed{\nabla}_{e_B} \sigma=J_A .
\end{equation}
\end{Lem}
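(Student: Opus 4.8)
The plan is to work in the null frame $(e_1,e_2,\underline{L},L)$ adapted to the spheres $\mathbb{S}_{t,r}$ and to expand the two Maxwell equations $\nabla^{\mu}G_{\mu\nu}=J_{\nu}$ and $\nabla^{\mu}{}^{*}\! G_{\mu\nu}=0$ with $\nu$ taken to be a spherical frame vector $e_A$. Each expansion produces an identity relating $\nabla_{\underline{L}}$-derivatives, $\slashed{\nabla}$-derivatives and zeroth order $r^{-1}$-terms of the null components; the point is then to combine the two identities so as to cancel the ``bad'' term $\nabla_L\underline{\alpha}_A$ and be left with \eqref{eq:alphaem}.

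Concretely, I would first record the covariant derivatives of the frame vectors in Minkowski space, which are explicit: $\nabla_L L=\nabla_{\underline{L}}\underline{L}=\nabla_L\underline{L}=\nabla_{\underline{L}}L=0$, $\nabla_L e_A=\nabla_{\underline{L}}e_A=0$, $\nabla_{e_A}L=\tfrac1r e_A$, $\nabla_{e_A}\underline{L}=-\tfrac1r e_A$ and $\nabla_{e_A}e_B=\slashed{\nabla}_{e_A}e_B-\tfrac{\delta_{AB}}{r}\partial_r$, the last one being the Gauss formula for the round spheres of radius $r$. Since $\eta(L,\underline{L})=-2$ and $\eta(e_A,e_B)=\delta_{AB}$, the inverse metric has frame components $\eta^{L\underline{L}}=-\tfrac12$, $\eta^{AB}=\delta^{AB}$, so that
$$\nabla^{\mu}G_{\mu A}=-\tfrac12(\nabla_L G)_{\underline{L}A}-\tfrac12(\nabla_{\underline{L}}G)_{LA}+\delta^{BC}(\nabla_{e_B}G)_{e_C A}=J_A.$$
Commuting $\nabla$ past the frame vectors with the formulas above, and using $G_{LA}=-\alpha_A$, $G_{\underline{L}A}=-\underline{\alpha}_A$, $G_{e_A\partial_r}=\tfrac12(\alpha_A-\underline{\alpha}_A)$ together with the fact that the restriction of $G$ to $\mathbb{S}_{t,r}$ equals $\sigma\,\varepsilon_{BC}$ (so that the spherical part of the last term reassembles into $\varepsilon_{BA}\slashed{\nabla}_{e_B}\sigma$), this reduces to an identity of the form
$$\tfrac12\nabla_{\underline{L}}\alpha_A-\tfrac12\nabla_L\underline{\alpha}_A-\tfrac{1}{2r}\alpha_A+\tfrac{1}{2r}\underline{\alpha}_A+\varepsilon_{BA}\slashed{\nabla}_{e_B}\sigma=J_A.$$
Running the very same computation for ${}^{*}\! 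G$, which satisfies $\nabla^{\mu}{}^{*}\! G_{\mu A}=0$, and substituting the standard relations between the null components of $G$ and of its Hodge dual (up to the fixed signs of $\varepsilon_{\lambda\sigma\mu\nu}$, these are the rotations $\varepsilon_{A}{}^{B}\alpha_B$, $-\varepsilon_{A}{}^{B}\underline{\alpha}_B$ of $\alpha,\underline{\alpha}$, together with $\sigma$ and $-\rho$), then contracting with $\varepsilon$ to rotate the resulting identity back, yields a second relation in which $\varepsilon_{BA}\slashed{\nabla}_{e_B}\sigma$ is replaced by $\slashed{\nabla}_{e_A}\rho$. Subtracting it from the previous one cancels the $\nabla_L\underline{\alpha}_A$ term, doubles $\nabla_{\underline{L}}\alpha_A$, annihilates the $\underline{\alpha}_A/r$ term and leaves exactly $\nabla_{\underline{L}}\alpha_A-\tfrac{\alpha_A}{r}+\slashed{\nabla}_{e_A}\rho+\varepsilon_{BA}\slashed{\nabla}_{e_B}\sigma=J_A$.

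The main obstacle is not conceptual but bookkeeping: one must keep careful track of the connection coefficients of the null frame when commuting $\nabla$ past the frame vectors (these are what generate the $\alpha_A/r$ term), and fix the sign conventions for $\varepsilon_{\lambda\sigma\mu\nu}$, the Hodge dual and the spherical volume form $\varepsilon_{AB}$ consistently enough that the two identities combine with the correct relative sign. An equivalent route, which avoids the inverse metric at the cost of the same spherical connection computation, is to start from the antisymmetrised form of Maxwell's equations given in Lemma \ref{maxwellbis}, applied to the index triple $(\underline{L},e_A,L)$ for $G$ and for ${}^{*}\! G$, and to combine the two resulting identities in the same way. Either way the computation is carried out in Appendix \ref{appendixD}.
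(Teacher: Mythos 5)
Your proposal follows essentially the same route as the paper's Appendix~\ref{appendixD}: one relation from the null-frame expansion of $\nabla^{\mu}G_{\mu A}=J_A$, another from $\nabla^{\mu}{}^* \! G_{\mu A}=0$ (which the paper packages, via Lemma~\ref{maxwellbis}, as the Bianchi-type identity $\nabla_{[L}G_{\underline{L}A]}=0$), and a linear combination of the two chosen to eliminate $\nabla_L\underline{\alpha}_A$ and the $\underline{\alpha}_A/r$ term. The only slip is a sign in your displayed intermediate identity, which should read $+\tfrac12\nabla_L\underline{\alpha}_A$ rather than $-\tfrac12\nabla_L\underline{\alpha}_A$ (compare \eqref{eq:alpha2}), so the final step is an addition rather than the claimed ``subtraction'' — exactly the kind of bookkeeping issue you flag, and it does not alter the structure of the argument.
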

The following result will allow us to treat part of the interior of the light cone.
\begin{Lem}\label{decayint}
Let $U$ be a smooth tensor field defined on $[0,T[ \times \mathbb{R}^3$. Then,
$$\forall \hspace{0.5mm} t \in [0,T[, \hspace{8mm} \sup_{|x| \leq 1+\frac{t}{2}} |U(t,x)| \lesssim \frac{1}{(1+t)^{\frac{5}{2}}} \sum_{|\gamma| \leq 2}  \| \tau_- \mathcal{L}_{Z^{\gamma}}(U)(t,y) \|_{L^2 \left( |y| \leq 2+\frac{3}{4}t \right)}.$$
\end{Lem}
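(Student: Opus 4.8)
The plan is to combine a rescaled three–dimensional Sobolev embedding with the structure relations of Lemma \ref{goodderiv}, using the fact that on the set $\{|x| \leq 1+\tfrac{t}{2}\}$ one has $\tau_+ \sim \tau_- \sim 1+t$: there each spatial derivative can be traded for a Killing field at the cost of one power of $1+t$, and the weight $\tau_-$ in the right–hand side provides the extra gain needed to reach $(1+t)^{-5/2}$.

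First I treat the bounded range $0 \leq t \leq 7$, which is elementary. Since $|x|+1 \leq 2+\tfrac{t}{2} \leq 2+\tfrac{3}{4}t$, the ball $B(x,1)$ is contained in $\{|y|\leq 2+\tfrac34 t\}$, and the Sobolev embedding $H^2(B(x,1)) \hookrightarrow L^\infty$ applied to the Cartesian components of $U$ gives $|U(t,x)| \lesssim \sum_{|\alpha|\leq 2}\|\partial^\alpha U\|_{L^2(B(x,1))}$. As the translations $\partial_\mu$ belong to $\Po \subset \mathbb{K}$ and $\mathcal{L}_{\partial^\alpha}(U)$ acts on Cartesian components exactly as $\partial^\alpha$, this is $\lesssim \sum_{|\gamma|\leq 2}\|\mathcal{L}_{Z^\gamma}(U)\|_{L^2(\{|y|\leq 2+\frac34 t\})}$, and one concludes using $\tau_- \geq 1$ and $(1+t)^{5/2} \lesssim 1$ on this range.

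For $t \geq 7$, fix $x$ with $|x| \leq 1+\tfrac t2$ and set $\tau := 1+t$. For $|y| \leq \tfrac14$ the point $x+\tau y$ satisfies $|x+\tau y| \leq \tfrac54+\tfrac34 t \leq 2+\tfrac34 t$ and $t-|x+\tau y| \geq \tfrac t4-\tfrac54 \geq \tfrac{1}{16}\tau$; hence $B(x,\tfrac{\tau}{4}) \subset \{|y|\leq 2+\tfrac34 t\}$ and, on that ball, $\tau \lesssim t-|y'| \leq \tau_-(t,y')$. Applying $H^2(B(0,\tfrac14)) \hookrightarrow L^\infty$ to $y \mapsto U(t,x+\tau y)$ and changing variables $y'=x+\tau y$ gives
$$|U(t,x)| \lesssim \sum_{|\alpha|\leq 2} \tau^{|\alpha|-\frac32}\,\big\|\partial^\alpha U\big\|_{L^2(B(x,\tau/4))}.$$
Now I convert the positive powers of $\tau$ into Killing fields: on $B(x,\tfrac\tau4)$ one has $r:=|y'|<t$, and by Lemma \ref{goodderiv} each $(t-r)\partial_\mu$ is a combination of the fields $\partial_\nu,\Omega_{ij},\Omega_{0k},S \in \mathbb{K}$ with coefficients bounded by $1$ whose $\mathbb{K}$–derivatives are again bounded; since $\tau \lesssim t-r$ there, iterating this identity and using that for $Z \in \mathbb{K}$ the difference between $\mathcal{L}_Z(U)$ and $Z$ acting on Cartesian components is a zeroth–order operator with constant coefficients, one gets $\tau^{|\alpha|}|\partial^\alpha U| \lesssim \sum_{|\gamma|\leq|\alpha|}|\mathcal{L}_{Z^\gamma}(U)|$ almost everywhere on $B(x,\tfrac\tau4)$, for $|\alpha|\leq 2$. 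Plugging this in, and using $\tau \lesssim \tau_-$ pointwise on $B(x,\tfrac\tau4)$ to absorb one further power, i.e. $\tau^{-3/2}\|\mathcal{L}_{Z^\gamma}U\|_{L^2} = \tau^{-5/2}\|\tau\,\mathcal{L}_{Z^\gamma}U\|_{L^2} \lesssim \tau^{-5/2}\|\tau_-\mathcal{L}_{Z^\gamma}U\|_{L^2}$, together with $B(x,\tfrac\tau4)\subset\{|y|\leq 2+\tfrac34 t\}$, yields the claimed bound.

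The one genuinely delicate point is this conversion step $\tau^{|\alpha|}|\partial^\alpha U| \lesssim \sum_{|\gamma|\leq|\alpha|}|\mathcal{L}_{Z^\gamma}U|$ on the interior region. One must keep track that $U$ is a tensor — but for $Z \in \mathbb{K}$ the tensorial corrections in $\mathcal{L}_Z$ have constant coefficients, so they are harmless — and, when commuting two copies of $(t-r)\partial_\mu$, one must avoid differentiating the singular coefficient $\partial_i(t-r) = -x^i/r$; it is cleanest instead to commute translations outward, writing $Z\partial_\nu = \partial_\nu Z + [Z,\partial_\nu]$ with $[Z,\partial_\nu]$ a constant–coefficient combination of translations, which keeps all coefficients bounded on $B(x,\tfrac\tau4)$ (including near $r=0$). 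Everything else is routine: the Sobolev embeddings, the rescaling change of variables, and the elementary inclusions and comparisons $\tau_+\sim\tau_-\sim 1+t$ valid on the region under consideration.
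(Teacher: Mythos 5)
Your proof is correct and follows essentially the same path as the paper's: a rescaled Sobolev embedding on a ball of radius $\sim (1+t)$ around $x$, the change of variables producing the $(1+t)^{|\alpha|-3/2}$ factors, and then the conversion of each power of $1+t$ into a vector field of $\mathbb{K}$ via Lemma \ref{goodderiv} together with the comparison $1+t \lesssim \tau_-$ on the region $|y-x|\leq(1+t)/4$. The explicit split $t\leq 7$ / $t\geq 7$ and the careful discussion of commuting translations outward to avoid the singular coefficient $x^i/r$ are sensible clarifications of points the paper's shorter proof leaves implicit, but they do not change the argument.
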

\begin{proof}
As $|\mathcal{L}_{Z^{\gamma}}(U)| \lesssim \sum_{|\beta| \leq |\gamma|} \sum_{\mu, \nu} | Z^{\beta} (U_{\mu \nu})|$, it suffices to prove the result for each component of the tensor and we can assume that $U$ is a scalar function. Let $(t,x) \in [0,T[ \times \R^3$ such that $|x| \leq 1+ \frac{1}{2}t$. Apply a standard $L^2$ Sobolev inequality to $V: y \mapsto U(t,x+\frac{1+t}{4}y)$ and then make a change of variables to get
$$|U(t,x)|=|V(0)| \lesssim \sum_{|\beta| \leq 2} \| \partial_x^{\beta} V \|_{L^2_y(|y| \leq 1)} \lesssim \left( \frac{1+t}{4} \right)^{-\frac{3}{2}} \sum_{|\beta| \leq 2} \left( \frac{1+t}{4} \right)^{|\beta|} \| \partial_x^{\beta} U(t,.) \|_{L^2_y(|y-x| \leq \frac{1+t}{4})}.$$
Observe now that $|y-x| \leq \frac{1+t}{4}$ implies $|y| \leq 2+\frac{3}{4}t$ and that $1+t \lesssim \tau_-$ on that domain. Consequently, using Lemma \ref{goodderiv} and that $[Z, \partial]$, for $Z \in \mathbb{K}$, is either $0$ or a translation, we have
$$( 1+t )^{|\beta|+1} \| \partial_x^{\beta} U(t,.) \|_{L^2_y(|y-x| \leq \frac{1+t}{4})} \hspace{2mm} \lesssim \hspace{2mm} \| \tau_-^{|\beta|+1} \partial_x^{\beta} U(t,.) \|_{L^2_y(|y| \leq 2+\frac{3}{4}t)} \hspace{2mm} \lesssim  \hspace{2mm} \sum_{|\gamma| \leq |\beta|} \| \tau_- Z^{\gamma} U(t,.) \|_{L^2_y(|y| \leq 2+\frac{3}{4}t)}.$$ 
\end{proof}
We refer to Lemma $2.3$ of \cite{CK} for a proof of the following two Sobolev inequalities, which will permit us to deal with the remaining region.
\begin{Lem}\label{Sob}
Let $U$ be a sufficiently regular tensor field defined on $\R^3$ and denote $\sum_{|\beta| \leq k} \left|\mathcal{L}_{\Omega^{\beta}}(U) \right|^2$, where $\Omega^{\beta} \in \Or^{|\beta|}$, by $|U|^2_{\mathbb{O},k}$. There exists a uniform constant $C>0$, independent of $U$, such that
\begin{eqnarray}
\nonumber \forall \hspace{0.5mm} t \in \R_+, \hspace{2mm} \forall \hspace{0.5mm} |x| \geq \frac{1}{2}t+1, \hspace{8mm} |U(x)| & \leq & \frac{C}{|x|\tau_-^{\frac{1}{2}}} \left( \int_{|y| \geq \frac{1}{2}t+1} |U(y)|^2_{\mathbb{O},2}+\tau_-^2|\nabla_{\partial_r} U(y) |^2_{\mathbb{O},1} dy \right)^{\frac{1}{2}}, \\ \nonumber
\forall \hspace{0.5mm} x \neq 0, \hspace{8mm} |U(x)| & \leq & \frac{C}{|x|^{\frac{3}{2}}} \left( \int_{|y| \geq |x|} |U(y)|^2_{\mathbb{O},2}+|y|^2|\nabla_{\partial_r} U(y) |^2_{\mathbb{O},1} dy \right)^{\frac{1}{2}}.\end{eqnarray}
\end{Lem}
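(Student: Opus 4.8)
The plan is to prove both inequalities as weighted Sobolev embeddings on the exterior region, by reducing them to one--dimensional Sobolev inequalities on the spheres $\Sp_{t,r}$ and in the radial variable; alternatively one may simply invoke Lemma $2.3$ of \cite{CK}, as we do. The key structural observation is that both right--hand sides are built from \emph{scale--invariant} first and second derivatives of $U$: in the exterior region a rotation $\mathcal{L}_\Omega$ and the operator $r\nabla_{\partial_r}$ are of the same strength, and $|y|^2|\nabla_{\partial_r}U|^2 = |r\nabla_{\partial_r}U|^2$ at $|y|=r$, so the weighted integrand $|U|^2_{\mathbb{O},2}+|y|^2|\nabla_{\partial_r}U|^2_{\mathbb{O},1}$ is precisely a sum of squares of \emph{two} such scale--invariant derivatives of $U$. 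By Lemma \ref{randrotcom}, $\mathcal{L}_\Omega$ and $\nabla_{\partial_r}$ commute, so these mixed second derivatives are unambiguous.

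\textbf{Reduction on the spheres.} Fix $x=r\omega$. Pulling $U$ back to the unit sphere and using an interpolation inequality of Agmon type on $\mathbb{S}^2$ (controlling $\|U(r\,\cdot\,)\|_{L^\infty(\mathbb{S}^2)}$ by a geometric mean of an $L^2$ and an $H^2$ norm, equivalently by a full set of at most two spherical derivatives), together with $(1+r)|\slashed\nabla U|\lesssim |U|_{\mathbb{O},1}$ (the tensorial analogue of the second estimate of Lemma \ref{null}), one bounds $|U(x)|$ by $L^2(\mathbb{S}^2)$--norms of at most two tangential scale--invariant derivatives of $U$, where one of the two slots may be taken up by $r\nabla_{\partial_r}$ rather than by a rotation.

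\textbf{Radial fundamental theorem of calculus.} For the second estimate, write for each fixed $\omega$
\[
r^3\int_{\mathbb{S}^2}|U|^2_{\mathbb{O},2}(r\omega')\,d\omega' \;=\; -\int_r^{+\infty}\partial_s\Big(s^3\int_{\mathbb{S}^2}|U|^2_{\mathbb{O},2}(s\omega')\,d\omega'\Big)\,ds ,
\]
expand the $s$--derivative — the $\partial_s$ hitting $|\mathcal{L}_{\Omega^\beta}U|^2$ produces, via the commutation of the previous step, a factor $\langle r\nabla_{\partial_r}\mathcal{L}_{\Omega^\beta}U,\mathcal{L}_{\Omega^\beta}U\rangle$ — and apply Cauchy--Schwarz in $(s,\omega')$ with the exponents chosen so that the radial factor picks up the extra $s^2=|y|^2$. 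After the change of variables $dy=s^2\,ds\,d\mathbb{S}^2$ this recombines into $|x|^{-3}\int_{|y|\ge|x|}\big(|U|^2_{\mathbb{O},2}+|y|^2|\nabla_{\partial_r}U|^2_{\mathbb{O},1}\big)\,dy$. For the first estimate one runs the identical computation on $\{|x|\ge\tfrac12 t+1\}$, replacing the weight $s^3$ by $s^2\tau_-(s)$ and using $|\partial_s\tau_-|\le1$; since $\tau_-$ may be much smaller than $s$ in the wave zone, one power of $|x|$--decay is traded for a power of $\tau_-^{-1}$, and the inner weight $|y|^2$ on the radial term is accordingly replaced by $\tau_-^2$.

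\textbf{Main difficulty.} Once these two steps are set up, what remains is routine one--dimensional calculus; the only delicate points are (i) the bookkeeping ensuring that the radial derivative carries only one angular derivative together with the extra $|y|^2$ (resp.\ $\tau_-^2$) weight — this is exactly what the Agmon--type spherical inequality and the choice of exponents in the Cauchy--Schwarz step are designed to produce — and (ii) for the first estimate, performing the radial integration along rays at fixed $t$, so that $\partial_s$ genuinely moves $u=t-s$ and hence $\tau_-$; both are carried out in detail in \cite{CK}.
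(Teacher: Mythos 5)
The citation to Lemma $2.3$ of \cite{CK} is, of course, exactly what the paper itself does, so the proposal is "correct" at that level; but the sketch you offer is misleading, and precisely at the step you wave off as "bookkeeping". The Radial-FTC step implicitly asserts
\[
r^3\int_{\mathbb{S}^2}\bigl|U(r\omega')\bigr|^2_{\mathbb{O},2}\,d\omega' \ \lesssim\ \int_{|y|\geq r}\Bigl(|U|^2_{\mathbb{O},2}+|y|^2|\nabla_{\partial_r}U|^2_{\mathbb{O},1}\Bigr)dy,
\]
and this is \emph{false}. In your own expansion, $\partial_s$ hitting $|\mathcal{L}_{\Omega^\beta}U|^2$ with $|\beta|=2$ produces $\langle\mathcal{L}_{\Omega^\beta}U,\mathcal{L}_{\Omega^\beta}\nabla_{\partial_r}U\rangle$, so any Cauchy--Schwarz in $(s,\omega')$ yields $|\nabla_{\partial_r}U|_{\mathbb{O},2}$, not $|\nabla_{\partial_r}U|_{\mathbb{O},1}$, and integrating by parts on $\mathbb{S}^2$ only trades this for $|U|_{\mathbb{O},3}$, also outside the budget. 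Concretely, take a scalar $U(s,\omega)=a(s)Y_\ell(\omega)$ with $Y_\ell$ a degree-$\ell$ spherical harmonic and $a$ a tent of height $1$ and width $\sim r/(1+\ell)$ near $s=r$: the left side is of order $(1+\ell)^4r^3\|Y_\ell\|_{L^2}^2$ while the right side is of order $(1+\ell)^3r^3\|Y_\ell\|_{L^2}^2$, so the ratio blows up as $\ell\to\infty$. Likewise, the "Reduction on the spheres" step cannot on its own produce an $r\nabla_{\partial_r}$-derivative: the Agmon inequality on $\mathbb{S}^2$ is purely tangential.

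The lemma nonetheless holds because the \emph{pointwise} value $|U(r\omega_0)|^2$ is a much smaller object than $\int_{\mathbb{S}^2}|U(r\omega)|^2_{\mathbb{O},2}\,d\omega$: a degree-$\ell$ harmonic satisfies $\|Y_\ell\|_{L^\infty}^2\sim(1+\ell)\|Y_\ell\|_{L^2}^2$ while $\int|Y_\ell|^2_{\mathbb{O},2}\sim(1+\ell)^4\|Y_\ell\|_{L^2}^2$, so three powers of $(1+\ell)$ are available, and they must be spent \emph{unevenly} on the two factors produced by the radial Cauchy--Schwarz, loading nearly all the angular regularity onto the $U$-factor and leaving only one rotation on $\nabla_{\partial_r}U$. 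This requires coupling the spherical and radial steps frequency by frequency (for instance, decompose in spherical harmonics, apply the radial FTC to each $U_\ell$, and close the sum over $\ell$ with a weighted Cauchy--Schwarz whose weights depend on $\ell$), rather than applying Agmon on the sphere and then a single radial FTC to $\int_{\mathbb{S}^2}|U|^2_{\mathbb{O},2}$. That frequency-dependent balancing is the genuine content of \cite{CK}'s Lemma $2.3$ and is exactly what is missing from your two-step pipeline.
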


$\bullet$ \textbf{Decay estimates for $G$.} \newline 
We are now ready to prove the pointwise decay estimates on the electromagnetic field.
\begin{Pro}\label{decayMaxell}
Let $k \in \mathbb{N}^*$. Then, we have for all $(t,x) \in [0,T[ \times \R^3$,
\begin{eqnarray}
\nonumber |\rho|(t,x)+ |\sigma|(t,x) & \lesssim & \frac{ \sqrt{\mathcal{E}_2[G](t)}}{\tau_+^{2}\tau_-^{\frac{1}{2}}}, \hspace{40mm} |\underline{\alpha}|(t,x) \hspace{2mm} \lesssim \hspace{2mm} \sqrt{\mathcal{E}^{k}_2[G](t)} \frac{ \log^{\frac{k}{2}}(1+\tau_-) }{\tau_+ \tau_-^{\frac{3}{2}}}, \\ \nonumber
  |\alpha|(t,x) & \lesssim & \frac{\sqrt{ \mathcal{E}_2[G](t) }+\sum_{|\kappa| \leq 1} \|r^2 \mathcal{L}_{Z^{\kappa}}(J)_A\|_{L^2(\Sigma_t)}}{\tau_+^{\frac{5}{2}}} . 
  \end{eqnarray}
\end{Pro}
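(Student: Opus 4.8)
The plan is to establish each of the three estimates separately on the \emph{interior} region $\{|x| \le 1 + \tfrac t2\}$, using the interior Sobolev embedding of Lemma \ref{decayint}, and on the \emph{exterior} region $\{|x| \ge 1 + \tfrac t2\}$, using the weighted Sobolev inequalities of Lemma \ref{Sob}; throughout, Lemmas \ref{randrotcom} and \ref{null} serve to commute the rotations and the null-frame derivatives past the null decomposition, and the Maxwell equation \eqref{eq:alphaem} of Lemma \ref{alphaem} is used to control the bad transversal derivative of $\alpha$.

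On the interior region $\tau_- \sim \tau_+ \sim 1+t$. For $\rho,\sigma$ and $\alpha$ I would apply Lemma \ref{decayint} to the $2$-form $G$ itself, use $|\zeta(G)| \lesssim |G|$ for any null component $\zeta$, and note that $\sum_{|\gamma|\le 2}\|\tau_-\mathcal{L}_{Z^\gamma}(G)\|_{L^2(\Sigma_t)}^2 \lesssim \mathcal{E}_2[G](t)$ (bound the Cartesian norm of $\mathcal{L}_{Z^\gamma}(G)$ by its null components and use $\tau_-^2 \le \tau_+^2$); since $\tau_+\sim 1+t$ this gives a bound by $(1+t)^{-5/2}\sqrt{\mathcal{E}_2[G]}$, stronger than what is claimed. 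For $\underline\alpha$ I would instead apply the Sobolev embedding of Lemma \ref{decayint} to the component $\underline\alpha(G)$ alone: commuting with boosts brings in the full null decomposition of $\mathcal{L}_{Z^\gamma}(G)$, but on the ball of integration $\tau_-\sim 1+t$, so the $L^2$ norm is $\lesssim \log^{k/2}(1+t)(1+t)^{-1}\sqrt{\mathcal{E}_2^k[G]}$ by the definition of $\mathcal{E}^{\partial_t,k}$, which weights \emph{all} null components equally by $\tau_-^2\log^{-k}(1+\tau_-)$; this produces exactly the factor $\log^{k/2}(1+\tau_-)\,\tau_+^{-1}\tau_-^{-3/2}$.

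On the exterior region $r\ge 1$ and $|x|\gtrsim \tau_+$. For $\rho$ and $\sigma$ I apply the first inequality of Lemma \ref{Sob} to the scalars $\tau_+\rho(G)$ and $\tau_+\sigma(G)$: rotations fix $\tau_+$ and commute with $\nabla_{\partial_r}$ and with the null decomposition, while Lemma \ref{null} turns $\nabla_L$ and $\nabla_{\underline L}$ of a null component into $\tau_+^{-1}$, resp.\ $\tau_-^{-1}$, times rotated components, so that $\tau_-^2|\nabla_{\partial_r}(\tau_+\rho)|^2_{\mathbb{O},1} \lesssim \tau_+^2\sum_{|\gamma|\le 2}|\rho(\mathcal{L}_{Z^\gamma}G)|^2$ and likewise for the undifferentiated term; all these integrals are $\lesssim \mathcal{E}_2[G]$ precisely because $\mathcal{E}^{\overline{K}_0}$ carries the weight $\tau_+^2$ on $\rho,\sigma$, and dividing by $\tau_+$ and using $|x|\gtrsim\tau_+$ gives the $\tau_+^{-2}\tau_-^{-1/2}$ rate. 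For $\underline\alpha$ I proceed identically but with the scalar $\tau_-\log^{-k/2}(1+\tau_-)\,\underline\alpha(G)$: rotations fix the weight, $\partial_r$ of it is $O(\log^{-k/2}(1+\tau_-))$, and Lemma \ref{null} gives $|\nabla_{\partial_r}\underline\alpha|\lesssim \tau_-^{-1}\sum_{|\gamma|\le 1}|\underline\alpha(\mathcal{L}_{Z^\gamma}G)|$, so every integrand is $\lesssim \tau_-^2\log^{-k}(1+\tau_-)\sum_{|\gamma|\le 2}|\underline\alpha(\mathcal{L}_{Z^\gamma}G)|^2$, which is exactly what $\mathcal{E}_2^k[G]$ controls; dividing by the weight and using $|x|\gtrsim\tau_+$ finishes this case.

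The main point is the $\alpha$ estimate. On the exterior region I would apply the second inequality of Lemma \ref{Sob} to the scalar $(1+r)\alpha(G)$, which produces the decay $|x|^{-5/2}\sim\tau_+^{-5/2}$ and whose $|\cdot|^2_{\mathbb{O},2}$ term is harmless ($\lesssim \int\tau_+^2|\alpha(\mathcal{L}_{\Omega^\beta}G)|^2 \lesssim \mathcal{E}_2[G]$). The obstruction is the term $|y|^2|\nabla_{\partial_r}((1+r)\alpha)|^2_{\mathbb{O},1}$: the transversal derivative $\nabla_{\underline L}\alpha$ hidden in $\nabla_{\partial_r}$ only gains $\tau_-^{-1}$ through Lemma \ref{null}, and $|y|(1+r)\tau_-^{-1}$ is not bounded in the exterior. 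This is where \eqref{eq:alphaem} enters: substituting $\nabla_{\underline L}\alpha_A = \tfrac{\alpha_A}{r} - \slashed{\nabla}_{e_A}\rho - \varepsilon_{BA}\slashed{\nabla}_{e_B}\sigma + J_A$, the angular derivatives $\slashed{\nabla}\rho,\slashed{\nabla}\sigma$ carry an extra $(1+r)^{-1}$ (Lemma \ref{null}), which together with $|y|\le\tau_+$ and $r\ge 1$ makes the $\alpha/r$, $\slashed{\nabla}\rho$, $\slashed{\nabla}\sigma$ contributions controllable by the $\tau_+^2$-weighted energy $\mathcal{E}_2[G]$, while $J_A$ produces exactly $\sum_{|\kappa|\le 1}\|r^2\mathcal{L}_{Z^\kappa}(J)_A\|_{L^2(\Sigma_t)}$; the interior $\alpha$ estimate is again immediate from Lemma \ref{decayint}. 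I expect the $\alpha$ estimate, and more generally the precise matching of the weights $\tau_+$, $\tau_-\log^{-k/2}(1+\tau_-)$, $(1+r)$ inserted before the exterior Sobolev inequalities to the weights built into $\mathcal{E}_2$ and $\mathcal{E}_2^k$ (so that the $\nabla_{\partial_r}$-terms produce no uncontrolled growth), to be the delicate part of the argument.
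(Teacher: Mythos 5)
Your proof is correct and follows essentially the same route as the paper: the interior region $\{|x|\le 1+\tfrac t2\}$ is handled by Lemma \ref{decayint}, the exterior by the two Sobolev inequalities of Lemma \ref{Sob} applied to suitably weighted null components (with Lemmas \ref{randrotcom} and \ref{null} controlling the commutators and the $\nabla_{\partial_r}$ terms), and the improved $\alpha$ rate comes precisely from substituting \eqref{eq:alphaem} for the transversal derivative before applying the second Sobolev inequality. The only differences from the paper are cosmetic (you weight by $\tau_+$ or $1+r$ where the paper uses $r$, which is equivalent in the region $r\ge 1+\tfrac t2$), plus two small imprecisions worth noting: for the interior $\underline\alpha$ bound it is cleaner to apply Lemma \ref{decayint} to the whole $2$-form $G$ rather than to $\underline\alpha(G)$ alone (since boosts do not commute with the null decomposition, whereas $|\underline\alpha(G)|\lesssim|G|$ and $\log^k(1+\tau_-)\lesssim\log^k(1+t)$ on the ball of integration gives the desired $\log^{k/2}(1+t)(1+t)^{-5/2}\sqrt{\mathcal{E}^k_2[G]}$ directly); and the factor "$(1+t)^{-1}$" you quote for that $L^2$ norm appears to be a slip — the $L^2$ norm in Lemma \ref{decayint} is $\lesssim\log^{k/2}(1+t)\sqrt{\mathcal{E}^k_2[G]}$ and the $(1+t)^{-5/2}$ is supplied by the lemma itself. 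Neither affects the validity of the argument.
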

\begin{proof}
We fix for this proof $(t,x) \in [0,T[ \times \R^3$. If $|x| \leq 1+\frac{1}{2}t$, the result follows from Proposition \ref{decayint}. We then suppose $|x| \geq 1+\frac{t}{2}$. During this proof, $\Omega^{\beta}$ will always denote a combination of rotational vector fields, i.e. $\Omega^{\beta} \in \Or^{|\beta|}$. Let $\zeta$ be either $\alpha$, $ \rho$ or $ \sigma$. As $\nabla_{\partial_r}$ and $\mathcal{L}_{\Omega}$ commute with the null decomposition (see Lemma \ref{randrotcom}), Lemma \ref{Sob} gives us
$$r^4 \tau_- |\zeta|^2 \lesssim \int_{ |y| \geq \frac{t}{2}+1} |r \zeta |^2_{\mathbb{O},2}+\tau_-^2|\nabla_{\partial_r} (r \zeta) |_{\mathbb{O},1}^2 dy \lesssim \sum_{ |\beta| \leq 1 } \sum_{|\gamma| \leq 2} \int_{ |y| \geq \frac{t}{2}+1} r^2| \zeta ( \mathcal{L}_{Z^{\gamma}} (G) |^2+\tau_-^2r^2|  \zeta ( \mathcal{L}_{\Omega^{\beta}} (\nabla_{\partial_r} G)) |^2 dy.$$
As $\nabla_{\partial_r}$ commute with $\mathcal{L}_{\Omega}$ as well as with the null decomposition (see Lemma \ref{randrotcom}), we have, using $2\partial_r= L-\underline{L}$ and Lemma \ref{null},
\begin{equation}\label{zetaeq2}
 |  \zeta ( \mathcal{L}_{\Omega} (\nabla_{\partial_r} G)) |+|  \zeta ( \nabla_{\partial_r} G) | \lesssim |  \nabla_L \zeta ( \mathcal{L}_{\Omega} (G) |+|  \nabla_{\underline{L}} \zeta ( \mathcal{L}_{\Omega} (G) |+| \nabla_L \zeta (  G) |+| \nabla_{\underline{L}} \zeta (  G) | \lesssim \frac{1}{\tau_-}\sum_{ |\gamma| \leq 2} | \zeta ( \mathcal{L}_{Z^{\gamma}} (G) |.
 \end{equation}
Since $\tau_+ \lesssim r \leq \tau_+$ in the region considered, we finally obtain
$$\tau_+^4 \tau_- |\zeta|^2 \hspace{2mm} \lesssim \hspace{2mm} \sum_{|\gamma| \leq 2} \int_{ |y| \geq \frac{t}{2}+1} \tau_+^2| \zeta ( \mathcal{L}_{Z^{\gamma}} (G) |^2 dx \hspace{2mm} \lesssim \hspace{2mm} \mathcal{E}_2[G](t).$$
We improve now the estimate on $\alpha$. As $\nabla^{\mu} \mathcal{L}_{\Omega} (G)_{\mu \nu} = \mathcal{L}_{\Omega}(J)_{\nu}$ and $\nabla^{\mu} {}^* \! \mathcal{L}_{\Omega} (G)_{\mu \nu} = 0$ for all $\Omega \in \Or$, we have according to \eqref{eq:alphaem} that
$$\forall \hspace{0.5mm} |\beta| \leq 1, \hspace{15mm} \nabla_{\underline{L}} \alpha(\mathcal{L}_{\Omega^{\beta}} (G))_A=\frac{1}{r}\alpha(\mathcal{L}_{\Omega^{\beta}} (G))_A-\slashed{\nabla}_{e_A}\rho (\mathcal{L}_{\Omega^{\beta}} (G)) +\varepsilon_{AB} \slashed{\nabla}_{e_B} \sigma (\mathcal{L}_{\Omega^{\beta}} (G))+\mathcal{L}_{\Omega^{\beta}}(J)_A.$$
Consequently, we get using Lemma \ref{null} that for all $\Omega \in \Or$,
\begin{equation}\label{alphaeq2}
  |  \alpha ( \nabla_{\partial_r} G) |+|  \alpha ( \mathcal{L}_{\Omega} (\nabla_{\partial_r} G)) | \lesssim \left|J_A \right| +\left| \mathcal{L}_{\Omega} (J)_A \right|+ \frac{1}{r}\sum_{ |\gamma| \leq 2} | \alpha ( \mathcal{L}_{Z^{\gamma}} (G) |+| \rho ( \mathcal{L}_{Z^{\gamma}} (G) |+| \sigma ( \mathcal{L}_{Z^{\gamma}} (G) |.
 \end{equation}
Applying the second inequality of Lemma \ref{Sob} and using this time \eqref{alphaeq2} instead of \eqref{zetaeq2}, we get
$$\tau_+^5 |\alpha|^2 \hspace{2mm} \lesssim \hspace{2mm} |x|^5 |\alpha|^2 \hspace{2mm} \lesssim \hspace{2mm} \int_{ |y| \geq |x|} |r \alpha|^2_{\mathbb{O},2}+r^2|\nabla_{\partial_r} ( r \alpha) |_{\mathbb{O},1}^2 dy \hspace{2mm} \lesssim \hspace{2mm} \mathcal{E}_2[G](t)+\sum_{|\kappa| \leq 1} \|r^2 \mathcal{L}_{Z^{\kappa}}(J)_A\|^2_{L^2(\Sigma_t)}.$$
Applying the first inequality of Lemma \ref{Sob} to $\tau_- \log^{-\frac{k}{2}}(1+\tau_-) \underline{\alpha}$ and using the same arguments as previously, one has
$$ \frac{r^2\tau_-^3}{\log^k(1+\tau_-)} |\underline{\alpha} |^2 \hspace{2mm} \lesssim \hspace{2mm}   \int_{ |y| \geq \frac{t}{2}+1} \left| \frac{\tau_-}{\log^{\frac{k}{2}}(1+\tau_-)} \underline{\alpha} \right|^2_{\mathbb{O},2}+\tau_-^2 \left| \nabla_{\partial_r} \left( \frac{\tau_-}{\log^{\frac{k}{2}}(1+\tau_-)} \underline{\alpha} \right) \right|_{\mathbb{O},1}^2 dy  \hspace{2mm} \lesssim \hspace{2mm}   \mathcal{E}^{k}_2[G](t).$$
\end{proof}
\section{The null structure of the non-linearity $\mathcal{L}_{Z^{\gamma}}(F) \left( v, \nabla_v \widehat{Z}^{\beta} f \right)$}\label{sec5}

In order to take advantage of the null structure of the Vlasov equation, we will expand quantities such as $\mathcal{L}_{Z^{\gamma}}(F) \left( v,\nabla_v g \right)$, with $g$ a regular function, in null coordinates. We then use the following lemma.
\begin{Lem}\label{calculF}
Let $G$ be a sufficiently regular $2$-form, $(\alpha, \underline{\alpha}, \rho, \sigma)$ its null components and $g$ a sufficiently regular function. Then,
$$\left| G \left( v, \nabla_v g \right) \right| \hspace{2mm} \lesssim \hspace{2mm}  \left( \tau_- |\rho|+\tau_+|\alpha|+\tau_+\frac{|v^A|}{v^0}|\sigma| +\tau_-\frac{|v^A|}{v^0} |\underline{\alpha}| +\tau_+\frac{v^{\underline{L}}}{v^0} |\underline{\alpha}| \right) \sum_{\widehat{Z} \in \K} \left| \widehat{Z} g \right|  .$$
\end{Lem}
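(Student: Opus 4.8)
The plan is to expand the contraction $G(v,\nabla_v g)$ explicitly in the null frame $(L,\underline{L},e_1,e_2)$ and to identify, term by term, which null component of $G$ multiplies which component of $v$ and of $\nabla_v g$. Write $G(v,\nabla_v g) = v^\mu {G_\mu}^{\phantom{\mu}i}\partial_{v^i} g$; since $\nabla_v g = (0,\partial_{v^1}g,\partial_{v^2}g,\partial_{v^3}g)$ has vanishing time component, only the spatial index $i$ of ${G_\mu}^{\phantom{\mu}i}$ appears, and we may freely decompose the spatial vector $(\partial_{v^i}g)_{1\le i\le 3}$ into its radial part $\partial_{v^r}g := \frac{x^i}{r}\partial_{v^i}g$ and its angular part. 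Correspondingly, $v$ decomposes as $v = v^L L + v^{\underline L}\underline{L} + v^A e_A$. So $G(v,\nabla_v g)$ is a sum of products $G(e_a,e_b)\cdot(\text{component of }v)\cdot(\text{component of }\nabla_v g)$ where $e_a,e_b$ range over the null frame, and $G(e_a,e_b)$ is one of $\alpha$, $\underline\alpha$, $\rho$, $\sigma$ (up to the frame relations $2\rho = G_{L\underline L}$, $\sigma = G_{12}$, etc.).

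The key input is then to control the weights attached to each component of $\nabla_v g$. First I would record that the angular components of $\nabla_v g$, i.e. the spherical part of the spatial vector $(\partial_{v^i}g)$, are of the form $r^{-1}\widehat{\Omega}\,g$-type quantities: indeed $\widehat\Omega_{0k} = t\partial_k + x^k\partial_t + v^0\partial_{v^k}$ and $\widehat\Omega_{ij} = x^i\partial_j - x^j\partial_i + v^i\partial_{v^j} - v^j\partial_{v^i}$, so one can solve for the $\partial_{v^i}g$ in terms of the $\widehat Z g$ and of $\partial_{t,x}g$; more precisely the \emph{angular} part of $(\partial_{v^i}g)$ costs a factor $\tau_+$ while its \emph{radial} part $\partial_{v^r}g$ costs only $\tau_-$ — this is exactly the statement recalled in the strategy section that "the radial component of $(0,\partial_{v^1}\widehat Z^\beta f,\dots)$ costs a power of $t-r$ instead of $t+r$''. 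So schematically $|\text{angular part of }\nabla_v g|\lesssim \tau_+\sum_{\widehat Z}|\widehat Z g|$ and $|\partial_{v^r}g|\lesssim \tau_-\sum_{\widehat Z}|\widehat Z g|$, with of course $\nabla_v g$ itself bounded by $\tau_+\sum_{\widehat Z}|\widehat Z g|$. Combining: each term of the expansion is $|\text{null comp. of }G|\times|\text{comp. of }v|\times|\text{comp. of }\nabla_v g|$, and we assemble the bound by pairing, in each term, the "bad'' objects with "good'' ones.

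Concretely, after expansion the terms are: $\rho$ paired with $v^L$ and with $\partial_{v^r}g$ (giving $v^L\tau_- \lesssim \tau_-$, since $v^L/v^0$ is bounded, times the $\tau_-$ from the radial derivative — wait, one factor only, since $v^L$ is already $O(v^0)$; this produces $\tau_-|\rho|$); $\rho$ paired with $v^{\underline L}$ or $v^A$ and an angular derivative is better and absorbed; $\alpha$ (which is $G_{Ae_b}$-type, $A$ angular) appears paired with $v^L$ and an angular derivative, contributing $\tau_+|\alpha|$ after the angular derivative costs $\tau_+$ and $v^L$ is $O(v^0)$ — or paired with $v^A$ and radial derivative, weaker; $\sigma = G_{12}$ pairs two angular directions, hence needs $v^A$ on one side, and with an angular derivative gives $\tau_+\frac{|v^A|}{v^0}|\sigma|$; $\underline\alpha = G_{A\underline L}$ pairs an angular with $\underline L$, giving either $v^{\underline L}$ against an angular derivative ($\tau_+\frac{v^{\underline L}}{v^0}|\underline\alpha|$) or $v^A$ against radial derivative ($\tau_-\frac{|v^A|}{v^0}|\underline\alpha|$). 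These are exactly the five terms in the claimed bound. I would write out the expansion in a single display, use the frame relations $2\rho=G_{L\underline L}$, $\sigma=\pm G_{12}$ together with $G(v,v)=0$ to eliminate the diagonal $LL$, $\underline L\,\underline L$ contractions, and then apply the weight bounds.

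The main obstacle I expect is the bookkeeping for the radial-versus-angular split of $\nabla_v g$ and making the claim "$\partial_{v^r}g$ costs only $\tau_-$'' precise: one must invert the linear relations expressing $\partial_{v^i}g$ through the $\widehat\Omega_{0k}g$, $\widehat\Omega_{ij}g$ and $Sg$, $\partial g$, track the coefficients (which involve $\frac{x^i}{r}$, $\frac{v^0}{v^r}$ and the like), and verify that contracting with $\frac{x^i}{r}$ kills the $\tau_+$-growing pieces leaving only $\tau_-$; the borderline factors such as $\frac{v^0}{v^r}$ or potential division by $v^{\underline L}$ need a short argument (or are simply absorbed because they come multiplied by a good null component, e.g. $\alpha$ or $\rho$, which already carries the needed $v$-weight via Lemma~\ref{weights1}). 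Once this inversion lemma is in hand, assembling the five terms is mechanical.
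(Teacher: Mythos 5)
Your proposal follows essentially the same route as the paper: expand $G(v,\nabla_v g)$ in the null frame, then use $v^0\partial_{v^i}=\widehat\Omega_{0i}-t\partial_i-x^i\partial_t$ to bound the angular part of $\nabla_v g$ by $\tau_+/v^0$ times $\sum_{\widehat Z}|\widehat Z g|$ and the identity $v^0(\nabla_v g)^r=\frac{x^i}{r}\widehat\Omega_{0i}g-Sg+(t-r)\underline{L}g$ to bound the radial part by $\tau_-/v^0$, and pair these against the five surviving null-frame products of $G$ and $v$. One clarification on your flagged obstacle: the radial identity just quoted is a direct algebraic rewriting with no denominators involving $v^r$ or $v^{\underline{L}}$, so the borderline factors you anticipate never arise, and the absence of $LL$, $\underline{L}\,\underline{L}$ contractions is simply antisymmetry of $G$ rather than a consequence of $G(v,v)=0$.
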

\begin{proof}
Expanding $G(v, \nabla_v g )$ with null components, we obtain
\begin{eqnarray}
\nonumber G(v, \nabla_v g ) & = & 2 \rho\left( v^L \left( \nabla_v g \right)^{\underline{L}}-v^{\underline{L}} \left( \nabla_v g \right)^L \right)+v^B \varepsilon_{BA} \sigma \left( \nabla_v g \right)^A-v^L \alpha_A \left( \nabla_v g \right)^A+v^A \alpha_A \left( \nabla_v g \right)^L \\ 
& & -v^{\underline{L}} \underline{\alpha}_A \left( \nabla_v g \right)^A+v^A \underline{\alpha}_A \left( \nabla_v g \right)^{\underline{L}}. \label{eq:calculF}
\end{eqnarray}
We bound the angular components of $\nabla_v g$ by merely using $v^0 \partial_{v^i} = \widehat{\Omega}_{0i}-t \partial_i-x^i \partial_t$. The radial component\footnote{Note that $\left( \nabla_v g \right)^L=-\left( \nabla_v g \right)^{\underline{L}}=\left( \nabla_v g \right)^r$.} has a better behavior since
\begin{equation}\label{radialcompo}
v^0\left( \nabla_v g \right)^r = \frac{x^i}{r} v^0\partial_{v^i} g=\frac{x^i}{r} \widehat{\Omega}_{0i}g-Sg+(t-r) \underline{L} g.
\end{equation}
\end{proof}
\begin{Rq}
Let us explain how this lemma reflects the null structure of the system. For this, we write $D_1 \prec D_2$ if $D_2$ is expected to behave better than $D_1$. Recall that we have the following hierarchies between the null components of $G$, $v$ and $\nabla_v g$.
\begin{itemize}
\item $\underline{\alpha} \prec \rho \sim \sigma \prec \alpha$,
\item $v^L \prec v^A \prec v^{\underline{L}}$,
\item $\left( \nabla_v g \right)^A \prec \left( \nabla_v g \right)^r$.
\end{itemize}
We can then notice that $\underline{\alpha}$ is hit by $v^{\underline{L}}$ or $v^A \left( \nabla_v g \right)^r$, $\rho$ by $\left( \nabla_v g \right)^r$ and $\sigma$ by $v^A$.
\end{Rq}

\section{Bootstrap assumptions and strategy of the proof}\label{sec6}

Let $N \geq 10$ and $(f^0,F^0)$ be an initial data set satisfying the assumptions of Theorem \ref{theorem}. Then, by a local well-posedness argument, there exists a unique maximal solution $(f,F)$ arising from this data to the system\footnote{We refer to Subsection \ref{subsecsmallvelo} for the reasons which bring us to define $(f,F)$ as a solution to these equations rather than the Vlasov-Maxwell system.}
 \begin{eqnarray}
\nonumber T^{\chi}_F(f) & = & 0, \\ \nonumber
\nabla^{\mu} F_{\mu \nu} & = & J(f)_{\nu}, \\ \nonumber
\nabla^{\mu} {}^* \! F_{\mu \nu} & = & 0.
\end{eqnarray}
Applying Proposition \ref{PropB} and considering possibly $\epsilon_1=C_1 \epsilon$, with $C_1$ a constant depending only on $N$, we can suppose without loss of generality that $\E_N^2[f](0) \leq \epsilon$ and $\mathcal{E}_N[F](0) \leq \epsilon$. Let $T^* >0$ such that $[0,T^*[$ is the maximal domain of $(f,F)$ and $T \in ]0,T^*[$ be the largest time such that\footnote{Notice that such a $T>0$ exists by a standard continuity argument.}, for all $t \in [0,T]$,
\begin{eqnarray}\label{bootf1}
\E^{2}_{N-2}[f](t) & \leq  & 4 \epsilon, \\ 
\E^{1}_N[f](t) & \leq  & 4 \epsilon \log (3+t), \label{bootf2} \\
\sum_{|\beta| = N-1} \left\| r^2 \int_v \frac{v^A}{v^0} \widehat{Z}^{\beta} f dv \right\|_{L^2(\Sigma_t)} & \leq & \sqrt{\epsilon} \log(3+t), \label{bootL2} \\
 \mathcal{E}_{N}[F](t) & \leq & 4 \epsilon \log^4 (3+t), \label{bootF1} \\
 \mathcal{E}^{5}_N[F](t) & \leq & 2 \underline{C} \epsilon , \label{bootF2} 
\end{eqnarray}
where $\underline{C}>0$ is a positive constant which will be specified later. The third bootstrap assumption is here for convenience, we could avoid it but it would complicate the proof. Before presenting our strategy, let us write the immediate consequences of these bootstrap assumptions. Using the Klainerman-Sobolev inequality of Proposition \ref{KS1} and the bootstrap assumption \eqref{bootf1}, one has
\begin{equation}\label{decayf}
\forall \hspace{0.5mm} (t,x) \in [0,T[ \times \R^3, \hspace{3mm} z \in \V, \hspace{3mm} |\beta| \leq N-5, \hspace{15mm} \int_v \left| z^2 \widehat{Z}^{\beta} f \right| dv \lesssim  \frac{\epsilon}{\tau_+^{2} \tau_-}.
\end{equation}
Applying the Klainerman-Sobolev inequality of Proposition \ref{KS2}, Lemma \ref{weights1} and using \eqref{bootf1} and \eqref{bootf2}, we get
\begin{equation}\label{decayf2}
\forall \hspace{0.5mm} t \in [0,T[, \hspace{6mm} \sum_{\begin{subarray}{} |\beta| \leq N-4 \\ \hspace{2mm} z \in \V \end{subarray}}  \left\| \tau_+ \sqrt{\tau_-} \int_v \left| z^2 \widehat{Z}^{\beta} f \right| dv \right\|_{L^2(\Sigma_t)} \lesssim  \epsilon,  \hspace{6mm} \sum_{|\beta| \leq N-2}  \left\| r^2 \int_v \frac{v^A}{v^0}  \widehat{Z}^{\beta} f  dv \right\|_{L^2(\Sigma_t)} \lesssim  \epsilon \log(3+t) .
\end{equation}
By Proposition \ref{decayMaxell}, commutation formula of Proposition \ref{Maxcom}, the bootstrap assumptions \eqref{bootL2}, \eqref{bootF1}, \eqref{bootF2} and the estimate \eqref{decayf2}, we obtain that, for all $(t,x) \in [0,T[ \times \R^3$ and $|\gamma| \leq N-2$,
\begin{eqnarray}\label{decayF1}
\left| \rho \left( \mathcal{L}_{Z^{\gamma}}(F) \right) \right|(t,x) & \lesssim &  \sqrt{\epsilon}\frac{\log^2(3+t)}{\tau_+^{2} \tau_-^{\frac{1}{2}}}, \hspace{20mm} \left| \alpha \left( \mathcal{L}_{Z^{\gamma}}(F) \right) \right|(t,x) \hspace{2mm} \lesssim \hspace{2mm} \sqrt{\epsilon}\frac{\log^2(3+t)}{\tau_+^{\frac{5}{2}} }, \\
\left| \sigma \left( \mathcal{L}_{Z^{\gamma}}(F) \right) \right|(t,x) & \lesssim &  \sqrt{\epsilon}\frac{\log^2(3+t)}{\tau_+^{2} \tau_-^{\frac{1}{2}}} , \hspace{20mm}  \left| \underline{\alpha} \left( \mathcal{L}_{Z^{\gamma}}(F) \right) \right|(t,x) \hspace{2mm} \lesssim \hspace{2mm} \sqrt{\epsilon}\frac{\log^{\frac{5}{2}}(1+\tau_-)}{\tau_+ \tau_-^{\frac{3}{2}}} . \label{decayF2}
\end{eqnarray}
Applying Proposition \ref{velocity}, one obtains that $f$ vanishes for small velocities, i.e.
\begin{equation}\label{eq:velocity}
\forall \hspace{1mm} t \in [0,T[, \hspace{2mm} x \in \R^3, \hspace{2mm} 0 < |v| \leq 1, \hspace{1.5cm} f(t,x,v)=0.
\end{equation}
In view of the support of $\chi$, we then obtain that $T_F(f)=0$ on $[0,T[$, so that $(f,F)$ is the unique classical solution to the Vlasov-Maxwell system \eqref{VM1}-\eqref{VM3} on $[0,T[$. The remainder of the article is devoted to the improvement of the bootstrap assumptions \eqref{bootf1}-\eqref{bootF2}, which will imply Theorem \ref{theorem} as it will prove that $T=T^*$ and then $T^* = + \infty$. The proof is divided in three parts.
\begin{enumerate}
\item First, we improve the bootstrap assumptions \eqref{bootf1} and \eqref{bootf2} by using Proposition \ref{energyf}. To bound the spacetime integrals arising from this energy estimate, we make crucial use of the null structure of the non linearity $\mathcal{L}_{Z^{\gamma}}(F)(v,\nabla_v \widehat{Z}^{\beta} f )$ as well as \eqref{decayF1}, \eqref{decayF2} and \eqref{decayf}.
\item Then, in of view of improving \eqref{bootL2}, \eqref{bootF1} and \eqref{bootF2}, the next step consists in proving $L^2$ estimates on quantities such as $\int_v | z \widehat{Z}^{\beta} f | dv$. To treat the higher order derivatives, we rewrite all transport equations as an inhomogeneous system of Vlasov equations. To handle the homogenous part, we take advantage of the smallness assumption on the $N+3$ derivatives of $f$ at $t=0$, \eqref{decayF1} and \eqref{decayF2}. The inhomogenous part $G$ will be schematically decomposed as a product $K Y$, with $\int_v |Y| dv$ a decaying function and $|K|^2Y$ an integrable function in $(x,v)$.
\item Finally, we improve the bounds on the energy norms of the electromagnetic field through Proposition \ref{energyMax1}. The null structure of the source terms of the Maxwell equations is fundamental for us here.
\end{enumerate}

\section{Improvement of the energy bound on the particle density}\label{sec7}

The purpose of this section is to improve the bootstrap assumptions \eqref{bootf1} and \eqref{bootf2}. Note first that
$$ \forall \hspace{0.5mm} z \in \V, \hspace{1.5mm} q \in \{1,2 \}, \hspace{1.5mm} |\beta| \leq N, \hspace{4mm} T_F(z^q \widehat{Z}^{\beta} f )= T_F(z^q) \widehat{Z}^{\beta} f + z^qT_F( \widehat{Z}^{\beta} f )=q z^{q-1} F(v,\nabla_v z )  \widehat{Z}^{\beta} f + z^q T_F( \widehat{Z}^{\beta} f )$$
and recall from \eqref{eq:velocity} that $T_F(f)=0$ on $[0,T[$. Thus, in view of the energy estimate of Proposition \ref{energyf}, $\E^{2}_{N-2}[f](0) \leq \epsilon$ and commutation formula of Proposition \ref{Maxcom}, $\E^{2}_{N-2}[f] \leq 3 \epsilon $ on $[0,T[$ ensues, if $\epsilon$ is small enough, from the following proposition. 
\begin{Pro}\label{Prop1}
Let $z \in \V$, $|\zeta| \leq N-2$, $|\gamma| \leq N-2$ and $|\xi| \leq N-3$. Then,
\begin{eqnarray}
\nonumber I_{\zeta}^{z,2} \hspace{2mm} & := &  \hspace{2mm} \int_0^t \int_{\Sigma_s} \int_v \left| z F \left( v,\nabla_v z \right)  \widehat{Z}^{\zeta} f  \right| \frac{dv}{v^0} dx ds  \hspace{2mm} \lesssim \hspace{2mm} \epsilon^{\frac{3}{2}}, \\ \nonumber
K^{z,2}_{\gamma, \xi} \hspace{2mm} & := &  \hspace{2mm} \int_0^t \int_{\Sigma_s} \int_v \left| z^2 \mathcal{L}_{Z^{\gamma}}(F) \left( v,\nabla_v \widehat{Z}^{\xi} f \right) \right| \frac{dv}{v^0} dx ds  \hspace{2mm} \lesssim \hspace{2mm}  \epsilon^{\frac{3}{2}}.
\end{eqnarray}
\end{Pro}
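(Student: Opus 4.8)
Both integrals are handled by the same recipe: expand the two-form evaluated against $\nabla_v(\text{something})$ using Lemma \ref{calculF}, so that every term carries a "good" null factor, then insert the pointwise decay estimates \eqref{decayF1}--\eqref{decayF2} for the field (or for the weight $z$, which is itself homogeneous of degree $0$ and bounded by $\sum_{w}|w|$ via Lemma \ref{weights}), and finally absorb the remaining $\int_v$ against the decay estimate \eqref{decayf} for the Vlasov field. The two key mechanisms that make the spacetime integrals converge are: (i) the weights $z$ convert the velocity null factors $v^L/v^0$, $v^{\underline L}/v^0$, $|v^A|/v^0$ into $\tau_-^{-1}$ or $\tau_+^{-1}$ decay via Lemma \ref{weights1}, which cancels the $\tau_\pm$ factors produced in Lemma \ref{calculF}; (ii) the residual $\int_v |z^2 \widehat Z^\kappa f|\,dv \lesssim \epsilon \tau_+^{-2}\tau_-^{-1}$ provides enough $(t,x)$-decay to integrate over $[0,t]\times\R^3$ after the $\log$ losses from the field estimates are accounted for.

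\textbf{Step 1: the term $I^{z,2}_\zeta$.} Here $F(v,\nabla_v z)$ with $z\in\V$ is, by Lemma \ref{weights}, a combination of components of $F$ weighted by components of $v/v^0$; alternatively apply Lemma \ref{calculF} directly with $g=z$, using $|\widehat Z z|\lesssim \sum_{w\in\V}|w|$. Either way one obtains
$$\left| F(v,\nabla_v z)\right| \lesssim \left(\tau_-|\rho|+\tau_+|\alpha|+\tau_+\tfrac{|v^A|}{v^0}|\sigma|+\tau_-\tfrac{|v^A|}{v^0}|\underline\alpha|+\tau_+\tfrac{v^{\underline L}}{v^0}|\underline\alpha|\right)\sum_{w\in\V}|w|.$$
Using \eqref{decayF1}--\eqref{decayF2} for the undifferentiated field, $|v^A|/v^0 + v^{\underline L}/v^0 \lesssim \tau_+^{-1}\sum_w|w|$ from Lemma \ref{weights1}, and $v^0\geq 1$ (as $f$ is supported in $|v|\geq 1$), every term is bounded by $\sqrt\epsilon\,\log^{5/2}(3+s)\,\tau_+^{-3/2}\tau_-^{-1/2}\sum_{w}|w|$ — the $\alpha$ term even better. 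Hence $I^{z,2}_\zeta \lesssim \sqrt\epsilon \int_0^t\int_{\Sigma_s} \frac{\log^{5/2}(3+s)}{\tau_+^{3/2}\tau_-^{1/2}}\int_v \sum_{w\in\V}|w^2\,w\,\widehat Z^\zeta f|\frac{dv}{v^0}\,dx\,ds$, which after renaming $w^3\widehat Z^\zeta f$ and applying \eqref{decayf} (valid since $|\zeta|\leq N-2 \leq N-5$ fails — so here one must instead use the Klainerman--Sobolev bound \eqref{bootf1} more carefully, or rather keep one power of $z$ inside $L^1$ and use \eqref{decayf} with the remaining $z^2$) is bounded by $\epsilon^{3/2}\int_0^t \frac{\log^{5/2}(3+s)}{\tau_+^{3/2}}\big(\int_{\Sigma_s}\tau_+^{-1}\tau_-^{-1}\,dx\big)^{\!1/2}\cdots$; doing the $\Sigma_s$ integral (polar coordinates, the $\tau_-^{-1}$ being integrable in $r$ up to a $\log$, the $\tau_+^{-3/2}$ killing the $r^2$) and then the $s$ integral yields a convergent bound $\lesssim\epsilon^{3/2}$. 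The cleanest route, avoiding Cauchy--Schwarz subtleties, is to bound $\int_v|w^3\widehat Z^\zeta f|\frac{dv}{v^0}\lesssim \int_v |w^2 \widehat Z^\zeta f|\,dv \cdot \|w/v^0\|$ and note $|w|\lesssim \tau_+$, reducing to $\int_0^t\int_{\Sigma_s}\frac{\log^{5/2}(3+s)}{\tau_+^{1/2}\tau_-^{1/2}}\cdot\frac{\epsilon}{\tau_+^2\tau_-}\,dx\,ds$, clearly $\lesssim\epsilon^{3/2}$ after one more application of Lemma \ref{weights1} to gain a missing $\tau_+^{-1}$.

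\textbf{Step 2: the term $K^{z,2}_{\gamma,\xi}$ and the main obstacle.} Apply Lemma \ref{calculF} with $G=\mathcal L_{Z^\gamma}(F)$ and $g=\widehat Z^\xi f$:
$$\left|\mathcal L_{Z^\gamma}(F)(v,\nabla_v\widehat Z^\xi f)\right| \lesssim \left(\tau_-|\rho_\gamma|+\tau_+|\alpha_\gamma|+\tau_+\tfrac{|v^A|}{v^0}|\sigma_\gamma|+\tau_-\tfrac{|v^A|}{v^0}|\underline\alpha_\gamma|+\tau_+\tfrac{v^{\underline L}}{v^0}|\underline\alpha_\gamma|\right)\sum_{\widehat Z\in\K}\left|\widehat Z\widehat Z^\xi f\right|,$$
with $|\gamma|\leq N-2$, $|\xi|\leq N-3$, so $|\widehat Z\widehat Z^\xi f|$ has order $\leq N-2$ and its $z^2$-weighted $\int_v$ obeys \eqref{decayf}. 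Multiply by $z^2/v^0$ and distribute: one of the two factors of $z$ stays with $\widehat Z\widehat Z^\xi f$ (keeping it inside the $\int_v$-$L^1$), the other $z$ together with the velocity null component is converted by Lemma \ref{weights1} — $\frac{|v^A|}{v^0}|z|, \frac{v^{\underline L}}{v^0}|z|\lesssim \tau_+^{-1}\sum_w|w|^2$ — into $\tau_+^{-1}$ times another weight, which is again kept inside $\int_v$. So every term is controlled by $\int_0^t\int_{\Sigma_s}\big(\tau_-|\rho_\gamma|+\tau_+|\alpha_\gamma| + |\sigma_\gamma|+\tfrac{\tau_-}{\tau_+}|\underline\alpha_\gamma|+|\underline\alpha_\gamma|\big)\sum_{w}\int_v |w^2\widehat Z\widehat Z^\xi f|\frac{dv}{v^0}\,dx\,ds$. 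Now insert \eqref{decayF1}--\eqref{decayF2}: $\tau_-|\rho_\gamma|, |\sigma_\gamma|, |\underline\alpha_\gamma|(\text{times anything} \leq 1) \lesssim \sqrt\epsilon\,\log^{5/2}(3+s)\,\tau_+^{-2}\tau_-^{1/2}$ (worst case), and $\tau_+|\alpha_\gamma|\lesssim \sqrt\epsilon\log^2(3+s)\tau_+^{-3/2}$; combined with $\int_v |w^2\widehat Z\widehat Z^\xi f|\,dv\lesssim \epsilon\,\tau_+^{-2}\tau_-^{-1}$ from \eqref{decayf}, each integrand is $\lesssim \epsilon^{3/2}\,\log^{5/2}(3+s)\,\tau_+^{-7/2}\tau_-^{-1/2}$ (or better), and $\int_0^t\int_{\Sigma_s}\log^{5/2}(3+s)\,\tau_+^{-7/2}\tau_-^{-1/2}\,dx\,ds \lesssim \int_0^t \log^{5/2}(3+s)\,\tau_+^{-3/2}\,ds <\infty$, giving $K^{z,2}_{\gamma,\xi}\lesssim\epsilon^{3/2}$. \textbf{The main obstacle} is bookkeeping of the $\underline\alpha$ term: the naive pairing $\tau_+\frac{v^{\underline L}}{v^0}|\underline\alpha_\gamma|$ would cost $\tau_+\cdot\tau_+^{-1}\cdot\sqrt\epsilon\,\tau_+^{-1}\tau_-^{-3/2}\log^{5/2}=\sqrt\epsilon\,\tau_+^{-1}\tau_-^{-3/2}\log^{5/2}$, and when multiplied by $\epsilon\tau_+^{-2}\tau_-^{-1}$ the $r$-integral $\int \tau_+^{-3}\tau_-^{-5/2}r^2\,dr$ converges — so one must be careful that it is precisely the conversion $v^{\underline L}/v^0 \lesssim \tau_+^{-1}\sum_w|w|$ (and not a weaker bound) that is used, and that the extra weight generated is absorbed into the $z^2$ already present. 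A secondary point is that for the top range $|\zeta|=N-2$ in $I^{z,2}_\zeta$ the Klainerman--Sobolev estimate \eqref{decayf} does not directly apply; there one instead keeps the full $\int_v|z^3\widehat Z^\zeta f|\,dv$ inside $L^1$ against $L^\infty$ of the (now sufficiently decaying, with the $\tau_+^{-1}$ gained from $v^{\underline L}/v^0$) field coefficient, and closes using the bootstrap bound \eqref{bootf1} on $\E^2_{N-2}[f]$ directly after a time integration — no $L^2$--$L^2$ splitting is needed because the coefficient already decays like $\tau_+^{-3/2}\log^{5/2}$ in $L^\infty$, which is time-integrable.
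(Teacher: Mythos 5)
Your strategy diverges from the paper's at a crucial point and, as written, does not close. The key issue is weight accounting. Whenever you convert $v^A/v^0$ or $v^{\underline L}/v^0$ into $\tau_+^{-1}\sum_{w\in\V}|w|$ (Lemma \ref{weights1}), you trade a velocity null factor for an extra weight. In $I^{z,2}_\zeta$ you then have one explicit $z$, one $w$ from $|\widehat Z z|\lesssim\sum_w|w|$, and a third $w'$ from the conversion; in $K^{z,2}_{\gamma,\xi}$ the explicit $z^2$ plus the converted weight again makes three. But no bootstrap assumption controls three weights: $\E^2_{N-2}[f]$ controls only $z^2\widehat Z^\beta f$, and the Klainerman--Sobolev decay \eqref{decayf}, which you invoke repeatedly, requires $|\beta|\leq N-5$, whereas here $|\zeta|\leq N-2$ and $|\xi|+1\leq N-2$. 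Your proposed fix for the top range ("keep $\int_v|z^3\widehat Z^\zeta f|\,dv$ in $L^1$ and close with $\E^2_{N-2}[f]$") fails precisely because $z^3$ is not among the quantities bounded by $\E^2_{N-2}$. Absorbing $|z|\leq\tau_+$ against the gained $\tau_+^{-1}$ merely undoes the conversion, returning you to a coefficient like $\tau_+\frac{|v^A|}{v^0}|\sigma_\gamma|\lesssim\sqrt\epsilon\,\tau_+^{-1}\tau_-^{-1/2}\log^2$, whose $L^\infty(\Sigma_s)$ norm is $\sim(1+s)^{-1}\log^2$ and is not time-integrable.

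The paper avoids this entirely by \emph{never} trading velocity null components for weights. In Lemma \ref{prepalem} one instead writes $|v^A|\lesssim\sqrt{v^0 v^{\underline L}}$ and applies Young's inequality (and absorbs all logarithmic losses into the fractional exponents) to obtain
$$\left|\mathcal L_{Z^\gamma}(F)(v,\nabla_v h)\right|\lesssim\left(\frac{\sqrt\epsilon}{\tau_+^{5/4}}+\frac{\sqrt\epsilon\,v^{\underline L}}{\tau_-^{5/4}\,v^0}\right)\sum_{\widehat Z\in\K}|\widehat Z h|,$$
with no extra weight created, and similarly $|F(v,\nabla_v z)|\lesssim(\cdots)\sum_{w\in\V}|w|$. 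The first part, decaying like $(1+s)^{-5/4}$ in $L^\infty(\Sigma_s)$, is integrated against the $L^1(\Sigma_s)$ energy from \eqref{bootf1}; the second part carries the factor $v^{\underline L}/v^0$ verbatim, and one switches to the null foliation $(C_u(t))_u$ (Lemma \ref{foliationexpli}), where that factor is exactly what the flux energy $\sup_u\|\int_v\frac{v^{\underline L}}{v^0}|z^2\widehat Z^\beta f|\,dv\|_{L^1(C_u(t))}\leq\E^2_{N-2}[f]$ controls, while $\tau_-^{-5/4}$ is integrable in $u$. If you want to repair your argument, you should reproduce this AM--GM split rather than converting $v^A,v^{\underline L}$ into weights, and replace all uses of \eqref{decayf} at orders above $N-5$ by the $L^1$ energy \eqref{bootf1} together with the $\Sigma_s$ or $C_u(t)$ foliation.
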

Similarly, the following result implies, if $\epsilon$ is small enough, that $\E^1_N[f](t) \leq 3 \epsilon \log (3+t)$ for all $t \in [0,T[$.
\begin{Pro}\label{Pro2}
Let $z \in \V$, $|\zeta| \leq N$, $\gamma$ and $\xi$ such that $|\gamma|+|\xi| \leq N$ and $|\xi| \leq N-1$. We have,
\begin{eqnarray}
\nonumber I^{z,1}_{\zeta} & := &\int_0^t \int_{\Sigma_s} \int_v \left| F \left( v,\nabla_v z \right) \widehat{Z}^{\zeta} f  \right| \frac{dv}{v^0} dx ds \hspace{2mm} \lesssim \hspace{2mm} \epsilon^{\frac{3}{2}} \log (3+t) , \\ \nonumber
K^{z,1}_{\gamma, \xi} & := & \int_0^t \int_{\Sigma_s} \int_v \left| z \mathcal{L}_{Z^{\gamma}}(F) \left( v,\nabla_v \widehat{Z}^{\xi} f \right) \right| \frac{dv}{v^0} dx ds \hspace{2mm} \lesssim \hspace{2mm} \epsilon^{\frac{3}{2}} \log (3+t).
\end{eqnarray}
\end{Pro}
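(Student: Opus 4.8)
The plan is to expand both non‑linearities with Lemma \ref{calculF} and to estimate, one by one, the five resulting building blocks with the help of the pointwise decay estimates \eqref{decayF1}--\eqref{decayF2}, the weight inequalities of Lemma \ref{weights1} and the bootstrap bounds \eqref{bootf1}--\eqref{bootF1}. Applying Lemma \ref{calculF} with $g=z$ (resp. $g=\widehat{Z}^\xi f$) and then Lemma \ref{weights} to control $\widehat{Z} z$ by $\sum_{w\in\V}|w|$ (resp. commuting $\widehat{Z}$ into $\widehat{Z}^\xi f$, which raises the order by at most one and keeps it $\leq N$), it suffices to bound spacetime integrals of the schematic form $\int_0^t\int_{\Sigma_s}\tau_{\pm}\,w_v\,\big|\zeta(\mathcal{L}_{Z^\gamma}F)\big|\int_v|z'\widehat{Z}^{\zeta'}f|\,\frac{dv}{v^0}\,dx\,ds$, where $\zeta\in\{\rho,\alpha,\sigma,\underline{\alpha}\}$, $|\zeta'|\leq N$, $z'\in\V$ and $w_v\in\{1,\tfrac{|v^A|}{v^0},\tfrac{v^{\underline{L}}}{v^0}\}$ is the velocity weight dictated by the lemma. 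I would split the analysis according to whether the pointwise decay of the field is available: if $|\gamma|\leq N-2$ one may use \eqref{decayF1}--\eqref{decayF2}, while if $|\gamma|\geq N-1$ then necessarily $|\xi|\leq 1$ (in $K^{z,1}$) or $|\gamma|=0$ is excluded, so the velocity averages of $\widehat{Z}\widehat{Z}^\xi f$ are of low order and enjoy the pointwise decay \eqref{decayf}.

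When $|\gamma|\geq N-1$, the Vlasov factor is of order $\leq 2 \leq N-5$, so $\int_v|z'\widehat{Z}^{\zeta'}f|\,dv\lesssim \epsilon\,\tau_+^{-2}\tau_-^{-1}$ by \eqref{decayf} (using $1\in\V$), and one can even afford an extra $\V$-weight coming from $\tfrac{|v^A|}{v^0}\lesssim\tau_+^{-1}\sum_w|w|$, which removes the dangerous $\tau_+$ factor in the $\sigma$ block. The electromagnetic field is then estimated in $L^2$ through $\mathcal{E}_N[F]\lesssim\epsilon\log^4(3+s)$ (or $\mathcal{E}^5_N[F]\lesssim\epsilon$), a Cauchy--Schwarz in $x$ being used to pair $\tau_{\pm}\zeta$ with the remaining spatial weight $\tau_+^{-3}\tau_-^{-1}$, whose $L^2(\Sigma_s)$ norm decays like $(1+s)^{-2}$; the $s$‑integral converges and this contributes $\lesssim\epsilon^{3/2}$. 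When $|\gamma|\leq N-2$ (and for all of $I^{z,1}$), I would use the pointwise decay of $\mathcal{L}_{Z^\gamma}F$. If moreover the Vlasov factor is of order $\leq N-2$, the extra weight produced by $\tfrac{|v^A|}{v^0}$ is absorbed by $\E^2_{N-2}[f]\lesssim\epsilon$, which carries no logarithm, and each block closes with a convergent $s$‑integral, giving $\lesssim\epsilon^{3/2}$. If the order is $>N-2$, the two blocks with $w_v=1$ are harmless since $\tau_+|\alpha|+\tau_-|\rho|\lesssim\sqrt\epsilon\,\log^2(3+s)\,\tau_+^{-3/2}$ and $\int_0^{+\infty}\log^3(3+s)(1+s)^{-3/2}ds<\infty$ while $\|\int_v|z'\widehat{Z}^{\zeta'}f|dv\|_{L^1(\Sigma_s)}\le\E^1_N[f](s)\lesssim\epsilon\log(3+s)$ by \eqref{bootf2}; the block $\tau_+\tfrac{v^{\underline{L}}}{v^0}|\underline{\alpha}|$ is handled by keeping the weight $\tfrac{v^{\underline{L}}}{v^0}$, foliating $[0,t]\times\R^3$ by the cones $C_u(t)$ (Lemma \ref{foliationexpli}), using $\tau_+\geq\tau_-$ on $C_u(t)$ so that $\tau_+|\underline{\alpha}|\lesssim\sqrt\epsilon\,\log^{5/2}(1+\tau_-)\,\tau_-^{-5/2}$ is constant along the cone, and bounding $\int_{C_u(t)}\int_v\tfrac{v^{\underline{L}}}{v^0}|z'\widehat{Z}^{\zeta'}f|\,dv\,dC_u(t)\le\E^1_N[f](t)$; since $\int_\R \log^{5/2}(1+\sqrt{1+u^2})\,(1+u^2)^{-5/4}\,du<\infty$ this contributes $\lesssim\epsilon^{3/2}\log(3+t)$.

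The main obstacle is the pair of blocks $\tau_+\tfrac{|v^A|}{v^0}|\sigma|$ and $\tau_-\tfrac{|v^A|}{v^0}|\underline{\alpha}|$ in the remaining regime, which only arises in $K^{z,1}_{\gamma,\xi}$ with $|\xi|=N-1$, hence $|\gamma|\leq 1$ and a Vlasov factor of order $N$: one can neither afford the extra weight coming from $\tfrac{|v^A|}{v^0}\lesssim\tau_+^{-1}\sum_w|w|$ (no energy of type $\E^2_N$ is available) nor simply discard $\tfrac{|v^A|}{v^0}$, since that would waste the crucial $\tau_-$‑decay of $\sigma$ and $\underline{\alpha}$ and produce too many logarithms after the time integration. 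The resolution is to use $|v^A|\lesssim\sqrt{v^0 v^{\underline{L}}}$, so that $\tfrac{|v^A|}{v^0}\lesssim\big(\tfrac{v^{\underline{L}}}{v^0}\big)^{1/2}$, to apply Cauchy--Schwarz in $v$ splitting $\int_v\big(\tfrac{v^{\underline{L}}}{v^0}\big)^{1/2}|z'\widehat{Z}^{\zeta'}f|\tfrac{dv}{v^0}$ into a cone‑energy density $\int_v\tfrac{v^{\underline{L}}}{v^0}|z'\widehat{Z}^{\zeta'}f|\,dv$ (controlled by $\E^1_N[f]$ on the cones of Lemma \ref{foliationexpli}, along which the $\tau_-$‑decay of $\sigma,\underline{\alpha}$ becomes a convergent weight) and an unweighted factor, and to control the latter by means of the auxiliary $L^2$ estimates \eqref{bootL2}, \eqref{decayf2} on $r^2\int_v\tfrac{v^A}{v^0}\widehat{Z}^\beta f\,dv$ (valid up to order $N-1$) together with the $L^2$ energy $\mathcal{E}_N[F]$ for $\tau_+\sigma$ and $\tau_+\underline{\alpha}$, a final Cauchy--Schwarz in $x$ on $\Sigma_s$ balancing the two. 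This step is the delicate one, as it demands careful logarithmic accounting in order to land precisely on the allowed $\log(3+t)$ power. Summing all contributions yields $I^{z,1}_\zeta,\ K^{z,1}_{\gamma,\xi}\lesssim\epsilon^{3/2}\log(3+t)$; the same scheme, made simpler by the availability of $z^2$‑weighted quantities and of the loss‑free energy $\E^2_{N-2}[f]$, proves Proposition \ref{Prop1}.
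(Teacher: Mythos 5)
Your overall plan—expand with Lemma \ref{calculF}, split on $|\gamma|$, and then estimate each block—is the right starting point, but it has one real gap and one substantial misapprehension of the structure of the paper's argument.

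\emph{The gap.} In the regime $|\gamma|\ge N-1$ (hence $|\xi|\le 1$), your single Cauchy--Schwarz on $\Sigma_s$ fails for the $\tau_+|\alpha|$ block. You claim a ``remaining spatial weight $\tau_+^{-3}\tau_-^{-1}$'' whose $L^2(\Sigma_s)$ norm decays like $(1+s)^{-2}$, but the $\alpha$-block in Lemma \ref{calculF} carries the coefficient $\tau_+$ and \emph{no} velocity weight to trade against $\tau_+$. After you pair $\tau_+|\alpha|$ with $\|\tau_+\alpha\|_{L^2(\Sigma_s)}\lesssim\sqrt{\mathcal{E}_N[F](s)}\lesssim\sqrt\epsilon\log^2(3+s)$, the leftover factor is only $\epsilon\,\tau_+^{-2}\tau_-^{-1}$, whose $L^2(\Sigma_s)$ norm is $\sim(1+s)^{-1}$, not $(1+s)^{-2}$; the $s$-integral then produces $\log^3(3+t)$, overshooting the allowed $\log(3+t)$. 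Trying $\mathcal{E}^5_N[F]$ instead (which controls $\|\alpha\|_{L^2}$) leaves a leftover whose $L^2$ norm is $O(1)$, and the $s$-integral diverges. The paper resolves precisely this term by replacing the slice foliation with the dyadic cone foliation $(C^i_u(t))$ of Lemma \ref{foliationexpli}: on each truncated cone $\tau_-$ is constant, which turns the $\tau_-^{-1}$ decay of the velocity average into an integrable $u$-weight, while the dyadic index $i$ supplies a polynomial factor $(1+2^i)^{-1/8}$ that absorbs the $\log^2$ from $\mathcal{E}_N[F]$. Without this device the $I_F$ part of the $|\gamma|\ge N-1$ case does not close.

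\emph{The misapprehension.} Your ``main obstacle''—the blocks $\tau_+\tfrac{|v^A|}{v^0}|\sigma|$ and $\tau_-\tfrac{|v^A|}{v^0}|\underline\alpha|$ when $|\gamma|\le N-2$ but the Vlasov factor has order $N-1$ or $N$—is not actually an obstacle in the paper's argument. Lemma \ref{prepalem} already absorbs \emph{all} the $v^A$ weights pointwise, using $|v^A|\lesssim\sqrt{v^0 v^{\underline L}}$ followed by Young's inequality:
$$\frac{|v^A|}{v^0}\left(\tau_+|\sigma|+\tau_-|\underline\alpha|\right)\lesssim\frac{\sqrt\epsilon\sqrt{v^{\underline L}}}{\tau_+^{3/4}\tau_-^{1/2}\sqrt{v^0}}\lesssim\frac{\sqrt\epsilon}{\tau_+^{5/4}}+\frac{\sqrt\epsilon\,v^{\underline L}}{\tau_-^{5/4}v^0},$$
so that, for any $|\gamma|\le N-2$, the whole nonlinearity is dominated by $\left(\tau_+^{-5/4}+\tau_-^{-5/4}v^{\underline L}/v^0\right)\sqrt\epsilon\sum_{\widehat Z}|\widehat Z h|$, with no dangling $\V$-weight and no need for an $L^2$ balancing argument. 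One then runs exactly the computation of Subsection \ref{secPro1}, replacing $\E^2_{N-2}[f]\lesssim\epsilon$ by $\E^1_N[f]\lesssim\epsilon\log(3+t)$, which is precisely what produces the $\log(3+t)$ in the statement. Your proposed scheme—Cauchy--Schwarz in $v$ into a cone-energy density and an ``unweighted'' factor, then an $L^2$ balancing via \eqref{bootL2} and $\mathcal{E}_N[F]$—is both more intricate and on shaky ground: \eqref{bootL2} and \eqref{decayf2} control $r^2\int_v\frac{v^A}{v^0}\widehat Z^\beta f\,dv$ only up to $|\beta|\le N-1$, while after the extra $\widehat Z$ from Lemma \ref{calculF} you face order $N$, and the square-root of the cone-density is not itself a cone-integrable quantity, so the claimed ``logarithmic accounting'' would not straightforwardly close. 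You should discard this branch and use Lemma \ref{prepalem} directly.

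A small numerical slip: from \eqref{decayF2} one has $\tau_+|\underline\alpha|\lesssim\sqrt\epsilon\,\log^{5/2}(1+\tau_-)\,\tau_-^{-3/2}$, not $\tau_-^{-5/2}$ (the conclusion you draw still holds, but the exponent is wrong as written).
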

The proofs are based on the analysis, through Lemma \ref{calculF}, of quantities such as $\mathcal{L}_{Z^{\gamma}}(F) \left( v, \nabla_v \widehat{Z}^{\beta} f \right)$. We then prove the following preparatory lemma.
\begin{Lem}\label{prepalem}
Let $|\gamma| \leq N-2$ and $h : [0,T[ \times \R^3_x \times (\R^3_v \setminus \{ 0\} )$ be a sufficiently regular function. Then,
$$\left| \mathcal{L}_{Z^{\gamma}}(F) \left( v, \nabla_v h \right) \right| \hspace{2mm} \lesssim \hspace{2mm}   \left( \frac{\sqrt{\epsilon}}{\tau_+^{\frac{5}{4}}}+\frac{ \sqrt{\epsilon} v^{\underline{L}}}{ \tau_-^{\frac{5}{4}}v^0} \right)  \sum_{\widehat{Z} \in \K} \left| \widehat{Z} h \right|  \hspace{9.5mm} \text{and} \hspace{9.5mm}  \left| F \left( v, \nabla_v z \right) \right| \hspace{2mm} \lesssim \hspace{2mm} \left( \frac{\sqrt{\epsilon}}{\tau_+^{\frac{5}{4}}}+\frac{ \sqrt{\epsilon} v^{\underline{L}}}{ \tau_-^{\frac{5}{4}}v^0} \right) \sum_{w \in \V} |w|. $$
\end{Lem}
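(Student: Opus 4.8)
The plan is to combine the structural identity for $G(v,\nabla_v h)$ from Lemma~\ref{calculF} with the pointwise decay estimates on the null components of $\mathcal{L}_{Z^\gamma}(F)$ recorded in \eqref{decayF1}--\eqref{decayF2}, together with the weight inequalities of Lemma~\ref{weights1}. First I would apply Lemma~\ref{calculF} with $G = \mathcal{L}_{Z^\gamma}(F)$, which yields
$$\left| \mathcal{L}_{Z^\gamma}(F)(v,\nabla_v h) \right| \lesssim \left( \tau_- |\rho| + \tau_+ |\alpha| + \tau_+ \tfrac{|v^A|}{v^0} |\sigma| + \tau_- \tfrac{|v^A|}{v^0} |\underline{\alpha}| + \tau_+ \tfrac{v^{\underline{L}}}{v^0} |\underline{\alpha}| \right) \sum_{\widehat{Z} \in \K} \left| \widehat{Z} h \right|,$$
where all null components are those of $\mathcal{L}_{Z^\gamma}(F)$. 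It then suffices to bound each of the five prefactors by $\sqrt\epsilon\, \tau_+^{-5/4} + \sqrt\epsilon\, v^{\underline{L}}(v^0)^{-1}\tau_-^{-5/4}$.

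Next I would treat the five terms in turn. For $\tau_- |\rho|$ and $\tau_+ |\alpha|$, inserting \eqref{decayF1} gives bounds of order $\sqrt\epsilon\,\log^2(3+t)\,\tau_+^{-2}\tau_-^{1/2}$ and $\sqrt\epsilon\,\log^2(3+t)\,\tau_+^{-3/2}$ respectively; since $\tau_-\le\tau_+$ and $\log^2(3+t)\lesssim \tau_+^{1/4}$ (indeed $\log^2(3+t) \lesssim_\delta \tau_+^\delta$ for any $\delta>0$), both are $\lesssim \sqrt\epsilon\,\tau_+^{-5/4}$. The term $\tau_+\tfrac{|v^A|}{v^0}|\sigma|$ is handled the same way as $\tau_- |\rho|$ after bounding $\tfrac{|v^A|}{v^0}\le 1$ and using $|\sigma|\lesssim \sqrt\epsilon\,\log^2(3+t)\,\tau_+^{-2}\tau_-^{-1/2}$ together with $\tau_-^{-1/2}\le 1$: this gives $\sqrt\epsilon\,\log^2(3+t)\,\tau_+^{-1}\lesssim \sqrt\epsilon\,\tau_+^{-5/4}$ once we also use that the remaining power is absorbed — more carefully, one writes $\tau_+ \tfrac{|v^A|}{v^0}|\sigma| \lesssim \sqrt\epsilon\,\log^2(3+t)\,\tau_+^{-1}\tau_-^{-1/2}$ and, using the middle inequality of Lemma~\ref{weights1} in the form $\tfrac{|v^A|}{v^0}\lesssim \tfrac{1}{\tau_+}\sum_{z\in\V}|z|$ is \emph{not} what is wanted here; instead one keeps $\tfrac{|v^A|}{v^0}\le 1$ and notes $\tau_+^{-1}\tau_-^{-1/2}\le \tau_+^{-5/4}$ fails — so the correct route is to observe $\tau_+^{-1}\log^2(3+t) \lesssim \tau_+^{-5/4}\cdot\tau_+^{1/4}\log^2(3+t)$, which is bounded since $\tau_+^{1/4}\log^2(3+t)$ is \emph{not} bounded; the clean fix is to use $|v^A|\le\sqrt{v^0 v^{\underline{L}}}$ from Lemma~\ref{weights1}, giving $\tau_+\tfrac{|v^A|}{v^0}|\sigma|\lesssim \tau_+ \sqrt{\tfrac{v^{\underline{L}}}{v^0}}|\sigma|$, and then splitting $\sqrt{\tfrac{v^{\underline{L}}}{v^0}}\le 1$ on $\{\tau_-\le\tau_+^{1/2}\}$ versus using extra $\tau_-$ decay otherwise. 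The two $\underline{\alpha}$ terms are the most favourable: by \eqref{decayF2}, $\tau_-\tfrac{|v^A|}{v^0}|\underline{\alpha}| \lesssim \sqrt\epsilon\,\log^{5/2}(1+\tau_-)\,\tau_+^{-1}\tau_-^{-1/2}$, bounded by $\sqrt\epsilon\,\tau_+^{-5/4}$ after absorbing logs, and $\tau_+\tfrac{v^{\underline{L}}}{v^0}|\underline{\alpha}| \lesssim \sqrt\epsilon\,\tfrac{v^{\underline{L}}}{v^0}\,\log^{5/2}(1+\tau_-)\,\tau_-^{-3/2}$, which is of the claimed form $\sqrt\epsilon\,\tfrac{v^{\underline{L}}}{v^0}\tau_-^{-5/4}$ since $\log^{5/2}(1+\tau_-)\lesssim \tau_-^{1/4}$.

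The second inequality for $|F(v,\nabla_v z)|$ is the special case $\gamma = 0$, $h = z$, combined with the last estimate of Lemma~\ref{weights} which gives $\sum_{\widehat{Z}\in\K}|\widehat{Z} z| \lesssim \sum_{w\in\V}|w|$; so it follows immediately from the first part. The main obstacle is purely bookkeeping: ensuring that every logarithmic factor $\log^a(3+t)$ or $\log^a(1+\tau_-)$ is genuinely absorbed into the gain of $\tau_+^{1/4}$ or $\tau_-^{1/4}$ one has available (since $3/2 - 5/4 = 1/4$ and $2 - 5/4 = 3/4$ etc., there is always at least a quarter power to spare), and that in the $\sigma$ term one correctly trades $|v^A|/v^0$ against a further factor $\tau_-^{1/2}\tau_+^{-1/2}$ via $|v^A|\lesssim\sqrt{v^0 v^{\underline{L}}}$ and $\tfrac{v^{\underline{L}}}{v^0}\lesssim\tfrac{1}{\tau_+}\sum_{z\in\V}|z|$ — but here one must be careful not to introduce unwanted weights $z$, so instead one uses only $|v^A|/v^0 \le 1$ and notes that $\tau_+^{-1}\le\tau_+^{-5/4}\tau_+^{1/4}$ is useless, hence the genuinely correct treatment of the $\sigma$ term is: $\tau_+\tfrac{|v^A|}{v^0}|\sigma| \lesssim \tau_+\tfrac{|v^A|}{v^0}\cdot\sqrt\epsilon\,\log^2(3+t)\,\tau_+^{-2}\tau_-^{-1/2}$, and then use $\tfrac{|v^A|}{v^0}\lesssim\sqrt{\tfrac{v^{\underline{L}}}{v^0}}\le 1$ to absorb it together with $\tau_-^{-1/2}\le 1$, leaving $\sqrt\epsilon\log^2(3+t)\tau_+^{-1}$; this is $\lesssim\sqrt\epsilon\,\tau_+^{-5/4}$ precisely because $\tau_+^{1/4}\ge c\log^2(3+t)$ fails — so one must instead keep one factor $\tau_-^{-1/2}$, writing the bound as $\sqrt\epsilon\log^2(3+t)\tau_+^{-1}\tau_-^{-1/2}$ and splitting into the region $\tau_-\le\tau_+^{1/2}$, where $\tau_+^{-1}\le\tau_-^{-2}\le\tau_-^{-5/4}$ and we land in the second term, and the region $\tau_-\ge\tau_+^{1/2}$, where $\tau_+^{-1}\tau_-^{-1/2}\le\tau_+^{-5/4}$ and $\log^2(3+t)$ is absorbed by the remaining room. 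This case distinction is the one delicate point; everything else is a direct substitution.
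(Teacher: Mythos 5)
Your overall strategy matches the paper's: apply Lemma~\ref{calculF}, then plug in the pointwise decay estimates \eqref{decayF1}--\eqref{decayF2} and the weight inequalities of Lemma~\ref{weights1}. The treatment of $\tau_-|\rho|$, $\tau_+|\alpha|$ and $\tau_+\frac{v^{\underline L}}{v^0}|\underline\alpha|$ is correct. However, there is a genuine gap in how you handle the two remaining terms, and it stems from the same mistake in both cases: you discard the factor $|v^A|/v^0$ via the crude bound $|v^A|/v^0\le 1$, but this factor is precisely what you need to reach the second term $\frac{\sqrt\epsilon\,v^{\underline L}}{\tau_-^{5/4}v^0}$ on the right-hand side.

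Concretely, for the term $\tau_-\frac{|v^A|}{v^0}|\underline\alpha|$ you claim it is bounded by $\sqrt\epsilon\,\tau_+^{-5/4}$ ``after absorbing logs.'' This is false. After using $|v^A|/v^0\le 1$ and \eqref{decayF2} you have $\sqrt\epsilon\,\log^{5/2}(1+\tau_-)\,\tau_+^{-1}\tau_-^{-1/2}$, and the purported inequality reduces to $\log^{5/2}(1+\tau_-)\,\tau_-^{-1/2}\lesssim\tau_+^{-1/4}$; taking $\tau_-$ bounded (say $\tau_-=2$) and $\tau_+\to\infty$, the left-hand side is a fixed positive constant while the right-hand side tends to $0$. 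Similarly, your final version of the $\sigma$ argument drops $|v^A|/v^0$, arrives at $\sqrt\epsilon\,\log^2(3+t)\,\tau_+^{-1}\tau_-^{-1/2}$, and in the region $\tau_-\le\tau_+^{1/2}$ claims to ``land in the second term.'' But without the $v^{\underline L}/v^0$ factor you cannot land there: $\frac{\sqrt\epsilon}{\tau_-^{5/4}}$ is not $\lesssim\frac{\sqrt\epsilon\,v^{\underline L}}{\tau_-^{5/4}v^0}$, since $v^{\underline L}/v^0$ may be arbitrarily small.

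The correct route, which is the paper's, is to keep $|v^A|\lesssim\sqrt{v^0 v^{\underline L}}$ and treat $\tau_+|\sigma|$ and $\tau_-|\underline\alpha|$ together. Both are $\lesssim\sqrt\epsilon\,\log^{5/2}(3+t)\,\tau_+^{-1}\tau_-^{-1/2}$, and after absorbing the logarithm into $\tau_+^{1/4}$ one obtains
$$\frac{|v^A|}{v^0}\bigl(\tau_+|\sigma|+\tau_-|\underline\alpha|\bigr)\lesssim\frac{\sqrt\epsilon\,\sqrt{v^{\underline L}/v^0}}{\tau_+^{3/4}\tau_-^{1/2}}.$$
Now apply $2ab\le a^2+b^2$ with $a=\tau_+^{-5/8}$ and $b=\tau_-^{-5/8}\sqrt{v^{\underline L}/v^0}$: since $\tau_-\le\tau_+$, one has $\tau_+^{-3/4}\tau_-^{-1/2}\le\tau_+^{-5/8}\tau_-^{-5/8}$, and therefore
$$\frac{\sqrt{v^{\underline L}/v^0}}{\tau_+^{3/4}\tau_-^{1/2}}\le\frac{\sqrt{v^{\underline L}/v^0}}{\tau_+^{5/8}\tau_-^{5/8}}\le\frac{1}{\tau_+^{5/4}}+\frac{v^{\underline L}/v^0}{\tau_-^{5/4}},$$
which gives exactly the claimed bound. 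Your ``clean fix'' suggestion (using $|v^A|\lesssim\sqrt{v^0v^{\underline L}}$) was the right idea, but the region-splitting you propose afterwards does not separate cleanly into the two target terms because $\sqrt{v^{\underline L}/v^0}$ and $v^{\underline L}/v^0$ do not compare in the needed direction; the Young-type split above avoids that issue and closes the argument.
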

\begin{proof}
Let $(\alpha, \underline{\alpha}, \rho, \sigma)$ be the null decomposition of $\mathcal{L}_{Z^{\gamma}}(F)$. Using Lemma \ref{calculF}, we have
$$ \left| \mathcal{L}_{Z^{\gamma}}(F) \left( v, \nabla_v h \right) \right| \hspace{1mm} \lesssim \hspace{1mm} \sum_{\widehat{Z} \in \K} \left( \tau_- \left| \rho \right|+\tau_+\left| \alpha  \right| + \tau_+ \frac{|v^A|}{v^0} \left| \sigma  \right|+ \tau_- \frac{|v^A|}{v^0} \left| \underline{\alpha} \right|+ \tau_+ \frac{v^{\underline{L}}}{v^0} \left| \underline{\alpha} \right| \right)      \left| \widehat{Z} h \right|.$$
According to the pointwise estimates \eqref{decayF1}, \eqref{decayF2} and the inequality $|v^A| \lesssim \sqrt{v^0 v^{\underline{L}}}$ (see Lemma \ref{weights1}), one has
\begin{equation}\label{eq:one} \tau_- \left| \rho \right|+\tau_+\left| \alpha  \right|  \lesssim  \frac{\sqrt{\epsilon}}{\tau_+^{\frac{5}{4}}}, \hspace{10mm} \tau_+\frac{v^{\underline{L}}}{v^0}|\underline{\alpha}| \lesssim \frac{\sqrt{\epsilon}v^{\underline{L}}}{\tau_-^{\frac{5}{4}}v^0}, \hspace{10mm} \frac{|v^A|}{v^0} (\tau_+ |\sigma|+\tau_- |\underline{\alpha} |) \lesssim \frac{\sqrt{\epsilon} \sqrt{ v^{\underline{L}}}}{\tau_+^{\frac{3}{4}} \tau_-^{\frac{1}{2}}\sqrt{v^0}} \lesssim \frac{\sqrt{\epsilon}}{\tau_+^{\frac{5}{4}} }+\frac{ \sqrt{\epsilon} v^{\underline{L}}}{ \tau_-^{\frac{5}{4}}v^0},
\end{equation}
which implies the first inequality. The second one follows directly since, by Lemma \ref{weights}, $\sum_{\widehat{Z} \in \K} | \widehat{Z}( z) | \lesssim \sum_{w \in \V} |w|$. 
\end{proof}
The remainder of the section is devoted to the proof of Propositions \ref{Prop1} and \ref{Pro2}.
\subsection{Proof of Proposition \ref{Prop1}}\label{secPro1}

Let $z \in \V$, $|\zeta| \leq N-2$, $|\gamma| \leq N-2$ and $|\xi| \leq N-3$. Using successively Lemma \ref{prepalem}, that $1 \leq v^0$ on the support of $f$ (see \eqref{eq:velocity}), that $[0,t] \times \R^3$ can be foliated by $(C_u(t))_{u \leq t}$ (see Lemma \ref{foliationexpli}) and $\E^2_{N-2}[f] \leq 4 \epsilon$, which comes from the bootstrap assumption \eqref{bootf1}, we have,
\begin{eqnarray}
\nonumber I^{z,2}_{\zeta}+K^{z,2}_{\gamma, \xi} \hspace{-0.5mm} & \lesssim & \sum_{|\beta| \leq N-2} \sum_{w \in \V} \int_0^t \int_{\Sigma_s} \int_v \frac{\sqrt{\epsilon}}{\tau_+^{\frac{5}{4}}} \left| w^2 \widehat{Z}^{\beta} f  \right| \frac{dv}{v^0} dx ds+\int_0^t \int_{\Sigma_s} \int_v \frac{\sqrt{\epsilon}}{\tau_-^{\frac{5}{4}}} \frac{v^{\underline{L}}}{v^0}\left| w^2 \widehat{Z}^{\beta} f  \right| \frac{dv}{v^0} dx ds \\ \nonumber
& \lesssim & \sum_{\begin{subarray}{} |\beta| \leq N-2 \\ \hspace{1mm} w \in \V \end{subarray}} \int_0^t \frac{\sqrt{\epsilon}}{(1+s)^{\frac{5}{4}}}  \int_{\Sigma_s} \int_v \left| w^2 \widehat{Z}^{\beta} f  \right| dv dx ds+\int_{u=-\infty}^t \int_{C_u(t)} \int_v \frac{\sqrt{\epsilon}}{\tau_-^{\frac{5}{4}}} \frac{v^{\underline{L}}}{v^0}\left| w^2 \widehat{Z}^{\beta} f  \right| dv dC_u(t) du \\ \nonumber 
& \lesssim & \sqrt{\epsilon} \sum_{|\beta| \leq N-2} \sum_{w \in \V} \int_0^t \frac{\E [ w^2 \widehat{Z}^{\beta} f ](s)}{(1+s)^{\frac{5}{4}}} ds+\int_{u=-\infty}^t \frac{1}{\tau_-^{\frac{5}{4}}} \int_{C_u(t)} \int_v \frac{v^{\underline{L}}}{v^0}\left| w^2 \widehat{Z}^{\beta} f  \right| dv dC_u(t) du \\ \nonumber
& \lesssim & \sqrt{\epsilon}  \int_0^t \frac{\E^2_{N-2}[f](s)}{(1+s)^{\frac{5}{4}}} ds+\sqrt{\epsilon}\int_{u=-\infty}^t \frac{1}{\tau_-^{\frac{5}{4}}} \E^2_{N-2}[f](t) du \hspace{2mm} \lesssim \hspace{2mm} \epsilon^{\frac{3}{2}} \int_0^{+ \infty} \frac{ds}{(1+s)^{\frac{5}{4}}}+\epsilon^{\frac{3}{2}}\int_{u=-\infty}^{+\infty} \frac{du}{\tau_-^{\frac{5}{4}}}. 
\end{eqnarray}
We then deduce that $I^{z,2}_{\zeta}+K^{z,2}_{\gamma, \xi} \lesssim \epsilon^{\frac{3}{2}}$, which concludes the proof of Proposition \ref{Prop1}.
\subsection{Proof of Proposition \ref{Pro2}}\label{secpointF}

We fix $z \in \V$, $\zeta$, $\gamma$ and $\xi$ satisfying $|\zeta| \leq N$, $|\gamma|+|\xi| \leq N$ and $|\xi| \leq N-1$. Suppose first that $|\gamma| \leq N-2$. Hence, following the computations of Subsection \ref{secPro1} and using that $\E^1_N[f](t) \leq 4 \log (3+t)$ by the bootstrap assumption \eqref{bootf2}, we get
\begin{eqnarray}
\nonumber I^{z,1}_{\zeta}+K^{z,1}_{\gamma, \xi}  & \lesssim & \sqrt{\epsilon} \sum_{|\beta| \leq N} \sum_{w \in \V} \int_0^t \frac{\E [ w \widehat{Z}^{\beta} f ](s)}{(1+s)^{\frac{5}{4}}} ds+\int_{u=-\infty}^t \frac{1}{\tau_-^{\frac{5}{4}}} \int_{C_u(t)} \int_v \frac{v^{\underline{L}}}{v^0}\left| w \widehat{Z}^{\beta} f  \right| dv dC_u(t) du \\ \nonumber
& \lesssim & \sqrt{\epsilon}  \int_0^t \frac{\E^1_{N}[f](s)}{(1+s)^{\frac{5}{4}}} ds+\sqrt{\epsilon}\int_{u=-\infty}^t \frac{1}{\tau_-^{\frac{5}{4}}} \E^1_{N}[f](t) du \\ \nonumber
& \lesssim &  \epsilon^{\frac{3}{2}} \int_0^{+ \infty} \frac{\log(3+s)}{(1+s)^{\frac{5}{4}}} ds+\epsilon^{\frac{3}{2}}\log (3+t) \int_{u=-\infty}^{+\infty} \frac{du}{\tau_-^{\frac{5}{4}}} \hspace{2mm} \lesssim \hspace{2mm} \epsilon^{\frac{3}{2}} \log (3+t). 
\end{eqnarray}
We now consider the cases where $|\gamma| \geq N-1$, so that $|\xi| \leq 1$. Let us denote the null decomposition of $\mathcal{L}_{Z^{\gamma}}(F)$ by $(\alpha, \underline{\alpha}, \rho , \sigma)$. Using Lemma \ref{calculF} and that $1 \leq v^0$ on the support of $f$, we are led to bound, for all $|\beta|=|\kappa|+1 \leq 2$, the following integrals,
\begin{eqnarray}
\nonumber I_F & := & \int_0^t \int_{\Sigma_s}\int_v \left( \tau_- \left| \rho \right|+\tau_+\left| \alpha  \right|+\tau_+|\sigma|  \frac{|v^{A}|}{v^0} \right) \left| z \widehat{Z}^{\beta} f \right|  dv dx ds,  \\ \nonumber
I_{\underline{\alpha}} & := & \int_0^t \int_{\Sigma_s} \int_v  \left| \underline{\alpha} \right|   \frac{\tau_-|v^A| + \tau_+v^{\underline{L}}}{v^0}\left| z \widehat{Z}^{\beta} f \right|  dv dx ds.
\end{eqnarray}
Using $\tau_+ v^{\underline{L}}+\tau_+|v^A| \lesssim v^0  \sum_{w \in \V}|w|$ (which comes from Lemmas \ref{weights1}), the Cauchy-Schwarz inequality, the bootstrap assumption \eqref{bootF2} and the estimate \eqref{decayf2}, we get
$$ I_{\underline{\alpha}}  \hspace{2mm} \lesssim \hspace{2mm} \sum_{w \in \V}  \int_{0}^{t} \left\| |\underline{\alpha}| \right\|_{L^2(\Sigma_s)} \left\|  \int_v  \left| (w^2+z^2) \widehat{Z}^{\beta} f \right| dv \right\|_{L^2(\Sigma_s)} ds \hspace{2mm} \lesssim \hspace{2mm} \int_{0}^{t} \sqrt{\mathcal{E}^{5}_N[F](s)} \frac{ds}{1+s}  \hspace{2mm} \lesssim \hspace{2mm} \epsilon^{\frac{3}{2}} \log(3+t).$$
For $I_F$, in order to apply Lemma \ref{foliationexpli}, notice first that we have by the estimate \eqref{decayf}, for $u \leq t$ and $i \in \mathbb{N}$, 
$$\left\| \int_v \left| w^2 \widehat{Z}^{\beta} f \right| dv \right\|^2_{L^2(C^i_u(t))} \lesssim  \int_{C_u^i(t)} \frac{\epsilon^2}{\tau_+^4 \tau_-^2} d C_u^i(t) \lesssim \frac{\epsilon^2}{\tau_-^{\frac{9}{4}}(1+t_i)^{\frac{1}{4}}} \int_{\underline{u}=2t_i-u}^{2t_{i+1}-u} \frac{r^2}{\tau_+^{\frac{7}{2}}} d \underline{u} \lesssim \frac{\epsilon^2}{\tau_-^{\frac{9}{4}}(1+2^i)^{\frac{1}{4}}} .$$
Hence, using $\tau_+|v^A| \lesssim v^0 \sum_{w \in \V} |w|$, the Cauchy-Schwarz inequality and the bootstrap assumption \eqref{bootF1}, we obtain
\begin{eqnarray}
\nonumber  I_{F} & = & \sum_{w \in \V} \sum_{i=0}^{+\infty} \int_{u=-\infty}^t  \int_{C^i_u(t)} \left( \tau_- \left| \rho \right|+\tau_+\left| \alpha  \right|+|\sigma| \right) \int_v \left| w^2 \widehat{Z}^{\beta} f \right| dv dC_u(t)^i du \\ \nonumber
& \lesssim & \sum_{w \in \V} \sum_{i=0}^{+\infty} \int_{u=-\infty}^t  \left\| \tau_- \left| \rho \right|+\tau_+\left| \alpha  \right|+|\sigma| \right\|_{L^2(C_u^i(t))} \left\| \int_v \left| w^2 \widehat{Z}^{\beta} f \right| dv \right\|_{L^2(C^i_u(t))} du \\ \nonumber
& \lesssim &  \sum_{i=0}^{+\infty} \int_{u=-\infty}^t  \sqrt{\mathcal{E}_N[F]( T_{i+1}(t))} \frac{\epsilon}{\tau_-^{\frac{9}{8}}(1+2^i)^{\frac{1}{8}}}  du \hspace{2mm} \lesssim \hspace{2mm} \epsilon^{\frac{3}{2}} \sum_{i=0}^{+\infty} \frac{\log^2(3+2^{i+1})}{(1+2^i)^{\frac{1}{8}}} \int_{u=-\infty}^{+\infty} \frac{du}{\tau_-^{\frac{9}{8}}} \hspace{2mm} \lesssim \hspace{2mm} \epsilon^{\frac{3}{2}}.
\end{eqnarray}
This concludes the proof and the improvement of the bootstrap assumptions \eqref{bootf1} and \eqref{bootf2}.
\section{$L^2$ estimates on the velocity averages of the Vlasov field}\label{sec8}

In view of the energy estimate of Proposition \ref{energyMax1}, we have to prove $L^2_x$ estimates on quantities such as $\int_v | z \widehat{Z}^{\beta} f | dv$, for $|\beta| \leq N$. If $|\beta| \leq N-2$, we can use a Klainerman-Sobolev inequality to obtain a sufficient decay rate (see Proposition \ref{L2esti} below). The main part of this section then consists in deriving such estimates for $|\beta| \geq N-1$. For this purpose, we follow the strategy used in \cite{FJS} (Section $4.5.7$) and adapted in \cite{dim4} for the Vlasov-Maxwell system. Contrary to \cite{dim4}, we will have to keep more of the null structure of the system. This will force us to add a new hierarchy on the functions studied here. Let us first rewrite the system and then we will explain how we will proceed. Let $I_1$ and $I_2$ be the following ordered sets,
\begin{flalign*}
& \hspace{0.5cm} I_1 := \{ \beta \hspace{2mm} \text{multi-index} \hspace{1mm} / \hspace{1mm} N-5 \leq |\beta| \leq N \} = \{ \beta_{1,1},...,\beta_{1,|I_1|} \}, & \\
& \hspace{0.5cm} I_2 := \{ \beta \hspace{2mm} \text{multi-index} \hspace{1mm} / \hspace{1mm}  |\beta| \leq N-5 \} =\{ \beta_{2,1},...,\beta_{2,|I_2|} \} .& 
\end{flalign*}

\begin{Rq}
Contrary to \cite{dim4}, we have $I_1 \cap I_2 \neq \varnothing$.
\end{Rq}
We also consider, for $N-5 \leq k \leq N$, $I^k_1 := \{ \beta \in I_1 \hspace{1mm} / \hspace{1mm} |\beta| =k \}$, and two vector valued fields $R$ and $W$ of respective length $|I_1|$ and $|I_2|$ such that
$$ R_i= \widehat{Z}^{\beta_{1,i}}f \hspace{10mm} \text{and} \hspace{10mm} W_i = \widehat{Z}^{\beta_{2,i}}f.$$
We will sometimes abusively write $i \in I^k_1$ instead of $\beta_{1,i} \in I^k_1$. Let us denote by $\Vv$ the module over the ring $C^0 \left( [0,T[ \times \R^3_x \times \left( \R^3_v \setminus \{0 \} \right) \right)$ generated by $( \partial_{v^l})_{1 \leq l \leq 3}$. We now rewrite the Vlasov equations satisfied by $R$ and $W$.
\begin{Lem}\label{L2bilan}
There exists three matrix-valued functions $A : [0,T[ \times \R^3 \times \left( \R^3_v \setminus \{0 \} \right) \rightarrow \mathfrak M_{|I_1|}(\Vv)$, $D : [0,T[ \times \R^3 \times \left( \R^3_v \setminus \{0 \} \right) \rightarrow  \mathfrak M_{|I_2|}(\Vv)$ and $B : [0,T[ \times \R^3 \times \left( \R^3_v \setminus \{0 \} \right) \rightarrow  \mathfrak M_{|I_1|,|I_2|}(\Vv)$ such that
$$T_F(R)+AR=B W  \hspace{10mm} \text{and} \hspace{10mm} T_F(W)=DW.$$
Moreover, if $1 \leq i \leq |I_1|$, $A$ and $B$ are such that $T_F(R_i)$ is a linear combination of 
\begin{flalign*}
& \hspace{1.2cm} \mathcal{L}_{Z^{\gamma}}(F) \left( v , \nabla_v R_j \right), \hspace{8.5mm} \text{with} \hspace{10mm} |\beta_{1,j}| < |\beta_{1,i}| \hspace{9.5mm} \text{and} \hspace{8mm} |\gamma|  +  |\beta_{1,j}| \leq |\beta_{1,i}|, & \\
& \hspace{1.2cm} \mathcal{L}_{Z^{\xi}}(F) \left( v , \nabla_v W_j \right), \hspace{8mm} \text{with} \hspace{10mm} |\beta_{2,j}| \leq N-6 \hspace{8mm} \text{and} \hspace{8mm} |\xi| \leq N. &
\end{flalign*}
Similarly, if $1 \leq i \leq I_2$, $D$ is such that $T_F(W_i)$ is a linear combination of
\begin{flalign*}
& \hspace{1.2cm} \mathcal{L}_{Z^{\gamma}}(F) \left( v , \nabla_v W_j \right), \hspace{8mm} \text{with} \hspace{10mm} |\beta_{2,j}| \leq N-6 \hspace{8mm} \text{and} \hspace{8mm} |\gamma| \leq N-5. & 
\end{flalign*}
Note also, using \eqref{decayf}, that
\begin{flalign*}
& \hspace{1.2cm}\forall \hspace{0.5mm} (t,x) \in [0,T[ \times \R^3, \hspace{3mm} z \in \V, \hspace{3mm} 1 \leq q \leq |I_2|, \hspace{12mm} \int_v |z^2W_q| dv \lesssim \frac{\epsilon}{\tau_+^{2}\tau_-}.&
\end{flalign*}
\end{Lem}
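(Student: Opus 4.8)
The proof is essentially a bookkeeping exercise based on the commutation formula of Proposition \ref{Maxcom}. Recall first that, thanks to \eqref{eq:velocity} and the support property of $\chi$, $(f,F)$ solves the Vlasov--Maxwell system \eqref{VM1}--\eqref{VM3} on $[0,T[$, so that Proposition \ref{Maxcom} does apply to it. Observe also that any term of the form $\mathcal{L}_{Z^{\gamma}}(F)\left( v, \nabla_v g \right)$ equals $v^{\mu} \mathcal{L}_{Z^{\gamma}}(F)_{\mu}{}^l \, \partial_{v^l} g$, i.e. it is the image of $g$ under a differential operator lying in the module $\Vv$, whose coefficients $v^{\mu} \mathcal{L}_{Z^{\gamma}}(F)_{\mu}{}^l$ are continuous on $[0,T[ \times \R^3_x \times \left( \R^3_v \setminus \{0\} \right)$ since $F$ is sufficiently regular. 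This is what allows us to encode the source terms in matrices with entries in $\Vv$.

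First I would treat the low-order vector field $W$. For each $1 \le i \le |I_2|$, Proposition \ref{Maxcom} expresses $T_F(W_i) = T_F(\widehat{Z}^{\beta_{2,i}} f)$ as a linear combination of terms $\mathcal{L}_{Z^{\gamma}}(F)\left( v, \nabla_v \widehat{Z}^{\kappa} f \right)$ with $|\gamma|+|\kappa| \le |\beta_{2,i}|$ and $|\kappa| \le |\beta_{2,i}|-1$. Since $|\beta_{2,i}| \le N-5$, this forces $|\gamma| \le N-5$ and $|\kappa| \le N-6$, so that $\widehat{Z}^{\kappa} f = W_j$ for the index $j$ with $\beta_{2,j} = \kappa$ (note $\kappa \in I_2$ as $|\kappa| \le N-6$). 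Collecting the corresponding $\Vv$-coefficients into a matrix $D$ gives $T_F(W) = DW$ with the announced structure, and the last displayed estimate is then merely \eqref{decayf} applied to $W_q = \widehat{Z}^{\beta_{2,q}} f$, using $|\beta_{2,q}| \le N-5$.

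The high-order block $R$ is handled the same way, the only additional point being to decide where to send the terms living in the overlap $I_1 \cap I_2 \neq \varnothing$. For $1 \le i \le |I_1|$, Proposition \ref{Maxcom} writes $T_F(R_i)$ as a linear combination of terms $\mathcal{L}_{Z^{\gamma}}(F)\left( v, \nabla_v \widehat{Z}^{\kappa} f \right)$ with $|\gamma|+|\kappa| \le |\beta_{1,i}|$ and $|\kappa| \le |\beta_{1,i}|-1$; I would split them according to whether $|\kappa| \ge N-5$ or $|\kappa| \le N-6$. In the first case $\kappa \in I_1$, hence $\widehat{Z}^{\kappa} f = R_j$ with $|\beta_{1,j}| = |\kappa| < |\beta_{1,i}|$ and $|\gamma|+|\beta_{1,j}| \le |\beta_{1,i}|$, and these terms go into $A$. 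In the second case $\kappa \in I_2$, hence $\widehat{Z}^{\kappa} f = W_j$ with $|\beta_{2,j}| \le N-6$ and $|\gamma| \le |\beta_{1,i}| \le N$, and these go into $B$. This produces $T_F(R) + AR = BW$. The main (and in fact the only) subtlety, which is precisely why the statement requires $|\beta_{2,j}| \le N-6$ rather than $\le N-5$ in the description of $B$, is that the terms with $|\kappa| = N-5$ could a priori be routed to either block; we systematically assign them to the $R$-block through $A$, thereby guaranteeing that the source terms appearing in $B$ and $D$ only involve components $W_j$ of order $|\beta_{2,j}| \le N-6$. Reading off the matrices $A : [0,T[ \times \R^3 \times \left( \R^3_v \setminus \{0\} \right) \to \mathfrak M_{|I_1|}(\Vv)$, $B$ valued in $\mathfrak M_{|I_1|,|I_2|}(\Vv)$ and $D$ valued in $\mathfrak M_{|I_2|}(\Vv)$ then completes the proof.
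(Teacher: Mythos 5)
Your proposal is correct and follows essentially the same route as the paper's (much terser) proof: apply Proposition \ref{Maxcom}, read off the bounds on $|\gamma|$, $|\kappa|$, and systematically route the $|\kappa|=N-5$ terms into the $R$-block through $A$. The extra bookkeeping you supply (the $\Vv$-valuedness of the coefficients, the explicit derivation $|\kappa|\leq|\beta_{2,i}|-1\leq N-6$ forcing $|\beta_{2,j}|\leq N-6$, and the observation that this is what makes $B$ and $D$ avoid order-$(N-5)$ components of $W$) is exactly what the paper leaves implicit.
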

\begin{Rq}\label{Rqbilan}
Notice that if $\beta_{1,i} \in I^{N-5}_1$, then $A_i^q=0$ for all $1 \leq q \leq |I_1|$. Note also that if $p \geq 1$ and $\beta_{1,i} \in I^{N-5+p}$, we have $|\gamma| \leq p$. 
\end{Rq}
\begin{proof}
One only has to apply the commutation formula of Proposition \ref{Maxcom} to $\widehat{Z}^{\beta_{1,i}} f$ or $\widehat{Z}^{\beta_{2,i}} f$ and to replace each quantity such as $\widehat{Z}^{\kappa} f$, for $|\kappa| \neq N-5$, by the corresponding component of $R$ or $W$. If $|\kappa| = N-5$, we replace it by the corresponding component of $R$.
\end{proof}
The goal is to obtain an $L^2$-estimate on $R$. For this, let us split it in $R:=H+G$, where
$$\left\{
    \begin{array}{ll}
         T^{\chi}_F(H)+ AH=0 \hspace{2mm}, \hspace{2mm} H(0,.,.)=R(0,.,.),\\
        T^{\chi}_F(G)+ AG= BW \hspace{2mm}, \hspace{2mm} G(0,.,.)=0
    \end{array}
\right.$$
and then prove $L^2$ estimates on the velocity averages of $H$ and $G$. To do it, we will schematically establish that $G=KW$, with $K$ a matrix such that $\E[KKW]$ do not growth too fast, and then use the pointwise decay estimates on $\int_v |z^2W|dv$ to obtain the expected decay rate on $\| \int_v |G| dv \|_{L^2_x}$. For $\| \int_v |H| dv \|_{L^2_x}$, we will make crucial use of Klainerman-Sobolev inequalities so that we will need to commute the transport equation satisfied by $H$ and prove $L^1$-bounds such as we did in the proof of Proposition \ref{Prop1}. Contrary to what we did in \cite{dim4}, we keep the $v$ derivatives in order to take advantage of the good behavior of radial component of $\nabla_v g$. This is why we put the derivatives of order $N-5$ in both $R$ and $W$.
\begin{Rq}
If we proceed as in \cite{dim4}, we would not be able to use the estimate $\left( \nabla_v g \right)^r \sim \tau_- \widehat{Z}g$ and an analogous result to Lemma \ref{calculF} would give the term $\tau_+|\underline{\alpha}| \frac{|v^A|}{v^0}|\widehat{Z} g|$. In our case (the three dimensional one), a lack of decay in the $t+r$ direction prevents us to deal with it.
\end{Rq}
\subsection{The homogeneous system}\label{subsecH}

In order to obtain $L^{\infty}$, and then $L^2$, estimates on $\int_v |H| dv$, we will have to commute at least three times the transport equation satisfied by each component of $H$. However, if $\beta_{1,i} \in I^k_1$, with $k \geq N-4$, we need to control the $L^1$ norm of $\widehat{Z}^{\kappa} H_j$, with $|\kappa|=4$ and $j \in I^{k-1}_1$, to bound $\| \widehat{Z}^{\xi} H_i \|_{L^1_{x,v}}$, with $|\xi| = 3$. We then consider the following energy norm
$$ \E_H \hspace{2mm} := \hspace{2mm} \sum_{z \in \V} \hspace{1mm} \sum_{k=0}^5 \hspace{1mm} \sum_{ |\beta| \leq 3+k} \hspace{1mm} \sum_{i \in I^{N-k}_1} \hspace{1mm} \E[z^2\widehat{Z}^{\beta} H_i].$$
We have the following commutation formula.
\begin{Lem}\label{comL2hom}
Let $0 \leq k \leq 5$, $|\beta| \leq 3+k$ and $i \in I^{N-k}_1$. Then, if $H$ vanishes for all $|v| \leq 1$, $T_F(\widehat{Z}^{\beta} H_i)$ can be written as a linear combination of terms of the form
$$ \mathcal{L}_{Z^{\gamma}}(F) \left( v, \nabla_v \widehat{Z}^{\xi} H_j \right) , \hspace{5mm} \text{with} \hspace{5mm} |\gamma| \leq 8 \leq N-2, \hspace{5mm} |\xi| \leq |\beta|, \hspace{5mm} |\beta_{1,j}| \leq |\beta_{1,i}|, \hspace{5mm} |\xi|+|\beta_{1,j}| < |\beta|+|\beta_{1,i}|.$$
\end{Lem}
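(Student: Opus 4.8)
The plan is to iterate the first-order commutation formula of Lemma \ref{comumax1}, carefully tracking two quantities: the total number of vector fields hitting $F$ (which I must bound by $8$) and the mixed order $|\xi| + |\beta_{1,j}|$ that measures progress of the induction. First I would recall from Lemma \ref{L2bilan} and Remark \ref{Rqbilan} that, by definition of $R$, $H$ and the energy norm $\E_H$, the transport equation $T_F(H_i) + (AH)_i = 0$ expresses $T_F(H_i)$ as a linear combination of terms $\mathcal{L}_{Z^{\gamma'}}(F)(v, \nabla_v H_{j'})$ with $|\beta_{1,j'}| < |\beta_{1,i}|$ and $|\gamma'| + |\beta_{1,j'}| \le |\beta_{1,i}|$, together with terms involving $W$; but since $H$ solves a \emph{homogeneous} system, only the $A H$ part is present, so every term is of the first type with $|\beta_{1,j'}| < |\beta_{1,i}|$. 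In particular, by Remark \ref{Rqbilan}, if $\beta_{1,i} \in I_1^{N-5+p}$ then $|\gamma'| \le p \le 5$.

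Next I would commute the equation $|\beta| \le 3 + k$ times. The key computational input is the first identity of Lemma \ref{comumax1}: applying $\widehat{Z}$ to $\mathcal{L}_{Z^{\gamma'}}(F)(v, \nabla_v H_{j'})$ produces $\mathcal{L}_{Z}\mathcal{L}_{Z^{\gamma'}}(F)(v,\nabla_v H_{j'})$, $\mathcal{L}_{Z^{\gamma'}}(F)(v,\nabla_v \widehat{Z} H_{j'})$, and (when $\widehat{Z}=S$) a multiple of $\mathcal{L}_{Z^{\gamma'}}(F)(v,\nabla_v H_{j'})$; here one also uses that commuting $\widehat{Z}^{\xi}$ past $H_{j'}$ only produces components $\widehat{Z}^{\xi'} H_{j''}$ with $|\beta_{1,j''}| \le |\beta_{1,j'}| + (\text{number of }v\text{-rotations used})$ and $|\xi'| \le |\xi|$ — this is where keeping the $v$-derivatives forces the bookkeeping through the $R$-components. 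After $|\beta| \le 3+k$ commutations starting from a term with $|\gamma'| \le 5-k$ (valid since $i \in I_1^{N-k}$ means $p = N - k - (N-5) = 5 - k$), the number of vector fields on $F$ is at most $|\gamma'| + |\beta| \le (5-k) + (3+k) = 8$, which gives $|\gamma| \le 8 \le N-2$ since $N \ge 10$. The bound $|\xi| \le |\beta|$ is immediate, $|\beta_{2,j}| \le |\beta_{2,i}|$ follows from the $W$-replacement rule of Lemma \ref{L2bilan} (no $W$-terms appear here, so in fact $j$ ranges over $I_1$ and this is vacuous or trivial), and the strict inequality $|\xi| + |\beta_{1,j}| < |\beta| + |\beta_{1,i}|$ is propagated from the strict decrease $|\beta_{1,j'}| < |\beta_{1,i}|$ in the zeroth step, since each commutation raises $|\xi|$ and $|\beta|$ in lockstep while $|\beta_{1,j}|$ can only grow by the amount of $v$-rotation content absorbed, which is already accounted for in $|\beta_{1,i}|$'s budget. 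Finally, the hypothesis that $H$ vanishes for $|v| \le 1$ is used exactly as in Subsection \ref{subsecsmallvelo} to ensure $T_F^\chi(H) = T_F(H)$, so that the commutation formula of Proposition \ref{Maxcom} (stated for $T_F$) applies verbatim.

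The main obstacle I expect is the precise bookkeeping of how the complete lifts $\widehat{Z}$ act on the $R$- and $W$-components: because we retain the $\nabla_v$ derivatives (unlike in \cite{dim4}), commuting $\widehat{Z}^{\xi}$ through a component $H_j = \widehat{Z}^{\beta_{1,j}} f$ can convert $v$-rotation pieces of the lifts into extra spatial vector fields, so one must verify that the index $|\xi| + |\beta_{1,j}|$ genuinely decreases and that the count $|\gamma| + (\text{order of the lift on }H)$ never exceeds the budget — this requires carefully separating, at each step, which of the at most $3+k$ commuted fields landed on $F$ versus on the Vlasov factor, and checking the worst case $k=0$ (order $N$, starting from $|\gamma'| \le 5$) really does saturate at $|\gamma| \le 8$.
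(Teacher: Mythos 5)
Your proof is essentially the same as the paper's: reduce to $T_F(H)+AH=0$ (valid because $\chi(|v|)=1$ on the support of $H$), then iterate Lemma \ref{comumax1}, and combine the bound $|\gamma'|\leq 5-k$ from Remark \ref{Rqbilan} with the $|\beta|\leq 3+k$ commutations to conclude $|\gamma|\leq(5-k)+(3+k)=8$. That counting is precisely the crux of the (very terse) proof in the paper, and you have it right.

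Two small remarks. First, you should also account explicitly for the contribution of the commutator $[T_F,\widehat{Z}^{\beta}]H_i$ itself, which the paper singles out: applying $\widehat{Z}^{\beta}$ to the piece $F(v,\nabla_v H_i)$ inside $T_F(H_i)$ produces, via the same Lemma \ref{comumax1}, terms $\mathcal{L}_{Z^{\gamma}}(F)(v,\nabla_v\widehat{Z}^{\kappa}H_i)$ with $j=i$, $|\gamma|+|\kappa|\leq|\beta|$ and $|\kappa|\leq|\beta|-1$; these satisfy $|\gamma|\leq|\beta|\leq 8$ and $|\kappa|+|\beta_{1,i}|<|\beta|+|\beta_{1,i}|$, so they fit the conclusion as well. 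Your present wording only tracks the terms coming from $\widehat{Z}^{\beta}\big((AH)_i\big)$. Second, your stated ``main obstacle'' about commuting $\widehat{Z}^{\xi}$ past $H_{j'}$ and re-indexing the $R$-components is not actually present: the lemma's conclusion already has the form $\mathcal{L}_{Z^{\gamma}}(F)(v,\nabla_v\widehat{Z}^{\xi}H_j)$, so $\widehat{Z}^{\xi}H_{j'}$ can be left as is and no re-indexing into higher-order $R$-components is needed at this stage.
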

\begin{proof}
If $H$ vanishes for all $|v| \leq 1$, we have $T_F(H)+AH=0$. Hence, according to Proposition \ref{Maxcom}, the source terms which arise from the commutator $[T_F,\widehat{Z}^{\beta}]$ are such as those described in this lemma, with $j=i$. The other ones come from $\widehat{Z}^{\beta} \left( T_F \left(H_i \right) \right)$ (use Lemma \ref{L2bilan}, Remark \ref{Rqbilan} and Lemma \ref{comumax1} to check that they are of the researched form).
\end{proof}
As $H(0,.,.)=R(0,.,.)$, it then follows that $H(0,.v)=0$ for all $|v| \leq 3$ and, applying Proposition\footnote{Strictly speaking, one cannot simply apply Proposition \ref{ProB} since $T_F(H_i) \neq 0$ if $ i \in I^{k}_1$ and $k \geq N-4$. However, in view of Proposition \ref{comL2hom}, one can easily adapt it to our context.} \ref{ProB}, that there exists $C_H>0$ such that $\E_H(0) \leq C_H \epsilon$. Consequently, using Corollary \ref{velocity2} and following the proof of Proposition \ref{Prop1}, one can prove that, for $\epsilon$ small enough, 
$$ \forall \hspace{0.5mm} t \in [0,T[, \hspace{8mm} \E_H (t) \leq 3C_H \epsilon \hspace{12mm} \text{and} \hspace{12mm} \forall \hspace{0.5mm} (t,x) \in [0,T[ \times \R^3, \hspace{1mm} 0 < |v| \leq 1, \hspace{8mm} H(t,x,v)=0.$$
By Proposition \ref{KS1}, we then obtain, for $0 \leq k \leq 5$,
\begin{equation}\label{decayH}
 \forall \hspace{0.5mm} (t,x) \in [0,T[ \times \R^3, \hspace{3mm} z \in \V, \hspace{3mm} 1 \leq j \leq |I^{N-k}_1|, \hspace{3mm} |\beta| \leq k,  \hspace{15mm} \int_v |z^2\widehat{Z}^{\beta} H_j| dv \lesssim  \frac{\epsilon}{\tau_+^2 \tau_-}.
 \end{equation}
\begin{Rq}\label{rqFhighderiv}
Proceeding as in Subsection $17.2$ of \cite{FJS2}, we could avoid any hypothesis on the higher order derivatives of $F^0$.
\end{Rq}

\subsection{The inhomogenous system}\label{subsecG}

Start by noticing that G vanishes for all $|v| \leq 1$ since $G=R-H$. We then deduce from $\chi(|v|) =1$ for all $|v| \geq 1$ that $G$ satisfies $T_F(G)+AG=BW$. To derive an $L^2$ estimate on $G$, we cannot commute the transport equation because $B$ contains top order derivatives of the electromagnetic field. Instead, we follow the methodology of \cite{FJS} (see Subsection $4.5.7$). We kept the $v$ derivatives of $G$ in the matrix $A$ so that we could use the null structure in a better way. In order to obtain $L^1$-bounds on quantities introduced below, we now need to rewrite these $v$ derivatives. This is the purpose of the following lemma.

\begin{Lem}\label{bilaninho}
There exists $p \geq 1$, a vector valued field $Y$ of length $p$, which vanishes for $|v| \leq 1$, and three matrix-valued functions $\overline{A} : [0,T[ \times \R^3 \times \left( \R^3_v \setminus \{0 \} \right) \rightarrow \mathfrak M_{|I_1|}(\R)$, $\overline{B} : [0,T[ \times \R^3 \times \left( \R^3_v \setminus \{0 \} \right) \rightarrow \mathfrak M_{|I_1|,p}(\R)$, $\overline{D} : [0,T[ \times \R^3 \times \left( \R^3_v \setminus \{0 \} \right) \rightarrow \mathfrak M_{p}(\R)$ such that
$$T_F(G)+\overline{A}G= \overline{B} Y, \hspace{10mm} T_F(Y)= \overline{D} Y \hspace{10mm} \text{and} \hspace{10mm} \sum_{z \in \V} \int_v  |z^2Y| dv \lesssim  \frac{\epsilon}{\tau_+^2 \tau_-}.$$
Moreover, $\overline{A}$ and $\overline{B}$ are such that, if $i \in \llbracket 1, |I_1| \rrbracket$, $T_F(G_i)$ can be bounded by a linear combination of terms of the form,
\begin{eqnarray}
\nonumber & & \left( \tau_-\left| \rho \left( \mathcal{L}_{Z^{\gamma}}(F) \right) \right|+\tau_+\left| \alpha \left( \mathcal{L}_{Z^{\gamma}}(F) \right) \right| +\tau_+ \frac{|v^A|}{v^0} \left| \sigma \left( \mathcal{L}_{Z^{\gamma}}(F) \right) \right|+ \frac{\tau_- |v^A|+ \tau_+ v^{\underline{L}}}{v^0} \left| \underline{\alpha} \left( \mathcal{L}_{Z^{\gamma}}(F) \right) \right| \right) |G_j| \hspace{5mm} \text{and} \\ \nonumber
& & \left( \tau_-\left| \rho \left( \mathcal{L}_{Z^{\xi}}(F) \right) \right|+\tau_+\left| \alpha \left( \mathcal{L}_{Z^{\xi}}(F) \right) \right|+\left| \sigma \left( \mathcal{L}_{Z^{\xi}}(F) \right) \right|+\left| \underline{\alpha} \left( \mathcal{L}_{Z^{\xi}}(F) \right) \right| \right) |z Y_q|,
\end{eqnarray}
where $j \in \llbracket 1, |I_1| \rrbracket$, $|\gamma| \leq 5$, $q \in \llbracket 1, p \rrbracket$, $|\xi| \leq N$ and $z \in V$. Similarly, $\overline{D}$ is such that, if $i \in \llbracket 1, p \rrbracket$, $T_F(Y_i)$ can be bounded by a linear combination of terms of the form,
$$\left( \tau_-\left| \rho \left( \mathcal{L}_{Z^{\gamma}}(F) \right) \right|+\tau_+\left| \alpha \left( \mathcal{L}_{Z^{\gamma}}(F) \right) \right| +\tau_+ \frac{|v^A|}{v^0} \left| \sigma \left( \mathcal{L}_{Z^{\gamma}}(F) \right) \right|+ \frac{\tau_- |v^A|+ \tau_+ v^{\underline{L}}}{v^0} \left| \underline{\alpha} \left( \mathcal{L}_{Z^{\gamma}}(F) \right) \right| \right) |Y_j|,$$
where $j \in \llbracket 1, p \rrbracket$ and $|\gamma| \leq N-5$.
\end{Lem}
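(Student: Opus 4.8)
The proof is a bookkeeping argument built on top of the decomposition $R = H + G$ of the previous subsection, together with the commutation formula of Proposition \ref{Maxcom} and the null expansion of Lemma \ref{calculF}. The first step is to produce the vector valued field $Y$. Recall that $G$ satisfies $T_F(G) + AG = BW$, where the entries of $A$ and $B$ lie in the module $\Vv$ generated by the $\partial_{v^l}$; concretely, each source term in $T_F(G_i)$ is of the schematic form $\mathcal{L}_{Z^{\gamma}}(F)(v,\nabla_v G_j)$ or $\mathcal{L}_{Z^{\xi}}(F)(v,\nabla_v W_j)$ with the index restrictions of Lemma \ref{L2bilan}. The idea is to enlarge the collection $\{W_q\}$ by adjoining all the first order velocity derivatives $\partial_{v^l} W_q$ (equivalently, using Definition \ref{Deflift} and Lemma \ref{goodderiv}, the quantities $\widehat{Z} W_q$ together with the $z W_q$, $z \in \V$), iterating finitely many times until the collection is closed under the action of $T_F$ modulo itself. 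This produces $Y$ of some finite length $p$, which vanishes for $|v| \le 1$ because $W$ does (it is built from derivatives of $f$, which vanishes there by \eqref{eq:velocity}), and which satisfies $T_F(Y) = \overline{D} Y$ with $\overline{D}$ now having \emph{scalar} entries — the $v$-derivatives having been absorbed into the components of $Y$ themselves. The pointwise bound $\sum_{z \in \V} \int_v |z^2 Y| dv \lesssim \epsilon \tau_+^{-2}\tau_-^{-1}$ is then exactly \eqref{decayf} applied to the finitely many derivatives of $f$ composing $Y$, since all of them have order $\le N-5 + O(1) \le N-5$... more precisely one must check the orders stay $\le N-5$, which they do by the construction in Lemma \ref{L2bilan} (the $W_j$ entering the source of $G_i$ have $|\beta_{2,j}| \le N-6$, so one spare derivative is available).

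The second step is to rewrite the source terms of $T_F(G_i)$ in the claimed form. Here one takes each term $\mathcal{L}_{Z^{\gamma}}(F)(v,\nabla_v G_j)$ and $\mathcal{L}_{Z^{\xi}}(F)(v,\nabla_v W_j)$ and applies Lemma \ref{calculF} directly: the former yields
$$\left( \tau_-|\rho| + \tau_+|\alpha| + \tau_+\tfrac{|v^A|}{v^0}|\sigma| + \tau_-\tfrac{|v^A|}{v^0}|\underline{\alpha}| + \tau_+\tfrac{v^{\underline{L}}}{v^0}|\underline{\alpha}| \right)\sum_{\widehat{Z}} |\widehat{Z} G_j|,$$
with the null components those of $\mathcal{L}_{Z^{\gamma}}(F)$ and $|\gamma| \le 5$ (the bound on $|\gamma|$ coming from Lemma \ref{L2bilan}, $|\gamma| + |\beta_{1,j}| \le |\beta_{1,i}| \le N$ and the lowest-order indices in $I_1$ being $N-5$, so $|\gamma| \le 5$ — this is exactly Remark \ref{Rqbilan}). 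One then observes that $\widehat{Z} G_j$ is itself, up to a $z$-weight from Lemma \ref{goodderiv}, a component of $G$, so these terms have the first claimed form with $\overline{A}$ absorbing the null-component prefactor. For the terms involving $W_j$, the high order $|\xi| \le N$ on the field forces us to \emph{not} pull out $\tau_+$ from the $\sigma$ and $\underline{\alpha}$ pieces; Lemma \ref{calculF} still gives the bound but one keeps it as $\left( \tau_-|\rho| + \tau_+|\alpha| + |\sigma| + |\underline{\alpha}| \right)|\widehat{Z} W_j|$ after using $|v^A|/v^0 \lesssim 1$ and $v^{\underline{L}}/v^0 \lesssim 1$ crudely, and then $\widehat{Z} W_j$ is a $z$-weighted component of $Y$ by construction. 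The $\overline{B}$ and $\overline{D}$ matrices then have scalar entries because all the $\nabla_v$ has been converted into components of $G$ or $Y$.

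The third step is simply to record the analogous statement for $T_F(Y_i)$, which is immediate from Lemma \ref{L2bilan} (the $\overline{D}$-type source terms $\mathcal{L}_{Z^{\gamma}}(F)(v,\nabla_v W_j)$ with $|\gamma| \le N-5$) followed by the same application of Lemma \ref{calculF}, keeping all of $\tau_+$ this time since $|\gamma| \le N-5 \le N-2$ lets us use the pointwise decay \eqref{decayF1}--\eqref{decayF2} later. The main obstacle, and the only genuinely delicate point, is the precise index bookkeeping in the first step: one must verify that closing the collection $\{W_q\} \cup \{\partial_v W_q\} \cup \dots$ under $T_F$ terminates at a finite $p$ \emph{without} raising the differential order of the $f$-derivatives above $N-5$, so that \eqref{decayf} still applies with a uniform constant. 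This works precisely because the source terms in $T_F(W_i)$ involve $W_j$ with $|\beta_{2,j}| \le N-6$ strictly below the ceiling, leaving exactly one derivative of slack to be spent on the velocity derivative — so a single enlargement suffices and $p$ is bounded in terms of $N$ alone. Everything else is a mechanical unwinding of the already-established commutation and null-expansion identities.
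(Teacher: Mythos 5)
The central mechanism of the paper's proof is absent from your argument. To produce a \emph{scalar-valued} $\overline{A}$, the paper trades the $v$-derivatives $\partial_{v^k} G_j$ appearing in $AG$ for other quantities via the identity $v^0\partial_{v^k} = \widehat{\Omega}_{0k} - x^k\partial_t - t\partial_k$ (and the radial formula \eqref{radialcompo}). Your proposal then asserts that ``$\widehat{Z} G_j$ is itself, up to a $z$-weight from Lemma \ref{goodderiv}, a component of $G$.'' This is false: $G$ does not satisfy the commuted Vlasov system of Proposition \ref{Maxcom}, so $\widehat{Z} G_j \neq G_{j_{\widehat{Z}}}$ in general. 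The paper's key observation is that since $R = H + G$ and $R_{j_{\widehat{Z}}} = \widehat{Z} R_j$ holds exactly, the error $G_{j_{\widehat{Z}}} - \widehat{Z} G_j = H_{j_{\widehat{Z}}} - \widehat{Z} H_j$ depends only on the homogeneous part $H$, enjoys the pointwise decay \eqref{decayH}, and must be \emph{adjoined to} $Y$. This is precisely what allows $\overline{A}$ to be a matrix of scalar functions rather than of differential operators; your sketch never accounts for the error and so never actually constructs $\overline{A}$.

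The gap propagates to your other two claims. First, the bound $\sum_z \int_v |z^2 Y|\,dv \lesssim \epsilon\tau_+^{-2}\tau_-^{-1}$ is not ``exactly \eqref{decayf}'': besides components of $W$, the paper's $Y$ contains the error terms $G_{j_{\widehat{Z}}} - \widehat{Z} G_j$ and, in order to close $T_F(Y) = \overline{D}Y$ after applying Lemma \ref{comL2hom}, further quantities $\widehat{Z}^{\kappa} H_j$ with $\beta_{1,j}$ of order up to $N$, whose decay comes from \eqref{decayH} and the energy $\E_H$ of Subsection \ref{subsecH}, not from \eqref{decayf}. Second, your proposal to close $Y$ by a ``single enlargement'' of $W$ with $\partial_{v^l} W_q$ does not stabilize: the entries of $D$ in Lemma \ref{L2bilan} are $\Vv$-valued, so $T_F(\partial_{v^l} W_q)$ generates second-order $v$-derivatives which you have no mechanism to absorb unless you perform the same conversion $v^0\partial_{v^k}\mapsto \widehat{\Omega}_{0k},\,x^k\partial_t,\,t\partial_k$ and then track the ensuing $H$-errors. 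In short, the construction of $Y$ and the scalarization of $\overline{A},\overline{B},\overline{D}$ hinge on the $H$-versus-$G$ splitting and on the identity $G_{j_{\widehat{Z}}}-\widehat{Z}G_j = H_{j_{\widehat{Z}}}-\widehat{Z}H_j$, and your proof does not contain either.
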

\begin{proof}
The strategy of the proof is the following. If $\partial_{v^k} G_j$ appears in $T_F(G)+AG=BW$, then, by Lemma \ref{L2bilan}, $j \in I^k_1$, with $N-5 \leq k \leq N-1$. We then transform it with $v^0 \partial_{v^k} = \widehat{\Omega}_{0k}-x^k \partial_t-t \partial_k$ and express it, with controllable error terms, as a combination of $(G_l)_{l \in I^{k+1}_1}$. The other manipulations are similar to those made in Section \ref{sec7} when we applied Lemma \ref{calculF}. Let us denote, for $j \in I_1 \setminus I_1^N$ and $\widehat{Z} \in \K$, by $j_{\widehat{Z}}$ the index such that $R_{j_{\widehat{Z}}}= \widehat{Z} \widehat{Z}^{\beta_{1,j}} f = \widehat{Z} R_j $. Hence, by \eqref{decayH} and since $R=H+G$, we have, for all $ j \in I_1 \setminus I^N_1$,
\begin{equation}\label{error}
\forall \hspace{0.5mm} (t,x) \in [0,T[ \times \R^3, \hspace{2mm} (z, \widehat{Z}) \in \V \times \K, \hspace{6mm} \int_v |z|^2|G_{j_{\widehat{Z}}}-\widehat{Z} G_j|dv = \int_v |z|^2|H_{j_{\widehat{Z}}}-\widehat{Z} H_j|dv \lesssim  \frac{\epsilon}{\tau_+^2 \tau_-}.
\end{equation}
Let $p^0 := |I_2|+|I_1 \setminus I^N_1|$ and $Y^0$ a vector valued field\footnote{$Y^0$ will be a subvector of the vector $Y$ of the lemma.} of length $p^0$ containing each component of $W$ and each $G_{j_{\widehat{Z}}}-\widehat{Z} G_j$, for $j \in I_1 \setminus I^N_1$. We order the components of $Y^0$ such as $Y^0_{j_{\widehat{Z}}}=G_{j_{\widehat{Z}}}-\widehat{Z} G_j$. In view of \eqref{error} and Lemma \ref{L2bilan}, $\int_v  |z^2Y^0| dv$ satisfies the desired pointwise decay estimate on $\int_v  |z^2Y| dv$. We now fix $i \in I_1$. Applying Lemma \ref{L2bilan}, one can see that $T_F(G_i)$ can be written as a linear combination of the following terms.
\begin{itemize}
\item Those coming from $BW$,
$$ \mathcal{L}_{Z^{\xi}}(F) \left( v , \nabla_v W_j \right), \hspace{5mm} \text{with} \hspace{5mm} |\beta_{2,j}| \leq N-6 \hspace{5mm} \text{and} \hspace{5mm} |\xi| \leq N,$$
leading, by Lemma \ref{calculF} and $\tau_+v^{\underline{L}}+\tau_+ |v^A| \lesssim v^0 \sum_{w \in \V} |w|$ (see Lemma \ref{weights1}), to the announced terms involving $Y$.
\item Those coming from $AW$, 
$$ \mathcal{L}_{Z^{\gamma}}(F) \left( v , \nabla_v G_j \right), \hspace{5mm} \text{with} \hspace{5mm} |\beta_{1,j}| < |\beta_{1,i}|  \hspace{5mm} \text{and} \hspace{5mm} |\gamma|+|\beta_{1,j}| \leq |\beta_{1,i}|.$$
Then, expand $\mathcal{L}_{Z^{\gamma}}(F) \left( v , \nabla_v G_j \right)$ in null components using formula \eqref{eq:calculF}. We now rewrite the angular components of $\nabla_v G_j$ using $v^0 \partial_{v^k}= \widehat{\Omega}_{0k}-x^k \partial_t-t \partial_k$, so that
$$ v^0\partial_{v^k} G_j = G_{j_{\widehat{\Omega}_{0k}}}-x^k G_{j_{\partial_t}}-t G_{j_{\partial_k}}-Y^0_{j_{\widehat{\Omega}_{0k}}} +x^k Y^0_{j_{\partial_t}}+t Y^0_{j_{\partial_k}}.$$
For the radial component, use \eqref{radialcompo} to obtain
$$v^0\left( \nabla_v G_j \right)^r = \frac{x^q}{r} \left( G_{j_{\widehat{\Omega}_{0q}}}-Y^0_{j_{\widehat{\Omega}_{0q}}} \right)-G_{j_S}+Y^0_{j_S}+(t-r) \left( G_{j_{\partial_t}}-Y^0_{j_{\partial_t}}-\frac{x^q}{r} G_{j_{\partial_q}}+\frac{x^q}{r} Y^0_{j_{\partial_q}} \right).$$
\end{itemize}
This concludes the construction of $\overline{A}$, $\overline{B}$. To obtain an equation for $Y^0$, we will see that we need to consider a bigger vector than $Y^0$. Let $i \in \llbracket 1 , p^0 \rrbracket$. If $Y^0_i=W_q$, with $q \in I_2$, we can build the line $i$ of $\overline{D}$ using Lemmas \ref{L2bilan} and \ref{calculF}. Otherwise, $Y^0_i = \widehat{Z} H_j-H_{j_{\widehat{Z}}}$ and by Lemma \ref{comL2hom} we see that functions such as $\partial_v \widehat{Z} H_r$, with $|\beta_{1,r}| < |\beta_{1,j}|$, appear in certain source terms of $T_F(Y^0_i)$. We then consider the vector valued field $Y$ containing $Y^0$ and all the quantities $\widehat{Z}^{\kappa} H_j$ such as $\beta_{1,j} \in I^{N-5+k}_1$ and $|\kappa|+k \leq 5$. It remains to use \eqref{decayH} and Lemmas \ref{comL2hom}, \ref{calculF}.
\end{proof}
Consider now $K$ satisfying $T^{\chi}_{ F}(K)+\chi \overline{A}K+\chi K \overline{D}= \chi\overline{B}$ and $K(0,.,.)=0$. Hence, $KY=G$ since they both initially vanish and $T_F(KY)+\overline{A}KY=\overline{B}Y$ in view of the velocity support of $Y$. The goal now is to control the energy
$$ \E_G := \sum_{i=0}^{|I_1|} \sum_{j=0}^p \sum_{q=0}^p  \E \left[ \left| K_i^j \right|^2 Y_q \right].$$
We will then be naturally led to use that
\begin{equation}\label{sourceinho}
T_F\left( |K^j_i|^2 Y_q\right) = |K^j_i |^2 \overline{D}^r_q Y_r-2\left(\overline{A}^r_i K^j_r +K^r_i \overline{D}^j_r \right) K^j_i Y_q+2 \overline{B}^j_iK^j_iY_q.
\end{equation}
\begin{Pro}\label{inhoL1}
If $\epsilon$ is small enough, we have $\E_G(t) \lesssim \epsilon \log^2 (3+t)$ for all $t \in [0,T[$.
\end{Pro}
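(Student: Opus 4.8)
The plan is to apply the energy estimate of Proposition \ref{energyf} to each scalar function $|K^j_i|^2 Y_q$, using the evolution equation \eqref{sourceinho}. The initial data vanish since $K(0,.,.)=0$, so $\E_G(0)=0$, and it remains to bound the spacetime integral
$$\int_0^t \int_{\Sigma_s} \int_v \left| T_F\left( |K^j_i|^2 Y_q\right) \right| \frac{dv}{v^0} dx ds$$
by a constant times $\epsilon^{1/2} \E_G(s)$-type terms plus an acceptable inhomogeneity. The three families of terms in \eqref{sourceinho} are handled as follows. First, the term $|K^j_i|^2 \overline{D}^r_q Y_r$: by Lemma \ref{bilaninho}, $\overline{D}$ only involves $\mathcal{L}_{Z^{\gamma}}(F)$ with $|\gamma| \leq N-5 \leq N-2$, so the pointwise decay estimates \eqref{decayF1}--\eqref{decayF2} apply exactly as in Lemma \ref{prepalem}, giving a coefficient bounded by $\sqrt{\epsilon}\tau_+^{-5/4} + \sqrt{\epsilon}\tau_-^{-5/4}v^{\underline{L}}/v^0$; this is absorbed as in the proof of Proposition \ref{Prop1} by splitting the spacetime slab into the $(\Sigma_s)$ and $(C_u(t))$ foliations and using $\int ds/(1+s)^{5/4} < \infty$, $\int du/\tau_-^{5/4} < \infty$. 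Second, the cross terms $-2(\overline{A}^r_i K^j_r + K^r_i \overline{D}^j_r)K^j_i Y_q$: here $\overline{A}$ again involves only low-order $\mathcal{L}_{Z^{\gamma}}(F)$ with $|\gamma| \leq 5$, so its coefficient enjoys the same $\sqrt{\epsilon}$ integrable bound; one then uses $2|K^j_r K^j_i| \leq |K^j_r|^2 + |K^j_i|^2$ and, for the $|K^r_i K^j_i|$ piece, Cauchy--Schwarz in the same way, to produce terms of the form (coefficient)$\times |K|^2 Y_q$, i.e.\ $\E_G$ itself.

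The main work is the third, genuinely inhomogeneous term $2\overline{B}^j_i K^j_i Y_q$. Since $\overline{B}$ carries top-order derivatives of $F$ (up to $|\xi| \leq N$), one cannot use pointwise decay on $\mathcal{L}_{Z^{\xi}}(F)$; instead one splits into the null components as in Lemma \ref{bilaninho}, obtaining terms controlled by $(\tau_-|\rho| + \tau_+|\alpha| + |\sigma| + |\underline{\alpha}|)(\mathcal{L}_{Z^{\xi}}(F))\,|z Y_q|$ times $|K^j_i|$. I apply Cauchy--Schwarz in $(s,x)$: the factor $|K^j_i|$ combined with one factor of $\sqrt{|Y_q|}$ (bounded pointwise via $\int_v |z^2 Y_q| dv \lesssim \epsilon \tau_+^{-2}\tau_-^{-1}$ from Lemma \ref{bilaninho}, and $1 \leq v^0$ on the support) will be estimated in $L^2_x$ against $\E_G$, while the field factor $(\tau_-|\rho| + \tau_+|\alpha| + |\sigma|)$ is estimated in $L^2$ on the truncated cones $C^i_u(t)$ using the energy bootstrap \eqref{bootF1} exactly as in the estimate of $I_F$ in Subsection \ref{secpointF}, and the $|\underline{\alpha}|$ piece is estimated in $L^2(\Sigma_s)$ against $\mathcal{E}^5_N[F]$ via the bootstrap \eqref{bootF2} as in the estimate of $I_{\underline{\alpha}}$. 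Crucially, the pointwise bound on $\int_v |z^2 Y_q| dv$ decays like $\tau_+^{-2}\tau_-^{-1}$, so splitting $|z Y_q| = |z Y_q|^{1/2}\cdot|z Y_q|^{1/2}$ and bounding one half pointwise leaves $\int_v |z Y_q| dv \lesssim \sqrt{\epsilon}\,\tau_+^{-1}\tau_-^{-1/2}\, (\int_v |z Y_q| dv)^{1/2}$-type factors that combine with the $C^i_u$ integrals to give, after summation over the dyadic index $i$, a bound $\epsilon^{3/2}\log^2(3+t)$ — the source of the $\log^2$ in the statement.

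Putting the three contributions together yields, with some constant $C>0$,
$$\E_G(t) \leq C\sqrt{\epsilon}\int_0^t \frac{\E_G(s)}{(1+s)^{5/4}}\,ds + C\sqrt{\epsilon}\int_{-\infty}^{t}\frac{\E_G(t)}{\tau_-^{5/4}}\,du + C\epsilon^{3/2}\log^2(3+t),$$
and since $\int_{-\infty}^{+\infty} \tau_-^{-5/4}\,du$ is a finite constant, choosing $\epsilon$ small enough that $C\sqrt{\epsilon}\int_{-\infty}^{+\infty}\tau_-^{-5/4}du \leq \tfrac12$ absorbs the second term into the left-hand side; a Grönwall argument on the first term (whose kernel is integrable) then closes the estimate and gives $\E_G(t) \lesssim \epsilon\log^2(3+t)$. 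The main obstacle is the bookkeeping in the inhomogeneous term: one must verify that every field factor appearing in $\overline{B}$ is either of sufficiently low order to use pointwise decay, or of null-component type $(\tau_-\rho,\tau_+\alpha,\sigma,\underline{\alpha})$ matched against the available $L^2$ energies on cones and slices, and that the weight $\tau_+^{-2}\tau_-^{-1}$ in the pointwise bound on $Y$ is exactly strong enough (after the $L^2$ split) to beat the logarithmic growth of $\mathcal{E}_N[F]$ and the $r^2\, d\underline{u}$ volume factor on the truncated cones.
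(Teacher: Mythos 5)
Your plan reproduces the paper's decomposition faithfully: the splitting of $T_F(|K^j_i|^2 Y_q)$ into the $\overline{A},\overline{D}$ pieces (low-order field, pointwise decay, $\Sigma_s$/$C_u(t)$ foliation) and the $\overline{B}$ piece (Cauchy--Schwarz in $v$, further split into $\Phi_F$ handled on truncated cones against $\mathcal{E}_N[F]$, and $\Phi_{\underline{\alpha}}$ handled on slices against $\mathcal{E}^5_N[F]$) is exactly what the paper does, as is the use of the pointwise bound $\int_v |z^2Y|\,dv \lesssim \epsilon\,\tau_+^{-2}\tau_-^{-1}$.

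The gap is in the closing argument. You claim the three contributions assemble into
$$\E_G(t) \leq C\sqrt{\epsilon}\int_0^t \frac{\E_G(s)}{(1+s)^{5/4}}\,ds + C\sqrt{\epsilon}\,\E_G(t)\int_{-\infty}^{+\infty}\frac{du}{\tau_-^{5/4}} + C\epsilon^{3/2}\log^2(3+t),$$
and that the first term closes by linear Gr\"onwall with integrable kernel and the second by absorption. But this omits the contribution of $\Phi_{\underline{\alpha}}$, which after Cauchy--Schwarz in $x$ yields a term of the form
$$\epsilon \int_0^t \frac{\sqrt{\E_G(s)}}{1+s}\,ds,$$
since $\left\|\bigl(\int_v|z^2Y|\,dv\,\int_v|K|^2|Y|\,dv\bigr)^{1/2}\right\|_{L^2(\Sigma_s)} \leq \left\|\int_v|z^2Y|\,dv\right\|_{L^\infty}^{1/2}\,\sqrt{\E_G(s)}$. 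This term has a non-integrable kernel $1/(1+s)$ and a sublinear dependence $\sqrt{\E_G}$ on the unknown, so it cannot be absorbed into the left-hand side, nor handled by linear Gr\"onwall on the first term. One must either invoke a nonlinear (Bihari/Osgood) comparison for the concave nonlinearity $u\mapsto\sqrt{u}$, or, as the paper does, set up a continuity/bootstrap argument: assume $\E_G(s)\lesssim\epsilon\log^2(3+s)$ on $[0,T_0)$, substitute into this integral to get $\epsilon^{3/2}\log^2(3+t)$, and improve the constant. Either route works, but as written your Gr\"onwall-plus-absorption argument leaves this term unaccounted for. (A smaller issue: the first term of $\Phi_F$ also produces $\epsilon\int_0^t \E_G(s)(1+s)^{-5/4}\,ds$, so it merges with the $\overline{A},\overline{D}$ Gr\"onwall term; that part is fine.)
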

\begin{proof}
Let $T_0 \in [0,T[$ the largest time such that $\E_G(t) \lesssim \epsilon \log^2 (3+t)$ for all $t \in [0,T_0[$. By continuity, $T_0 >0$. The remaining of the proof consists in improving this bootstrap assumption, which would imply the result. The computations will be similar as those of the proof of Proposition \ref{Pro2}. Let $i \in \llbracket 1, |I_1| \rrbracket$ and $(j,q) \in \llbracket 1, p \rrbracket^2$. According to the energy estimate of Proposition \ref{energyf} and \eqref{sourceinho}, it suffices to prove that 
\begin{equation}\label{Aterms}
I_{\overline{A},\overline{D}} \hspace{1.5mm} := \hspace{1.5mm} \int_0^t \int_{\Sigma_s} \int_v \left| |K^j_i |^2 \overline{D}^r_q Y_r-2\left(\overline{A}^r_i K^j_r +K^r_i \overline{D}^j_r \right) K^j_i Y_q \right| \frac{dv}{v^0}dx ds \hspace{1.5mm} \lesssim \hspace{1.5mm} \epsilon^{\frac{3}{2}} \log^2 (3+t) ,
\end{equation}
\begin{equation}\label{Bterms}
I_{\overline{B}} \hspace{1.5mm} := \hspace{1.5mm} \int_0^t \int_{\Sigma_s} \int_v \left| \overline{B}^j_iK^j_iY_q \right| \frac{dv}{v^0}dx ds \hspace{1.5mm} \lesssim \hspace{1.5mm} \epsilon^{\frac{3}{2}} \log^2 (3+t).
\end{equation}
According to Proposition \ref{bilaninho} and \eqref{eq:one}, one has, using $\E_G (t) \lesssim \epsilon \log^2 (3+t)$ and $1 \leq v^0$ on the support of $Y$,
$$ I_{\underline{A}, \overline{D}} \hspace{0.3mm} \lesssim  \hspace{0.3mm} \int_0^t \int_{\Sigma_s} \int_v \left( \frac{\sqrt{\epsilon}}{\tau_+^{\frac{5}{4}}}+\frac{\sqrt{\epsilon} v^{\underline{L}}}{ \tau_-^{\frac{5}{4}} v^0} \right) |K|^2 |Y| dv dx ds  \hspace{0.3mm} \lesssim  \hspace{0.3mm} \sqrt{\epsilon} \int_0^t \frac{\E_G(s)}{(1+s)^{\frac{5}{4}}} ds + \sqrt{\epsilon} \int_{u=-\infty}^t \frac{\E_G(t)}{\tau_-^{\frac{5}{4}}} du \hspace{0.3mm} \lesssim \hspace{0.3mm} \epsilon^{\frac{3}{2}}\log^2 (3+t)  .$$
We now turn on \eqref{Bterms}, where the electromagnetic field is differentiated too many times to be estimated pointwise. According to Proposition \ref{bilaninho}, $1 \leq v^0$ on the support of $Y$ and using the Cauchy-Schwarz inequality in $v$, we can bound $\int_v | \overline{B}^j_iK^j_iY_q | \frac{dv}{v^0}$ by a linear combination of terms of the form
$$\bullet \hspace{1mm} \Phi_F^{\xi} := \left| \int_v  |z^2Y| dv \int_v | K|^2 |Y|  dv \right|^{\frac{1}{2}}\left( \tau_-\left| \rho_{\xi} \right|+\tau_+\left| \alpha_{\xi}  \right|+\left| \sigma_{\xi} \right| \right), \hspace{1cm} \bullet \hspace{1mm} \Phi_{\underline{\alpha}}^{\xi}:= \left| \int_v  |z^2Y| dv \int_v | K|^2 |Y|  dv \right|^{\frac{1}{2}} \left| \underline{\alpha}_{\xi}  \right|, $$
where $|\xi| \leq N$ and $(\alpha_{\xi},\underline{\alpha}_{\xi}, \rho_{\xi}, \sigma_{\xi})$ is the null decomposition of $\mathcal{L}_{Z^{\xi}}(F)$. Now, fix $|\xi| \leq N$. Using the Cauchy-Schwarz inequality in $x$, the bootstrap assumption \eqref{bootF2} and $\E_G(t) \leq \epsilon \log^2(3+t)$, we have
\begin{eqnarray}
\nonumber \int_0^t \int_{\Sigma_s} \Phi^{\xi}_{\underline{\alpha}} dx ds & \lesssim & \int_0^t \| \underline{\alpha}_{\xi} \|_{L^2(\Sigma_s)} \left\|  \left| \int_v  |z^2Y| dv \int_v | K|^2 |Y|  dv \right|^{\frac{1}{2}} \right\|_{L^2(\Sigma_s)} ds \\ \nonumber
& \lesssim & \int_0^t \sqrt{\mathcal{E}^5_N[F](s)} \left\| \int_v  |z^2Y| dv \right\|_{L^{\infty}(\Sigma_s)}^{\frac{1}{2}} \left\|  \int_v | K|^2 |Y|  dv \right\|_{L^1(\Sigma_s)}^{\frac{1}{2}}  ds \\ \nonumber
& \lesssim & \epsilon^{\frac{3}{2}} \int_0^t \frac{\sqrt{\E_G(s)}}{1+s} ds \hspace{2mm} \lesssim \hspace{2mm} \epsilon^{\frac{3}{2}} \log^2(3+t).
\end{eqnarray}
By the inequality $2ab \leq a^2+b^2$ and $\tau_+^2 \tau_- \int_v |z^2 Y| dv \lesssim \epsilon$, one has
$$ \Phi_F^{\xi} \leq \frac{\epsilon}{\tau_+^{\frac{5}{4}}} \int_v | K|^2 |Y|  dv + \frac{\epsilon}{ \tau_+^{\frac{3}{4}} \tau_-} \left( \tau_-\left| \rho_{\xi} \right|+\tau_+\left| \alpha_{\xi}  \right|+\left| \sigma_{\xi} \right| \right)^2,$$ 
so that, by Lemma \ref{foliationexpli} and the bootstrap assumption \eqref{bootF1}
\begin{eqnarray}
\nonumber \int_0^t \int_{\Sigma_s} \Phi^{\xi}_F dx ds & \lesssim & \epsilon \int_0^t \frac{\E_G(s)}{(1+s)^{\frac{5}{4}}} ds+\sum_{i =0}^{+\infty} \int_{u=-\infty}^t \frac{\epsilon}{\tau_-^{\frac{5}{4}} } \int_{C_u^i(t)} \frac{1}{\sqrt{\tau_+}} \left( \tau_-\left| \rho_{\xi} \right|+\tau_+\left| \alpha_{\xi}  \right|+\left| \sigma_{\xi} \right| \right)^2 dC_u^i(t) du \\ \nonumber
& \lesssim & \epsilon^{\frac{3}{2}}+ \epsilon \sum_{i =0}^{+\infty} \int_{-\infty}^t \frac{\mathcal{E}_N[F](T_{i+1}(t))}{ \tau_-^{\frac{5}{4}} \sqrt{1+t_i}}  du \hspace{2mm} \lesssim \hspace{2mm} \epsilon^{\frac{3}{2}}+ \epsilon^2 \sum_{i =0}^{+\infty} \frac{\log^4(1+2^{i+1})}{\sqrt{1+2^i}} \int_{-\infty}^{+\infty} \frac{du}{ \tau_-^{\frac{5}{4}} }  \hspace{2mm} \lesssim \hspace{2mm} \epsilon^{\frac{3}{2}}.
\end{eqnarray}
This concludes the improvement of the bootstrap assumption on $\E_G$ and then the proof.
\end{proof}
\subsection{The $L^2$ estimates}

In order to improve the bound on the electromagnetic field energy, we will use the following estimates.
\begin{Pro}\label{L2esti}
Let $z \in \V$ and $|\beta| \leq N$. Then,
$$\forall \hspace{0.5mm} t \in [0,T[, \hspace{15mm} \left\| \tau_+ \sqrt{\tau_-} \int_v \left| z \widehat{Z}^{\beta} f \right| dv \right\|_{L^2(\Sigma_t)} \lesssim \epsilon \log (3+t).$$
The logarithmical growth can be removed for $|\beta| \leq N-4$.
\end{Pro}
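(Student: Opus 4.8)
The plan is to split on the order of differentiation at $|\beta| = N-4$, which is the threshold below which the Klainerman--Sobolev inequality of Proposition \ref{KS2} still leaves enough room. For $|\beta| \leq N-4$, I would simply apply Proposition \ref{KS2} to $\widehat{Z}^{\beta} f$ with $j=1$, so that $\| \tau_+ \sqrt{\tau_-} \int_v | z \widehat{Z}^{\beta} f | dv \|_{L^2(\Sigma_t)}$ is bounded by a sum of $L^1_{x,v}$ norms of $w \widehat{Z}^{\kappa} \widehat{Z}^{\beta} f$ with $|\kappa| \leq 2$, hence of order at most $N-2$. Since $v^0/v^0 = 1 \in \V$ and $|w| \leq 1 + w^2$, one has $\E[w \widehat{Z}^{\kappa} \widehat{Z}^{\beta} f] \lesssim \sum_{z' \in \V} \E[ z'^2 \widehat{Z}^{\kappa} \widehat{Z}^{\beta} f]$, so the right-hand side is $\lesssim \E^2_{N-2}[f](t) \lesssim \epsilon$ by the bootstrap assumption \eqref{bootf1}, which yields the estimate without the logarithm.

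For $N-3 \leq |\beta| \leq N$, the function $\widehat{Z}^{\beta} f$ is a component $R_i$ of $R$ with $\beta_{1,i} \in I^{N-k}_1$, $0 \leq k \leq 3$, and I would use the splitting $R_i = H_i + G_i$ of Subsections \ref{subsecH} and \ref{subsecG}. For the homogeneous part I apply Proposition \ref{KS2} to $H_i$ with $j=1$ (licit since $H$ vanishes for $|v| \leq 1$); this bounds $\| \tau_+ \sqrt{\tau_-} \int_v | z H_i | dv \|_{L^2(\Sigma_t)}$ by $L^1_{x,v}$ norms of $w \widehat{Z}^{\kappa} H_i$ with $|\kappa| \leq 2 \leq 3+k$, which, converting $z$-weights into $z^2$-weights exactly as above, are controlled by $\E_H(t) \lesssim \epsilon$ from Subsection \ref{subsecH}, so this contribution carries no logarithm. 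For the inhomogeneous part I use $G_i = K_i^j Y_j$: the Cauchy--Schwarz inequality in $v$ gives $\int_v |z G_i| dv \lesssim \sum_j ( \int_v |K_i^j|^2 |Y_j| dv )^{1/2} ( \int_v z^2 |Y_j| dv )^{1/2}$, the second factor is $\lesssim \sqrt{\epsilon}\, \tau_+^{-1} \tau_-^{-1/2}$ by the pointwise decay $\sum_{z \in \V} \int_v |z^2 Y| dv \lesssim \epsilon\, \tau_+^{-2}\tau_-^{-1}$ of Lemma \ref{bilaninho}, and squaring, integrating over $\Sigma_t$ and invoking $\E_G(t) \lesssim \epsilon \log^2(3+t)$ from Proposition \ref{inhoL1} produces $\| \tau_+ \sqrt{\tau_-} \int_v | z G_i | dv \|_{L^2(\Sigma_t)} \lesssim \epsilon \log(3+t)$. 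Adding the $H_i$ and $G_i$ contributions gives the claimed bound, and only the inhomogeneous part contributes the logarithm, which explains why it disappears for $|\beta| \leq N-4$.

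All the substantial input — the construction of $H$, $G$, $K$, $Y$ and the energy bounds $\E_H(t) \lesssim \epsilon$ and $\E_G(t) \lesssim \epsilon \log^2(3+t)$ — is already available from Sections \ref{sec7} and \ref{sec8}, so there is no serious analytic obstacle here. The one point that demands care is the bookkeeping: checking that the derivatives and weights produced when Proposition \ref{KS2} is applied to $H_i$, and those entering the quadratic quantities $|K_i^j|^2 Y_j$, are precisely the ones controlled by $\E_H$ and $\E_G$ respectively, and that passing from $z$-weights to the $z^2$-weights used in those energies (via $|z| \leq 1 + z^2$ and $1 = v^0/v^0 \in \V$) stays within the admissible number of derivatives.
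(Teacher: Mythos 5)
Your proposal is correct and follows essentially the same route as the paper: the cases $|\beta| \leq N-4$ follow from the Klainerman--Sobolev inequality of Proposition \ref{KS2} and the bootstrap bound on $\E^2_{N-2}[f]$ (the paper cites \eqref{decayf2}, which encapsulates exactly this), and for $N-3 \leq |\beta| \leq N$ one splits $\widehat{Z}^{\beta} f = H_j + G_j$, treating $H_j$ via Proposition \ref{KS2} and $\E_H \lesssim \epsilon$, and $G_j = K_j^q Y_q$ via Cauchy--Schwarz in $v$, the pointwise decay of $\int_v |z^2 Y|\,dv$ from Lemma \ref{bilaninho}, and $\E_G(t) \lesssim \epsilon \log^2(3+t)$ from Proposition \ref{inhoL1}. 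Your write-up is in fact slightly more explicit than the paper's about the $\tau_+\sqrt{\tau_-}$ weight accounting in the $G_j$ step and about passing from the $z$-weight to the $z^2$-weights controlled by the energy norms, but the substance is identical.
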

\begin{proof}
The cases $|\beta| \leq N-4$ ensue from \eqref{decayf2}. Suppose now that $|\beta| \geq N-3$, so that there exists $j \in \llbracket 1 , |I_1| \rrbracket$ such that $ \widehat{Z}^{\beta} f = H_j+G_j$. It then suffices to prove that both $H_j$ and $G_j$ satisfy such $L^2$-estimates. For $H_j$, one only has to use $\E_H \leq 3\epsilon$ on $[0,T[$ and the Klainerman-Sobolev inequality of Proposition \ref{KS2}. For $G_j$, recall that $G_j = K_j^q Y_q$ and use $\int_v |z^2 Y| dv \lesssim \epsilon \tau_+^{-2}$, which comes from Proposition \ref{bilaninho}, and the Cauchy-Schwarz inequality in $v$ in order to obtain
$$\left\| \int_v \left|z G_j \right| dv \right\|_{L^2(\Sigma_t)} \hspace{-0.5mm} = \left\| \int_v |z| \left| K_j^q Y_q \right| dv \right\|_{L^2(\Sigma_t)} \lesssim \left\| \int_v  \left| zY \right| dv \right\|^{\frac{1}{2}}_{L^{\infty}(\Sigma_t)}\left\| \int_v  \left| K_j^q \right|^2 \left| Y_q \right| dv \right\|^{\frac{1}{2}}_{L^1(\Sigma_t)} \hspace{-0.5mm} \lesssim \frac{\sqrt{\epsilon}}{1+t} \sqrt{\E_G(t)}.$$
It then remains to use Proposition \ref{inhoL1}, which gives $\E_G(t) \lesssim \epsilon \log^2(3+t)$.
\end{proof}
Combining this Proposition with the inequality $r |v^A| \lesssim v^0 \sum_{w \in \V} |w|$ (see Lemma \ref{weights1}), one can then improve the bootstrap assumption \eqref{bootL2} if $\epsilon$ is small enough.
\section{The energy bounds of the electromagnetic field}\label{sec9}
The last part of the proof consists in improving the bootstrap assumptions \eqref{bootF1} and \eqref{bootF2}. According to the energy estimate of Proposition \ref{energyMax1}, commutation formula of Proposition \ref{Maxcom} and $\mathcal{E}_N[F](0) \leq \epsilon$, $\mathcal{E}_N[F](t) \leq 3 \epsilon \log^{4}(3+t) $ and $\mathcal{E}^{5}[F](t) \leq  \underline{C} \epsilon$ for all $t \in [0,T[$ follow, if $\epsilon$ is small enough and $\underline{C}$ choosen large enough, from
\begin{eqnarray}
\nonumber \sum_{|\gamma| \leq N} \sum_{|\beta| \leq N} \int_0^t \int_{\Sigma_s} \left| \overline{K}_0^{\mu} \mathcal{L}_{Z^{\gamma}}(F)_{\mu \nu} \int_v \frac{v^{\nu}}{v^0} \widehat{Z}^{\beta} f dv \right| dx ds & \lesssim & \epsilon^{\frac{3}{2}} \log^4 (3+t), \\ 
\nonumber \sum_{|\gamma| \leq N} \sum_{|\beta| \leq N} \int_0^t \int_{\Sigma_s} \frac{\tau_-^2}{\log^5(1+\tau_-)} \left|  \mathcal{L}_{Z^{\gamma}}(F)_{0 \nu} \int_v \frac{v^{\nu}}{v^0} \widehat{Z}^{\beta} f dv \right| dx ds & \lesssim & \epsilon^{\frac{3}{2}}.
\end{eqnarray}
Fix $|\beta| \leq N$, $|\gamma| \leq N$, denote by $(\alpha, \underline{\alpha}, \rho, \sigma)$ the null decomposition of $\mathcal{L}_{Z^{\gamma}}(F)$ and recall that $\overline{K}_0^L=\frac{1}{2} \tau_+^2$ and $\overline{K}_0^{\underline{L}}=\frac{1}{2} \tau_-^2$. Expanding $\overline{K}_0^{\mu} \mathcal{L}_{Z^{\gamma}}(F)_{\mu \nu} J( \widehat{Z}^{\beta} f )^{\nu}$ and $ \mathcal{L}_{Z^{\gamma}}(F)_{0 \nu} J( \widehat{Z}^{\beta} f )^{\nu}$ in null coordinates, we can observe that it suffices to prove that
\begin{eqnarray}
\nonumber  I & := & \int_0^t \int_{\Sigma_s} \int_v \left(\tau_+^2 |\rho| \frac{v^{\underline{L}}}{v^0}+\tau_+^2 |\alpha| \frac{|v^A|}{v^0} +\tau_-^2 |\rho|\frac{v^L}{v^0}+\tau_-^2|\underline{\alpha}|\frac{|v^{A}|}{v^0} \right) \left| \widehat{Z}^{\beta} f \right| dv dx ds \hspace{2mm} \lesssim \hspace{2mm} \epsilon^{\frac{3}{2}} \log^4 (3+t) , \\
\nonumber  I_0 & := & \int_0^t \int_{\Sigma_s} \int_v \frac{\tau_-^2}{\log^5(1+\tau_-)} \left( |\rho| + |\alpha|+ |\underline{\alpha}|\frac{|v^{A}|}{v^0} \right) \left| \widehat{Z}^{\beta} f \right| dv dx ds \hspace{2mm} \lesssim \hspace{2mm} \epsilon^{\frac{3}{2}} .
\end{eqnarray}
Using the Cauchy-Schwarz inequality in $x$, $ \tau_+v^{\underline{L}}+\tau_+|v^A|+\tau_- v^L \lesssim v^0 \sum_{w \in \V} |w| $ (see Lemmas \ref{weights1}), the bootstrap assumption \eqref{bootF1} and Proposition \ref{L2esti}, we have
\begin{eqnarray}
\nonumber I & \lesssim & \sum_{w \in \V} \int_{0}^t  \left\| \tau_+ |\rho| +\tau_+ |\alpha|+\tau_- \underline{\alpha} \right\|_{L^2(\Sigma_s)}  \left\| \int_v \left|w \widehat{Z}^{\beta} f \right| dv \right\|_{L^2(\Sigma_s)}ds \\ \nonumber
& \lesssim & \epsilon \int_{0}^t  \sqrt{\mathcal{E}_N[F](s)} \frac{ \log(3+s)}{1+s} ds \hspace{2mm} \lesssim \hspace{2mm} \epsilon^{\frac{3}{2}} \int_{0}^t \frac{\log^3(3+s)}{1+s}ds \hspace{2mm} \lesssim \hspace{2mm} \epsilon^{\frac{3}{2}} \log^4(3+t).
\end{eqnarray}
Similarly, using $ \tau_+|v^A|+\tau_- v^0 \lesssim v^0 \sum_{w \in \V} |w| $, we obtain
\begin{eqnarray}
\nonumber I_0 & \lesssim & \sum_{w \in \V} \int_{0}^t  \left\| \tau_+ |\rho| +\tau_+ |\alpha|+\tau_- \underline{\alpha} \right\|_{L^2(\Sigma_s)}  \left\| \frac{\tau_-}{\tau_+ \log^5(1+\tau_-)}\int_v \left|w \widehat{Z}^{\beta} f \right| dv \right\|_{L^2(\Sigma_s)}ds \\ \nonumber
& \lesssim &  \int_{0}^t  \sqrt{\mathcal{E}_N[F](s)} \left\| \frac{\tau_+ \sqrt{\tau_-}}{\tau_+^{\frac{3}{2}}} \int_v \left|w \widehat{Z}^{\beta} f \right| dv \right\|_{L^2(\Sigma_s)} ds \hspace{2mm} \lesssim \hspace{2mm} \epsilon^{\frac{3}{2}} \int_{0}^t \frac{\log(3+s)}{(1+s)^{\frac{3}{2}}} ds \hspace{2mm} \lesssim \hspace{2mm} \epsilon^{\frac{3}{2}}.
\end{eqnarray}
These two estimates allow us to improve the bootstrap assumptions \eqref{bootF1} and \eqref{bootF2} if $\epsilon$ is small enough, which concludes the proof.

\appendix

\section{The Vlasov field vanishes for small velocities}\label{appendixA}

Let $F$ be a smooth $2$-form defined on $[0,T[ \times \R^3$ which satisfies
\begin{equation}\label{decayF}
\forall \hspace{0.5mm} (t,x) \in [0,T[ \times \R^3, \hspace{1cm} |F|(t,x) \hspace{1mm} \lesssim \hspace{1mm} \frac{\sqrt{\epsilon}}{\tau_+ \tau_-} \hspace{1cm} \text{and} \hspace{1cm} |\rho(F)|(t,x) \hspace{1mm} \lesssim \hspace{1mm} \frac{\sqrt{\epsilon}}{\tau_+^{\frac{3}{2}}}.
\end{equation}
The aim of this section is to prove the following result.
\begin{Pro}\label{velocity}
Let $f$ be a classical solution to $T^{\chi}_F(f)=0$ such that $f(0,.,v)=0$ for all $|v| \leq 3$. Then if $\epsilon$ is small enough, we have
$$\forall \hspace{0.5mm} (t,x,v) \in [0,T[ \times \R^3 \times (\R^3 \setminus \{ 0 \} ), \hspace{16mm}  |v| \leq 1 \hspace{2mm} \Rightarrow \hspace{2mm} f(t,x,v)=0.$$
\end{Pro}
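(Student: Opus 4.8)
The plan is to follow the characteristics of the operator $T_F^{\chi}$ backward in time and to exploit the cutoff $\chi$ together with the strong decay of $\rho(F)$. Let $(X,V)$ denote a characteristic of $T_F^{\chi}$, i.e. the solution to
\begin{align*}
\frac{dX^i}{ds} &= \frac{V^i(s)}{|V|(s)}, & \frac{dV^j}{ds} &= \chi(|V|(s))\, \frac{V^{\mu}(s)}{|V|(s)}\, {F_{\mu}}^{j}(s,X(s)),
\end{align*}
with $(X,V)(t) = (x,v)$. Since $f$ is constant along such curves and $f(0,\cdot,w)=0$ for all $|w|\le 3$, it suffices to prove that if $|v|\le 1$ then the velocity part $V$ of the characteristic through $(t,x,v)$ satisfies $|V(0)|\le 3$; in fact I will show it stays in, say, $[1/2,2]$ all the way down to $s=0$, so that $f(t,x,v)=f(0,X(0),V(0))=0$.

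\textbf{Key steps.} First I would record the a priori bound on the growth of $|V|$: from the ODE for $V^j$ and $|{V^{\mu}}/{|V|}|\le \sqrt 2$ one gets $\big|\frac{d}{ds}|V|\big| \lesssim \chi(|V|)\,|F|(s,X(s))$, and along a characteristic one has $|X(s)| \le |x| + t$, hence $s - |X(s)|$ can be large negative but $s+|X(s)|\ge s$. The naive estimate using only $|F|\lesssim \sqrt\epsilon \tau_+^{-1}\tau_-^{-1}$ is not integrable in $s$ near the light cone. The improvement comes from decomposing ${V^{\mu}}{F_{\mu}}^{j}$ into null components: one isolates the term carrying $\rho(F)$, which by \eqref{decayF} decays like $\sqrt\epsilon\,\tau_+^{-3/2}$ and is therefore integrable in $s$ uniformly, while the remaining contributions come paired with a null component of $V$ that is small along the characteristic — schematically, one shows that as long as $|V|$ stays near $1$, the "bad" directions of $V$ entering the non-$\rho$ terms contribute a factor that, combined with $|F|\lesssim\sqrt\epsilon\tau_+^{-1}\tau_-^{-1}$, yields an $L^1_s$ bound. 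Concretely I would run a continuity/bootstrap argument on the interval of times $s\in[s_0,t]$ where $|V(s)|\in[1/2,2]$: on that interval $\chi(|V|)$ is harmless, the integrand is controlled by $\sqrt\epsilon$ times an integrable function of $s$, so $\big||V(t)|-|V(s_0)|\big|\lesssim\sqrt\epsilon$; choosing $\epsilon$ small forces $|V(s_0)|$ to remain strictly inside $(1/2,2)$, so $s_0=0$, and then $|V(0)|\in[1/2,2]\subset[0,3]$.

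\textbf{Main obstacle.} The delicate point is the near-light-cone behaviour: the bound $|F|\lesssim\sqrt\epsilon\tau_+^{-1}\tau_-^{-1}$ alone gives $\int \tau_-^{-1}\,ds$, which diverges logarithmically along a characteristic that hugs the cone. One must genuinely use the better decay of $\rho(F)$ and the null structure of $V^\mu {F_\mu}^j$ — in particular that the component of $\frac{dV}{ds}$ responsible for changing $|V|$ (the radial one, $\frac{x^j}{|x|}\frac{dV^j}{ds}$) is, up to the $\rho$ term which is already integrable, multiplied by the small null component $v^{\underline L}/v^0$ or $v^L/v^0$ of the velocity, and these are themselves controlled along the flow. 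Making this pairing precise, i.e.\ showing the non-$\rho$ contributions to $\frac{d}{ds}|V|$ are $\lesssim \sqrt\epsilon\,\tau_+^{-1}$ (integrable) rather than merely $\lesssim\sqrt\epsilon\,\tau_+^{-1}\tau_-^{-1}$, is the crux; once that is done the bootstrap closes immediately. I would also remark at the end that the same argument, with $3$ and $1$ replaced by $R$ and any fixed fraction of $R$, gives the variant stated in the remark following Theorem \ref{theorem}.
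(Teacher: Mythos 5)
The overall architecture of your proposal is aligned with the paper's: follow the characteristics, use $\tfrac{d}{ds}|V|^2=2V^iF_{0i}$, split off the radial electric field $\rho(F)$ (which you correctly note decays like $\tau_+^{-3/2}$, integrable), and handle the remainder via the near-alignment of $V$ with $X$ along the flow. But there is a genuine gap exactly where you flag ``the crux'': you assert that the non-$\rho$ contributions are ``$\lesssim \sqrt\epsilon\,\tau_+^{-1}$'' because the null components $V^{\underline L}/|V|$ or $V^A/|V|$ are ``themselves controlled along the flow,'' and you provide no mechanism for that control. This is not a minor detail to be filled in: it is essentially all of the difficulty. Worse, the claimed pointwise bound is false in general. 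At a time $s$ where the characteristic is near the cone ($|s-|X(s)||$ small) but $V$ happens to be badly misaligned with $X$, the factor pairing the $\alpha,\underline\alpha$ components is $O(1)$ and $|F|\sim\sqrt\epsilon/s$, so the integrand is only $O(\sqrt\epsilon/s)$, which is not integrable. There is no pointwise $\tau_+^{-1}$ estimate; what is true — and what the paper actually proves — is an $L^1_s$ estimate exploiting the fact that the bad set (close to the cone and misaligned) is \emph{sparse in time}.

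Concretely, the paper controls $\bigl|\tfrac{V}{|V|}-\tfrac{X}{s}\bigr|$ (which is comparable to $\sqrt{V^{\underline L}/|V|}$ near the cone) and shows via Lemmas \ref{point1}, \ref{point2} and Corollary \ref{point3} that whenever the characteristic is both close to the cone (in $\overline{\mathcal A}_\delta$) and misaligned (in $\mathcal B_{4K}$), the quantity $s-|X(s)|$ must start growing like $K^2\sqrt s$, which forces the characteristic to exit the near-cone region by a definite dyadic scale. The integral $I_1$ is then bounded by summing over dyadic blocks, each contributing $O(\sqrt\epsilon\,\log(1+2^i)\,2^{-i/2})$. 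None of this dynamical/geometric argument appears in your sketch, and without it the bootstrap does not close. In short: your decomposition is right, your identification of the obstacle is right, but ``these are controlled along the flow'' is precisely the theorem, not a step you can take for granted, and the bound you propose to prove about the non-$\rho$ terms is not the correct formulation (pointwise vs.\ integrated-in-time with a sparsity argument).
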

The proof is based on the study of the characteristics of the system. As $f_0(.,v)=0$ for all $|v| \leq 3$, we consider $(x,v) \in \R^3 \times \R^3$ such that $|v| \geq 3$ and $(X,V)$ the characteristic of the operator $T^{\chi}_F$ such that $(X(0),V(0))=(x,v)$. Our goal is to prove $\inf_{[0,T[} |V| \geq 1$, which would imply Proposition \ref{velocity}. Then, suppose that $|V|$ reaches the value $1$ and define
$$t_1 := \inf \{ s \in [0,T[ \hspace{1mm} / \hspace{1mm}   |V(s)|=1 \}, \hspace{12mm} t_0 := \sup \{ s \in [0,t_1] \hspace{1mm} / \hspace{1mm} |V(s)| = 2 \}.$$
As $V$ is continuous, $t_0$ and $t_1$ are well defined. In view of the support of $\chi$, $(X,V)$ satisfies the following system of ODEs on $[t_0,t_1]$,
\begin{equation}\label{eq:characteristic2}
\forall \hspace{0.5mm} 1 \leq i \leq 3, \hspace{8mm} \frac{dX^i}{ds}(s)=\frac{V^i(s)}{|V(s)|} \hspace{8mm} \text{and} \hspace{8mm} \frac{dV^i}{ds}(s)=F_{0i}(s,X(s))+\frac{V^j(s)}{|V(s)|} F_{ji}(s,X(s)).
\end{equation}
We then deduce, since $F$ is a $2$-form, that $\frac{d\left( |V|^2 \right)}{ds}=2V^i F_{0i}$, which implies
\begin{equation}\label{eq:characteristic}
\forall \hspace{0.5mm} t_0 \leq t \leq t_1, \hspace{1cm} |V(t)|^2 \geq |V(t_0)|^2-2\int_{t_0}^t \left| V^i(s) F_{0i}(s,X(s)) \right| ds.
\end{equation}
Before presenting the strategy of the proof, let us introduce certain subsets of $[t_0,t_1]$ and two constants. Note that if $\epsilon$ is small enough, we can suppose that $22 \leq t_0 < t_1$. We can then introduce two constants $\delta >0$ and $K>0$ independent of $\epsilon$ and satisfying 
\begin{equation}\label{deltaK}
 4\delta \leq 1+\delta \leq  K < \frac{\pi}{4 \sqrt{2}} t_0^{\frac{1}{4}} \hspace{15mm} \text{and} \hspace{15mm}  2^{-\frac{5}{2}} K^2-\delta > 2 \delta . 
  \end{equation}
We also consider, for $Q >0$, the following subsets of $[t_0,t_1]$,
$$ \mathcal{A}_Q := \{ s \in [t_0,t_1] \hspace{1mm} / \hspace{1mm} |s-|X(s)|| \geq Q s^{\frac{1}{4}} \} \hspace{12mm} \text{and} \hspace{12mm} \overline{\mathcal{A}}_Q := [t_0,t_1] \setminus \mathcal{A}_Q.$$ 
Then, using \eqref{eq:characteristic} and $\sup_{[t_0,t_1]} |V| \leq 2$, we have for all $t \in [t_0,t_1]$,
\begin{eqnarray}
\nonumber |V(t)|^2 & \geq & 4-4\int_{\mathcal{A}_{\delta}} \left| \frac{V^i(s)}{|V(s)|} F_{0i}(s,X(s)) \right| ds -4 \int_{\overline{\mathcal{A}}_{\delta}} \left| \frac{V^i(s)}{|V(s)|}-\frac{X^i(s)}{s} \right| \left| F_{0i}(s,X(s)) \right| ds \\ \nonumber
& & -4 \int_{\overline{\mathcal{A}}_{\delta}} \frac{|X(s)|}{s} \left| \frac{X^i(s)}{|X(s)|}  F_{0i}(s,X(s)) \right| ds.
\end{eqnarray}
The result would ensue if we could bound the three integrals on the right hand side of the last inequality by $C \sqrt{\epsilon}$, with $C>0$ a constant independant of $T$ and $(x,v)$. Indeed, we would then obtain, for $\epsilon < (2C)^{-2}$,
$$\forall \hspace{0.5mm} t \in [t_0,t_1], \hspace{10mm} |V(t)| \geq  \sqrt{2},$$
which would contradict $|V(t_1)| = 1$. We can easily bound two of these integrals, using either the strong decay rate of $F$ away from the light cone or the strong decay rate satisfied by the null component $\rho$ and that $ \frac{|X(s)|}{s}$ is bounded near the light cone. More precisely, using \eqref{decayF}, the definition of $\mathcal{A}_{\delta}$ and $t_0 \geq 1$, one has
\begin{eqnarray}
\nonumber \int_{\mathcal{A}_{\delta}} \left| \frac{V^i(s)}{|V(s)|} F_{0i}(s,X(s)) \right| ds & \lesssim & \int_0^{+\infty} \frac{\sqrt{\epsilon} ds}{(1+s)(1+\delta s^{\frac{1}{4}})} \hspace{2mm} \lesssim \hspace{2mm} \sqrt{\epsilon}, \\
\nonumber \int_{\overline{\mathcal{A}}_{\delta}} \frac{|X(s)|}{s} \left| \frac{X^i(s)}{|X(s)|}  F_{0i}(s,X(s)) \right| ds & \lesssim & \int_{\overline{\mathcal{A}}_{\delta}}(1+\delta ) \left|\rho(F)(s,X(s)) \right| ds \hspace{2mm} \lesssim \hspace{2mm} \int_0^{+ \infty} \frac{\sqrt{\epsilon}}{(1+s)^{\frac{3}{2}}}ds \hspace{2mm} \lesssim \hspace{2mm} \sqrt{\epsilon}.
\end{eqnarray}
For the last integral, observe, in view of \eqref{decayF}, that
\begin{equation}\label{eq:velocity1}
I_1:=\int_{\overline{\mathcal{A}}_{\delta}} \left| \frac{V^i(s)}{|V(s)|}-\frac{X^i(s)}{s} \right| \left| F_{0i}(s,X(s)) \right| ds \lesssim \sqrt{\epsilon} \int_{\overline{\mathcal{A}}_{\delta}} \left| \frac{V(s)}{|V(s)|}-\frac{X(s)}{s} \right| \frac{1}{1+|s-|X(s)||} \frac{ds}{1+s}.
\end{equation}
As $|s-|X(s)||$ is small on $\overline{\mathcal{A}}_{\delta}$, the goal is to obtain enough decay from $\frac{V(s)}{|V(s)|}-\frac{X(s)}{s}$. The rough idea behind the following computations is the following. As $|s-X(s)|$ is small, then, by \eqref{eq:characteristic2}, $s \sim |X(s)| \sim \left| \int_0^s \frac{V(\tau)}{|V(\tau)|} d \tau \right| $ and we almost have equality in the triangular inequality 
$$ \left| \int_0^s \frac{V(\tau)}{|V(\tau)|} d \tau \right| \leq \int_0^s \left|  \frac{V(\tau)}{|V(\tau)|}  \right| d \tau = s.$$
$\frac{V}{|V|}$ then almost keeps a constant direction $\vec{u}$, so that $$ \frac{X(s)}{s} \sim \frac{1}{s} \int_0^s \frac{V(\tau)}{|V(\tau)|} d \tau \sim \vec{u} \sim \frac{V}{|V|}.$$
In order to bound \eqref{eq:velocity1}, let us introduce, for $Q > 0$ and $\delta_0>0$, the following subsets of $[t_0,t_1]$,
$$ \mathcal{B}_Q := \left\{ s \in [t_0,t_1] \hspace{1mm} / \hspace{1mm} \left| \frac{V(s)}{|V(s)|}-\frac{X(s)}{s} \right| > \frac{Q}{ s^{\frac{1}{4}} } \right\}, \hspace{12mm} \overline{\mathcal{B}_Q}:= [t_0,t_1] \setminus \mathcal{B}_Q \hspace{12mm} \text{and} \hspace{12mm} \mathcal{C}^{\delta_0}_Q:= \overline{\mathcal{A}}_{\delta_0} \cap \mathcal{B}_Q.$$
In view of the definition of $\mathcal{C}^{\delta}_{4K}$ and \eqref{eq:velocity1}, $I_1 \lesssim \sqrt{\epsilon}$ would ensue if we prove
\begin{equation}\label{boundI}
 I \hspace{1mm} := \hspace{1mm} \int_{\mathcal{C}^{\delta}_{4K}} \left| \frac{V^i(s)}{|V(s)|}-\frac{X^i(s)}{s} \right| \left| F_{0i}(s,X(s)) \right| ds \hspace{1mm} \lesssim \hspace{1mm} \sqrt{\epsilon}.
 \end{equation}
From now, we suppose that $\mathcal{C}^{\delta}_{4K} \neq \varnothing$ as otherwise, $I=0$. We start by the following two results.
\begin{Lem}\label{point1}
Let $s \in \mathcal{C}^{\delta}_{4K}$. Then, $[s,\min(t_1,s+s^{\frac{3}{4}})] \subset \mathcal{B}_{2K}$.
\end{Lem}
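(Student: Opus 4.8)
The plan is to use continuity of the function $s \mapsto \left| \frac{V(s)}{|V(s)|} - \frac{X(s)}{s} \right|$ together with a crude Lipschitz-type control of its derivative on the interval $[s, s+s^{3/4}]$, exploiting that on this interval we remain (essentially) near the light cone and that the velocities stay bounded above by $2$ and below by $1$. First I would fix $s \in \mathcal{C}^{\delta}_{4K}$, so by definition $\left| \frac{V(s)}{|V(s)|} - \frac{X(s)}{s} \right| > \frac{4K}{s^{1/4}}$ and $s \in \overline{\mathcal{A}}_{\delta}$, i.e. $|s - |X(s)|| \leq \delta s^{1/4}$. The aim is to show that for every $\tau \in [s, \min(t_1, s+s^{3/4})]$ one has $\left| \frac{V(\tau)}{|V(\tau)|} - \frac{X(\tau)}{\tau} \right| > \frac{2K}{\tau^{1/4}}$, which is the membership condition for $\mathcal{B}_{2K}$.

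The key step is to estimate how much the quantity $\varphi(\tau) := \frac{V(\tau)}{|V(\tau)|} - \frac{X(\tau)}{\tau}$ can change over a window of length $s^{3/4}$. Differentiating and using \eqref{eq:characteristic2}: the term $\frac{d}{d\tau}\frac{V}{|V|}$ is bounded, using $|V| \geq 1$ on $[t_0,t_1]$ and the pointwise bound $|F| \lesssim \sqrt{\epsilon}/(\tau_+ \tau_-)$ from \eqref{decayF}, by $\lesssim \sqrt{\epsilon}/((1+\tau)(1+|\tau - |X(\tau)||))$; the term $\frac{d}{d\tau}\frac{X}{\tau} = \frac{1}{\tau}\frac{V}{|V|} - \frac{X}{\tau^2}$ is bounded by $\lesssim 1/\tau$ since $|X(\tau)| \lesssim \tau$. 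Integrating from $s$ to $\tau \leq s + s^{3/4}$ gives $|\varphi(\tau) - \varphi(s)| \lesssim s^{3/4} \cdot \frac{1}{s} + (\text{small field contribution}) \lesssim s^{-1/4}$, and by choosing (or having already fixed, via \eqref{deltaK}) the constant $K$ large enough relative to the implicit constant, this variation is at most $K s^{-1/4} \leq K \cdot 2^{1/4}\tau^{-1/4}$... hence $\left| \varphi(\tau) \right| \geq |\varphi(s)| - |\varphi(\tau) - \varphi(s)| > \frac{4K}{s^{1/4}} - \frac{K}{s^{1/4}} = \frac{3K}{s^{1/4}} \geq \frac{3K}{\tau^{1/4}} > \frac{2K}{\tau^{1/4}}$, using $\tau \geq s$. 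Here I must be slightly careful: on the sub-window where the characteristic might leave $\overline{\mathcal{A}}_{\delta}$ the field decay is even better, so the estimate only improves; and the crude bound $|X(\tau)| \le \tau$ is enough because we only need $1/\tau$-decay of the rate, and $s^{3/4}/s = s^{-1/4}$ is exactly the right budget to compare against the $\tau^{-1/4}$ threshold.

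The main obstacle I anticipate is purely bookkeeping: making sure the constants $\delta$, $K$, $t_0$ from \eqref{deltaK} are genuinely sufficient to absorb the implicit constant in the variation estimate $|\varphi(\tau) - \varphi(s)| \le C_0 s^{-1/4}$, so that $4K - C_0 > 2K$, i.e. $K > C_0$. This is why $K$ was introduced as a free constant independent of $\epsilon$ with only lower bounds imposed, and the argument closes by simply demanding $K$ exceed this universal $C_0$. No additional structure of the equations is needed beyond the pointwise decay \eqref{decayF} and the a priori bounds $1 \le |V| \le 2$, $|X(\tau)| \le \tau$ valid throughout $[t_0,t_1]$.
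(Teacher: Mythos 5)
Your proposal is essentially the paper's proof: the paper also estimates the variation of $\tau \mapsto \frac{V(\tau)}{|V(\tau)|} - \frac{X(\tau)}{\tau}$ over the window $[s,\min(t_1,s+s^{3/4})]$ by a Lipschitz bound of order $1/s$ (using the mean value theorem for $V/|V|$, the pointwise field decay \eqref{decayF} with $1 \leq |V| \leq 2$, and the explicit identity $\big|\frac{X(s)}{s}-\frac{X(t)}{t}\big| \leq \frac{2+\delta}{s}|t-s|$ for the flow part), then closes via the reverse triangle inequality and the constraints \eqref{deltaK}. The only cosmetic difference is that the paper keeps the $O(1)$ flow constant ($2+\delta$) and the $O(\sqrt\epsilon)$ force constant separate — so the final step reads $4K - C_0\sqrt\epsilon - 2 - \delta > 2K$, with $2K \geq 2+2\delta$ handling the flow part and $\epsilon$ small absorbing the rest — whereas you lump them into a single $C_0$ and ask $K$ to dominate it, which amounts to the same thing given \eqref{deltaK}.
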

\begin{proof}
Let $t \in [s,\min(t_1,s+s^{\frac{3}{4}})]$. As $s \in \mathcal{B}_{4K}$ and by the triangle inequality, one has
$$\frac{4K}{s^{\frac{1}{4}}} \leq \left| \frac{V^i(s)}{|V(s)|}-\frac{X^i(s)}{s} \right| \leq \left| \frac{V^i(t)}{|V(t)|}-\frac{X^i(t)}{t} \right| +\left| \frac{V^i(s)}{|V(s)|}-\frac{V^i(t)}{|V(t)|} \right|+\left| \frac{X^i(s)}{s}-\frac{X^i(t)}{t} \right|.$$
By the mean value theorem applied to the function $\frac{V}{|V|}$ and using the estimate \eqref{decayF}, we have
$$\left| \frac{V^i(s)}{|V(s)|}-\frac{V^i(t)}{|V(t)|} \right| \hspace{1mm} \leq \hspace{1mm} C_1 \sup_{\tau \in [s,t[} |F(\tau,X(\tau))| |t-s|  \hspace{1mm} \leq \hspace{1mm} \frac{C_0\sqrt{\epsilon}}{1+s}|t-s| \hspace{1mm} \leq \hspace{1mm} \frac{C_0\sqrt{\epsilon}}{s}|t-s|.$$
Using $|X(s)| \leq s+\delta s^{\frac{1}{4}}$ and $|X(t)-X(s)| \leq |t-s|$, which results from \eqref{eq:characteristic2}, we have
$$ \left| \frac{X^i(s)}{s}-\frac{X^i(t)}{t} \right| \hspace{1mm} \leq \hspace{1mm} \frac{(t-s)|X^i(s)|+s|X^i(t)-X^i(s)|}{ts} \hspace{1mm} \leq \hspace{1mm} \frac{2+\delta}{t}|t-s| \hspace{2mm} \leq \hspace{2mm} \frac{ 2+\delta}{s}|t-s|.$$
Thus, as $s \leq t \leq s+s^{\frac{3}{4}}$ and $2K \geq 2+2 \delta$ (see \eqref{deltaK}), it follows, for $\epsilon$ small enough,
$$\frac{4K}{s^{\frac{1}{4}}}-\frac{C_0 \sqrt{\epsilon}+2+\delta}{s}|t-s| \geq \frac{4K-C_0 \sqrt{\epsilon}-2-\delta}{s^{\frac{1}{4}}} > \frac{2K}{t^{\frac{1}{4}}}, \hspace{8mm} \text{so that} \hspace{8mm} t \in \mathcal{B}_{2K}.$$
\end{proof}
\begin{Lem}\label{point2}
Suppose that $s \in \mathcal{C}^{\delta}_{4K}$ and let $t_*(s)$ be equal to $ \inf \{ t \in [s,t_1] \hspace{1mm} / \hspace{1mm} t \notin \mathcal{C}_{2K}^{2 \delta} \}$ if it is well defined and $t_*(s)=t_1$ otherwise. Then,
$$ \forall \hspace{0.5mm} t \in [s,t_*(s)], \hspace{8mm} t-|X(t)| \geq s-|X(s)|+K^2\sqrt{t}-K^2\sqrt{s}. $$
\end{Lem}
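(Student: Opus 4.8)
The idea is to differentiate $t \mapsto t - |X(t)|$ along the characteristic and show that, on the interval $[s,t_*(s)]$, the derivative is bounded below by $\tfrac{d}{dt}\big(K^2\sqrt{t}\big) = \tfrac{K^2}{2\sqrt t}$, then integrate. From \eqref{eq:characteristic2} we have
$$\frac{d}{dt}\big(t-|X(t)|\big) = 1 - \frac{X^i(t)}{|X(t)|}\frac{dX^i}{dt}(t) = 1 - \frac{X^i(t)}{|X(t)|}\frac{V^i(t)}{|V(t)|},$$
so the task is to give a quantitative lower bound for $1 - \frac{X}{|X|}\cdot\frac{V}{|V|}$, i.e. to show that the unit vectors $\frac{X(t)}{|X(t)|}$ and $\frac{V(t)}{|V(t)|}$ make an angle bounded below. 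Writing $\theta(t)$ for that angle, $1 - \frac{X}{|X|}\cdot\frac{V}{|V|} = 1-\cos\theta(t) \geq \tfrac{1}{2}\theta(t)^2 \cdot c$ for $\theta$ not too large (and $\geq$ a fixed constant if $\theta$ is bounded away from $0$ and $\pi$), so it suffices to bound $\theta(t)$ from below. On $[s,t_*(s)] \subset \mathcal{C}^{2\delta}_{2K}$ we know two things: first, $t\in\overline{\mathcal{A}}_{2\delta}$ means $|t-|X(t)||\leq 2\delta t^{1/4}$, so $X(t)$ is close to the light cone and in particular $|X(t)| \geq t - 2\delta t^{1/4} \geq t/2$ for $t\geq t_0\geq 22$; second, $t\in\mathcal{B}_{2K}$ gives $\big|\frac{V(t)}{|V(t)|}-\frac{X(t)}{t}\big| > \frac{2K}{t^{1/4}}$. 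Combining these, $\big|\frac{V(t)}{|V(t)|}-\frac{X(t)}{|X(t)|}\big| \geq \big|\frac{V(t)}{|V(t)|}-\frac{X(t)}{t}\big| - \big|\frac{X(t)}{t}-\frac{X(t)}{|X(t)|}\big| \geq \frac{2K}{t^{1/4}} - \frac{2\delta t^{1/4}}{t} = \frac{2K-2\delta}{t^{1/4}} \geq \frac{K}{t^{1/4}}$ since $K\geq 1+\delta \geq 4\delta$. The chord length between two unit vectors controls the angle: $\theta(t) \geq \big|\frac{V}{|V|}-\frac{X}{|X|}\big| \geq \frac{K}{t^{1/4}}$. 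Here one must also check $\theta(t)$ stays in a range where $1-\cos\theta \gtrsim \theta^2$; this is where the upper bound $K < \frac{\pi}{4\sqrt2}t_0^{1/4}$ from \eqref{deltaK} enters, guaranteeing $\theta(t) \leq \frac{K}{t^{1/4}} \cdot(\text{something}) < \frac{\pi}{2\sqrt2}$ or similar, so that $1-\cos\theta(t) \geq \tfrac{1}{2}\big(\frac{2}{\pi}\theta(t)\big)^2 \cdot 2 = \frac{4}{\pi^2}\theta(t)^2 \geq \frac{4}{\pi^2}\frac{K^2}{\sqrt t}$ — and after absorbing the numerical constants (which is exactly what the $2^{-5/2}$ and the precise inequalities in \eqref{deltaK} are calibrated for), one gets
$$\frac{d}{dt}\big(t-|X(t)|\big) = 1-\cos\theta(t) \geq \frac{K^2}{2\sqrt t} = \frac{d}{dt}\big(K^2\sqrt t\big).$$

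Once this differential inequality is established on $[s,t_*(s)]$, the conclusion follows by integrating from $s$ to $t$: for any $t\in[s,t_*(s)]$,
$$t-|X(t)| - \big(s-|X(s)|\big) = \int_s^t \frac{d}{d\tau}\big(\tau-|X(\tau)|\big)\,d\tau \geq \int_s^t \frac{K^2}{2\sqrt\tau}\,d\tau = K^2\sqrt t - K^2\sqrt s,$$
which is exactly the claimed inequality. One small point of care: $t_*(s)$ is defined as an infimum, so $[s,t_*(s)]$ is the closed interval and the bound $[s,t_*(s)]\subset \mathcal{C}^{2\delta}_{2K}$ (needed above) holds on the half-open interval $[s,t_*(s))$ by definition; continuity of $X,V$ extends the pointwise angle estimate to the closed endpoint, and the integral identity is insensitive to the single endpoint anyway, so integrating over $[s,t]$ for $t\leq t_*(s)$ is legitimate. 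Also one should note at the start that $s$ itself lies in $\mathcal{C}^{2\delta}_{2K}$: indeed $s\in\mathcal{C}^\delta_{4K} = \overline{\mathcal{A}}_\delta\cap\mathcal{B}_{4K} \subset \overline{\mathcal{A}}_{2\delta}\cap\mathcal{B}_{2K} = \mathcal{C}^{2\delta}_{2K}$, so the interval $[s,t_*(s)]$ is nonempty and the argument gets off the ground.

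The main obstacle is the geometric/trigonometric step: converting the lower bound on the chord $\big|\frac{V}{|V|}-\frac{X}{|X|}\big|\geq \frac{K}{t^{1/4}}$ into a lower bound on $1-\frac{V}{|V|}\cdot\frac{X}{|X|}$ of the form $\geq \frac{K^2}{2\sqrt t}$ with the correct constant. For unit vectors $u,w$ one has the exact identity $1-u\cdot w = \tfrac{1}{2}|u-w|^2$, so in fact $1-\frac{V}{|V|}\cdot\frac{X}{|X|} = \tfrac12\big|\frac{V}{|V|}-\frac{X}{|X|}\big|^2 \geq \tfrac12\frac{K^2}{\sqrt t}$ directly — no trigonometry needed, and the constant is even better than required. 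The role of the upper bound $K<\frac{\pi}{4\sqrt2}t_0^{1/4}$ in \eqref{deltaK} is then only to ensure that the chord estimate does not force $|u-w|>2$ (impossible for unit vectors), i.e. that the set $\mathcal{C}^\delta_{4K}$ and the bound $\frac{K}{t^{1/4}}\leq 2$ are consistent on $[t_0,t_1]$; since $t\geq t_0$ this is guaranteed by $K \leq 2 t_0^{1/4}$, which follows from $K<\frac{\pi}{4\sqrt2}t_0^{1/4}$. Thus the genuine content is just the derivative computation plus this elementary identity, and the delicate bookkeeping lies entirely in tracking the constants $\delta,K$ through the chain of inequalities so that they match the hypotheses \eqref{deltaK} that were fixed in advance.
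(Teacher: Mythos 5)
Your proposal is correct and, once you reach the final paragraph, it is a genuinely cleaner route than the paper's.  Both arguments start identically: differentiate $g(t)=t-|X(t)|$ to get $g'(t)=1-\langle\tfrac{X}{|X|},\tfrac{V}{|V|}\rangle$, and then bound the chord
$$\left|\tfrac{V(t)}{|V(t)|}-\tfrac{X(t)}{|X(t)|}\right|\ \geq\ \tfrac{2K}{t^{1/4}}-\tfrac{2\delta}{t^{3/4}}$$
from below using $t\in\mathcal{B}_{2K}\cap\overline{\mathcal{A}}_{2\delta}$ and the triangle inequality.  After that the paper detours through the angle $\theta(t)$: it uses chord $\leq$ arc to get $\theta(t)\geq \tfrac{\sqrt2 K}{t^{1/4}}$, then the trigonometric bound $1-\cos\phi\geq\tfrac14\phi^2$ valid on $[0,\pi/4]$, and this is exactly where the calibrated constraint $K<\tfrac{\pi}{4\sqrt2}t_0^{1/4}$ of \eqref{deltaK} is invoked.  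Your observation is that the detour is unnecessary: for unit vectors the identity $1-u\cdot w=\tfrac12|u-w|^2$ is exact, so $g'(t)=\tfrac12\big|\tfrac{V}{|V|}-\tfrac{X}{|X|}\big|^2\geq\tfrac12\cdot\tfrac{K^2}{\sqrt t}$ falls out immediately (even your looser chord bound $\geq K/t^{1/4}$, obtained from $2K-2\delta\geq K$ via $4\delta\leq K$, already gives the required $\tfrac{K^2}{2\sqrt t}$).  This eliminates the trigonometric estimate and renders the upper bound on $K$ in \eqref{deltaK} superfluous for this lemma.  Two small blemishes to clean up in the write-up: the displayed ``$=\tfrac{2K-2\delta}{t^{1/4}}$'' should be ``$\geq$'' (you dropped a power of $t$ when absorbing the $\delta$-term), and the provisional trigonometric constant $\tfrac{4}{\pi^2}$ you floated in the plan is actually wrong (the correct Jordan-type bound on $[0,\pi]$ gives $\tfrac{2}{\pi^2}$, and neither would suffice on its own), but since your final argument bypasses trigonometry entirely, neither affects the conclusion.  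Your remarks about $s\in\mathcal{C}^{2\delta}_{2K}$ and about integrating up to the closed endpoint $t_*(s)$ are both correct and match the care the paper takes in stating the estimate on the half-open interval.
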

\begin{proof}
Let $g : t \mapsto t-|X(t)|$, so that $g'(t)= 1-\langle \frac{V(t)}{|V(t)|}, \frac{X(t)}{|X(t)|} \rangle $. Let us estimate $\theta \in [0, \pi[$, the angle between $V$ and $X$. For $t \in [s,t_*(s)[$, we have 
\begin{itemize}
\item $\left| \frac{V(t)}{|V(t)|}-\frac{X(t)}{|X(t)|} \right| \leq |\theta(t) |$ since $\frac{V}{|V|}$ and $\frac{X}{|X|}$ are unit vectors.
\item $\frac{2K}{t^{\frac{1}{4}}} \leq \left| \frac{V(t)}{|V(t)|}-\frac{X(t)}{t} \right|$ as $t \in B_{2K}$ and $\left| \frac{|X(t)|}{t}-1 \right|  \leq \frac{2\delta}{t^{\frac{3}{4}}}$ since $t \in \overline{A}_{2\delta}$.
\end{itemize}
We then obtain, using $4 \delta \leq K$, the trivial fact $\left| \frac{X(t)}{t}-\frac{X(t)}{|X(t)|} \right|=\left| \frac{|X(t)|}{t}-1 \right|$ and the triangle inequality, that
$$\frac{ \sqrt{2} K}{t^{\frac{1}{4}}} \hspace{1mm} \leq \hspace{1mm} \frac{2K}{t^{\frac{1}{4}}}-\frac{2\delta}{t^{\frac{3}{4}}} \hspace{1mm} \leq \hspace{1mm} \left| \frac{V(t)}{|V(t)|}-\frac{X(t)}{t} \right|-\left| \frac{X(t)}{t}-\frac{X(t)}{|X(t)|} \right| \hspace{1mm} \leq \hspace{1mm} \left| \frac{V(t)}{|V(t)|}-\frac{X(t)}{|X(t)|} \right| \hspace{1mm} \leq \hspace{1mm} |\theta (t) |.$$
Consequently, as $1-\frac{1}{4}\phi^2 \geq \cos \phi$ for $\phi \in [0, \frac{\pi}{4}]$ and since $\sqrt{2} K \leq \frac{\pi}{4}t_0^{\frac{1}{4}}$, we obtain
$$g'(t) =1-\cos \theta (t) \geq 1-\cos \left( \frac{\sqrt{2} K}{t^{\frac{1}{4}}} \right) \geq \frac{K^2}{ 2\sqrt{t}} \hspace{12mm} \text{and then} \hspace{12mm} g(t) \geq g(0)+K^2(\sqrt{t}-\sqrt{s}) .$$
\end{proof}
The strategy now is to prove that $\mathcal{C}^{\delta}_{4K}$ is composed of pieces sufficiently well separated for \eqref{boundI} to hold. It is then convenient to consider a dyadic partition of $[t_0,t_1]$, which leads us to introduce, for all $i \in \mathbb{N}$,
$$ \mathcal{C}^i_{4K} := \mathcal{C}^{\delta}_{4K} \cap [2^i,2^{i+1}[ \hspace{12mm} \text{and, if} \hspace{2mm} \mathcal{C}^i_{4K} \neq \varnothing, \hspace{12mm} s^i = \inf \mathcal{C}^i_{4K} \hspace{12mm} \text{and} \hspace{12mm} t_*^i = t_*(s^i).$$
\begin{Cor}\label{point3}
Let $i \in \mathbb{N}$ such that $\mathcal{C}^i_{4K} \neq \varnothing$. Then, $\mathcal{C}^i_{4K} \subset [s^i, \min(t_*^i,2^{i+1})]$. Moreover, if $|X(s^i)|-s^i >0$, $t \mapsto t-|X(t)|$ is positive on $[2^{i+2},t_1]$.
\end{Cor}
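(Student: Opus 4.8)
The plan is to combine Lemmas \ref{point1} and \ref{point2} with the elementary monotonicity of $g(t):=t-|X(t)|$ on $[t_0,t_1]$: by \eqref{eq:characteristic2} one has $g'(t)=1-\langle V(t)/|V(t)|,\,X(t)/|X(t)|\rangle\ge 0$, so $g$ is non-decreasing. Two soft observations will be used throughout. First, $\mathcal{A}_{2\delta}\subset\mathcal{A}_{\delta}$ and $\mathcal{B}_{4K}\subset\mathcal{B}_{2K}$, hence $\mathcal{C}^{\delta}_{4K}\subset\mathcal{C}^{2\delta}_{2K}$. Second, $\mathcal{C}^{2\delta}_{2K}=\overline{\mathcal{A}}_{2\delta}\cap\mathcal{B}_{2K}$ is relatively open in $[t_0,t_1]$, being cut out by strict inequalities between continuous functions; therefore $[s^i,t_*^i)\subset\mathcal{C}^{2\delta}_{2K}$, and if $t_*^i<t_1$ then $t_*^i$, the infimum of the set $[s^i,t_1]\setminus\mathcal{C}^{2\delta}_{2K}$ (closed in $[s^i,t_1]$), satisfies $t_*^i>s^i$ and $t_*^i\notin\mathcal{C}^{2\delta}_{2K}$.

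The quantitative heart of the proof is the bound $t_*^i<s^i+(s^i)^{3/4}$. For every $t\in[s^i,t_*^i)$ one has $g(t)<2\delta t^{1/4}$ because $t\in\overline{\mathcal{A}}_{2\delta}$, while Lemma \ref{point2} applied at $s=s^i$, together with $|g(s^i)|<\delta(s^i)^{1/4}$ (as $s^i\in\overline{\mathcal{A}}_{\delta}$), gives $g(t)\ge -\delta(s^i)^{1/4}+K^2(\sqrt t-\sqrt{s^i})$. Comparing these two bounds and using \eqref{deltaK} and $s^i\ge t_0\ge 22$, a direct computation yields $\sqrt{t_*^i}-\sqrt{s^i}\lesssim \delta K^{-2}(s^i)^{1/4}$, hence $t_*^i-s^i<(s^i)^{3/4}$ and in particular $t_*^i<2s^i<2^{i+2}$. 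Consequently, if $t_*^i<t_1$, then $t_*^i\in[s^i,\min(t_1,s^i+(s^i)^{3/4})]\subset\mathcal{B}_{2K}$ by Lemma \ref{point1}, so from $t_*^i\notin\mathcal{C}^{2\delta}_{2K}=\overline{\mathcal{A}}_{2\delta}\cap\mathcal{B}_{2K}$ we must have $t_*^i\in\mathcal{A}_{2\delta}$, i.e. $|g(t_*^i)|\ge 2\delta(t_*^i)^{1/4}$. But Lemma \ref{point2} also gives $g(t_*^i)\ge g(s^i)\ge -\delta(s^i)^{1/4}>-2\delta(t_*^i)^{1/4}$, which rules out the negative alternative and leaves $g(t_*^i)\ge 2\delta(t_*^i)^{1/4}>0$.

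For the first assertion, suppose for contradiction that some $\tau\in\mathcal{C}^i_{4K}$ satisfies $\tau>t_*^i$. Then $t_*^i<\tau\le t_1$, so the previous paragraph applies, and monotonicity of $g$ gives $g(\tau)\ge g(t_*^i)\ge 2\delta(t_*^i)^{1/4}$. Since $2^i\le s^i\le t_*^i$ and $\tau<2^{i+1}$ we get $\tau<2t_*^i$, whence $2\delta(t_*^i)^{1/4}>\delta\tau^{1/4}$ and so $g(\tau)>\delta\tau^{1/4}$, contradicting $\tau\in\overline{\mathcal{A}}_{\delta}$. Thus $\mathcal{C}^i_{4K}\subset[s^i,t_*^i]$, and since by definition of $s^i$ and of $\mathcal{C}^i_{4K}$ one has $\mathcal{C}^i_{4K}\subset[s^i,2^{i+1})$, we conclude $\mathcal{C}^i_{4K}\subset[s^i,\min(t_*^i,2^{i+1})]$. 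For the positivity statement: if $t_*^i<t_1$, then $g(t_*^i)>0$ and $g$ non-decreasing force $g>0$ on $[t_*^i,t_1]\supset[2^{i+2},t_1]$ (recall $t_*^i<2^{i+2}$); if $t_*^i=t_1$, the bound $t_*^i-s^i<(s^i)^{3/4}$ gives $t_1<2^{i+2}$, so $[2^{i+2},t_1]=\varnothing$ and there is nothing to prove.

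I expect the estimate $t_*^i<s^i+(s^i)^{3/4}$ to be the only genuine difficulty: it is exactly where the lower bound on $g$ supplied by Lemma \ref{point2} must be played off against the upper bound $2\delta t^{1/4}$ valid on $\overline{\mathcal{A}}_{2\delta}$, and where the precise relations between $\delta$ and $K$ in \eqref{deltaK} (and the lower bound $t_0\ge 22$) are used. Everything else — the monotonicity of $g$, the openness of $\mathcal{C}^{2\delta}_{2K}$, the sign discussion at $t_*^i$, and the passage through the dyadic scale via $\tau<2t_*^i$ — is routine.
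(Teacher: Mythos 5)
Your proof is correct, and it takes a genuinely different route from the paper's. The paper splits on whether the stopping time $t_*^i$ lies in $\overline{\mathcal{B}}_{2K}$ or in $\mathcal{A}_{2\delta}$: in the first branch it uses Lemma~\ref{point1} to get $t_*^i\ge\tau^i:=s^i+(s^i)^{3/4}$, applies Lemma~\ref{point2} at $\tau^i$ to land inside $\mathcal{A}_{2\delta}$, and then notes $t_*^i=\tau^i$; in the second branch it argues directly from $|g(t_*^i)|\ge 2\delta(t_*^i)^{1/4}$. You instead establish, unconditionally and before any case split, the quantitative estimate $t_*^i< s^i+(s^i)^{3/4}$ by squeezing $g$ between the upper bound $2\delta t^{1/4}$ (from membership in $\overline{\mathcal{A}}_{2\delta}$) and the lower bound of Lemma~\ref{point2}, and invoking \eqref{deltaK} and $t_0\ge 22$. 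This bound, together with Lemma~\ref{point1}, forces $t_*^i\in\mathcal{B}_{2K}$ whenever $t_*^i<t_1$, hence $t_*^i\in\mathcal{A}_{2\delta}$, and the rest follows by monotonicity of $g$. In effect your computation shows that the paper's first case is vacuous when $t_*^i<t_1$, which removes the case analysis; you also obtain the positivity statement without the hypothesis $|X(s^i)|-s^i>0$, so your version is slightly stronger. The only place to be careful is the step ``$\sqrt{t_*^i}-\sqrt{s^i}\lesssim\delta K^{-2}(s^i)^{1/4}$, hence $t_*^i-s^i<(s^i)^{3/4}$'': the implicit constant matters here, but writing $u=(t_*^i)^{1/4}$, $v=(s^i)^{1/4}$ and noting $K^2(u-v)(u+v)\le 3\delta u$ gives $u-v\le 3\delta K^{-2}\le 2^{-5/2}$, and with $v\ge 22^{1/4}$ one checks $t_*^i-s^i=(u-v)(u+v)(u^2+v^2)<v^3$; so the numerology of \eqref{deltaK} provides exactly the slack needed. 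Both arguments ultimately rest on the same three ingredients — Lemmas~\ref{point1} and \ref{point2}, the choice of constants \eqref{deltaK}, and the monotonicity of $t\mapsto t-|X(t)|$ — but your ordering of the logic is cleaner.
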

\begin{proof}
Let $i \in \mathbb{N}$ and suppose that $\mathcal{C}^i_{4K} \neq \varnothing$. We assume moreover that $t_*^i < t_1$ since there is nothing to prove otherwise and we introduce $T^i=\min(t_1,2^{i+2})$. We will use several times that $s-|X(s)| > 0$, for $s \in [t_0,t_1]$, implies that $t \mapsto t-|X(t)|$ is positive on $[s,t_1]$ since it is an increasing function. As $t_*^i \in \mathcal{A}_{2 \delta} \cup \overline{\mathcal{B}}_{2K}$ by definition, we have two cases to study.
\begin{itemize}
\item If $t_*^i \in \overline{\mathcal{B}}_{2K}$, then, by Lemma \ref{point1}, $t_*^i \geq \tau^i := s^i+|s^i|^{\frac{3}{4}}$. Hence, using Lemma \ref{point2}, we get
$$\tau^i -|X(\tau^i)| \geq s^i -|X(s^i)|+ K^2 \sqrt{\tau^i}-K^2 \sqrt{s^i} \geq -\delta \left| s^i \right|^{\frac{1}{4}}+K^2 \sqrt{s^i} \left( \sqrt{1+|s^i|^{-\frac{1}{4}}}-1 \right).$$
Since $4\sqrt{1+h}-4 \geq h$ for all $h \in [0,1]$ and $t \mapsto t -|X(t)|$ increases, we obtain, for all $t \in [\tau^i,T^i]$,
\begin{equation}\label{taui}
t-|X(t)| \geq \tau^i -|X(\tau^i)|  \geq -\delta 2^{\frac{i+2}{4}}+ K^2 \sqrt{s^i} \frac{1}{4}|s^i|^{-\frac{1}{4}} \geq (2^{-\frac{10}{4}}K^2-\delta)2^{\frac{i+2}{4}}  \geq 2 \delta \left| T^i \right|^{\frac{1}{4}} \geq 2 \delta t^{\frac{1}{4}} .
\end{equation}
so that\footnote{We then necessarily have $\tau^i=t_*^i$.} $[\tau^i,T_i] \subset \mathcal{A}_{2 \delta} $. As $C^i_{4K} \subset \overline{A}_{\delta}$ and $\mathcal{A}_{2 \delta} \cap \overline{A}_{\delta} = \varnothing$, we obtain $C^i_{4K} \subset [s^i, \tau^i] \subset [s^i, t_*^i]$. Observe also that $\tau^i-|X(\tau^i)| >0$, which implies that $t \mapsto t-|X(t)|$ is positive on $[\tau^i,t_1] \subset [2^{i+2},t_1]$.
\item Otherwise $t_*^i \in \mathcal{A}_{2 \delta}$. If $t_*^i \geq T^i$, we have $\tau^i \leq 2^{i+2} \leq t_*^i$ so that $C^i_{4K} \subset [s^i, 2^{i+1}] \subset [s^i, t_*^i]$. The positivity of $t \mapsto t-|X(t)|$ on $[2^{i+2},t_1]$ then follows from \eqref{taui}. Otherwise, as $|t_*^i-|X(t_*^i)|| \geq 2 \delta |t_*^i|^{\frac{1}{4}}$ and $t \mapsto t -|X(t)|$ increases, we necessarily have $t_*^i-|X(t_*^i)| \geq 0$ since $s^i-|X(s^i)| \geq - \delta |s^i|^{\frac{1}{4}}$. Hence, using again that $t \mapsto t -|X(t)|$ increases, it yields
$$ \forall \hspace{0.5mm} t \in [t_*^i,T^i], \hspace{10mm} t-|X(t)| \geq 2 \delta \left| t_*^i \right|^{\frac{1}{4}} > \delta t^{\frac{1}{4}}, \hspace{5mm} \text{as} \hspace{5mm} 2 \delta \left| t_*^i \right|^{\frac{1}{4}} \geq 2 \delta 2^{\frac{i}{4}} > \delta 2^{\frac{i+2}{4}} .$$
We can then conclude that $[t_*^i,T^i] \subset \mathcal{A}_{\delta}$, implying that $C^i_{4K} \subset [s^i,t_*^i]$ since $A_{\delta} \cap C^i_{4K} = \varnothing$. We also proved that $t \mapsto t-|X(t)|$ on $[2^{i+2},t_1]$ as $t_*^i-|X(t_*^i)| \geq 0$ and $t_*^i \leq 2^{i+2}$.
\end{itemize}
\end{proof}
We are now able to bound $I$. Let $\mathcal{D} := \{ i \in \mathbb{N} \hspace{1mm} / \hspace{1mm} C^i_{4K} \neq \varnothing \}$. Suppose first that $t \geq |X(t)|$ for all $t \in \mathcal{C}_{4K}^{\delta}$. Then, using \eqref{decayF}, Lemma \ref{point2} and Corollary \ref{point3},
\begin{eqnarray}
\nonumber I & \lesssim & \sum_{i \in \mathcal{D} } \int_{\mathcal{C}^i_{4K}} \frac{\sqrt{\epsilon}}{(1+s)(1+|s-|X(s)||)} ds \hspace{2mm} \lesssim \hspace{2mm} \sum_{i \in \mathcal{D} } \int_{s^i}^{2^{i+1}} \frac{\sqrt{\epsilon}}{(1+s)(1+K^2\sqrt{s}-K^2\sqrt{s^i})} ds \\ \nonumber
& \lesssim &  \sum_{i \in \mathcal{D} } \sqrt{\epsilon} \int_{s^i}^{2^{i+1}} \frac{1+\sqrt{s}+\sqrt{s^i}}{(1+s)(1+s-s^i)} ds \hspace{2mm} \lesssim \hspace{2mm}  \sum_{i =0 }^{+ \infty} \frac{\sqrt{\epsilon}}{2^{\frac{i}{2}}} \int_{2^i}^{2^{i+1}} \frac{1}{(1+s-2^i)} ds \\ \nonumber 
& \lesssim & \sqrt{\epsilon} \sum_{i =0 }^{+ \infty} \frac{\log(1+2^i)}{2^{\frac{i}{2}}} \hspace{2mm} \lesssim \hspace{2mm} \sqrt{\epsilon}.
\end{eqnarray}
Otherwise, with $p= \min \mathcal{D}$ and according to Corollary \ref{point3}, we have $t \geq |X(t)|$ for all $t \geq 2^{p +2}$ and the result then follows from
$$ \sqrt{\epsilon} \int_{\mathcal{C}^{p}_{4K} \cup \mathcal{C}^{p+1}_{4K}} \frac{ds}{1+s} \leq \sqrt{\epsilon} \int_{2^p}^{2^{p +2}} \frac{ds}{1+s} \leq \sqrt{\epsilon} \log (4).$$ 
In order to apply this result in Subsection \ref{subsecH}, we need to adapt it to an echeloned system of transport equations. 
\begin{Cor}\label{velocity2}
Let $k \in \mathbb{N}^*$ and, for all $1 \leq j <  i \leq k$ and $1 \leq q \leq 3$, let $A^{q,j}_i$ be a sufficiently regular matrix valued function defined on $[0,T[ \times \R^3 \times \left( \R^3 \setminus \{ 0 \} \right)$. Consider $g=(g_1,...,g_k)$, where each $g_i$ is a vector valued field, a classical solution on $[0,T[$ to the system
$$T^{\chi}_F(g_1)=0, \hspace{2.5cm} T_F^{\chi}(g_i)=A^{q,j}_i \partial_{v^q} g_{j} \hspace{1cm} 2 \leq i \leq k.$$
If $g(0,.,v)=0$ for all $|v| \leq 3$, then $g(t,.,v)=0$ for all $|v| \leq 1$.
\end{Cor}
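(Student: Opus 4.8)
The plan is to argue by induction on the echelon level $i\in\{1,\dots,k\}$. The base case $i=1$ is Proposition \ref{velocity} applied componentwise, since $g_1$ solves $T^\chi_F(g_1)=0$; note moreover that the proof of that proposition in fact establishes more than its statement: it shows that a characteristic $(X,V)$ of $T^\chi_F$ cannot connect a velocity $|V|\ge 2$ at a time $\ge 22$ to a velocity $|V|\le 1$ at a later time, because dropping through the regime $|v|\in[1,2]$ is forbidden by \eqref{eq:characteristic2}--\eqref{eq:characteristic} and the decay \eqref{decayF} once $\epsilon$ is small. Re-running that argument with shifted thresholds gives, for any fixed $\rho\in[1,2)$ and $\epsilon$ small enough, the following dynamical statement: a characteristic of $T^\chi_F$ that reaches $\{|v|\le\rho\}$ at some time $t$ satisfies $|V(s)|<\rho+C\sqrt\epsilon$ for all $s\in[0,t]$. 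Indeed, on $[22,t]$ this is the shifted Proposition \ref{velocity} (a drop from $\rho+C\sqrt\epsilon$ to $\rho$ is forbidden once $C$ exceeds the constant in the error estimate), while on $[0,22]$ one uses the crude bound $\big||V(s)|^2-|V(s')|^2\big|\lesssim\sqrt\epsilon$, which follows from $|F|\lesssim\sqrt\epsilon/\tau_+\lesssim\sqrt\epsilon/(1+s)$ and a standard continuation argument keeping $|V|$ bounded. The crucial point throughout is that the characteristics of $T^\chi_F$ depend only on $F$, so this dynamical information is identical for every $g_i$.

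For the induction we fix a decreasing sequence of thresholds $\tfrac32=\rho_1>\rho_2>\dots>\rho_k\ge 1$ with $\rho_{m}-\rho_{m+1}=C\sqrt\epsilon$, $C$ being the constant above, and prove that $g_m$ vanishes on $\{0<|v|\le\rho_m\}$ for each $m$. Assume this holds for $g_1,\dots,g_{i-1}$. Then each $\partial_{v^q}g_j$, $j<i$, vanishes on the open set $\{0<|v|<\rho_{i-1}\}$, hence so does the source $A^{q,j}_i\partial_{v^q}g_j$; that is, $T^\chi_F(g_i)=0$ on $\{|v|<\rho_{i-1}\}$. Now take $(t,y,w)$ with $0<|w|\le\rho_i$ and let $(X,V)$ be the characteristic of $T^\chi_F$ through $(t,y,w)$. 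By the dynamical statement, $|V(s)|<\rho_i+C\sqrt\epsilon=\rho_{i-1}$ for all $s\in[0,t]$, so the source vanishes along the whole characteristic and $s\mapsto g_i(s,X(s),V(s))$ is constant; since also $|V(0)|<\rho_{i-1}<3$, the hypothesis on the data gives $g_i(0,X(0),V(0))=0$, whence $g_i(t,y,w)=0$. This proves the claim for $g_i$, closing the induction, and the case $m=k$ yields $g(t,\cdot,v)=0$ for all $|v|\le 1$.

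The main obstacle is exactly the quantitative bookkeeping just described: the usable velocity threshold loses $O(\sqrt\epsilon)$ at each of the $k$ echelon levels, both because the characteristic ending at $|V(t)|=|w|$ may rise by $O(\sqrt\epsilon)$ above $|w|$ and because of the crude estimates on $[0,22]$ where the fine null-structure bounds of Proposition \ref{velocity} are not yet available. This is harmless: one chooses $\epsilon_0=\epsilon_0(k)$ small enough that $\rho_k\ge 1$, and $\rho_1\le 3$ guarantees that the vanishing of the data for $|v|\le 3$ kills the term $g_i(0,X(0),V(0))$ at every step. Apart from this, the only point requiring care — the "a characteristic reaching small velocity has always had small velocity" statement with shifted thresholds — is a line-by-line repetition of the argument already carried out for Proposition \ref{velocity}, so no genuinely new analytic input is needed.
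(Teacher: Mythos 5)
Your proof is correct, but it takes a genuinely different route from the paper's. The paper's argument for Corollary \ref{velocity2} is ``soft'': one writes each $g_i$ via Duhamel's formula along the characteristic $(X_s,V_s)$ ending at $(x,v)$ with $|v|<1$, and the key observation is that $\partial_{v^q}g_j(s,X_s,V_s)=0$ not because one has a quantitative neighbourhood estimate, but because of continuous dependence of the characteristic flow on the initial point: for $|w|$ small the backward characteristic through $(s,X_s,V_s+w)$ still lands at $|V^{w,s}_0|<3$, so $g_j(s,X_s,\cdot)$ vanishes identically in a $v$-neighbourhood of $V_s$, and hence all its $v$-derivatives vanish there. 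Iterating this (with doubly and triply perturbed characteristics) kills the Duhamel source term echelon by echelon, with no shifted thresholds and no $k$-dependence of $\epsilon_0$. You instead re-run the analysis of Proposition \ref{velocity} quantitatively to obtain the ``shifted'' dynamical statement that a characteristic reaching $|v|\le\rho$ at time $t$ satisfies $|V(s)|<\rho+C\sqrt\epsilon$ for all prior times, then induct on echelon levels with thresholds $\rho_1=\tfrac32>\rho_2>\dots>\rho_k\ge 1$ separated by $C\sqrt\epsilon$. I checked the logic of your shifted statement: on $[t_0,t_1]$ with $\rho\le|V|\le\rho+C\sqrt\epsilon\subset[1,2]$ the null-structure integrals are still controlled by an absolute constant times $\sqrt\epsilon$, and the inequality $(\rho+C\sqrt\epsilon)^2-\rho^2>C_0\sqrt\epsilon$ for $C$ large enough and $\epsilon$ small gives the contradiction; the regime $t_0<22$ is handled by your crude $[0,22]$ estimate exactly as you say. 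Your argument buys a more hands-on, ODE-level picture (the velocity support shrinks by at most $O(\sqrt\epsilon)$ per echelon), at the cost of (a) having to genuinely re-derive the estimates of Proposition \ref{velocity} with modified thresholds rather than invoking its statement, and (b) $\epsilon_0=\epsilon_0(k)$, which is harmless here since $k$ is fixed by $N$ but is a slight weakening compared to the paper's proof. One small imprecision: your ``base case $i=1$ is Proposition \ref{velocity} applied componentwise'' is off, since the base case of your induction needs $g_1$ to vanish up to $|v|\le\rho_1=\tfrac32$, not merely $|v|\le 1$, so the base case already requires the shifted dynamical statement (with $\rho=\tfrac32$), which you do establish later in the paragraph.
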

\begin{proof}
Let $(t,x,v) \in [0,T[ \times \R^3 \times \left( \R^3 \setminus \{ 0 \} \right)$ such that $|v| < 1$. We denote by $(X_s,V_s)$ the value in $s$ of the characteristic of the operator $T_F^{\chi}$ which was equal to $(x,v)$ in $s=t$. By Duhamel's formula, we have
\begin{eqnarray}
\nonumber g_1(t,x,v) & = & g_1(s,X_s,V_s) \hspace{2mm} = \hspace{2mm} g_1(0,X_0,V_0), \\  g_i(t,x,v) & = & g_i(0,X_0,V_0)+\int_0^t A^{q,j}_i(s,X_s,V_s) \partial_{v^q} g_j(s,X_s,V_s) ds \hspace{1cm} \text{for all $2 \leq i \leq k$.} \label{eq:A1}
\end{eqnarray} 
 According to the proof of Proposition \ref{velocity},
\begin{equation}\label{eq:A2}
|V_0| < 3, \hspace{1.5cm} \text{so that} \hspace{1.5cm} \forall \hspace{0.5mm} 1 \leq i \leq k, \hspace{1cm} g_i(0,X_0,V_0)=0
\end{equation}
since otherwise we would have $|V_t|=|v| \geq 1$. Fix now $ s \in [0,t]$ and consider $w \in \R^3$ such that $|w|< \frac{1}{2}|v|$. We denote by $(X_{\tau}^{w,s}, V_{\tau}^{w,s})$ the value in $\tau$ of the characteristic of $T^{\chi}_F$ which was equal to $(X_s,V_s+w)$ in $\tau=s$. Then,
$$ g_1(s,X_s,V_s+w)=g_1(0,X_0^{w,s},V_0^{w,s}).$$
By continous dependence on the initial condition of the solutions to 
$$ \frac{dX^i}{ds}(s)=\frac{V^i(s)}{|V(s)|}, \hspace{1.5cm} \frac{dV^i}{ds}(s)=\chi(|V(s)|)F_{0i}(s,X(s))+\frac{V^j(s)}{|V(s)|} \chi(|V(s)|) F_{ji}(s,X(s)), \hspace{1cm} 1 \leq i \leq 3,$$ and since $|V_0| < 3$, there exists $\overline{\delta} >0$ (depending on $(t,s,x,v)$) such that $|V_0^{w,s}| <3$ for all $|w| < \overline{\delta}$. Hence,
\begin{equation}\label{eq:A3}
 \forall \hspace{0.5mm} |w| < \overline{\delta}, \hspace{1cm} g_1(s,X_s,V_s+w)=0, \hspace{1cm} \text{so that} \hspace{1cm} \forall \hspace{0.5mm} 1 \leq q \leq 3, \hspace{5mm}  \partial_{v^q} g_1 (s,X_s,V_s)=0.
\end{equation}
Repeating the argument, one can obtain
$$\forall \hspace{0.5mm} |w| \leq \overline{\delta}, \hspace{3mm} \forall \hspace{0.5mm} \tau \in [0,s], \hspace{3mm} \exists \underline{\delta} > 0, \hspace{3mm} \forall \hspace{0.5mm} |\underline{w}| \leq \underline{\delta}, \hspace{15mm} g_1(\tau,X_{\tau}^{w,s},V_{\tau}^{w,s}+\underline{w})=0 ,$$
which implies
\begin{equation}\label{eq:A4}
\forall \hspace{0.5mm} |w| \leq \overline{\delta}, \hspace{3mm} \forall \hspace{0.5mm} \tau \in [0,s], \hspace{0.3cm} \forall \hspace{0.5mm} 1 \leq q \leq 3, \hspace{15mm}  \partial_{v^q} g_1 (\tau,X_{\tau}^{w,s},V_{\tau}^{w,s})=0.
\end{equation}
Combining \eqref{eq:A1}, \eqref{eq:A2} and \eqref{eq:A3}, we get $g_2(t,x,v)=0$. Using \eqref{eq:A4} and $|V_0^{w,s}| <3$ for all $|w| < \overline{\delta}$, it yields, in view of the support of $g_2(0,.,.)$,
$$\forall \hspace{0.5mm} |w| < \overline{\delta}, \hspace{1cm} g_2(s,X_s,V_s+w)=g_2(0,X_0^{w,s},V_0^{w,s})+\int_0^s A^{q,j}_i( \tau, X_{\tau}^{w,s}, V_{\tau}^{w,s}) \partial_{v^q} g_1 ( \tau, X_{\tau}^{w,s}, V_{\tau}^{w,s}) d \tau=0.$$
We then deduce that $\partial_{v^q} g_2(s,X_s,V_s)=0$ for all $q \in \llbracket 1,3 \rrbracket$ and $s \in [0,t]$, so that, by \eqref{eq:A1}, $g_3(t,x,v)=0$. We then proved the desired result if $k=3$. The general case can be treated similarly by an induction.

\end{proof}

\section{Bounding the initial norms}\label{AppendixB}
We consider in this section $(f^0,F^0)$ satisfying the hypotheses of Theorem \ref{theorem} and $(f,F)$ the unique classical solution of \eqref{VM1}-\eqref{VM3} arising from these data.
\begin{Pro}\label{PropB}
There exists a constant $C_1$, depending only on $N$, such that
$$ \mathcal{E}_N[F](0) \leq C_1 \epsilon \hspace{1.5cm} \text{and} \hspace{1.5cm} \E^2_{N+3}[f](0) \leq C_1 \epsilon .$$
\end{Pro}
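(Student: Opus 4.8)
The plan is to evaluate both quantities on $\Sigma_0=\R^3$, to reduce them to weighted $L^2$, respectively weighted $L^1$, norms of $\partial_x$- and $\partial_v$-derivatives of $(f^0,F^0)$, and then to close using the smallness hypothesis. First observe that at $t=0$ the cone integrals appearing in $\mathcal{E}^{\overline{K}_0}$ and in $\E$ are supported on the spheres $\{s=0\}$ and thus contribute nothing, that $\tau_+(0,x)=\tau_-(0,x)=\sqrt{1+|x|^2}\sim 1+|x|$, that $|z|(0,x,v)\lesssim 1+|x|$ for every $z\in\V$ (use $z_{0i}(0,x,v)=-x^i$, $|z_{ij}|,|s_0|\lesssim|x|$ and $|v^\mu/v^0|\le 1$), and that the null components of a $2$-form are bounded by its Cartesian components. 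Combining this with the standard estimates $|\mathcal{L}_{Z^{\gamma}}(G)_{\mu\nu}|(0,x)\lesssim\sum_{e+|c|\le|\gamma|}(1+|x|)^{e+|c|}|\partial_t^e\partial_x^c G_{\mu\nu}|(0,x)$ and, for the lifted fields, $|\widehat{Z}^{\beta}f|(0,x,v)\lesssim\sum_{e+|c|+|d|\le|\beta|}(1+|x|)^{e+|c|}(1+|v|)^{|d|}|\partial_t^e\partial_x^c\partial_v^d f|(0,x,v)$, it suffices to bound $\mathcal{E}_N[F](0)$ by
\[ \sum_{e+|c|\le N}\bigl\|(1+|x|)^{e+|c|+1}\,\partial_t^e\partial_x^c F(0,\cdot)\bigr\|_{L^2(\R^3)}^2\lesssim\epsilon \]
and $\E^2_{N+3}[f](0)$ by the analogous weighted $L^1_{x,v}$ bound for $(1+|x|)^{e+|c|+2}(1+|v|)^{|d|}\,\partial_t^e\partial_x^c\partial_v^d f(0,\cdot)$ with $e+|c|+|d|\le N+3$.

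Next I would trade the time derivatives using the equations. The Vlasov equation \eqref{VM1}, rewritten as $v^0\partial_t f=-v^i\partial_i f-F(v,\nabla_v f)$, expresses $\partial_t f$ through $\partial_x f$, $\nabla_v f$ and $F$, while the evolution part of the Maxwell system \eqref{VM2}--\eqref{VM3} (cf.\ Lemma \ref{maxwellbis}) expresses $\partial_t F$ through $\partial_x F$ and the current $J(f)$. Iterating these substitutions and integrating by parts in $v$ inside the currents (the coefficients $v^{\mu}/v^0$ and their $v$-derivatives being bounded on the relevant velocity range), one writes $\partial_t^e\partial_x^c F(0,\cdot)$ as a linear combination of terms $\partial_x^{c'}F^0$ with $|c'|\le e+|c|$ and of \emph{current terms} $\int_v w(x,v)\bigl(\prod_i\partial_x^{a_i}F^0\bigr)\partial_x^{a_0}f^0\,dv$, with $w$ smooth and bounded and $\sum_i a_i+a_0\le e+|c|-1$; similarly $\partial_t^e\partial_x^c\partial_v^d f(0,\cdot)$ becomes a sum of terms, each the product of a bounded coefficient, finitely many factors $\partial_x^{\bullet}F^0$, finitely many velocity averages of $\partial_x^{\bullet}f^0$, and exactly one non-averaged factor $\partial_x^{\bullet}\partial_v^{\bullet}f^0$, with the total number of derivatives at most $e+|c|+|d|$.

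Finally I would estimate term by term. The purely electromagnetic terms $(1+|x|)^{e+|c|+1}\partial_x^{\le e+|c|}F^0$ are controlled in $L^2$ directly by the hypothesis on $F^0$, the weight $(1+|x|)^{2(e+|c|)+2}$ being precisely the one provided for $\partial_x^{e+|c|}F^0$ since $e+|c|\le N\le N+2$; the corresponding ``linear'' contributions to $\E^2_{N+3}[f](0)$ are handled identically and again exactly saturate the weight $(1+|x|)^{|\beta|+2}(1+|v|)^{|\kappa|}$ of the hypothesis. For the current and nonlinear terms I would pull the ($v$-independent) factors $\partial_x^{a_i}F^0$ out of the velocity integral and apply H\"older in $x$ (as $L^2_x\times L^\infty_x$, or $L^2_x\times L^2_x$): the low-order $F^0$-factors are bounded in $L^\infty$ through $\|F^0\|_{L^\infty}\lesssim\|F^0\|_{H^2}\lesssim\sqrt\epsilon$, a possible single high-order $F^0$-factor is kept in a weighted $L^2$, and the velocity averages $\int_v|\partial_x^{a_0}\partial_v^{d_0}f^0|\,dv$ are estimated in $L^\infty_x$ or weighted $L^2_x$ by combining the weighted $L^1$ bounds of the hypothesis with the Sobolev embedding $W^{3,1}(\R^3)\hookrightarrow L^\infty(\R^3)$ and $\|g\|_{L^2_x}^2\le\|g\|_{L^\infty_x}\|g\|_{L^1_x}$; the smallness $\epsilon\le 1$ then absorbs the surplus powers of $\sqrt\epsilon$ produced by the nonlinear factors. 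The delicate point, and the step I expect to be the main obstacle, is the bookkeeping of derivative orders and weights in these current terms: because the hyperbolic rotations $\Omega_{0k}=t\partial_k+x^k\partial_t$ carry simultaneously a factor $x^k$ and a time derivative, their iterated action produces, at top order, current terms such as $(1+|x|)^{N}\int_v w\,\partial_x^{N-1}f^0\,dv$, whose contribution to $\mathcal{E}_N[F](0)$ requires exactly the bound $\int_{\R^3}\int_{\R^3}(1+|x|)^{N+1}|\partial_x^{N-1}f^0|\,dv\,dx\lesssim\epsilon$, i.e.\ the hypothesis with $|\beta|=N-1$, $|\kappa|=0$; one must check that no term ever demands more than $N+3$ derivatives of $f^0$, more than $N+2$ derivatives of $F^0$, or a weight exceeding $(1+|x|)^{|\beta|+2}$ on a factor $\partial^{\beta}_x f^0$. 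Since $N\ge 10$, there is room to spare in all the non-saturated cases, which makes the remaining estimates routine.
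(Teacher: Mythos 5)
Your proposal is correct and follows essentially the same route as the paper: the reduction to weighted bounds on pure Cartesian derivatives of $(f^0,F^0)$ at $t=0$ (using $\tau_+=\tau_-=\sqrt{1+|x|^2}$, $|z|\lesssim 1+|x|$ on $\Sigma_0$, and the expansion of $\mathcal L_{Z^\gamma}$, $\widehat Z^\beta$ in terms of $\partial_t,\partial_x,\partial_v$) is exactly Proposition~\ref{ProB}, and your mechanism for proving it --- trading $\partial_t$ for $\partial_x,\partial_v$ via the Vlasov--Maxwell equations and closing with weighted $L^\infty$--$L^2$ and $L^2$--$L^1$ Sobolev inequalities, Cauchy--Schwarz in $x$, and the support condition $|v|\ge 3$ --- is also the paper's. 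The one organizational difference worth noting: you propose to fully expand all time derivatives at once into polynomial products of $\partial^\bullet F^0$ and (averages of) $\partial^\bullet f^0$, which is why you correctly flag multilinear bookkeeping of derivative orders and weights as the main obstacle; the paper instead runs a single induction on $\max(p,q)$, trading exactly one $\partial_t$ per step, so that every error term is \emph{bilinear} (either $\nabla_x\nabla^r_{\partial_t}\nabla_x^\kappa F+\int_v\partial_t^r\partial_x^\kappa f\,dv$ on the Maxwell side, or $\nabla_{\partial_t}^q\nabla_x^\kappa F\cdot\partial_v\partial_t^p\partial_x^{\beta_1} f$ with $p+q=r$ on the Vlasov side) and falls directly under the induction hypothesis. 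This keeps every H\"older estimate to two factors and makes the weight/derivative counting essentially automatic, which is precisely the step you anticipated to be delicate.
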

The proof is a corollary of the following proposition and that, for all $\widehat{Z}^{\beta} \in \K^{|\beta|}$ and $Z \in \mathbb{K}^{|\gamma|}$,
$$|\widehat{Z}^{\beta} f | \lesssim \sum_{|\alpha_2|+|\alpha_1|+p \leq |\beta|} \tau_+^{|\alpha_1|+p} |v|^{|\alpha_2|} |  \partial_t^p  \partial_x^{\alpha_1}\partial_{v}^{\alpha_2} f |, \hspace{1.5cm} \left| \mathcal{L}_{Z^{\gamma}}(F) \right| \lesssim \sum_{|\kappa|+q \leq |\gamma|} \tau_+^{|\kappa|+q} |  \nabla_{\partial_t}^q \nabla_x^{\kappa} F|.$$
\begin{Pro}\label{ProB}
We have, for all $|\alpha_2|+|\alpha_1|+p \leq N+3$ and $|\kappa|+q \leq N+2$,
\begin{equation}\label{iterB}
\hspace{-1mm} \left\| (1+|x|)^{|\alpha_1|+p+2} |v|^{|\alpha_2|} \partial_t^p  \partial_x^{\alpha_1}\partial_{v}^{\alpha_2} f \right\|_{L^1_v L^1(\Sigma_0)} \lesssim \epsilon, \hspace{0.8cm} \text{and} \hspace{0.8cm} \left\| (1+|x|)^{|\kappa|+q+1}  \nabla^q_{\partial_t} \nabla_x^{\kappa}  F \right\|^2_{L^2(\Sigma_0)} \lesssim \epsilon.
\end{equation}
\end{Pro}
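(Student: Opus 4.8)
\noindent\emph{Proof strategy.} The plan is to use the evolution equations at $t=0$ to trade the time derivatives for spatial ones, reducing everything to the weighted $L^1$ and $L^2$ norms of $(f^0,F^0)$ appearing in Theorem \ref{theorem}. Dividing the Vlasov equation by $v^0$ gives $\partial_t f=-\tfrac{v^i}{v^0}\partial_i f-\tfrac{\chi(|v|)}{v^0}v^\mu {F_\mu}^j\partial_{v^j}f$, whose coefficients $\tfrac{v^i}{v^0}$ and $\tfrac{\chi(|v|)}{v^0}v^\mu$ are smooth and bounded together with all their $v$-derivatives on $\{|v|\ge\tfrac12\}$ ($\chi/v^0$ extending smoothly by $0$). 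Since $f^0$ vanishes on $\{0<|v|\le 3\}$, so do all its $x$- and $v$-derivatives there by continuity, so every occurrence of a derivative of $f^0$ below is supported in $\{|v|\ge3\}$ and the weights $|v|^{|\alpha_2|}$ are comparable to $(1+|v|)^{|\alpha_2|}$ there. Likewise, writing the Maxwell equations as $\partial_t E=\nabla\times B-\int_v\tfrac{v}{v^0}f\,dv$ and $\partial_t B=-\nabla\times E$ lets one trade $\partial_t F$ for one spatial derivative of $F$ plus a velocity average of $f$. Iterating these identities and commuting with $\partial_x^{\alpha_1}\partial_v^{\alpha_2}$ (resp.\ $\nabla_x^\kappa$), an induction on $p$ (resp.\ $q$) shows that $\partial_t^p\partial_x^{\alpha_1}\partial_v^{\alpha_2}f_{|t=0}$ (resp.\ $\nabla_{\partial_t}^q\nabla_x^\kappa F_{|t=0}$, whose leading term is $\nabla_x^{q+\kappa}F^0$) is a finite sum of multilinear terms of the shape
$$c(v/|v|)\;\partial_x^{a}\partial_v^{b}f^0\;\prod_i\partial_x^{c_i}F^0\;\prod_j\int_v p_j(v/|v|)\,\partial_x^{d_j}f^0\,dv,$$
with $c,p_j$ smooth and bounded on $\{|v|\ge\tfrac12\}$, no positive power of $x$ ever appearing (the coefficients of the equations at $t=0$ depend only on $v$), and $a+b+\sum_i c_i+\sum_j d_j\le|\alpha_1|+|\alpha_2|+p$ (resp.\ $\le q+|\kappa|$).

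The core of the argument is a bookkeeping of the weights, carried along the induction, giving the identities
$$(a+2)+\sum_i(c_i+1)+\sum_j(d_j+2)=|\alpha_1|+p+2,\qquad \sum_i(c_i+1)+\sum_j(d_j+2)=q+|\kappa|+1$$
(each operation producing a new term — turning a $\partial_t$ into a spatial derivative on $f^0$, on an $F^0$ factor or on an average, spawning a fresh $F^0$ factor through the Vlasov nonlinearity, or converting an $F^0$ factor into an average — raises both sides by exactly one), together with the fact that the number of $v$-derivatives falling on coefficients is at least $|\alpha_2|-b$, whence $|v|^{|\alpha_2|}|c(v/|v|)|\lesssim(1+|v|)^b$ on $\{|v|\ge3\}$.

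Then I would distribute the weight $(1+|x|)^{|\alpha_1|+p+2}$ (resp.\ $(1+|x|)^{q+|\kappa|+1}$) over the factors exactly according to the identities above: the factor $\partial_x^a\partial_v^b f^0$ receives $(1+|x|)^{a+2}(1+|v|)^b$ and is estimated in $L^1_{x,v}$ directly from the hypothesis of Theorem \ref{theorem}; each $\partial_x^{c_i}F^0$ receives $(1+|x|)^{c_i+1}$ and, when $c_i\le N$, is put in $L^\infty_x$ via $H^2(\R^3)\hookrightarrow L^\infty$ (the two extra derivatives being controlled by the data since then $c_i+2\le N+2$); each average receives $(1+|x|)^{d_j+2}$ and, when $d_j\le N$, is put in $L^\infty_x$ via $W^{3,1}(\R^3)\hookrightarrow L^\infty$; while for the $F$ estimate the leading (more generally, the unique top-order) factor is instead placed in $L^2_x$ directly from the data. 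A derivative count rules out order-$(N+3)$ derivatives of $F^0$ (creating an $F^0$ factor costs one $\partial_t$, so an order-$m$ derivative of $F^0$ requires $m+1\le p+|\alpha_1|\le N+3$, i.e.\ $m\le N+2$), and the two residual orders $c_i\in\{N+1,N+2\}$ — which by the same count force every other factor to carry no derivative — are handled by placing $\partial_x^{c_i}F^0$ in $L^2_x$ and the remaining $f^0$-type factor in $L^2_x(L^1_v)$ via $\|g\|_{L^2_x}^2\le\|g\|_{L^1_x}\|g\|_{L^\infty_x}$ and one more Sobolev embedding. Multiplying these bounds and summing the finitely many terms gives $\lesssim\epsilon$ (the linear term saturates the estimate; every genuinely multilinear term carries an extra positive power of $\epsilon$).

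The main obstacle is precisely this bookkeeping: the weight identities hold with \emph{equality}, so there is no slack, and one must verify that at every step of the reduction the Sobolev embeddings applied to the lower-order factors stay affordable — i.e.\ the extra derivatives they consume ($+2$ for $F^0$ factors, $+3$ for averages) keep the orders within the ranges $|\gamma|\le N+2$ and $|\beta|\le N+3$ of the data. This is where $N\ge10$ (far more than enough) and the derivative count excluding order-$(N+3)$ derivatives of $F^0$ come in. As indicated in Remark \ref{rqFhighderiv}, an argument in the spirit of \cite{FJS2} would even dispense with the hypotheses on the order $N+1$ and $N+2$ derivatives of $F^0$, but that refinement is not needed for Proposition \ref{ProB} as stated.
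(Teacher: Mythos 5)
Your strategy — trade $\partial_t$ for spatial derivatives and velocity averages via the equations at $t=0$, then control everything by the weighted norms of the data using Sobolev embeddings — is the right one and indeed matches the paper's. But the execution is genuinely different in organization. The paper proves \eqref{iterB} by an interleaved induction on $\max(p,q)$, peeling off a single $\partial_t$ at a time: the Maxwell estimate at order $r+1$ is reduced to $\|\tau_+^{|\kappa|+r+2}\nabla_x\nabla_{\partial_t}^r\nabla_x^\kappa F\|_{L^2}$ plus the velocity average of $\partial_t^r\partial_x^\kappa f$, both handled directly by the induction hypothesis; and the Vlasov estimate at order $r+1$ is reduced, via the iterated commutation formula, to bilinear products $\nabla_{\partial_t}^q\nabla_x^\kappa F\cdot\partial_v\partial_t^p\partial_x^{\beta_1}\partial_v^{\alpha_3}f$ with $q+p=r$, $|\kappa|+|\beta_1|=|\alpha_1|$, which are again controlled by the induction hypothesis on \emph{both} factors (placing $\tau_+^{q+|\kappa|+1}\nabla_{\partial_t}^q\nabla_x^\kappa F$ in $L^\infty_x$ via $H^2\hookrightarrow L^\infty$ when $q+|\kappa|\le N$, or in $L^2_x$ with a Cauchy--Schwarz and an $L^2$--$L^1$ Sobolev inequality on the Vlasov factor when $q+|\kappa|>N$). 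Crucially, the paper never unfolds all the way down to $(f^0,F^0)$: the induction hypothesis is applied to expressions that still carry time derivatives, and the weight accounting is local to each reduction step — the factorization $\tau_+^{|\alpha_1|+r+3}=\tau_+^{|\kappa|+q+1}\cdot\tau_+^{|\beta_1|+p+2}$ follows immediately from $|\kappa|+|\beta_1|=|\alpha_1|$ and $q+p=r$, with no global multilinear bookkeeping. Your approach instead fully unfolds to multilinear expressions in the initial data and carries a global weight identity through the unfolding. This is workable and your bookkeeping identities and the derivative count excluding order-$(N+3)$ derivatives of $F^0$ look right, but it shifts all the burden onto verifying that the asserted equalities are preserved term by term (including under Leibniz distributions, integrations by parts in $v$, and the substitution of Maxwell source terms), which is precisely the combinatorial load the paper's one-step induction is designed to avoid. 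In short: same underlying mechanism, but the paper's mutual induction on $\max(p,q)$ buys a much lighter accounting of weights, while your full unfolding makes the multilinear structure of the $t=0$ derivatives completely explicit at the cost of having to justify the global bookkeeping.
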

\begin{proof}
Note that $\tau_+^2=1+|x|^2$ on $\Sigma_0$. The proof consists of an induction on $\max(p,q)$. The result holds for $\max(p,q)=0$ in view of the hypotheses on $(f^0,F^0)$. Let $r \in \llbracket 1 , N+2 \rrbracket$ and suppose that \eqref{iterB} is satisfied for all $p \leq r$ and all $ q \leq r$. If $r < N+2$, fix $|\kappa| \leq N+2-(r+1)$ and notice that, using \eqref{VM2} and Lemma \ref{maxwellbis},
$$ \partial_t F_{0i} = \partial^j F_{ji}+\int_{v \in \R^3} \frac{v^i}{v^0} f dv \hspace{1cm} \text{and} \hspace{1cm} \partial_t F_{ij}=\partial_i F_{0j}+\partial_j F_{i0}.$$
Then, by a standard $L^2_x-L^1_x$ Sobolev inequality and using the induction hypothesis twice, we get
\begin{eqnarray}
\nonumber \left\| \tau_+^{|\kappa|+r+2} \nabla^{r+1}_{\partial_t} \nabla_x^{\kappa}  F   \right\|^2_{L^2(\Sigma_0)} & \leq  & 2\left\| \tau_+^{|\kappa|+r+2}  \nabla_x \nabla^r_{\partial_t} \nabla_x^{\kappa}  F \right\|^2_{L^2(\Sigma_0)}+\left\| \tau_+^{|\kappa|+r+2}  \int_{v} \partial^r_t \partial_x^{\kappa}  f dv \right\|^2_{L^2(\Sigma_0)} \\ \nonumber
& \lesssim & \epsilon + \sum_{|\beta| \leq 2} \left\| \tau_+^{|\kappa|+r+2} \int_{v} \partial_x^{\beta} \partial^r_t \partial_x^{\kappa}  f dv \right\|^2_{L^1(\Sigma_0)} \hspace{2mm} \lesssim \hspace{2mm} \epsilon+\epsilon^2,
\end{eqnarray} 
since $r+|\kappa|+|\beta| \leq N+3$. We now turn on the Vlasov field and we do not suppose $r < N+2$ anymore. Start by fixing $\alpha_1$ and $\alpha_2$ such that $|\alpha_1|+|\alpha_2|+r+1 \leq N+3$. Iterating the commutation formula $T_F(\partial_{\mu} f ) = - \mathcal{L}_{\partial_{\mu}} (F)(v, \nabla_v f)$ and using $\mathcal{L}_{\partial_{\mu}}= \nabla_{\partial_{\mu}}$, we get
$$\partial_t \partial_t^r \partial^{\alpha_1}_x f=-\frac{v^i}{|v|} \partial_i \partial_t^r \partial^{\alpha_1}_x f+\sum_{q+p=r} \sum_{\begin{subarray}{} \hspace{2mm} |\kappa|+|\beta_1| = |\alpha_1| \\ p+|\beta_1| \leq r+|\alpha_1|-1 \end{subarray}}  \frac{v^{\mu}}{|v|} \nabla^q_{\partial_t} \nabla^{\kappa}_x {F_{\mu}}^{ j} \partial_{v^j} \partial_t^p \partial_x^{\beta_1} f.$$
Taking the $\partial^{\alpha_2}_v$ derivative of each side and multiplying them by $\tau_+^{|\alpha_1|+r+3}|v|^{|\alpha_2|}$, one obtains
\begin{eqnarray} \label{eqB1}
\hspace{-5mm} \tau_+^{|\alpha_1|+r+1+2}|v|^{|\alpha_2|} |\partial_t^{r+1} \partial^{\alpha_1}_x \partial^{\alpha_2}_v f| & \leq & \sum_{|\alpha_3|+n=|\alpha_2|} \tau_+^{|\alpha_1|+1+r+2} |v|^{|\alpha_2|-n}| \partial_x \partial_t^r \partial^{\alpha_1}_x \partial^{\alpha_3}_v f|+ \\ 
& & \hspace{-4mm} \sum_{\begin{subarray}{} \hspace{1mm} |\kappa|+|\beta_1| = |\alpha_1| \\ p+|\beta_1| \leq r+|\alpha_1|-1 \end{subarray}} \sum_{\begin{subarray}{} \hspace{2mm} q+p=r \\ |\alpha_3|+n=|\alpha_2| \end{subarray}} \tau_+^{|\alpha_1|+r+3} |v|^{|\alpha_2|-n} \left| \nabla^q_{\partial_t} \nabla^{\kappa}_x F \partial_{v} \partial_t^p \partial_x^{\beta_1} \partial_{v}^{\alpha_3} f \right|. \label{eqB2}
\end{eqnarray}
By the induction hypothesis, the $L^1_v L^1(\Sigma_0)$ norm of the terms of \eqref{eqB1} are bounded by $\epsilon$. Consider parameters such as in the sum in \eqref{eqB2}.

$\bullet$ If $q+|\kappa| \leq N$, then, using a standard $L^{\infty}-L^2$ Sobolev inequality on $\tau_+^{q+|\kappa|+1} \nabla^q_{\partial_t} \nabla^{\kappa}_x F$ and that $|v| \geq 3$ on the support of $f^0$, we get
$$ \tau_+^{|\alpha_1|+r+3} |v|^{|\alpha_2|-n} \left| \nabla^q_{\partial_t} \nabla^{\kappa}_x F \partial_{v} \partial_t^p \partial_x^{\beta_1} \partial_{v}^{\alpha_3} f \right| \lesssim  \tau_+^{|\beta_1|+p+2} |v|^{|\alpha_3|+1} \left| \partial_t^p \partial_x^{\beta_1} \partial_v \partial_{v}^{\alpha_3} f \right| \sum_{|\gamma| \leq |\kappa|+2} \left\| \tau_+^{|\kappa|+q+1} \nabla^{q}_{\partial_t} \nabla_x^{\gamma}  F   \right\|_{L^2_x}.$$
The $L^1_v L^1(\Sigma_0)$ norm of the left hand side of the previous inequality is then bounded by $\epsilon^{\frac{3}{2}}$ according to the induction hypothesis and $p+q=r$.

$\bullet$ Otherwise $|\beta_1|+p \leq 2$ and, by the Cauchy-Schwarz inequality in $x$,
$$\left\| \tau_+^{|\alpha_1|+r+3} |v|^{|\alpha_2|-n}  \nabla^q_{\partial_t} \nabla^{\kappa}_x F \partial_{v} \partial_t^p \partial_x^{\beta_1} \partial_{v}^{\alpha_3} f \right\|_{L^1_{v,x}} \leq \left\| \tau_+^{|\kappa|+q+1}  \nabla^q_{\partial_t} \nabla^{\kappa}_x F  \right\|_{L^2_{x}} \left\| \tau_+^{|\beta_1|+p+2} |v|^{|\alpha_3|}  \partial_t^p \partial_x^{\beta_1} \partial_v \partial_{v}^{\alpha_3} f \right\|_{L^1_vL^2_{x}}.$$
The left hand side of the previous inequality can be bounded by $\epsilon^{\frac{3}{2}}$. Indeed, as $|v| \geq 3$ on the support of $f^0$ and using a $L^2_x-L^1_x$ Sobolev inequality\footnote{To deal with the lack of regularity of the absolute value of, say, $\int_v|g| dv$ one may apply first a $L^2-L^1$ Sobolev inequality in the variables $(x^1,x^2)$ and then in the variable $x^3$, as we did in the proof of Proposition \ref{KS2}.}, it yields
 $$ \left\| \tau_+^{|\beta_1|+p+2} |v|^{|\alpha_3|}  \partial_t^p \partial_x^{\beta_1} \partial_v \partial_{v}^{\alpha_3} f \right\|_{L^1_vL^2(\Sigma_0)} \lesssim \sum_{ |\beta| \leq |\beta_1|+2} \left\| \tau_+^{|\beta_1|+p+2} |v|^{|\alpha_3|+1}  \partial_t^p \partial_x^{\beta} \partial_v \partial_{v}^{\alpha_3} f \right\|_{L^1_v L^1 (\Sigma_0) }.$$
It remains to use the induction hypothesis twice. This concludes the induction and then the proof.
\end{proof}

\section{All derivatives of $F$ are chargeless}\label{AppendixC}

The aim of this section is to prove the following result, which also applies to massive particles.
\begin{Pro}
Let $N_0 \geq 2$ and $(f,F)$ be a sufficiently regular solution to the Vlasov-Maxwell system on $[0,T]$ such that
$$ \forall \hspace{0.5mm} t \in [0,T], \hspace{1cm} \sum_{ |\beta| \leq N_0} \int_{\Sigma_t} \int_{v \in \R^3} \left| \widehat{Z}^{\beta} f \right| dv dx +\sum_{  |\gamma| \leq N_0} \int_{\Sigma_t}  \left| \mathcal{L}_{Z^{\gamma}}(F) \right|^2 dx < + \infty.$$ 
Then, for all $1 \leq |\gamma| \leq N_0$, $\mathcal{L}_{Z^{\gamma}}(F)$ is chargeless, i.e.
$$  -\lim_{r \rightarrow + \infty} \int_{\mathbb{S}_{0,r}} \rho \left( \mathcal{L}_{Z^{\gamma}}(F) \right) d \mathbb{S}_{0,r} = 0 .$$
\end{Pro}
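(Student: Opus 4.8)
The plan is to prove that for every multi-index $\gamma$ with $1 \leq |\gamma| \leq N_0$, the charge of $\mathcal{L}_{Z^{\gamma}}(F)$, namely $-\lim_{r \to +\infty} \int_{\mathbb{S}_{0,r}} \rho(\mathcal{L}_{Z^{\gamma}}(F)) d\mathbb{S}_{0,r}$, vanishes. The first reduction is to observe, via the commutation formula of Proposition \ref{Maxcom}, that each $\mathcal{L}_{Z^{\gamma}}(F)$ satisfies a Maxwell system $\nabla^{\mu}\mathcal{L}_{Z^{\gamma}}(F)_{\mu\nu} = \sum_{|\xi|\leq|\gamma|} m^{\gamma}_{\xi} J(\widehat{Z}^{\xi}f)_{\nu}$ and $\nabla^{\mu}{}^*\!\mathcal{L}_{Z^{\gamma}}(F)_{\mu\nu}=0$, so the charge is a well-defined conserved quantity whose value is $\int_{\Sigma_0} \sum_{|\xi|\leq|\gamma|} m^{\gamma}_{\xi} \int_v \widehat{Z}^{\xi} f \, dv \, dx$ by the divergence theorem applied to $J(\mathcal{L}_{Z^{\gamma}}(F))^0 = \nabla^i \mathcal{L}_{Z^{\gamma}}(F)_{i0}$ (this is the constraint equation, integrated over $\Sigma_0$, with the flux at spatial infinity giving the charge). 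So it suffices to show that for each $\gamma$ with $|\gamma|\geq 1$,
$$ \int_{\Sigma_0} \int_{v \in \R^3} \widehat{Z}^{\gamma} f \, dv \, dx = 0,$$
since the hierarchy of charges of lower derivatives then forces all the $m^{\gamma}_\xi$-terms with $|\xi|\geq 1$ to vanish and the $|\xi|=0$ term is the total charge $Q(0)$, which we do NOT assume vanishes — but crucially it only enters for $|\gamma|=0$, which is excluded.

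The heart of the matter is therefore the following claim: if $\widehat{Z} \in \K$ is a single commutation vector field (a complete lift of a Poincaré generator, or the scaling $S$), then $\int_{\R^3_x}\int_{\R^3_v} \widehat{Z}^{\gamma} f \, dv\, dx = 0$ whenever $\widehat{Z}^{\gamma}$ contains at least one factor. I would prove this by induction on $|\gamma|$, peeling off the innermost vector field. For the base case $|\gamma|=1$: for a translation $\widehat{\partial}_\mu = \partial_\mu$, the integral $\int\int \partial_\mu(\ldots)$ — if $\mu = i$ spatial, it is a total $x$-derivative, hence zero by the decay/integrability hypothesis; if $\mu = 0 = t$, we instead use the Vlasov equation $|v|\partial_t f = -v^i\partial_i f - F(v,\nabla_v f)$ and note that $\int_v \int_x v^i \partial_i f \, dv\, dx = 0$ (total $x$-derivative) while $\int_v\int_x F(v,\nabla_v f) dv\, dx = 0$ after integration by parts in $v$ using the antisymmetry of $F$ (exactly the computation in the proof of Proposition \ref{energyf}, where $\partial_\mu \int_v |g| v^\mu/v^0 \, dv$ picks up only the $F(v/v^0,\nabla_v g)$ term which vanishes as $v^j v^i F_{ji}=0$). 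For a rotation $\widehat{\Omega}_{ij}$, complete lift identity and integration by parts in both $x$ and $v$: $\int_v\int_x \widehat{\Omega}_{ij} f = \int_v\int_x (x^i\partial_j - x^j\partial_i)f + (v^i\partial_{v^j}-v^j\partial_{v^i})f = 0$, the first bracket being a divergence in $x$ and the second a divergence in $v$. For the hyperbolic rotation $\widehat{\Omega}_{0k} = t\partial_k + x^k\partial_t + v^0\partial_{v^k}$: the $t\partial_k$ term is an $x$-divergence (at fixed $t$, on $\Sigma_0$ actually $t=0$ so this term is literally zero); the $v^0\partial_{v^k}$ term integrates by parts in $v$ to $-\int \partial_{v^k}(v^0) f = -\int (v^k/v^0) f$; and the $x^k\partial_t f$ term we rewrite via the Vlasov equation as $x^k\partial_t f = -x^k (v^i/|v|)\partial_i f - x^k|v|^{-1}F(v,\nabla_v f)$, integrate the first by parts in $x$ to get $+(v^k/|v|)\int f\,dv\,dx$ which exactly cancels the $v^0\partial_{v^k}$ contribution (recall $|v|=v^0$), and the $F$-term vanishes by the antisymmetry argument again. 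For $\widehat{Z}=S=x^\mu\partial_\mu$: $\int\int Sf = \int\int (x^i\partial_i + t\partial_t)f$; on $\Sigma_0$ the $t\partial_t$ term vanishes identically ($t=0$) — wait, one must be careful, $S$ is applied before restricting to $\Sigma_0$, so one uses the Vlasov equation for $\partial_t f$ as before and the $x^i\partial_i$ together with the resulting $-x^i(v^j/|v|)\partial_j$-type terms; integration by parts in $x$ produces $3\int\int f\,dv\,dx$ from the $x^i\partial_i$ and $-3\int\int f\,dv\,dx$-type cancellation — this needs the neutral hypothesis only if a bare $Q(0)$ survives, so I must check the bookkeeping produces a total multiple of $Q(0)$ times zero, i.e. the coefficient of the non-vanishing piece is zero by a dimensional/homogeneity count. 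The cleanest route for $S$ is: $\int_{\Sigma_0}\int_v Sf = \int_{\Sigma_0}\int_v x^i\partial_i f\,dv\,dx + \int_{\Sigma_0}\int_v t\partial_t f\,dv\,dx$; the second vanishes since $t=0$ on $\Sigma_0$, the first equals $-3\int_{\Sigma_0}\int_v f\,dv\,dx = -3Q(0)$. So here the charge of $\mathcal{L}_S(F)$ is $m^S \cdot Q(0)$ — but from Lemma \ref{comumax1}, $\mathcal{L}_S(J(f)) = J(Sf) + 2J(f)$, i.e. the coefficient structure is $\nabla^\mu\mathcal{L}_S(F)_{\mu\nu} = J(Sf)_\nu + 3\delta_{S,S}J(f)_\nu$; integrating, the charge is $(-3 + 3)Q(0) = 0$. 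Good — the scaling case closes by an exact cancellation of the commutator coefficient against the $x$-integration-by-parts constant, and does NOT need neutrality.

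For the inductive step, write $\widehat{Z}^{\gamma} = \widehat{Z}^{\gamma_1}\widehat{Z}^{\gamma'}$ with $|\gamma'| = |\gamma|-1 \geq 1$ and set $g = \widehat{Z}^{\gamma'}f$; I want $\int_{\Sigma_0}\int_v \widehat{Z}^{\gamma_1} g \, dv\, dx = 0$. The argument is identical to the base case with $f$ replaced by $g$, except that $g$ no longer solves the homogeneous Vlasov equation but rather $T_F(g) = \sum n \,\mathcal{L}_{Z^\eta}(F)(v,\nabla_v \widehat{Z}^\kappa f)$ by Proposition \ref{Maxcom} — however, whenever I used the Vlasov equation it was precisely to substitute for $\partial_t g$, and the new inhomogeneous terms $\mathcal{L}_{Z^\eta}(F)(v,\nabla_v\widehat{Z}^\kappa f)$ still integrate to zero over $\R^3_v$ (integration by parts in $v$, antisymmetry of the $2$-form, $v^\mu v^\nu \mathcal{L}_{Z^\eta}(F)_{\mu\nu}=0$), so they contribute nothing. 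The rotation and translation-in-$x$ cases never touch the equation at all. Hence each such integral is a sum of exact $x$-divergences, exact $v$-divergences (killed by the decay hypotheses: $\int_v\int_x |\widehat{Z}^\beta f|\,dv\,dx < \infty$ for $|\beta|\leq N_0$, plus the weighted decay giving vanishing boundary terms), and $v$-space terms that cancel in pairs exactly as in the base case — with the scaling's commutator-constant cancellation being the one nontrivial bookkeeping point. I expect the main obstacle to be the careful justification that boundary terms at $|x|\to\infty$ and at $|v|\to 0,\infty$ genuinely vanish (this is where the finiteness hypothesis $\sum_{|\beta|\leq N_0}\int\int|\widehat{Z}^\beta f| < \infty$ together with the fact that the $\widehat{Z}^\beta f$ for $|\beta|$ one larger are also in $L^1$ is used to control the $x$-flux, and the support-away-from-$v=0$ or integrability in $v$ handles the velocity boundary), and the honest tracking of the commutator coefficients $m^\gamma_\xi$ to see that the only potentially surviving term $Q(0)$ always appears with coefficient zero when $|\gamma|\geq 1$ — equivalently, that $\int_{\Sigma_0}\int_v\widehat{Z}^\gamma f\,dv\,dx=0$ for $|\gamma|\geq 1$ regardless of neutrality, which is exactly what the induction above delivers.
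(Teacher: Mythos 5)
Your computations for the translations, rotations, and boosts are correct and match the paper's proof; the gap is in the reduction step. You reduce the Proposition to the claim that $\int_{\Sigma_0}\int_v\widehat{Z}^\gamma f\,dv\,dx = 0$ for every $|\gamma|\geq 1$, and you run the induction on this statement. That claim is false whenever $\gamma$ contains the scaling vector field $S$, and Proposition~C.1 makes no neutral hypothesis (Remark~1.4 is explicit that it holds whether or not $Q$ vanishes). Your own base-case computation yields $\int_{\Sigma_0}\int_v Sf\,dv\,dx = -3Q(0)$, nonzero in general; peeling off another $S$ by the same $t=0$ integration by parts gives $\int_{\Sigma_0}\int_v S^2 f\,dv\,dx = -3\cdot(-3Q(0)) = 9Q(0)$, and so on. Your remark that the $|\xi|=0$ term ``only enters for $|\gamma|=0$'' is also incorrect: by Lemma~\ref{comumax1}, every occurrence of $S$ in $\gamma$ feeds a multiple of $J(f)$ back into the commuted Maxwell equation, so $m^\gamma_0 \neq 0$ whenever $\gamma$ contains a scaling. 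You do see, for a single $S$, that the $(-3+3)Q(0)=0$ cancellation closes that case; but you then declare this ``equivalent'' to $\int\int\widehat{Z}^\gamma f = 0$ and assert the induction delivers the latter. It cannot: the base case for $\gamma=(S)$ already produces $-3Q(0)\neq 0$, so the inductive hypothesis fails at the first step.

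The correct quantity to propagate is the charge itself, namely the full combination $\sum_\xi m^\gamma_\xi\int\int\widehat{Z}^\xi f$, not the single term $\int\int\widehat{Z}^\gamma f$. This is precisely what the paper's Lemma~\ref{lemappen} is built to do: it takes as input a $2$-form $H_0$ whose Maxwell source is $J(h_0)$, with $h_0$ the \emph{entire} source $\sum_\xi m^{\gamma'}_\xi\widehat{Z}^\xi f$ (not just $\widehat{Z}^{\gamma'}f$), and concludes $\mathcal{L}_Z(H_0)$ is chargeless; the Proposition then follows by applying the lemma once with $H_0 = \mathcal{L}_{Z^{\gamma'}}(F)$ where $\gamma'$ drops the outermost index. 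With that formulation the scaling case becomes trivial and needs no inductive hypothesis: on $\Sigma_0$ one has $\int\int Sh_0 = -3\int\int h_0$, so the charge of $\mathcal{L}_S(H_0)$, proportional to $\int\int(Sh_0 + 3h_0)$, equals $(-3+3)\int\int h_0 = 0$ whatever the value of $\int\int h_0$. Your integrations by parts in $x$ and $v$, and the cancellation coming from the antisymmetry of the $2$-form together with the $1/v^0$ factor, can all be retained verbatim; what must change is the inductive statement, from ``$\int\int\widehat{Z}^\gamma f = 0$'' to ``$\mathcal{L}_{Z^\gamma}(F)$ is chargeless''.
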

This ensues from the commutation formula of Proposition \ref{Maxcom} and the following lemma.
\begin{Lem}\label{lemappen}
Fix $p \geq 1$ and let $H_0$, $H_1$, ..., $H_p$ be sufficiently regular $2$-forms defined on $[0,T] \times \R^3$ and $h_0$, $h_1$, ..., $h_p$ be sufficiently regular functions defined on $[0,T] \times \R^3_x \times \R^3_v$ such that
$$ \nabla^{\mu} \left( H_0 \right)_{ \mu \nu} = J(h_0)_{\nu} \hspace{12mm} \text{and} \hspace{12mm} T(h_0)=\sum_{1 \leq i \leq p} H_i \left( v, \nabla_v h_i \right).$$
Suppose moreover that
$$\forall \hspace{0.5mm} t \in [0,T], \hspace{1cm} \sum_{\lambda=0}^p \|H_{\lambda} \|_{L^2_x} \hspace{-0.4mm} (t)+\| h_{\lambda} \|_{L^1_{x,v}} \hspace{-0.5mm} (t) + \|(1+r)  \nabla_{t,x} h_{\lambda} \|_{L^1_{x,v}} \hspace{-0.5mm} (t)+ \|(1+|v|)  \nabla_{v} h_{\lambda} \|_{L^1_{x,v}}\hspace{-0.5mm} (t) < + \infty. $$
Then, $\mathcal{L}_{Z}(H_0)$ is chargeless for all $Z \in \mathbb{K}$.
\end{Lem}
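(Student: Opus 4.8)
The plan is to compute the charge of $\mathcal{L}_Z(H_0)$ directly at an arbitrary time $t\in[0,T]$ and show it vanishes, using only the structural form of the source term $T(h_0)$ and a few integrations by parts; no compact support enters. First I would recall that for a $2$-form $G$ with $\nabla^\mu G_{\mu\nu}=J_\nu$ one has, since $G_{00}=0$, the constraint $\partial^i G_{i0}=J_0$, hence by the divergence theorem on a ball $\int_{\mathbb{S}_{t,r}}\rho(G)\,d\mathbb{S}_{t,r}=\int_{\mathbb{S}_{t,r}}\frac{x^i}{r}G_{i0}\,d\mathbb{S}_{t,r}=\int_{|y|\le r}J_0\,dy$; letting $r\to+\infty$, which is legitimate as soon as $J_0\in L^1(\Sigma_t)$, the charge of $G$ at time $t$ equals $-\int_{\Sigma_t}J_0\,dx$. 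For $G=\mathcal{L}_Z(H_0)$ with $Z\in\mathbb{K}$, Lemma \ref{comumax1} gives $\nabla^\mu\mathcal{L}_Z(H_0)_{\mu\nu}=J(\widehat Z h_0)_\nu+3\delta_{Z,S}J(h_0)_\nu$ (only the first Maxwell equation is needed for this), and since $J(g)_0=\int_v\frac{v_0}{v^0}g\,dv=-\int_v g\,dv$, the statement reduces to proving
$$\int_{\Sigma_t}\int_v\Big(\widehat Z h_0+3\,\delta_{Z,S}\,h_0\Big)\,dv\,dx=0\qquad\text{for every }Z\in\mathbb{K}.$$
The weighted $L^1$ hypotheses on the $h_\lambda$ are what guarantee $J_0\in L^1(\Sigma_t)$ and, later, the vanishing of the various fluxes at spatial infinity.

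The argument rests on two elementary observations. First, recalling $H(v,\nabla_v g)=v^\mu{H_\mu}^j\partial_{v^j}g$, for a $v$-independent $2$-form $H$ and any function $g$ one has the pointwise identity $\frac1{v^0}H(v,\nabla_v g)=\partial_{v^j}\big(\frac{v^\mu}{v^0}{H_\mu}^j g\big)$, because the extra term $\partial_{v^j}\big(\frac{v^\mu}{v^0}\big){H_\mu}^j g$ vanishes: $\partial_{v^j}(v^0/v^0)=0$, and $\partial_{v^j}\big(\frac{v^k}{v^0}\big){H_k}^j=\big(\frac{\delta_{kj}}{v^0}-\frac{v^kv^j}{(v^0)^3}\big)H_{kj}=0$ by the antisymmetry of $H$. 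Applying this to $T(h_0)=\sum_iH_i(v,\nabla_v h_i)$ gives $\int_v\frac1{v^0}T(h_0)\,dv=0$; dividing $v^\mu\partial_\mu h_0=T(h_0)$ by $v^0$ and integrating in $v$ then yields
$$\partial_t\int_v h_0\,dv=-\partial_i\int_v\frac{v^i}{v^0}h_0\,dv,$$
i.e. the time derivative of the $v$-average of $h_0$ is a pure spatial divergence. Second, integrating by parts in $v$ (using $v_0=-v^0$ and $\partial_{v^j}v^0=v^j/v^0$) I would record
$$\int_v\partial_\mu h_0\,dv=\partial_\mu\!\int_v h_0\,dv,\qquad\int_v\widehat\Omega_{ij}h_0\,dv=\Omega_{ij}\!\int_v h_0\,dv,\qquad\int_v Sh_0\,dv=S\!\int_v h_0\,dv,$$
$$\int_v\widehat\Omega_{0k}h_0\,dv=(t\partial_k+x^k\partial_t)\!\int_v h_0\,dv-\int_v\frac{v^k}{v^0}h_0\,dv,$$
the velocity part of $\widehat\Omega_{ij}$ contributing nothing since $i\ne j$.

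I would then integrate each identity over $\Sigma_t$ and watch it collapse. Spatial translations give a spatial divergence, hence $0$; $\partial_t$ gives a spatial divergence after the first observation; for $\Omega_{ij}$ the vector field is divergence free, so $\int_{\Sigma_t}\Omega_{ij}(\cdot)\,dx=0$. For $\widehat\Omega_{0k}$, the term $t\,\partial_k\!\int_v h_0\,dv$ is a spatial divergence ($t$ being constant on $\Sigma_t$), and by the first observation $x^k\partial_t\!\int_v h_0\,dv=-x^k\partial_i\!\int_v\frac{v^i}{v^0}h_0\,dv$ integrates by parts in $x$ to $\int_{\Sigma_t}\int_v\frac{v^k}{v^0}h_0\,dv\,dx$, exactly cancelling the last term; hence the charge vanishes. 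For $S=t\partial_t+x^i\partial_i$, the $t\partial_t$-part integrates to $0$ by the first observation, while $\int_{\Sigma_t}x^i\partial_i\!\int_v h_0\,dv\,dx=-3\int_{\Sigma_t}\int_v h_0\,dv\,dx$ by integration by parts, which balances the $3\delta_{Z,S}h_0$ contribution. This proves the lemma. The Proposition then follows by applying it with $H_0=\mathcal{L}_{Z^{\gamma'}}(F)$ for a decomposition $Z^\gamma=ZZ^{\gamma'}$: by Proposition \ref{Maxcom} this $H_0$ satisfies $\nabla^\mu(H_0)_{\mu\nu}=J(h_0)_\nu$ with $h_0=\sum_{|\xi|\le|\gamma'|}m^{\gamma'}_\xi\widehat Z^\xi f$, and $T(h_0)=\sum m^{\gamma'}_\xi T(\widehat Z^\xi f)$ is, again by Proposition \ref{Maxcom} together with $T_F(f)=0$, a finite sum of terms $H_i(v,\nabla_v h_i)$; the assumed $L^1$ bounds on the $\widehat Z^\beta f$ and the $L^2$ bounds on the $\mathcal{L}_{Z^\gamma}(F)$, combined with the weights carried by the vector fields $\widehat Z$, supply the weighted integrability the lemma requires.

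The hard part will not be the (completely explicit) algebra above but the analytic bookkeeping: one must justify that every boundary term at spatial infinity vanishes — in the $r\to+\infty$ limit defining the charge and in the integrations by parts over $\Sigma_t$ — and must handle the $v$-integrations by parts carefully near $v=0$, where $\frac{v^\mu}{v^0}=\frac{v^\mu}{|v|}$ is merely bounded. This is exactly what the hypotheses $\|(1+r)\nabla_{t,x}h_\lambda\|_{L^1_{x,v}}<\infty$ and $\|(1+|v|)\nabla_v h_\lambda\|_{L^1_{x,v}}<\infty$ are designed to control.
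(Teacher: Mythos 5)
Your proof is correct and is in substance the same as the paper's: reduce the charge of $\mathcal{L}_Z(H_0)$ via Lemma~\ref{comumax1} and the divergence theorem to $-\int_{\Sigma_t}\int_v\bigl(\widehat Z h_0 + 3\delta_{Z,S}h_0\bigr)\,dv\,dx$, then dispose of each $Z\in\mathbb{K}$ through integrations by parts in $x$ and $v$, the transport equation for $h_0$, and the antisymmetry of the $H_i$. Your organization is marginally cleaner in that you extract the conservation law $\partial_t\int_v h_0\,dv=-\partial_i\int_v\frac{v^i}{v^0}h_0\,dv$ once (your first observation, which rests on exactly the antisymmetry cancellation the paper invokes in the $\partial_t$ case) and reuse it for $\partial_t$, $\Omega_{0k}$, and $S$, whereas the paper redoes the $v$-integration by parts case by case and handles $\Omega\in\Or$ by a short geometric sphere argument ($\slashed{div}\,\Omega=0$ on $\mathbb{S}_{t,r}$) rather than the volume formula --- cosmetic differences, not substantive ones.
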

\begin{Rq}
Note that in dimension $n \neq 3$, we merely have that $\mathcal{L}_S(H_0)$ is chargeless if and only if $H_0$ is.
\end{Rq}
\begin{proof}
To lighten the proof, we suppose that $p=1$ and we denote $(H_0,h_0)$ by $(G,g)$ and $(h_1,H_1)$ by $(H,h)$. Consider first $Z \in \mathbb{P}$. Then, by Lemma \ref{comumax1}, $\nabla^{\mu} \mathcal{L}_Z(G)_{\mu 0}= J(\widehat{Z} g)_0$ so that, using the divergence theorem, 
\begin{equation}\label{eq:chargeap}
 Q(t):= \lim_{r \rightarrow + \infty} \int_{ \Sp_{t,r} } \rho \left( \mathcal{L}_{Z}(G) \right) d \Sp_{t,r} = - \int_{x \in \R^3} \int_{v \in \R^3} \widehat{Z} g dv dx.
 \end{equation}
\begin{itemize}
\item If $Z=\Omega \in \Or$ is a rotational vector field, the result does not depend of the source term of the Maxwell equations. Indeed, using Lemma \ref{randrotcom} and the divergence theorem on $\mathbb{S}_{t,r}$, one has,
$$ \int_{ \Sp_{t,r} } \rho \left( \mathcal{L}_{\Omega}(G) \right) d \Sp_{t,r} = \int_{ \Sp_{t,r} } \Omega \left( \rho ( G ) \right) d \Sp_{t,r}= \int_{ \Sp_{t,r} } \slashed{div} ( \Omega) \rho(G) d \Sp_{t,r} .$$
As $\Omega$ is tangential to $\Sp_{t,r}$, we have $\slashed{div} (\Omega )(t,r,\omega_1,\omega_2) = div_{\R^3} (\Omega)(t,r,\omega_1,\omega_2)=0$, so that $Q(t)=0$.
\item If $Z = \partial_i$ is a spatial translation, an integration by parts on the right hand side of \eqref{eq:chargeap} gives the result. 
\item If $Z=\partial_t$, then, as $\partial_t g=-\frac{v^j}{v^0} \partial_j g+H \left( \frac{v}{v^0} , \nabla_v h \right)$, integration by parts (in $x$ and in $v$) gives us, as $H$ is a $2$-form,
\begin{equation}\label{eq:chargeapped}
Q(t)= \int_{x \in \R^3} \int_{v \in \R^3} \frac{v^i}{v^0} \partial_i g-H \left( \frac{v}{v^0} , \nabla_v h \right) dv dx = \int_{x \in \R^3} \int_{v \in \R^3} \frac{v^i v^j}{(v^0)^3} H_{ij} h dv dx =0.
\end{equation}
\item If $Z= \Omega_{0i}$ is a Lorentz boost, then an integration by parts in $x$ on $t \partial_i g$ and in $v$ on $v^0 \partial_{v^i} g$ gives
$$Q(t) = - \int_{x \in \R^3} \int_{v \in \R^3} \left( t \partial_i g+x^i \partial_t g+v^0 \partial_{v^i} g \right) dv dx = \int_{x \in \R^3} \int_{v \in \R^3}\frac{v^i}{v^0} gdv dx-\int_{x \in \R^3} \int_{v \in \R^3} x^i \partial_t g dv dx.$$
Using again $\partial_t g=-\frac{v^j}{v^0} \partial_j g+H \left( \frac{v}{v^0} , \nabla_v h \right)$ and integrating by parts, we have
\begin{eqnarray}
\nonumber \int_{x \in \R^3} \int_{v \in \R^3} x^i \partial_t g dv dx & = & -\int_{v \in \R^3} \frac{v^j}{v^0} \int_{x \in \R^3}x^i \partial_j g dx dv+ \int_{x \in \R^3} x^i \int_{v \in \R^3} H \left( \frac{v}{v^0} , \nabla_v h \right) dv dx \\ \nonumber 
& = &  \int_{x \in \R^3} \int_{v \in \R^3} \frac{v^i}{v^0}g dv dx-\int_{x \in \R^3} \int_{v \in \R^3} x^i\frac{v^k v^j}{(v^0)^3} H_{kj} h dv dx .
\end{eqnarray}
As $H$ is a $2$-form, we finally obtain that $Q(t)=0$.
\end{itemize}
For the case of the scaling vector field, note first by Lemma \ref{comumax1} that $\nabla^{\mu} \mathcal{L}_S (G)_{\mu 0 } = J(Sg)_0+3J(g)_0$. Hence,
$$ \lim_{r \rightarrow + \infty} \int_{ \Sp_{t,r} } \rho \left( \mathcal{L}_{S}(G) \right) d \Sp_{t,r} = - \int_{x \in \R^3} \int_{v \in \R^3} \left( x^{\mu} \partial_{\mu}g+3g \right) dv dx = -t \int_{x \in \R^3} \int_{v \in \R^3}  \partial_t g dv dx .$$
Recall from \eqref{eq:chargeapped} that the integral on the right hand side of the last equation is equal to $0$. This concludes the proof.
\end{proof}

\section{Proof of Lemmas \ref{randrotcom}, \ref{null} and \ref{alphaem}}\label{appendixD}

Let $G$ be a $2$-form and $J$ be a $1$-form, both sufficiently regular and defined on $[0,T[ \times \R^3$, such that
\begin{eqnarray}
\nonumber \nabla^{\mu} G_{\mu \nu} & = & J_{\nu}, \\ \nonumber
\nabla^{\mu} {}^* \! G_{ \mu \nu } & = & 0.
\end{eqnarray}
Let us successively prove Lemmas \ref{randrotcom}, \ref{null} and \ref{alphaem}.
\begin{Lem}
Let $\Omega \in \Or$. Then, denoting by $\zeta$ any of the null component $\alpha$, $\underline{\alpha}$, $\rho$ or $\sigma$,
$$ [\mathcal{L}_{\Omega}, \nabla_{\partial_r}] G=0, \hspace{1.2cm} \mathcal{L}_{\Omega}(\zeta(G))= \zeta ( \mathcal{L}_{\Omega}(G) ) \hspace{1.2cm} \text{and} \hspace{1.2cm} \nabla_{\partial_r}(\zeta(G))= \zeta ( \nabla_{\partial_r}(G) ) .$$
Similar results hold for $\mathcal{L}_{\Omega}$ and $\nabla_{\partial_t}$, $\nabla_L$ or $\nabla_{\underline{L}}$. For instance, $\nabla_{L}(\zeta(G))= \zeta ( \nabla_{L}(G) )$.
\end{Lem}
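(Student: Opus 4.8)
The plan is to verify all the assertions in a null frame $(L,\underline{L},e_1,e_2)$, reducing them to two elementary geometric facts about Minkowski space: the commutators of $\Omega\in\Or$ with the frame vectors, and the covariant derivatives of the frame vectors along $\partial_r$ and $\partial_t$. First I would record the commutators. A rotation $\Omega=x^i\partial_j-x^j\partial_i$ annihilates both $r$ and $t$, so $[\Omega,\partial_t]=[\Omega,\partial_r]=0$, hence $[\Omega,L]=[\Omega,\underline{L}]=0$, while $[\Omega,e_A]$ is again tangent to the spheres $\Sp_{t,r}$ (one checks $\langle[\Omega,e_A],\partial_r\rangle=\langle[\Omega,e_A],\partial_t\rangle=0$ using that $\Omega$ is Killing and $\Sp_{t,r}$-tangent). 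Next I would compute the connection coefficients: $\partial_t$ and the $\partial_i$ are parallel, and a one-line computation in Cartesian coordinates gives $\nabla_{\partial_r}\partial_r=0$; consequently $\nabla_{\partial_r}L=\nabla_{\partial_r}\underline{L}=\nabla_{\partial_t}L=\nabla_{\partial_t}\underline{L}=0$, likewise $\nabla_L L=\nabla_L\underline{L}=\nabla_{\underline{L}}L=\nabla_{\underline{L}}\underline{L}=0$, and $\nabla_V e_A$ is $\Sp_{t,r}$-tangent for $V\in\{\partial_r,\partial_t,L,\underline{L}\}$, of the form $\nabla_V e_A=\sum_B\omega_{AB}e_B$ with $\omega$ antisymmetric since $\langle e_A,e_B\rangle$ is constant.

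Then the identities $\nabla_V(\zeta(G))=\zeta(\nabla_V G)$ for $V\in\{\partial_r,\partial_t,L,\underline{L}\}$ follow by expanding both sides: in $\nabla_V(G(X,Y))=(\nabla_V G)(X,Y)+G(\nabla_V X,Y)+G(X,\nabla_V Y)$ with $(X,Y)$ one of $(L,\underline{L})$, $(e_A,L)$, $(e_A,\underline{L})$, $(e_1,e_2)$, each correction term $G(\nabla_V X,Y)$ either vanishes (because $\nabla_V L=\nabla_V\underline{L}=0$) or, in the case of $\sigma$, cancels against $G(X,\nabla_V Y)$ by antisymmetry of $\omega$ together with $G(e_A,e_A)=0$; for $\alpha,\underline{\alpha}$ the term $G(\nabla_V e_A,\cdot)$ also appears in the covariant derivative of the $\Sp_{t,r}$-tangent one-form $\zeta(G)$ and cancels. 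The identities $\mathcal{L}_\Omega(\zeta(G))=\zeta(\mathcal{L}_\Omega(G))$ are proved in exactly the same fashion, now using $\mathcal{L}_\Omega(G(X,Y))=(\mathcal{L}_\Omega G)(X,Y)+G([\Omega,X],Y)+G(X,[\Omega,Y])$ and the commutators from the first step. Finally $[\mathcal{L}_\Omega,\nabla_V]G=0$ for $V\in\{\partial_r,\partial_t,L,\underline{L}\}$ comes from the fact that a Killing field commutes with the Levi-Civita connection, $\mathcal{L}_\Omega(\nabla G)=\nabla(\mathcal{L}_\Omega G)$, combined with $[\Omega,V]=0$: contracting the first slot, $\mathcal{L}_\Omega(\nabla_V G)=\nabla_{[\Omega,V]}G+\nabla_V(\mathcal{L}_\Omega G)=\nabla_V(\mathcal{L}_\Omega G)$.

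The routine but slightly delicate point I expect to be the main source of bookkeeping is keeping track of the precise meaning of $\nabla_{\partial_r}$, $\nabla_L$, $\nabla_{\underline{L}}$ and $\mathcal{L}_\Omega$ acting on the $\Sp_{t,r}$-tangent objects $\alpha(G)$, $\underline{\alpha}(G)$, $\sigma(G)$ (as opposed to the spacetime $2$-form $G$), and checking that with the standard projected conventions the frame-correction terms genuinely cancel in all four components rather than just for $\rho$. Everything else is immediate once the commutators and connection coefficients above are in hand, and the analogous statements for $\nabla_{\partial_t}$, $\nabla_L$, $\nabla_{\underline{L}}$ follow by writing $\nabla_L=\nabla_{\partial_t}+\nabla_{\partial_r}$, $\nabla_{\underline{L}}=\nabla_{\partial_t}-\nabla_{\partial_r}$ and using linearity together with $[\Omega,L]=[\Omega,\underline{L}]=0$.
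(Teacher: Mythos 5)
Your proof is correct, and while the overall scaffolding (record the commutators of $\Omega$ with the frame, record the connection coefficients, then expand the Leibniz rule and watch the correction terms vanish) matches the paper, you take a genuinely different route on the two points where the paper does something special. First, for the identity $\mathcal{L}_{\Omega}(\sigma(G))=\sigma(\mathcal{L}_{\Omega}(G))$ the paper first observes that the naive Leibniz expansion only seems to give $|\Omega\sigma(G)|\lesssim|\sigma(\mathcal{L}_{\Omega}(G))|+|\sigma(G)|$, and then recovers the exact identity via the Hodge-dual trick $\rho({}^*\!H)=-\sigma(H)$ together with ${}^*\!\mathcal{L}_{\Omega}(H)=\mathcal{L}_{\Omega}({}^*\!H)$; you instead observe directly that the error term $(C^1_{\Omega,1}+C^2_{\Omega,2})\sigma(G)$ vanishes because the matrix $C^B_{\Omega,A}=\langle[\Omega,e_A],e_B\rangle$ is antisymmetric (since $\Omega$ is Killing and $\langle e_A,e_B\rangle=\delta_{AB}$), so no separate trick for $\sigma$ is needed — you should, however, state explicitly that the antisymmetry of the Lie-bracket coefficients comes from the Killing property, as you only spell this out for the covariant-derivative coefficients $\omega_{AB}$. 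Second, for $[\mathcal{L}_{\Omega},\nabla_{\partial_r}]G=0$ the paper grinds through the Cartesian-coordinate formula for the Lie derivative of a $2$-form; you instead invoke the abstract fact that a Killing field commutes with the Levi-Civita connection and contract the first slot using $[\Omega,\partial_r]=0$. Your version treats all four null components uniformly and is insensitive to the particular normalization of the spherical frame (you do not need $\nabla_{\partial_r}e_A=0$, only metric-compatibility), which is a genuine simplification; the paper's Hodge-dual and coordinate computations are more elementary in the sense of not presupposing the Killing-commutes-with-$\nabla$ fact, but at the cost of a case split.
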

\begin{proof}
Let $\Omega \in \Or$. The property $[\mathcal{L}_{\Omega}, \nabla_{\partial_r}] G=0$ follows from $[\Omega, \partial_r]=0$, straightforward computations and that, in cartesian coordinates and for a vector field $X$,
\begin{equation}\label{Liecoordinate}
 \mathcal{L}_{X} (G)_{\mu \nu} = X(G_{\mu \nu} )+ \partial_{\mu} ( X^{\lambda} ) G_{\lambda \nu}+\partial_{\nu} (X^{\lambda} ) G_{\mu \lambda} \hspace{6mm} \text{and} \hspace{6mm} \nabla_X(G)_{\mu \nu} = X(G_{\mu \nu} ).
 \end{equation}
Aside from $\mathcal{L}_{\Omega} (\sigma(G))= \sigma ( \mathcal{L}_{\Omega}(G))$, the remaining identities ensue from $[\Omega, L]=[\Omega, \underline{L}]=\nabla_{\partial_r} L=\nabla_{\partial_r} \underline{L}=\nabla_{\partial_r} e_A=0$ since, for instance,
$$ 2 \nabla_{\partial_r} \rho(G) = (\nabla_{\partial_r} G) (L, \underline{L} )+G(\nabla_{\partial_r} L, \underline{L})+G(L, \nabla_{\partial_r} \underline{L}) \hspace{8mm} \text{and} \hspace{8mm} \mathcal{L}_{\Omega}(\alpha)=\mathcal{L}_{\Omega}(G)( \cdot , L)+G(\cdot, [\Omega,L]).$$ 
Using that $[\Omega, e_A]=C_{\Omega}^B(\omega_1,\omega_2) e_B$, where $C^B_{\Omega}$ are bounded functions on the sphere, we directly obtain $|\Omega \sigma(G) | \lesssim |\sigma ( \mathcal{L}_{\Omega}(G)) |+|\sigma(G)|$, which is good enough for proving the following results of this appendix. To obtain $\Omega(\sigma(G))= \sigma ( \mathcal{L}_{\Omega}(G))$, one can check, with straightforward computations that, for a $2$-form $H$,
$$ \rho ({}^* \! H ) = - \sigma (H) \hspace{1cm} \text{and} \hspace{1cm} {}^* \! \mathcal{L}_{\Omega} (H)=  \mathcal{L}_{\Omega} ( {}^* \! H).$$
It then follows
$$\Omega \sigma(G) =-\Omega \rho ( {}^* \! G ) = -\rho (\mathcal{L}_{\Omega} ( {}^* \! H))=-\rho ({}^* \! \mathcal{L}_{\Omega} ( G))=\sigma (\mathcal{L}_{\Omega} ( G))).$$
For the results concerning the operator $\nabla_{\partial_t}$, use $\nabla_{\partial_t} L=\nabla_{\partial_t} \underline{L}=\nabla_{\partial_t} e_A=0$. Finally, for $\nabla_L$ and $\nabla_{\underline{L}}$, recall that $L=\partial_t+\partial_r$ and $\underline{L}=\partial_t-\partial_r$.
\end{proof}
\begin{Lem}
Denoting by $\zeta$ any of the null component $\alpha$, $\underline{\alpha}$, $\rho$ or $\sigma$, we have
$$\tau_- \left| \nabla_{\underline{L}} \zeta (G)\right|+\tau_+ \left| \nabla_{L} \zeta (G) \right| \lesssim \sum_{|\gamma| \leq 1} \left|  \zeta \left( \mathcal{L}_{Z^{\gamma}}(G) \right) \right| \hspace{5mm} \text{and} \hspace{5mm} (1+r)\left| \slashed{\nabla} \zeta (G) \right| \lesssim |\zeta(G)|+\sum_{ \Omega \in \Or} \left|  \zeta \left( \mathcal{L}_{\Omega}(G) \right) \right| .$$
\end{Lem}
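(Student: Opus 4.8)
The plan is to reduce everything to the commutation relations of Lemma~\ref{randrotcom} together with the identities of Lemma~\ref{goodderiv}, applied componentwise to the null decomposition of $G$. First I would recall that $2\slashed{\nabla}_{e_A}$, $\nabla_L$ and $\nabla_{\underline L}$ commute with the null decomposition (this is exactly Lemma~\ref{randrotcom}), so that, for $\zeta$ any of $\alpha,\underline\alpha,\rho,\sigma$, one has $\nabla_L(\zeta(G))=\zeta(\nabla_L G)$ and $\nabla_{\underline L}(\zeta(G))=\zeta(\nabla_{\underline L} G)$, and similarly $\mathcal L_\Omega(\zeta(G))=\zeta(\mathcal L_\Omega(G))$ for $\Omega\in\Or$ (with the harmless loss $|\Omega\sigma(G)|\lesssim|\sigma(\mathcal L_\Omega G)|+|\sigma(G)|$ noted in the proof of the previous lemma, which is irrelevant for an estimate). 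Hence it suffices to establish the two inequalities with $\zeta(G)$ replaced by an arbitrary scalar/tensor function $h$ supported away from the axis, i.e.\ to bound $\tau_-|\nabla_{\underline L}h|$, $\tau_+|\nabla_L h|$ and $(1+r)|\slashed\nabla h|$ by undifferentiated quantities and first-order $\mathbb K$- or $\Or$-derivatives.

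For the first inequality I would use the identities of Lemma~\ref{goodderiv}: $(t-r)\underline L=S-\tfrac{x^i}{r}\Omega_{0i}$ and $(t+r)L=S+\tfrac{x^i}{r}\Omega_{0i}$. Applying these to $h=\zeta(G)$ gives
$$|t-r|\,|\underline L\,\zeta(G)|\lesssim |S\zeta(G)|+\sum_i|\Omega_{0i}\zeta(G)|,\qquad (t+r)\,|L\,\zeta(G)|\lesssim|S\zeta(G)|+\sum_i|\Omega_{0i}\zeta(G)|.$$
Now $\nabla_{\underline L}\zeta(G)$ and $\underline L(\zeta(G))$ differ only by terms involving $\nabla_{\underline L}$ of the null frame, which vanish ($\nabla_{\partial_r}L=\nabla_{\partial_r}\underline L=\nabla_{\partial_r}e_A=\nabla_{\partial_t}(\cdot)=0$ as recorded above), so in fact $\nabla_{\underline L}\zeta(G)=(\nabla_{\underline L}G)$ in the appropriate component, and $S\zeta(G),\Omega_{0i}\zeta(G)$ are, up to zeroth-order terms, the corresponding null components of $\mathcal L_S(G)$ and $\mathcal L_{\Omega_{0i}}(G)$ by Lemma~\ref{randrotcom} (extended to $S$ and boosts, which is the content of that lemma's ``similar results'' clause — strictly one checks $[\Omega_{0i},L]$, $[S,L]$ etc.\ are linear combinations of frame vectors). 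Combining with $|t-r|\sim\tau_-$ for $\tau_-\geq 1$ and handling the region $\tau_-\lesssim 1$ trivially (there $\tau_-|\nabla_{\underline L}\zeta(G)|\lesssim|\nabla_{\underline L}\zeta(G)|\lesssim|\zeta(\mathcal L_{Z^\gamma}G)|$ using $\underline L=L-2\partial_r$ and the $L$-estimate, or directly), one gets the $\tau_-$ and $\tau_+$ bounds. The $(1+r)|\slashed\nabla\zeta(G)|$ estimate is simpler: $re_A=\sum_{i<j}C^{i,j}_A\Omega_{ij}$ from Lemma~\ref{goodderiv}, so $r|\slashed\nabla_{e_A}\zeta(G)|\lesssim\sum_{\Omega\in\Or}|\Omega\zeta(G)|\lesssim|\zeta(G)|+\sum_{\Omega\in\Or}|\zeta(\mathcal L_\Omega G)|$, and for $r\lesssim 1$ one absorbs into $|\zeta(G)|$ plus lower-order rotation derivatives in the same way.

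**Main obstacle.** The only delicate point is bookkeeping the discrepancy between the \emph{frame} derivative $\nabla_{L}\zeta(G)$ (or $\slashed\nabla\zeta(G)$) and the \emph{coordinate-vector-field} action on the scalar $\zeta(G)$: applying $S$ or $\Omega_{0i}$ to the scalar $\zeta(G)=G(L,\underline L)$ etc.\ produces, besides $(\mathcal L_Z G)(L,\underline L)$, extra terms $G([Z,L],\underline L)+G(L,[Z,\underline L])$. One must check that the relevant commutators $[S,L]=-L$, $[\Omega_{0i},L]$, $[\Omega_{0i},\underline L]$, $[\Omega_{0i},e_A]$ are all linear combinations of $L,\underline L,e_A$ with \emph{bounded} coefficients on the region $r\sim t$ (indeed $[\Omega_{0i},L]$ and $[\Omega_{0i},\underline L]$ are combinations of $e_A$ with coefficients $\sim x^j/r$, bounded; $[\Omega_{0i},e_A]$ involves factors $\sim 1$), so that these error terms are bounded by $|\zeta'(G)|$ for other null components $\zeta'$, which is acceptable since the right-hand side sums over all of them through $\mathcal L_{Z^\gamma}(G)$ with $|\gamma|\le 1$. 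This is routine but must be done carefully; I would simply cite the analogous computations already performed for Lemma~\ref{randrotcom} and invoke Lemma~\ref{goodderiv}, keeping the verification at the level of ``straightforward computations in cartesian coordinates'' as the paper does elsewhere. Once this is in place, the estimate is immediate.
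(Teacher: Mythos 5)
Your overall approach is the right one --- use the commutation of $\nabla_{\underline L},\nabla_L,\slashed\nabla$ with the null decomposition (Lemma~\ref{randrotcom}), substitute the Killing combinations from Lemma~\ref{goodderiv}, and trade $\nabla_Z$ for $\mathcal L_Z$ plus an algebraic error. Your treatment of the angular inequality via $re_A=\sum C^{i,j}_A\Omega_{ij}$ is also exactly the paper's. However, the crucial step of controlling the algebraic error $\mathcal L_{\Omega_{0i}}(G)-\nabla_{\Omega_{0i}}(G)$ (and likewise for $S$) contains a genuine gap.

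You write that the commutators $[Z,L]$, $[Z,\underline L]$, $[Z,e_A]$ have bounded coefficients and that therefore the error terms ``are bounded by $|\zeta'(G)|$ for other null components $\zeta'$, which is acceptable since the right-hand side sums over all of them through $\mathcal L_{Z^\gamma}(G)$ with $|\gamma|\le 1$.'' This misreads the statement of the lemma. The right-hand side is $\sum_{|\gamma|\le 1}|\zeta(\mathcal L_{Z^\gamma}(G))|$: it is the \emph{same} null component $\zeta$ evaluated on $G$ and its first Lie derivatives. No other null components appear. That is precisely the refinement over Lemma $3.3$ of \cite{CK} that this lemma is recording, and it is exactly why the proof cannot get away with ``bound the errors by some null component.'' The paper's proof instead computes the algebraic tensor $H^i:=\mathcal L_{\Omega_{0i}}(G)-\nabla_{\Omega_{0i}}(G)$ explicitly (equation~\eqref{defHi}) and then, using the crucial cancellation coming from the contraction with $x_i/r$, shows that $\tfrac{x_i}{r}\rho(H^i)=0$, $\tfrac{x_i}{r}\sigma(H^i)=0$, and $\tfrac{x_i}{r}\alpha(H^i)=\pm\alpha(G)$ (similarly for $\underline\alpha$). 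Without that computation one cannot conclude; the generic Leibniz bound you invoke would only give the weaker estimate where the right-hand side is summed over all four null components of $\mathcal L_{Z^\gamma}(G)$.

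A secondary inaccuracy: the commutators $[\Omega_{0i},L]$ are \emph{not} combinations of $e_A$ with bounded coefficients. One finds $[\Omega_{0i},L]=-\tfrac{x^i}{r}L+\tfrac{t-r}{r}(\text{tangential part of }\partial_i)$, whose tangential coefficient blows up as $r\to 0$ with $t$ fixed large. This is harmless only because the dangerous tangential pieces cancel after contraction with $x^i/r$ in the combination $S-\tfrac{x^i}{r}\Omega_{0i}$ (and a similar cancellation drives the vanishing of $\tfrac{x_i}{r}\rho(H^i)$ and $\tfrac{x_i}{r}\sigma(H^i)$ above). Neither this cancellation nor the preservation of the null component is addressed by citing Lemma~\ref{randrotcom}, which only covers $\Or$, $\partial_t$, $\partial_r$, $L$, $\underline L$ --- it says nothing about $S$ or the boosts. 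To close the argument you would need to reproduce the explicit $H^i$ computation of Appendix~\ref{appendixD}.
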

\begin{proof}
Let $\zeta \in \{ \alpha, \underline{\alpha}, \rho, \sigma \}$. As $\nabla_{\underline{L}}$ commute with the null decomposition (see the previous Lemma) and $(t-r)\underline{L}= S-\frac{x^i}{r} \Omega_{0i}$ (see Lemma \ref{goodderiv}), we have,
$$ (t-r) \nabla_{\underline{L}} \zeta \left( G \right) = \zeta \left( \nabla_{(t-r) \underline{L}} G \right) = \zeta \left( \nabla_S G  -\frac{x^i}{r} \nabla_{\Omega_{0i}} G \right)
=  \zeta \left(\mathcal{L}_S (G) \right)-2 \zeta (G)-\frac{x^i}{r} \zeta \left( \mathcal{L}_{\Omega_{0i}}(G) \right)+\frac{x_i}{r} \zeta \left( H^i \right), $$
since $\mathcal{L}_S(G)=\nabla_S(G)+2G$ and where $H^i=\mathcal{L}_{\Omega_{0i}}(G)-\nabla_{\Omega_{0i}}(G)$. We have, using \eqref{Liecoordinate},
\begin{equation}\label{defHi}
\hspace{-2mm} H^i_{\mu \nu} =0 \hspace{5mm} \text{if} \hspace{5mm} \mu, \nu \notin \{0,i \} \hspace{3mm} \text{or} \hspace{3mm} \mu, \nu \in \{ 0, i \}. \hspace{8mm} \text{If} \hspace{5mm} \nu \notin \{0,i \}, \hspace{5mm} H^i_{0\nu}=G_{i \nu} \hspace{5mm} \text{and} \hspace{5mm} H^i_{i \nu} = G_{0 \nu}.
 \end{equation}
Consequently,
$$ \frac{x_i}{r} \rho \left( H^i \right) = \frac{x_i}{r} H^i_{r 0 } = \frac{x^i}{r}\frac{x^j}{r} G_{j i} =0, \hspace{5mm} \text{so that} \hspace{5mm} | \underline{L} \rho( G)| \lesssim \frac{1}{\tau_-} \sum_{|\gamma| \leq 1} |\rho(\mathcal{L}_{Z^{\gamma}}(G)|.$$
It remains to study $x_i \zeta \left( H^i \right)$ for $\zeta \in \{ \alpha, \underline{\alpha}, \sigma \}.$ Let us treat together the case of $\alpha$ and $\underline{\alpha}$ by computing $x_iH^i_{A0}$ and $x_iH^i_{Ar}$. Recall that $e_A = \sum_{1 \leq k < l \leq 3} C^{k,l}(\omega_1,\omega_2) \frac{\Omega_{kl}}{r}$ where $C^{k,l}$ are bounded functions on the sphere. As
\begin{eqnarray}
\nonumber x_i H^i \hspace{-1mm} \left( \frac{\Omega_{kl}}{r} , \partial_t \hspace{-1mm} \right) \hspace{-3mm} & = & \hspace{-3mm} \frac{x_ix^k}{r} H^i_{l0}-\frac{x_ix^l}{r} H^i_{k0}=\frac{x^ix^k}{r} G_{li}-\frac{x^ix^l}{r} G_{ki}=rG \left( \frac{\Omega_{kl}}{r}, \partial_r \right), \\
\nonumber x_i H^i \hspace{-1mm} \left( \frac{\Omega_{kl}}{r} , \partial_r \hspace{-1mm} \right) \hspace{-3mm} & = & \hspace{-3mm} \frac{x_ix^kx^j}{r^2} H^i_{lj}-\frac{x_ix^lx^j}{r^2} H^i_{kj} \\ \nonumber & = & \hspace{-3mm} x^k \hspace{-0.3mm} \left( \hspace{-0.2mm} \frac{x_ix^i \hspace{-0.4mm}- \hspace{-0.mm}(x^l)^2}{r^2} G_{l0} \hspace{-0.3mm} + \hspace{-0.3mm} \frac{x_lx^j}{r^2} G_{0j} \hspace{-0.3mm} - \hspace{-0.3mm} \frac{(x^l)^2}{r^2} G_{0l} \hspace{-0.2mm} \right) \hspace{-0.2mm} - \hspace{-0.1mm} x^l \hspace{-0.3mm} \left( \hspace{-0.2mm} \frac{x_ix^i \hspace{-0.4mm}- \hspace{-0.3mm} (x^k)^2}{r^2} G_{k0} \hspace{-0.3mm} + \hspace{-0.3mm} \frac{x_kx^j}{r^2} G_{0j} \hspace{-0.3mm} - \hspace{-0.3mm} \frac{(x^k)^2}{r^2} G_{0k} \hspace{-0.2mm} \right) \\ \nonumber
& = & \hspace{-3mm} r G \left( \frac{\Omega_{kl}}{r} , \partial_t \right),
\end{eqnarray}
we obtain
$$| \nabla_{\underline{L}} \left( \alpha( G) \right)_A| \lesssim \frac{1}{\tau_-} \sum_{|\gamma| \leq 1} |\alpha(\mathcal{L}_{Z^{\gamma}}(G)_A| \hspace{6mm} \text{and} \hspace{6mm} |\nabla_{\underline{L}} \left( \underline{\alpha}( G) \right)_A| \lesssim \frac{1}{\tau_-} \sum_{|\gamma| \leq 1} |\underline{\alpha}(\mathcal{L}_{Z^{\gamma}}(G)_A|.$$
For the remaining case, $\zeta=\sigma$, straightforward computations give
$$ x_i H^i \left( \frac{\Omega_{kl}}{r}, \frac{\Omega_{pq}}{r} \right) = \frac{x^k x^p x_i}{r^2} H^i_{lq}+\frac{x^l x^q x_i}{r^2} H^i_{kp}-\frac{x^k x^q x_i}{r^2} H^i_{lp}-\frac{x^l x^p x_i}{r^2} H^i_{kq} =0, \hspace{5mm} \text{so that} \hspace{5mm} x_i H^i_{AB}=0.$$
The proof for $\nabla_{L}$ is similar as it also commutes with the null decomposition and since $(t+r)L=S+\frac{x^i}{r}\Omega_{0i}$. Finally, for the angular derivatives, use that $\mathcal{L}_{\Omega}$ commute with the null decomposition and that for a function $u$ and a $1$-form $U$ tangential to the $2$-spheres,
$$ r \left| \slashed{\nabla} u \right| \leq \sum_{\Omega \in \Or} \left| \Omega u \right| \hspace{10mm} \text{and} \hspace{10mm}  r \left| \slashed{\nabla} U \right|  \lesssim |U|+ \sum_{\Omega \in \Or} \left| \mathcal{L}_{\Omega} U \right|. $$
In order to prove the last inequality, recall that $|\Gamma_{Ar}^B| \lesssim r^{-1}$, where $\Gamma$ are the christofel symbols of the Minkowski spacetime in the nonholonomic basis $(\partial_t, \partial_r,e_1,e_2)$, so that
$$ r \left| \slashed{\nabla}_{e_A} U \right|  \leq r\left| \nabla_{e_A} U -\Gamma^D_{Ar}U_Ddr\right| \lesssim |U|+\sum_{\Omega \in \Or} \left| \nabla_{\Omega} U \right|+\lesssim |U|+\sum_{\Omega \in \Or} \left| \mathcal{L}_{\Omega}( U) \right|.  $$
\end{proof}
\begin{Lem}
Denoting by $(\alpha, \underline{\alpha}, \rho, \sigma)$ the null decomposition of $G$, we have
$$\nabla_{\underline{L}} \alpha_A-\frac{\alpha_A}{r}+\slashed{\nabla}_{e_A} \rho+\varepsilon_{BA} \slashed{\nabla}_{e_B} \sigma=J_A .$$
\end{Lem}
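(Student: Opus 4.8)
The plan is to read off two identities in the null frame $(L,\underline L,e_1,e_2)$ — one from the Maxwell equation $\nabla^{\mu}G_{\mu\nu}=J_{\nu}$ evaluated in an $e_A$ direction, and one from the Bianchi identity $\nabla_{[\lambda}G_{\mu\nu]}=0$ (available through Lemma \ref{maxwellbis}) evaluated on the triple $(L,\underline L,e_A)$ — and then to combine them linearly so as to cancel the $\nabla_L\underline\alpha$ term. Throughout I would use the connection coefficients of this frame in Minkowski space, namely $\nabla_{\partial_t}X=\nabla_{\partial_r}X=0$ for $X\in\{L,\underline L,e_A\}$ (so that $\nabla_L$ and $\nabla_{\underline L}$ annihilate every frame vector), together with $\nabla_{e_A}L=\tfrac1r e_A$, $\nabla_{e_A}\underline L=-\tfrac1r e_A$ and $\nabla_{e_A}e_B=\slashed\nabla_{e_A}e_B-\tfrac1{2r}\delta_{AB}(L-\underline L)$; the last one is obtained by projecting onto $\mathrm{span}(L,\underline L)$ using $\langle L,\underline L\rangle=-2$ and the second fundamental forms $\chi_{AB}=\tfrac1r\delta_{AB}$, $\underline\chi_{AB}=-\tfrac1r\delta_{AB}$.

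First I would compute $J_A=\nabla^{\mu}G_{\mu A}$ by contracting $(\nabla_\bullet G)(\bullet,e_A)$ with the inverse metric components $g^{L\underline L}=g^{\underline L L}=-\tfrac12$, $g^{e_Ae_B}=\delta^{AB}$. The two terms carrying $g^{L\underline L}$ are $(\nabla_{\underline L}G)(L,e_A)$ and $(\nabla_L G)(\underline L,e_A)$; since $\nabla_L,\nabla_{\underline L}$ kill the frame these reduce to $\underline L$- and $L$-derivatives of components, equal to $-\nabla_{\underline L}\alpha_A$ and $-\nabla_L\underline\alpha_A$. The angular sum $\sum_B(\nabla_{e_B}G)(e_B,e_A)$, once the normal parts of $\nabla_{e_B}e_B$ and $\nabla_{e_B}e_A$ are extracted, splits into the intrinsic divergence $\sum_B(\slashed\nabla_{e_B}\slashed G)(e_B,e_A)$ of the tangential $2$-form part $\slashed G$ of $G$ (whose only component is $\sigma$), which equals $\varepsilon_{BA}\slashed\nabla_{e_B}\sigma$, plus a net contribution $-\tfrac1{2r}\alpha_A+\tfrac1{2r}\underline\alpha_A$ from the normal corrections. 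This yields
\[
J_A=\tfrac12\nabla_{\underline L}\alpha_A+\tfrac12\nabla_L\underline\alpha_A+\varepsilon_{BA}\slashed\nabla_{e_B}\sigma-\tfrac1{2r}\alpha_A+\tfrac1{2r}\underline\alpha_A .
\]

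Next I would evaluate the Bianchi component $(\nabla_L G)(\underline L,e_A)+(\nabla_{\underline L}G)(e_A,L)+(\nabla_{e_A}G)(L,\underline L)=0$. The first two brackets are again $-\nabla_L\underline\alpha_A$ and $\nabla_{\underline L}\alpha_A$, while $(\nabla_{e_A}G)(L,\underline L)=e_A(2\rho)-G(\tfrac1r e_A,\underline L)-G(L,-\tfrac1r e_A)=2\slashed\nabla_{e_A}\rho-\tfrac1r\underline\alpha_A-\tfrac1r\alpha_A$, using $\slashed\nabla_{e_A}\rho=e_A(\rho)$. Hence $\nabla_{\underline L}\alpha_A-\nabla_L\underline\alpha_A+2\slashed\nabla_{e_A}\rho-\tfrac1r\alpha_A-\tfrac1r\underline\alpha_A=0$, that is $\tfrac12\nabla_L\underline\alpha_A+\tfrac1{2r}\underline\alpha_A=\tfrac12\nabla_{\underline L}\alpha_A+\slashed\nabla_{e_A}\rho-\tfrac1{2r}\alpha_A$. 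Substituting this into the displayed identity cancels the terms $\tfrac12\nabla_L\underline\alpha_A$ and $\tfrac1{2r}\underline\alpha_A$ and leaves exactly $\nabla_{\underline L}\alpha_A-\tfrac1r\alpha_A+\slashed\nabla_{e_A}\rho+\varepsilon_{BA}\slashed\nabla_{e_B}\sigma=J_A$, which is the claim.

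The routine but delicate part is the angular sum in the first step: one must cleanly separate $\slashed\nabla$ from the normal part of $\nabla_{e_B}e_A$, track the signs forced by the definitions $\alpha_A=G_{AL}$, $\underline\alpha_A=G_{A\underline L}$, $\rho=\tfrac12 G_{L\underline L}$, $\sigma=G_{12}$ and by the normalization $\langle L,\underline L\rangle=-2$, and recognize the emerging intrinsic divergence of the tangential part as $\varepsilon_{BA}\slashed\nabla_{e_B}\sigma$. Once the connection coefficients are in place, no genuine estimate is needed.
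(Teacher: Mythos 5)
Your proposal is correct and follows essentially the same route as the paper: you take the $e_A$-component of $\nabla^{\mu}G_{\mu\nu}=J_{\nu}$, the $(L,\underline L,e_A)$-component of the Bianchi identity $\nabla_{[\lambda}G_{\mu\nu]}=0$ from Lemma \ref{maxwellbis}, compute the angular sum $\sum_B(\nabla_{e_B}G)(e_B,e_A)$ using the same frame connection coefficients, and eliminate $\nabla_L\underline\alpha_A$ by a linear combination (adding half the Bianchi relation to the Maxwell one, which is what your substitution amounts to). The two intermediate frame equations you display coincide exactly with the paper's \eqref{eq:alpha1} and \eqref{eq:alpha2}, and your angular-sum computation is the paper's \eqref{eq:covsphere}.
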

\begin{proof}
Let us start by proving, for $B \neq A$,
\begin{equation}\label{eq:covsphere}
(\nabla_{e_B} G)(e_B,e_A)=\varepsilon_{BA} \slashed{\nabla}_{e_B} \sigma -\frac{1}{2r}\alpha(e_A)+\frac{1}{2r} \underline{\alpha}(e_A).
\end{equation}
Suppose for instance that $e_B=e_1$ and $e_A=e_2$. Then, as $\nabla_{e_C} e_D = \slashed{\nabla}_{e_C} e_D-\frac{\delta_{C,D}}{r} \nabla_{\partial_r}$,
\begin{eqnarray}
\nonumber \varepsilon_{BA} \slashed{\nabla}_{e_B} \sigma \hspace{1mm} = \hspace{1mm} e_1 \left( G(e_1,e_2) \right) & = &  (\nabla_{e_1} G )(e_1,e_2)+G(\nabla_{e_1} e_1, e_2)+G( e_1, \nabla_{e_1} e_2) \\ \nonumber
& = & (\nabla_{e_1} G )(e_1,e_2)+\frac{1}{2r}\alpha(e_2)-\frac{1}{2r} \underline{\alpha}(e_2)+G(\slashed{\nabla}_{e_1} e_1, e_2)+G( e_1, \slashed{\nabla}_{e_1} e_2).
\end{eqnarray}
It remains to notice that, as $G$ is a $2$-form and $ 2 \left< \slashed{\nabla}_{e_1}e_A,e_A \right> =\slashed{\nabla}_{e_1} \left( \left< e_A,e_A \right> \right)=0$,
$$G(\slashed{\nabla}_{e_1} e_1, e_2)+G( e_1, \slashed{\nabla}_{e_1} e_2) = G( \left< \slashed{\nabla}_{e_1} e_1,e_1 \right> e_1, e_2)+G( e_1, \left<\slashed{\nabla}_{e_1} e_2,e_2 \right> e_2) =0.$$
Recall now from Lemma \ref{maxwellbis} that $\nabla_{[\lambda} G_{\mu \nu ] }=0$. Taking the $(L,\underline{L},A)$ component of this tensorial equation, we get, as $\nabla_L \underline{L}=\nabla_{\underline{L}} L=0$ and $\nabla_{e_A} L=-\nabla_{e_A} \underline{L} = \frac{1}{r}e_A$,
\begin{eqnarray}
\nonumber \nabla_{[L} G_{\underline{L} A ] } =0 & \Leftrightarrow & (\nabla_L G)(\underline{L} , e_A)+(\nabla_{\underline{L}} G)(e_A,L)+(\nabla_{e_A} G)(L,\underline{L}) =0 \\ \nonumber
& \Leftrightarrow & -(\nabla_L \underline{\alpha})( e_A)+(\nabla_{\underline{L}} \alpha)(e_A)+2 \nabla_{e_A} \rho-G( \nabla_{e_A} L, \underline{L} )-G(L, \nabla_{e_A} \underline{L} ) =0 \\ 
& \Leftrightarrow & -(\nabla_L \underline{\alpha})( e_A)+(\nabla_{\underline{L}} \alpha)(e_A)+2 \slashed{\nabla}_{e_A} \rho-\frac{1}{r}\underline{\alpha}( e_A )-\frac{1}{r} \alpha (e_A) =0. \label{eq:alpha1}
\end{eqnarray}
Similarly, taking $\nu =A$ in $\nabla^{\mu} G_{\mu \nu}= J_{\nu}$, we obtain, using \eqref{eq:covsphere} and since $\nabla^L = -\frac{1}{2} \nabla_{\underline{L}}$ and $\nabla^{\underline{L}} = -\frac{1}{2} \nabla_L$,
\begin{eqnarray}
\nonumber \nabla^{\mu} G_{\mu A} = J_A & \Leftrightarrow & -\frac{1}{2}(\nabla_L G)(\underline{L} , e_A)-\frac{1}{2}(\nabla_{\underline{L}} G)(L,e_A)+(\nabla_{e_B} G)(e_B,e_A) =J_A \\ 
& \Leftrightarrow & \frac{1}{2}(\nabla_L \underline{\alpha})( e_A)+\frac{1}{2}(\nabla_{\underline{L}} \alpha)(e_A)+ \varepsilon_{BA} \slashed{\nabla}_{e_B} \sigma -\frac{1}{2r}\alpha(e_A)+\frac{1}{2r} \underline{\alpha}(e_A)=J_A.  \label{eq:alpha2}
\end{eqnarray}
It remains to add half of \eqref{eq:alpha1} to \eqref{eq:alpha2}.
\end{proof}

\renewcommand{\refname}{References}
\bibliographystyle{abbrv}
\bibliography{biblio}

\begin{thebibliography}{10}

\bibitem{ABJ}
L.~Andersson, P.~Blue, and J.~Joudioux.
\newblock Hidden symmetries and decay for the {V}lasov equation on the {K}err
  spacetime.
\newblock {\em Comm. Partial Differential Equations}, 43(1):47--65, 2018.

\bibitem{dim4}
L.~Bigorgne.
\newblock {Asymptotic properties of small data solutions of the Vlasov-Maxwell
  system in high dimensions}.
\newblock arXiv:1712.09698, 2017.

\bibitem{ext}
L.~Bigorgne.
\newblock {Asymptotic Properties of the Solutions to the Vlasov–Maxwell
  System in the Exterior of a Light Cone}.
\newblock {\em International Mathematics Research Notices}, 07 2020.
\newblock rnaa062.

\bibitem{dim3}
L.~Bigorgne.
\newblock Sharp asymptotic behavior of solutions of the {$3d$}
  {V}lasov-{M}axwell system with small data.
\newblock {\em Comm. Math. Phys.}, 376(2):893--992, 2020.

\bibitem{CK}
D.~Christodoulou and S.~Klainerman.
\newblock Asymptotic properties of linear field equations in {M}inkowski space.
\newblock {\em Comm. Pure Appl. Math.}, 43(2):137--199, 1990.

\bibitem{FJS2}
D.~Fajman, J.~Joudioux, and J.~Smulevici.
\newblock {Sharp asymptotics for small data solutions of the Vlasov-Nordstr\"om
  system in three dimensions}.
\newblock arXiv:1704.05353, 2017.

\bibitem{FJS3}
D.~Fajman, J.~Joudioux, and J.~Smulevici.
\newblock {The Stability of the Minkowski space for the Einstein-Vlasov
  system}.
\newblock arXiv:1707.06141, 2017.

\bibitem{FJS}
D.~Fajman, J.~Joudioux, and J.~Smulevici.
\newblock A vector field method for relativistic transport equations with
  applications.
\newblock {\em Anal. PDE}, 10(7):1539--1612, 2017.

\bibitem{GSc}
R.~T. Glassey and J.~W. Schaeffer.
\newblock Global existence for the relativistic {V}lasov-{M}axwell system with
  nearly neutral initial data.
\newblock {\em Comm. Math. Phys.}, 119(3):353--384, 1988.

\bibitem{GSt}
R.~T. Glassey and W.~A. Strauss.
\newblock Absence of shocks in an initially dilute collisionless plasma.
\newblock {\em Comm. Math. Phys.}, 113(2):191--208, 1987.

\bibitem{Lindblad}
H.~Lindblad and M.~Taylor.
\newblock Global stability of {M}inkowski space for the {E}instein-{V}lasov
  system in the harmonic gauge.
\newblock {\em Arch. Ration. Mech. Anal.}, 235(1):517--633, 2020.

\bibitem{Sc}
J.~Schaeffer.
\newblock A small data theorem for collisionless plasma that includes high
  velocity particles.
\newblock {\em Indiana Univ. Math. J.}, 53(1):1--34, 2004.

\bibitem{Poisson}
J.~Smulevici.
\newblock Small data solutions of the {V}lasov-{P}oisson system and the vector
  field method.
\newblock {\em Ann. PDE}, 2(2):Art. 11, 55, 2016.

\bibitem{Svedberg}
C.~Svedberg.
\newblock {\em Non-linear wave equations coupled to generalized
  massive-massless Vlasov equations}.
\newblock PhD thesis, KTH, Mathematics (Div.), 2012.

\bibitem{Wang}
X.~Wang.
\newblock {Propagation of regularity and long time behavior of the $3D$ massive
  relativistic transport equation II: Vlasov-Maxwell system}.
\newblock arXiv:1804.06566, 2018.

\end{thebibliography}

\end{document}